	\tikzstyle{edge}=[line width=.75pt]
\definecolor{c1}{rgb}{1, 0.8, 0.6}
\definecolor{c2}{rgb}{0.5, 1, 0.8}
\definecolor{c3}{rgb}{1, 0.4, 0.9}
\definecolor{c4}{rgb}{0.5, 0.9, 1}
\definecolor{c5}{RGB}{213,94,0}		
\newtheorem{theorem}{Theorem}[section]
\newtheorem{proposition}[theorem]{Proposition}
\newtheorem{conjecture}[theorem]{Conjecture}
\newtheorem{lemma}[theorem]{Lemma}
\newtheorem{corollary}[theorem]{Corollary}
\newtheorem{question}[theorem]{Question}
\theoremstyle{remark}
\newtheorem{definition}[theorem]{Definition}
\newtheorem{example}[theorem]{Example}
\newtheorem{remark}[theorem]{Remark}
\newtheorem{note}[theorem]{Note}
\newcommand{\defn}[1]{{\color{green!50!black}\textit{#1}}}
\newcommand{\defs}{\stackrel{\mathsf{def}}{=}}
\newcommand{\ie}{\textit{i.e.},\;}
\newcommand{\eg}{\textit{e.g.},\;}
\renewcommand{\th}{^{\mathsf{th}}}
\newcommand{\st}{^{\mathsf{st}}}
\newcommand{\rsp}{-.16cm}
\newcommand*\circled[1]{\tikz[baseline=(char.base)]{
            \node[shape=circle,fill=gray!50!black, minimum size=11pt, inner sep=-3pt, outer sep = 0pt] (char) {\textcolor{white}{$\vphantom{\overline{\mathbf{8}}} \mathbf{#1}$}};}}
\newcommand{\hgl}[1]{\circled{#1}}
\newcommand{\Poset}{\mathbf{P}}
\newcommand{\least}{\hat{0}}
\newcommand{\grtst}{\hat{1}}
\newcommand{\JI}{\mathsf{J}}
\newcommand{\MI}{\mathsf{M}}
\newcommand{\Con}{\mathsf{Con}}
\newcommand{\cg}{\mathsf{cg}}
\newcommand{\id}{\mathsf{e}}
\newcommand{\wo}{\omega_{\mathsf{o}}}
\newcommand{\woa}{\omega_{\mathsf{o};\alpha}}
\newcommand{\woab}{\omega_{\mathsf{o};\alpha'}}
\newcommand{\Weak}{\mathsf{Weak}}
\newcommand{\weakorder}{\leq_{\mathsf{weak}}}
\newcommand{\weakless}{<_{\mathsf{weak}}}
\newcommand{\weakcover}{\lessdot_{\mathsf{weak}}}
\newcommand{\Tamari}{\mathsf{Tam}}
\newcommand{\Align}{\mathsf{Align}}
\newcommand{\Skew}{\mathsf{skew}}
\newcommand{\invset}{\mathsf{Inv}}
\newcommand{\covset}{\mathsf{Cov}}
\newcommand{\Hyper}{\mathfrak{H}}
\newcommand{\sleft}{[\![}
\newcommand{\sright}{]\!]}
\newcommand{\lleft}{(\!(}
\newcommand{\rright}{)\!)}
\newcommand{\invs}[1]{\sleft {#1} \sright}
\newcommand{\invd}[2]{\lleft {#1} \; {#2} \rright}
\newcommand{\invorder}{\mathbf{Inv}}
\newcommand{\wb}{\mathbf{w}}
\newcommand{\sbo}{\mathbf{s}}
\newcommand{\wbo}{\mathbf{w}_{\mathsf{o}}}
\newcommand{\wboa}{\mathbf{w}_{\mathsf{o};\alpha}}
\newcommand{\linc}{\vec{c}}
\newcommand{\Part}{\mathsf{Part}}
\newcommand{\indicator}{\varrho}
\title{Parabolic Tamari Lattices in Linear Type $B$}
\author{Wenjie Fang}
	\address{WF: LIGM, Univ. Gustave Eiffel, CNRS, ESIEE Paris, F-77454 Marne-la-Vallée, France}
	\email{wenjie.fang@u-pem.fr}
\author{Henri M{\"u}hle}
	\address{HM: Qoniac GmbH, Dr. K{\"u}lz-Ring 15, 01067 Dresden, Germany.}
	\email{henri.muehle@tu-dresden.de}
\author{Jean-Christophe Novelli}
	\address{JCN: LIGM, Univ. Gustave Eiffel, CNRS, ESIEE Paris, F-77454 Marne-la-Vallée, France}
	\email{novelli@univ-mlv.fr}
\thanks{HM has received funding from the European Research Council (Grant Agreement no. 681988, CSP-Infinity). JCN was partially supported by the
CARPLO project of the Agence Nationale de la recherche (ANR-20-CE40-0007).}
\keywords{parabolic quotients, Coxeter group, aligned elements, hyperoctahedral group, Tamari lattices}
\subjclass[2010]{05E15 (primary), and 06A07, 20F55 (secondary)}
\newcounter{i}
\newcounter{n}
\newcommand{\crossout}[3]{
    \draw[line width=.95pt,red]({(#1-.25)*#3},{(#2-.25)*#3}) -- ({(#1+.25)*#3},{(#2+.25)*#3});
    \draw[line width=.95pt,red]({(#1-.25)*#3},{(#2+.25)*#3}) -- ({(#1+.25)*#3},{(#2-.25)*#3});
}
\newcommand{\AlphaPerm}[7]{
	\edef\alphaList{{#1}}
	\begin{tikzpicture}
		\def\d{#5};
		\def\dx{#4};
		\setcounter{i}{0}
		\setcounter{n}{0}
		\foreach \cl in {#2}{
            \pgfmathsetmacro{\ax}{\alphaList[\thei]}
            \addtocounter{n}{\ax}
            \stepcounter{i}
		}
		\begin{pgfonlayer}{background}
            \draw[#6,fill=white,rounded corners,line width=2pt]({(0+1.5*\d)*\dx},-2.2*\d*\dx) -- ({(\then+1-1.5*\d)*\dx},-2.2*\d*\dx) -- ({(\then+1-1.5*\d)*\dx},2.2*\d*\dx) -- ({(0+1.5*\d)*\dx},2.2*\d*\dx) -- cycle;
        \end{pgfonlayer}{background}

        \setcounter{i}{0}
		\setcounter{n}{0}
		\foreach \cl in {#2}{
			\pgfmathsetmacro{\ax}{\alphaList[\thei]}
			\begin{pgfonlayer}{background}
				\fill[\cl]({(\then+1-1.5*\d)*\dx},-1.55*\d*\dx) -- ({(\then+\ax+1.5*\d)*\dx},-1.55*\d*\dx) -- ({(\then+\ax+1.5*\d)*\dx},1.55*\d*\dx) -- ({(\then+1-1.5*\d)*\dx},1.55*\d*\dx) -- cycle;
			\end{pgfonlayer}
			\addtocounter{n}{\ax}
			\stepcounter{i}
		}
		\setcounter{i}{1}
		\foreach \k in {#3}{
			\draw(\thei*\dx,.65*\dx) node[anchor=north,scale=#7]{$\k\vphantom{\overline{\uwave{k}}}$};
			\addtocounter{i}{1};
		}
	\end{tikzpicture}
}
\begin{document}

\begin{abstract}
  We study parabolic aligned elements associated with the type-$B$ Coxeter group and the so-called linear Coxeter element.  These elements were introduced algebraically in (M{\"u}hle and Williams, 2019) for parabolic quotients of finite Coxeter groups and were characterized by a certain forcing condition on inversions.  We focus on the type-$B$ case and give a combinatorial model for these elements in terms of pattern avoidance.  Moreover, we describe an equivalence relation on parabolic quotients of the type-$B$ Coxeter group whose equivalence classes are indexed by the aligned elements.  We prove that this equivalence relation extends to a congruence relation for the weak order. The resulting quotient lattice is the type-$B$ analogue of the parabolic Tamari lattice introduced for type $A$ in (M{\"u}hle and Williams, 2019).  
\end{abstract}

\maketitle

\section{Introduction}
    \label{sec:introduction}
Knuth introduced in \cite[Section~2.2.1]{knuth97aop_vol1} the family of \textit{stack-sortable permutations}.  These are permutations that can be converted into the identity permutation by passing through a stack. Combinatorially, stack-sortable permutations are characterized by avoiding the pattern $231$. More precisely, a permutation $\pi$ of $\{1,2,\ldots,n\}$ avoids the pattern $231$ if and only if for every $i<j<k$, when $\pi(i)>\pi(k)$, we have $\pi(i)>\pi(j)$. Following Bj{\"o}rner and Wachs this pattern-avoidance can be interpreted as a certain \textit{forcing} of inversions~\cite[Section~9]{bjorner97shellable}.

Using a root-theoretic approach, Reading generalized this condition to all finite irreducible Coxeter groups $W$ and all Coxeter elements $c$.  These \textit{$c$-aligned elements} of $W$ play an important role in the very active stream of Coxeter--Catalan combinatorics.  First, the number of $c$-aligned elements of $W$ is the $W$-Catalan number (for any choice of Coxeter element $c$).  Second, the $c$-aligned elements provide a bijective bridge between $c$-noncrossing partitions and $c$-clusters associated with $W$~\cite{reading07clusters}.

On top of that, the $c$-aligned elements behave nicely from a lattice-theoretic point of view.  Inside the weak order, they form the \textit{$c$-Cambrian lattice}; a particular semidistributive and trim lattice. This family of lattices generalizes the famous \textit{Tamari lattice}~\cite{reading06cambrian}.  Another remarkable feature is that these lattices arise from a certain orientation of the $1$-skeleton of the \textit{$c$-associahedron}, a simple polytope associated with the Coxeter group $W$.  In fact, the boundary complex of the dual polytope of the $c$-associahedron is the $c$-cluster complex associated with $W$, which arises from the \textit{almost positive roots} associated with $W$ by means of a certain \textit{compatibility} condition.

In \cite{muehle19tamari}, $c$-aligned elements, $c$-noncrossing partitions and $c$-clusters were generalized to \textit{parabolic quotients} of finite irreducible Coxeter groups.  In this generalization, some properties of these objects were conjectured to remain, such as the lattice property of parabolic $c$-aligned elements under weak order (\cite[Conjecture~35]{muehle19tamari}), and---when $W$ is of \textit{coincidental type}---the equinumerosity of parabolic $c$-aligned elements, parabolic $c$-noncrossing partitions and parabolic $c$-clusters (\cite[Conjecture~41]{muehle19tamari}).  The case of the symmetric group with $c$ the increasing long cycle was settled in the same article.  Further research has exhibited remarkable connections between parabolic Coxeter--Catalan objects associated with the symmetric group, certain Hopf algebras, and the theory of diagonal harmonics~\cite{ceballos20the}.  For more properties of these objects, readers are referred to \cite{fang21consecutive,krattenthaler22the,muehle21noncrossing}.

\medskip

In this article, we present a first study of parabolic Coxeter--Catalan objects associated with the hyperoctahedral group, \ie the Coxeter group of type $B$.  We start by recalling the necessary basics on posets and lattices (Section~\ref{sec:posets_lattices}) and Coxeter groups (Section~\ref{sec:coxeter_groups}).  Then we turn our attention to the Coxeter group of type $B$, which can be viewed as the symmetry group of the hyperoctahedron.  We recall a permutation representation of this group in terms of sign-symmetric permutations (Section~\ref{sec:sign_symmetric_permutations}), and then consider a particular realization of the roots associated with this group (Section~\ref{sec:type_b_roots}).  With the basics set up we then recall Reading's construction of the $\linc$-aligned elements associated with the Coxeter group of type $B$, where $\linc$ is a \textit{linear} Coxeter element, \ie a particular long cycle (Section~\ref{sec:type_b_alignment}).

In the main part of this article, we consider parabolic quotients of the hyperoctahedral group of degree $n$.  These parabolic quotients consist of minimal-length representatives of the left cosets with respect to certain subgroups.  We give a simple criterion for the membership of a sign-symmetric permutation to such a parabolic quotient (Section~\ref{sec:representatives}). In particular, it turns out that the parabolic quotients are determined by a type-$B$ composition $\alpha$ of $n$, \ie an integer composition of $n$ with a possible additional $0$-component.  We denote such a parabolic quotient by $\Hyper_{\alpha}$.

Next, we study the longest element in $\Hyper_{\alpha}$ and describe its $\linc$-sorting word together with the associated inversion order by means of skew shapes (Section~\ref{sec:parabolic_longest_elements}).  We then characterize the $\linc$-aligned elements associated with a parabolic quotient of the hyperoctahedral group algebraically in terms of a forcing condition on the inversions and combinatorially in terms of a pattern-avoidance condition (Section~\ref{sec:parabolic_aligned_elements}).  It shall be remarked here that there are two types of type-$B$ compositions: those with a $0$-component and those without.  This dichotomy has an impact on the pattern-avoidance condition, because the avoided patterns differ slightly depending on whether the $0$-component is present or not.  Nevertheless, we may write $\Align\bigl(\Hyper_{\alpha},\wboa(\linc)\bigr)$ uniformly for the set of $\linc$-aligned elements in $\Hyper_{\alpha}$.

In the last section of this article (Section~\ref{sec:parabolic_tamari_lattice}), we prove that $\Align\bigl(\Hyper_{\alpha},\wboa(\linc)\bigr)$ induces a quotient lattice of the weak order on $\Hyper_{\alpha}$, called the \textit{type-$B$ parabolic Tamari lattice} $\Tamari_{B}(\alpha)$.

\begin{theorem}\label{thm:parabolic_tamari_lattice}
    For every type-$B$ composition $\alpha$, $\Tamari_{B}(\alpha)$ is a lattice. Moreover, it is a quotient lattice of the weak order on the parabolic quotient $\Hyper_{\alpha}$.
\end{theorem}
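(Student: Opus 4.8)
The plan is to exhibit $\Tamari_{B}(\alpha)$ as a lattice quotient of the weak order on $\Hyper_{\alpha}$ induced by the equivalence relation $\equiv$ whose classes are indexed by the $\linc$-aligned elements. Since the weak order on any parabolic quotient of a finite Coxeter group is a lattice, and since the quotient of a finite lattice by a lattice congruence is again a lattice, both assertions of the theorem reduce to the single claim that $\equiv$ is a \emph{lattice congruence} of the weak order on $\Hyper_{\alpha}$.

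To verify this, I would invoke the standard local criterion from Section~\ref{sec:posets_lattices}: an equivalence relation on a finite lattice $L$ is a lattice congruence precisely when (i) each class is an interval of $L$, and (ii) the projection maps $\pi^{\downarrow}$ and $\pi^{\uparrow}$, sending an element to the bottom, respectively the top, of its class, are order-preserving. The first step is to show that the unique $\linc$-aligned element of a class is its bottom element, so that $\pi^{\downarrow}$ is the projection onto $\Align\bigl(\Hyper_{\alpha},\wboa(\linc)\bigr)$. I would do this with the pattern-avoidance / forcing characterization of alignment: whenever an element is not aligned, its inversion set contains a forced inversion whose removal produces a strictly smaller element of the same class, and iterating this process terminates at the aligned representative. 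Combined with the description of which inversions are forced, this also yields order-convexity of the classes, giving (i).

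The main obstacle is condition (ii), the monotonicity of the two projections. On a finite poset this may be checked on cover relations, so it suffices to prove that $v \weakcover w$ in $\Hyper_{\alpha}$ implies $\pi^{\downarrow}(v) \weakorder \pi^{\downarrow}(w)$ and $\pi^{\uparrow}(v) \weakorder \pi^{\uparrow}(w)$. A cover in the weak order alters the inversion set by a single cover inversion, and the delicate point is to control how this one change propagates through the forcing condition: adding or deleting one inversion can only force further inversions that are themselves consistent with the inversion order $\invorder$, so the projected elements remain comparable. I expect the genuinely technical part here to be the dichotomy flagged in the introduction, namely that the avoided patterns---and hence the forcing rules near the sign-change positions---differ according to whether the type-$B$ composition $\alpha$ carries a $0$-component. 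I would treat both cases uniformly by arguing entirely in terms of the inversion order $\invorder$ and splitting into subcases according to whether the altered inversion crosses the symmetry axis.

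Once (i) and (ii) are established, the local criterion shows that $\equiv$ is a lattice congruence, whence $\Tamari_{B}(\alpha) = \Hyper_{\alpha}/{\equiv}$ is a quotient lattice of the weak order and in particular a lattice, as claimed.
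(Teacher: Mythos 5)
Your overall skeleton does match the paper's: the paper also defines the equivalence relation via the downward projection $\Pi^\alpha_\downarrow$ onto $\Hyper_\alpha(231)$ and verifies the three conditions of Lemma~\ref{lem:congruences_combin}. But there is a genuine gap at the point you pass over most quickly: the existence of the \emph{top} element of each class, i.e., the well-definedness of your map $\pi^{\uparrow}$. Your classes are defined by the bottom projection, and order-convexity together with a unique minimum does \emph{not} make a class an interval --- a convex class could a priori have several maximal elements. You cannot obtain the maximum by dualizing the downward construction naively, because the relevant set of representatives at the top is \emph{not} $\Hyper_\alpha(231)$: the paper must introduce a second family, the type-$B$ $(\alpha,312)$-avoiding elements $\Hyper_\alpha(312)$ (Definition~\ref{def:312_pattern}), the order-reversing involution $\iota(\pi)=\pi\woa$ (Proposition~\ref{prop:iota_dual}), a dual projection $\Pi^\alpha_\dagger$ (Lemma~\ref{lem:unique_maximal_alt}), and then prove the nontrivial statement that the fibers of $\Pi^\alpha_\uparrow$ and $\Pi^\alpha_\downarrow$ coincide (Proposition~\ref{prop:two-ends-meet}, resting on the cover-by-cover analysis in Proposition~\ref{prop:cover_equivalence}). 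Your proposal contains no idea playing this role, and without it condition (i) of Lemma~\ref{lem:congruences_combin} is not established and $\pi^{\uparrow}$ in condition (iii) is not even defined.

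A secondary misjudgment: you locate the ``genuinely technical part'' in the monotonicity of the projections, but in the paper that step is two lines --- if $\pi\weakorder\pi'$ then $\Pi^\alpha_\downarrow(\pi)\weakorder\pi\weakorder\pi'$, and maximality of $\invset\bigl(\Pi^\alpha_\downarrow(\pi')\bigr)$ from Lemma~\ref{lem:unique_maximal} immediately gives $\Pi^\alpha_\downarrow(\pi)\weakorder\Pi^\alpha_\downarrow(\pi')$; no cover-relation or forcing-propagation analysis is needed. The real work you treat as routine is Lemma~\ref{lem:unique_maximal} itself: your iteration ``remove an offending cover inversion and repeat'' terminates, but its limit must be shown independent of the choices made --- a confluence statement proved in the paper by a delicate induction tracking indices $k_d$ with $\pi'(k_d)=\pi'(k)+d$ and exploiting a minimal (resp.\ maximal, in the join case with $j\leq\alpha_1$) choice of the middle index of the pattern. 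Finally, note that even granting the congruence, identifying the quotient with $\Tamari_B(\alpha)=\Weak\bigl(\Hyper_\alpha(231)\bigr)$ as posets uses that the minimal representatives are exactly the aligned elements and that comparison of classes agrees with comparison of these representatives; the paper records this in Proposition~\ref{prop:congruence}, and separately gives a direct meet-semilattice proof of the lattice property (Proposition~\ref{prop:parabolic_tamari_lattice}) that does not route through the congruence at all.
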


These lattices enjoy some remarkable structural properties.

\begin{theorem}\label{thm:parabolic_tamari_properties}
    For every type-$B$ composition $\alpha$, $\Tamari_{B}(\alpha)$ is congruence uniform (hence semidistributive) and trim.
\end{theorem}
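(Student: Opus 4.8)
The plan is to deduce both properties from the fact, established in Theorem~\ref{thm:parabolic_tamari_lattice}, that $\Tamari_{B}(\alpha)$ is a lattice quotient of the weak order on $\Hyper_{\alpha}$. First I would argue that the weak order on $\Hyper_{\alpha}$ is itself congruence uniform. The left cosets $wW_{J}$ of the parabolic subgroup $W_{J}$ whose minimal-length representatives constitute $\Hyper_{\alpha}$ are intervals of the weak order on the full hyperoctahedral group $B_{n}$ and form the classes of a lattice congruence whose quotient is (isomorphic to) the weak order on $\Hyper_{\alpha}$; thus the weak order on $\Hyper_{\alpha}$ is a lattice quotient of the weak order on $B_{n}$. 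Since the weak order on a finite Coxeter group of type $B$ is congruence uniform (Caspard--Le~Conte~de~Poly-Barbut--Morvan), and congruence uniformity is inherited by lattice quotients (Day's theory of bounded lattices), the weak order on $\Hyper_{\alpha}$ is congruence uniform. Applying the same inheritance once more, now to the quotient map of Theorem~\ref{thm:parabolic_tamari_lattice}, shows that $\Tamari_{B}(\alpha)$ is congruence uniform, and semidistributivity follows because every congruence-uniform lattice is semidistributive.

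For trimness I would verify the two defining conditions, extremality and left modularity, directly, and here the preceding step already does part of the work. In a congruence-uniform lattice the maps $j \mapsto \cg(j_{*},j)$ and $m \mapsto \cg(m,m^{*})$ are bijections from the join-irreducibles, respectively meet-irreducibles, onto the join-irreducible congruences, so that $\bigl\lvert \JI\bigl(\Tamari_{B}(\alpha)\bigr) \bigr\rvert = \bigl\lvert \MI\bigl(\Tamari_{B}(\alpha)\bigr) \bigr\rvert$. Extremality therefore reduces to the single equality between this common number and the length of $\Tamari_{B}(\alpha)$. The length I would compute as the number $N_{\alpha}$ of inversions of the longest element of $\Hyper_{\alpha}$: its $\linc$-sorting word, read off from the skew-shape description of Section~\ref{sec:parabolic_longest_elements}, yields a maximal chain none of whose covers is contracted by the aligned congruence, and its image is a maximal chain of length $N_{\alpha}$; as the length of a quotient cannot exceed that of the weak order on $\Hyper_{\alpha}$, which is graded of rank $N_{\alpha}$, the length of $\Tamari_{B}(\alpha)$ equals $N_{\alpha}$.

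The core computation, and the step I expect to be the main obstacle, is the count of join-irreducibles of $\Tamari_{B}(\alpha)$. Under the quotient map these are in bijection with the join-irreducible covers $j_{*} \weakcover j$ of the weak order on $\Hyper_{\alpha}$ that survive the aligned congruence, and one must show that there are exactly $N_{\alpha}$ of them. This is where the type-$B$ combinatorics genuinely enters: I would translate the notion of an uncontracted join-irreducible cover into the forcing condition on inversions and the associated pattern-avoidance model of Section~\ref{sec:parabolic_aligned_elements}, and carry out the enumeration, keeping track of the dichotomy between type-$B$ compositions with and without a $0$-component, since the avoided patterns---and hence the surviving covers---differ slightly in the two cases. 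Finally, for left modularity I would exhibit a maximal chain of left-modular elements, the natural candidate being the image of the $\linc$-sorting chain of the longest element used above; verifying that each of its elements is left modular in $\Tamari_{B}(\alpha)$ can be carried out through the characterization of left-modular elements of a semidistributive quotient in terms of the congruence, which is comparatively routine once extremality is in hand.
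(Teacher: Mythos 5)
Your overall architecture matches the paper's --- congruence uniformity by inheritance, then extremality via length and irreducible counts, then trimness --- but your first step is wrong as stated. Under the paper's conventions (\emph{left} weak order, left cosets $wW_{J}$), the cosets are \emph{not} intervals: already inside $\Hyper_{n}$ the coset $s_{2}W_{\{s_{1}\}}=\{s_{2},\,s_{2}s_{1}\}$ is an antichain, since $\invset(s_{2})=\{s_{2}\}\not\subseteq\{s_{1},\,s_{1}s_{2}s_{1}\}=\invset(s_{2}s_{1})$. Worse, $\Weak(W^{J})$ is in general not a lattice quotient of $\Weak(W)$ at all: for $W=S_{3}$ and $J=\{s_{1}\}$, the poset $\Weak(W^{J})$ is a three-element chain, while an inspection of the congruence lattice of the hexagon $\Weak(S_{3})$ shows that its quotients have cardinalities $6,5,4,2,1$ --- never $3$. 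The classical fact you are half-remembering is dual to what you need: the fibers of $w\mapsto w_{J}$ form a lattice congruence whose quotient is $\Weak(W_{J})$, the weak order on the parabolic \emph{subgroup}, whereas the parabolic \emph{quotient} $W^{J}$ enters the weak order as the interval $[\id,\woa]$ (Theorem~\ref{thm:weak_order_quotients}). The repair is exactly the paper's route: intervals are sublattices, and congruence uniformity is preserved under sublattices as well as under surjective homomorphisms (Proposition~\ref{prop:congruence_uniform_preserving}); with that substitution, your first paragraph --- including the second application to the quotient map of Theorem~\ref{thm:parabolic_tamari_lattice} and semidistributivity via Lemma~\ref{lem:congruence_uniform_semidistributive} --- goes through and coincides with Proposition~\ref{prop:alpha_tamari_congruence_uniform}.

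Beyond this, two of your steps are declarations of intent rather than proofs. The count of join-irreducibles, which you yourself flag as the main obstacle, is where the entire type-$B$ content of the theorem lives, and you leave it unexecuted; the paper settles it (Proposition~\ref{prop:alpha_tamari_irreducibles_count}) by an explicit bijection: for every inversion $\invd{i}{j}$ of $\woa$ there is exactly one element of $\Hyper_{\alpha}(231)$ having $\invd{i}{j}$ as its unique cover inversion, constructed in closed form with separate cases for $i,j>0$, for $i=-j$, and for mixed signs (the last splitting further according to $\alpha$ split, $\alpha$ join with $\lvert i\rvert>\alpha_{1}$, and $\alpha$ join with $\lvert i\rvert\leq\alpha_{1}$). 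Nothing in your sketch indicates how you would produce or certify these elements, so the proposal stops short of the core computation. Finally, your plan to verify left modularity directly, via a ``characterization of left-modular elements of a semidistributive quotient in terms of the congruence,'' invokes a tool that does not exist in the paper and is far from routine; it is also unnecessary, because once semidistributivity and extremality are in hand, Theorem~\ref{thm:semidistributive_extremal_is_trim} (every semidistributive extremal lattice is trim) finishes the proof --- which is precisely how the paper concludes in Proposition~\ref{prop:alpha_tamari_trim}. Your length computation via the suffix chain of the $\linc$-sorting word of $\woa$ is sound and matches Proposition~\ref{prop:alpha_tamari_length}, and your derivation of $\bigl\lvert\JI\bigr\rvert=\bigl\lvert\MI\bigr\rvert$ from congruence uniformity is a legitimate variant of the paper's use of Lemma~\ref{lem:semidistributive_irreducibles}.
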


In fact, congruence-uniformity of $\Tamari_{B}(\alpha)$ follows right away from the fact that $\Tamari_{B}(\alpha)$ is a quotient lattice of the weak order because the latter is itself congruence uniform. Since trimness of $\Tamari_{B}(\alpha)$ cannot be deduced so easily, we think that it is justified to state these important structural properties together as a theorem.

We conclude this article with a brief outlook on future research in this direction (Section~\ref{sec:outlook}).

\section{Posets and lattices}
    \label{sec:posets_lattices}
\subsection{Partially ordered sets}

Throughout this section, let $P$ denote a finite set. If $P$ is equipped with a reflexive, antisymmetric and transitive relation $\leq$, then $\Poset={(P,\leq)}$ is a partially ordered set (or \defn{poset} for short).  An induced \defn{subposet} of $\Poset$ is a poset $(Q,\leq')$ for $Q\subseteq P$, where $q\leq'q'$ if and only if $q\leq q'$ for all $q,q'\in Q$.  The \defn{direct product} of $\Poset$ with a poset $(P',\leq')$ is the poset $(P\times P',\preceq)$ with $(p_{1},p'_{1})\preceq(p_{2},p'_{2})$ if and only if $p_{1}\leq p_{2}$ and $p'_{1}\leq'p'_{2}$.

An element $p\in P$ is \defn{minimal} if for every $q\in P$ with $q\leq p$ it follows that $p=q$. It is \defn{maximal} if for every $q\in P$ with $p\leq q$ it follows that $p=q$. If $\Poset$ has a unique minimal and a unique maximal element, then $\Poset$ is \defn{bounded}. Then, we usually write $\least$ (resp. $\grtst$) for the unique minimal (resp. maximal) element of $\Poset$.

An \defn{interval} of $\Poset$ is a set $[p,q]\defs\{r\in P\colon p\leq r\leq q\}$ for some $p,q\in P$ with $p\leq q$. A \defn{covering pair} (or \defn{cover} for short) of $\Poset$ is a pair $(p,q)\in P\times P$ such that $p<q$ and there does not exist an $r\in P$ such that $p<r<q$. We usually write $p\lessdot q$ in this situation. In other words, a covering pair induces an interval of cardinality two.

A subset $C\subseteq P$ is a \defn{chain} if we can write $C=\{c_{1},c_{2},\ldots,c_{k}\}$ such that $c_{1}<c_{2}<\cdots<c_{k}$.  A chain is \defn{maximal} if there exists no strictly larger chain $C'$ with $C\subsetneq C'$.  In other words, a maximal chain $C$ can be written as $C=\{c_{1},c_{2},\ldots,c_{k}\}$ with $c_{1}\lessdot c_{2}\lessdot\cdots\lessdot c_{k}$, where $c_{1}$ is minimal and $c_{k}$ is maximal.  The \defn{length} of $\Poset$ is
\begin{displaymath}
	\ell(\Poset) \defs \max\bigl\{\lvert C\rvert-1\colon C\;\text{is a maximal chain of}\;\Poset\bigr\}.
\end{displaymath}

If $(P',\leq')$ is another poset, then a map $f\colon P\to P'$ is \defn{order preserving} if for all $p,q\in P$ with $p\leq q$ it holds that $f(p)\leq' f(q)$.

\subsection{Lattices}
	
An element $p\in P$ is a \defn{lower bound} for a subset $A\subseteq P$ if $p\leq a$ for all $a\in A$.  The \defn{greatest lower bound} of $A$ is (if it exists) the unique maximal element of the set of lower bounds of $A$ (considered as a subposet of $\Poset$).  Upper bounds and least upper bounds are defined analogously.

Then, $\Poset$ is a \defn{lattice} if for all $p,q\in P$ the set $\{p,q\}$ has a greatest lower bound (the \defn{meet}, denoted by $p\wedge q$) and a least upper bound (the \defn{join}, denoted by $p\vee q$).  It is easy to see that every finite lattice is bounded.  A \defn{sublattice} of $\Poset$ is a subposet $\mathbf{Q}=(Q,\leq)$ which is itself a lattice and for every $p,q\in Q$ the join (and meet) of $p$ and $q$ in $\mathbf{Q}$ is the same as the join (and meet) in $\Poset$.

An element $j\in P\setminus\{\least\}$ is \defn{join-irreducible} if $j=p\vee q$ implies $j\in\{p,q\}$ for all $p,q\in P$.  Since $P$ is finite, for every join-irreducible element $j\in P$ there exists a unique element $j_{*}\in P$ such that $j_{*}\lessdot j$.  \defn{Meet-irreducible} elements are defined analogously.  Let $\JI(\Poset)$ and $\MI(\Poset)$ denote respectively the set of join- and meet-irreducible elements of $\Poset$.
 
\subsection{Congruence-uniform lattices}

Let $\Poset=(P,\leq)$ be a lattice.  A \defn{lattice congruence} of $\Poset$ is an equivalence relation $\Theta$ on $P$ that respects the lattice operations.  More precisely, for $p_{1},p_{2},q_{1},q_{2}\in P$ with $(p_{1},q_{1})\in\Theta$ and $(p_{2},q_{2})\in\Theta$, we have $(p_{1}\vee p_{2},q_{1}\vee q_{2})\in\Theta$ and $(p_{1}\wedge p_{2},q_{1}\wedge q_{2})\in\Theta$.  The set of congruence classes of $\Theta$ forms a lattice again, called the \defn{quotient lattice} $\Poset/\Theta$.  The order relation is given by comparing representatives of each congruence class.

The following characterization of lattice congruences will be useful later.

\begin{lemma}[{\cite[Section~3]{reading06cambrian}}]    \label{lem:congruences_combin}
    Let $\Poset=(P,\leq)$ be a lattice and let $\Theta$ be an equivalence relation on $P$.  Then $\Theta$ is a lattice congruence of $\Poset$ if and only if the following conditions are satisfied:
    \begin{enumerate}[\rm (i)]
        \item Every equivalence class of $\Theta$ is an interval of $\Poset$;
        \item The map that sends $p\in P$ to the unique minimal element in $[p]_{\Theta}$ is order preserving;
        \item The map that sends $p\in P$ to the unique maximal element in $[p]_{\Theta}$ is order preserving.
    \end{enumerate}
\end{lemma}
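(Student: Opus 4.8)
The plan is to phrase everything in terms of the two \emph{projection maps} $\pi_{\downarrow}$ and $\pi^{\uparrow}$ from conditions~(ii) and~(iii): for $p\in P$, let $\pi_{\downarrow}(p)$ be the minimum and $\pi^{\uparrow}(p)$ the maximum of the class $[p]_{\Theta}$, both of which exist once the classes are intervals. For the necessity of (i)--(iii), I would fix a class $C$ and apply the congruence property to the reflexive pair $(a,a)$ together with the pair $(a,b)$ to deduce that $a\wedge b$ and $a\vee b$ lie in $C$ for all $a,b\in C$; iterating over the finite set $C$ produces a minimum $m$ and a maximum $M$. To identify $C$ with the interval $[m,M]$, I would take $m\leq x\leq M$ and feed $(m,M)\in\Theta$ and $(x,x)\in\Theta$ into the congruence to get $(m\vee x,\,M\vee x)=(x,M)\in\Theta$, so $x\in C$. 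Conditions (ii) and (iii) then come from applying meet- and join-compatibility to a comparable pair $p\leq q$: from $(\pi_{\downarrow}(p)\wedge\pi_{\downarrow}(q),\,p\wedge q)\in\Theta$ and $p\wedge q=p$ one finds $\pi_{\downarrow}(p)\wedge\pi_{\downarrow}(q)\in[p]_{\Theta}$; being in $[p]_{\Theta}$ it is at least the class minimum $\pi_{\downarrow}(p)$, while it is trivially at most $\pi_{\downarrow}(p)$, so the two coincide and $\pi_{\downarrow}(p)\leq\pi_{\downarrow}(q)$. The statement for $\pi^{\uparrow}$ is order-dual.

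For the sufficiency direction, which I expect to be the crux, I would first record three elementary features of the projections under (i)--(iii): each is \emph{order preserving} (this is exactly (ii) and (iii)), \emph{idempotent} (because $\pi^{\uparrow}(p)$ already lies in $[p]_{\Theta}$, whose maximum is $\pi^{\uparrow}(p)$), and contracting resp.\ expanding in the sense that $\pi_{\downarrow}(p)\leq p\leq\pi^{\uparrow}(p)$. I would also use the reformulation that $(p,q)\in\Theta$ if and only if $\pi^{\uparrow}(p)=\pi^{\uparrow}(q)$, since two classes sharing a maximal element coincide. To verify join-compatibility I then take $(a_{1},b_{1})\in\Theta$ and $(a_{2},b_{2})\in\Theta$, set $M_{i}=\pi^{\uparrow}(a_{i})=\pi^{\uparrow}(b_{i})$, and prove $\pi^{\uparrow}(a_{1}\vee a_{2})=\pi^{\uparrow}(M_{1}\vee M_{2})$ by a two-sided comparison: since $a_{i}\leq M_{i}$, order-preservation gives $\pi^{\uparrow}(a_{1}\vee a_{2})\leq\pi^{\uparrow}(M_{1}\vee M_{2})$, while $M_{i}=\pi^{\uparrow}(a_{i})\leq\pi^{\uparrow}(a_{1}\vee a_{2})$ yields $M_{1}\vee M_{2}\leq\pi^{\uparrow}(a_{1}\vee a_{2})$ and, after applying $\pi^{\uparrow}$ and using idempotency, the reverse inequality. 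Running the identical computation with the $b_{i}$ gives $\pi^{\uparrow}(b_{1}\vee b_{2})=\pi^{\uparrow}(M_{1}\vee M_{2})$, hence $\pi^{\uparrow}(a_{1}\vee a_{2})=\pi^{\uparrow}(b_{1}\vee b_{2})$ and $(a_{1}\vee a_{2},\,b_{1}\vee b_{2})\in\Theta$. Meet-compatibility follows by the order-dual argument using $\pi_{\downarrow}$ and (ii).

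The genuine obstacle is this last direction: conditions (i)--(iii) give no direct control over how $\pi^{\uparrow}(a_{1}\vee a_{2})$ relates to $b_{1}\vee b_{2}$, and one cannot simply claim that $\pi^{\uparrow}$ preserves joins, as that is false in general. The trick is to route both sides through the common auxiliary value $\pi^{\uparrow}(M_{1}\vee M_{2})$, which the idempotency of $\pi^{\uparrow}$ lets us compute from either $\{a_{1},a_{2}\}$ or $\{b_{1},b_{2}\}$. Once both join- and meet-compatibility are in place, $\Theta$ satisfies the definition of a lattice congruence, which closes the equivalence.
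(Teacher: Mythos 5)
Your proof is correct and complete; note that the paper itself gives no proof of this lemma but quotes it from Reading \cite{reading06cambrian}, and your argument is precisely the standard one behind that citation. In particular, the key step---routing both $\pi^{\uparrow}(a_{1}\vee a_{2})$ and $\pi^{\uparrow}(b_{1}\vee b_{2})$ through the common value $\pi^{\uparrow}(M_{1}\vee M_{2})$ via order-preservation and idempotency---is exactly how the classical characterization is established, and your reliance on finiteness (so that each class, being closed under $\wedge$ and $\vee$, has a minimum and a maximum) is covered by the paper's standing assumption that $P$ is finite.
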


Given another lattice $\mathbf{Q}={(Q,\leq_{Q})}$, a map $f\colon P\to Q$ is a \defn{lattice homomorphism} if for all $p,p'\in P$ we have $f(p\vee p')=f(p)\vee_{Q}f(p')$ and $f(p\wedge p')=f(p)\wedge_{Q}f(p')$. If $f$ is surjective, then the preimages of $f$ induce a lattice congruence on $\Poset$.  Moreover, given a lattice congruence $\Theta$, the map that sends $p\in P$ to the minimal element of $[p]_{\Theta}$ is a surjective lattice homomorphism.

We may as well consider the set of \textit{all} congruence relations on $\Poset$ ordered by refinement. This is again a lattice~\cite{funayama42distributivity}; the \defn{congruence lattice} of $\Poset$, denoted by $\Con(\Poset)$.  From this perspective, a lattice congruence $\Theta$ of $\Poset$ is join-irreducible if it is a join-irreducible element of $\Con(\Poset)$.  For $p\lessdot q$, let $\cg(p,q)$ denote the finest lattice congruence on $\Poset$ in which $p$ and $q$ are congruent, \ie the intersection of \textit{all} lattice congruences in which $p$ and $q$ are congruent.  Interestingly, the covering pairs of $\Poset$ determine the join-irreducible lattice congruences.

\begin{theorem}[{\cite[Section~2.14]{gratzer11lattice}~and~\cite[Theorem~3.20]{freese95free}}]
  Let $\Poset$ be a finite lattice and let $\Theta\in\Con(\Poset)$. The following are equivalent.
    \begin{enumerate}[\rm (i)]
        \item $\Theta$ is join-irreducible in $\Con(\Poset)$.
        \item $\Theta=\cg(p,q)$ for some $p\lessdot q$.
        \item $\Theta=\cg(j_{*},j)$ for some $j\in\JI(\Poset)$.
    \end{enumerate}
\end{theorem}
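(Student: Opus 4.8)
The plan is to prove the equivalences using only that $\Con(\Poset)$ is a finite lattice, that the join of congruences is obtained as the transitive closure of the union of the collapsed pairs, and that $\cg(p,q)$ is the least congruence in which $p$ and $q$ are identified. The backbone is a decomposition lemma: \emph{every} $\Theta\in\Con(\Poset)$ equals $\bigvee\bigl\{\cg(p,q):p\lessdot q\text{ and }(p,q)\in\Theta\bigr\}$. The inequality ``$\geq$'' is immediate, since each summand is at most $\Theta$ by minimality of $\cg$. For ``$\leq$'', I would use that congruence classes are order-convex: if $a\equiv b$ and $a\leq c\leq b$, then meeting $a\equiv b$ with $c$ yields $a=a\wedge c\equiv b\wedge c=c$. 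Hence for any collapsed pair $(a,b)$ with $a<b$, every covering step along a maximal chain from $a$ to $b$ is collapsed, and the join of the corresponding $\cg$'s collapses $(a,b)$. Granting this, (i)$\Rightarrow$(ii) is quick: a join-irreducible $\Theta$ is not the bottom congruence, so it collapses at least one covering pair, and being equal to the finite join displayed above it must coincide with one of the joinands $\cg(p,q)$.

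The crux, and the step I expect to be the main obstacle, is (ii)$\Rightarrow$(i), namely that each $\cg(p,q)$ is join-irreducible. I would prove the stronger assertion that it is join-prime: if $p\lessdot q$ and $(p,q)\in\Theta_1\vee\Theta_2$, then $(p,q)$ already lies in $\Theta_1$ or in $\Theta_2$; since join-prime implies join-irreducible in any lattice, this suffices. Because the join is the transitive closure of the union, there is a witnessing sequence $p=z_0,z_1,\ldots,z_m=q$ whose consecutive pairs lie alternately in $\Theta_1$ and $\Theta_2$. The idea is to push this sequence into the prime interval $[p,q]=\{p,q\}$ by applying the map $w\mapsto(w\vee p)\wedge q$. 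This map fixes $p$ and $q$, sends every $z_i$ into $\{p,q\}$, and preserves each $\Theta_j$-relation (joins and meets respect congruences). The resulting sequence runs from $p$ to $q$ inside a two-element set, so some single step jumps between $p$ and $q$ and is therefore a $\Theta_j$-collapse of the pair $(p,q)$, as desired.

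It remains to prove (ii)$\Leftrightarrow$(iii). The implication (iii)$\Rightarrow$(ii) is trivial, since $j_*\lessdot j$ is a covering pair. For (ii)$\Rightarrow$(iii), given $p\lessdot q$ I would choose $j$ to be a minimal element of the set $\{w:w\leq q\text{ and }w\not\leq p\}$, which is nonempty as it contains $q$. A short argument from minimality shows $j$ is join-irreducible with $j_*=j\wedge p$: if $j=a\vee b$ with $a,b<j$, then $a,b\leq q$ forces $a,b\leq p$ by minimality, whence $j\leq p$, a contradiction; and every $w<j$ satisfies $w\leq p$, so $j\wedge p$ is the largest element below $j$, hence the unique lower cover $j_*$. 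Then $\cg(p,q)=\cg(j_*,j)$ follows from two one-line computations: meeting the collapse $p\equiv q$ with $j$ gives $j_*=p\wedge j\equiv q\wedge j=j$, while joining the collapse $j_*\equiv j$ with $p$ gives $p=j_*\vee p\equiv j\vee p=q$, where $j\vee p=q$ holds because $p<j\vee p\leq q$ and $p\lessdot q$. This closes the loop and establishes all three equivalences.
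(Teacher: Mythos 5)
Your proof is correct, but note that the paper does not prove this theorem at all: it is imported verbatim from the literature (Gr{\"a}tzer, and Freese--Je{\v z}ek--Nation), so any self-contained argument is ``different'' from the paper by default. What you have reconstructed is essentially the classical proof from those sources, and all three ingredients check out: the decomposition $\Theta=\bigvee\{\cg(p,q)\colon p\lessdot q,\ (p,q)\in\Theta\}$ via order-convexity of congruence classes; the join-primeness of $\cg(p,q)$ for $p\lessdot q$, obtained by projecting an alternating $\Theta_{1}$/$\Theta_{2}$-sequence into the prime interval with $w\mapsto(w\vee p)\wedge q$ (this map indeed fixes $p$ and $q$, lands in $\{p,q\}$, and preserves congruence relations, so some step of the image sequence collapses $(p,q)$ in a single $\Theta_{j}$); and the transfer (ii)$\Leftrightarrow$(iii) by taking $j$ minimal among $\{w\colon w\leq q,\ w\not\leq p\}$, where your two one-line computations $j_{*}=p\wedge j\equiv q\wedge j=j$ and $p=j_{*}\vee p\equiv j\vee p=q$ are exactly right. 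Your route even yields slightly more than the statement, namely that the congruences $\cg(p,q)$ with $p\lessdot q$ are join-\emph{prime}, not merely join-irreducible, which is the useful form in applications. Two points deserve a sentence each in a final write-up: in the decomposition lemma you only treat collapsed pairs $(a,b)$ with $a<b$, so you should first reduce an arbitrary collapsed pair to the comparable pairs $(a\wedge b,a)$ and $(a\wedge b,b)$ by meeting with $a$ and with $b$; and in (i)$\Rightarrow$(ii) you should note that join-irreducibility, defined via binary joins, extends to the displayed finite join by induction, with the empty join excluded because a join-irreducible $\Theta$ is by definition not the bottom congruence. Both are routine and do not affect the validity of the argument.
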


As a consequence, any finite lattice $\Poset$ admits a surjective map
\begin{displaymath}
    \cg\colon\JI(\Poset)\to\JI\bigl(\Con(\Poset)\bigr), \quad j\mapsto\cg(j_{*},j),
\end{displaymath}
which in general fails to be injective.  A lattice $\Poset$ is \defn{congruence uniform} if the map $\cg$ is a bijection for $\Poset$ and its dual $\Poset^{d}=(P,\geq)$.

\begin{proposition}[\cite{pudlak74yeast}]\label{prop:congruence_uniform_preserving}
    The class of congruence-uniform lattices is preserved under lattice homomorphisms, under passing to sublattices, and under taking (finitely many) direct products.
\end{proposition}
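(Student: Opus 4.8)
The plan is to exploit the self-duality built into the definition. Since the discussion above records that $\cg$ is surjective for every finite lattice, congruence uniformity of $\Poset$ is equivalent to injectivity of $\cg$ for both $\Poset$ and its dual $\Poset^d$. Each of the three constructions commutes with dualization: the dual of a sublattice of $\Poset$ is a sublattice of $\Poset^d$, a lattice homomorphism $\Poset\to\Poset'$ is simultaneously a homomorphism $\Poset^d\to(\Poset')^d$, and $(\Poset_1\times\Poset_2)^d=\Poset_1^d\times\Poset_2^d$. Hence it suffices to show that injectivity of the single map $\cg_{\Poset}$ is inherited under each construction; applying that conclusion to the dual lattices then recovers the statement for the duals, and the two halves together give congruence uniformity of the result. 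Following the standard terminology I shall call a finite lattice with $\cg$ injective \emph{lower bounded}, so the task reduces to showing that lower boundedness passes to sublattices, homomorphic images, and finite direct products.

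Direct products I would dispatch first, as they are the cleanest. For finite lattices one has $\Con(\Poset_1\times\Poset_2)\cong\Con(\Poset_1)\times\Con(\Poset_2)$, and the join-irreducibles of any direct product of lattices are exactly the elements that are join-irreducible in one factor and equal to $\least$ in the other. Consequently $\JI(\Poset_1\times\Poset_2)$ and $\JI\bigl(\Con(\Poset_1\times\Poset_2)\bigr)$ both split as disjoint unions over the two factors, the map $\cg$ is compatible with these splittings, and injectivity on each factor forces injectivity on the product; induction then handles finitely many factors.

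For a homomorphic image, which I may realize as a quotient $\Poset/\Theta$, I would use the correspondence $\Con(\Poset/\Theta)\cong[\Theta,\grtst]$, an interval of the finite distributive lattice $\Con(\Poset)$. Such an interval is again distributive, and its join-irreducibles are precisely the join-irreducibles $\gamma$ of $\Con(\Poset)$ with $\gamma\not\leq\Theta$, transported by $\gamma\mapsto\gamma\vee\Theta$. Matching these against the join-irreducibles of $\Poset$ that survive in the quotient — namely those $j$ for which $\cg(j_*,j)\not\leq\Theta$ — and invoking the assumed bijectivity of $\cg_{\Poset}$, one obtains that $\cg_{\Poset/\Theta}$ is again injective.

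The sublattice case is where I expect the real work. The difficulty is that although meets and joins in a sublattice $\mathbf{Q}\leq\Poset$ agree with those in $\Poset$, an element join-irreducible in $\mathbf{Q}$ may well be join-reducible in $\Poset$, so there is no naive transfer of join-irreducibles, let alone of their associated congruences. To circumvent this I would reformulate lower boundedness through the join-dependency relation $D$ on join-irreducibles — a finite lattice being lower bounded exactly when $D$ is acyclic — and establish a refinement lemma to the effect that every minimal nontrivial join cover in $\mathbf{Q}$ lifts to a join cover in $\Poset$. This would let me trace a hypothetical $D$-cycle in $\mathbf{Q}$ back to a $D$-cycle in $\Poset$, contradicting the lower boundedness of $\Poset$. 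Proving this refinement lemma cleanly is the main obstacle; once it is in hand, the three cases together with the duality reduction of the first paragraph complete the argument.
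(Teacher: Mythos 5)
The paper offers no proof of this proposition --- it is imported wholesale from the literature (the bracket cites \cite{pudlak74yeast}) --- so your proposal can only be measured against the standard argument, which in outline it reconstructs faithfully: reduce congruence uniformity to lower boundedness (injectivity of $\cg$) via the duality observations, then verify the three closures. Your duality reduction is correct, and the product case is fine: congruences of a lattice product split as products of congruences, join-irreducibles of a product have $\least$ in one coordinate, and $\cg$ respects both splittings. The quotient case is also sound, modulo one step you gloss over, namely how $\JI(\Poset/\Theta)$ sits inside $\bigl\{j\in\JI(\Poset)\colon\cg(j_{*},j)\not\leq\Theta\bigr\}$. This is repaired by sending a join-irreducible congruence class to its least element $x$ (classes are intervals, cf.\ Lemma~\ref{lem:congruences_combin}): if $x=a\vee b$ with $a,b<x$, then the class of $x$ is the join of the classes of $a$ and $b$, so join-irreducibility in the quotient forces $x$ congruent to $a$ or $b$, contradicting minimality of $x$; hence $x\in\JI(\Poset)$, and $(x_{*},x)\notin\Theta$ gives $\cg(x_{*},x)\not\leq\Theta$. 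Injectivity of $\cg_{\Poset}$ plus the always-available surjectivity of $\cg_{\Poset/\Theta}$ then closes your counting argument.

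The genuine gap is the sublattice case, and it is larger than the ``technical obstacle'' you present it as. Your proposed refinement lemma --- every minimal nontrivial join cover in $\mathbf{Q}$ lifts to a join cover in $\Poset$ --- is vacuously true and therefore toothless: since $\mathbf{Q}$ carries the joins of $\Poset$, the inequality $j\leq\bigvee A$ witnessing a cover in $\mathbf{Q}$ is verbatim a join cover in $\Poset$. Lifting is not the issue; the difficulty runs the other way. The elements $j,k\in\JI(\mathbf{Q})$ realizing a step of the join-dependency relation $D_{\mathbf{Q}}$ are in general join-reducible in $\Poset$, so a $D_{\mathbf{Q}}$-cycle yields, a priori, no configuration among $\JI(\Poset)$ at all. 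To transfer it one must refine the cover inside $\Poset$ to a minimal join cover consisting of elements of $\JI(\Poset)$ (available by finiteness), replace $j$ and $k$ by suitable canonical joinands in $\Poset$ (using that lower boundedness implies join-semidistributivity, so canonical join representations exist), and prove that each $D_{\mathbf{Q}}$-step forces a nonempty $D_{\Poset}$-path between appropriate joinands; only then does $D$-acyclicity of $\Poset$ descend to $\mathbf{Q}$. That lemma is the substantive content of the sublattice theorem in the treatment of lower bounded lattices in \cite{freese95free}, and it is in effect the whole proposition --- your other two cases are comparatively formal. As it stands, the proposal is a correct roadmap of the literature proof with its one decisive step left open, as you yourself acknowledge.
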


Congruence-uniform lattices have other remarkable properties.  Recall that a lattice $\Poset=(P,\leq)$ is \defn{semidistributive} if for all $p,q,r\in P$ it holds that 
\begin{align*}
    p\vee q=p\vee r & \quad\text{implies}\quad p\vee q=p\vee(q\wedge r),\\
    p\wedge q=p\wedge r & \quad\text{implies}\quad p\wedge q=p\wedge(q\vee r).
\end{align*}
Lattices satisfying only the first implication are \defn{join-semidistributive}.  We record two helpful, auxiliary results.

\begin{lemma}[{\cite[Lemma~4.2]{day79characterizations}}]\label{lem:congruence_uniform_semidistributive}
    Every congruence-uniform lattice is semidistributive.
\end{lemma}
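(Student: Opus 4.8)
The plan is to deduce semidistributivity from its two halves. Recall that $L$ is semidistributive exactly when it is both join-semidistributive and meet-semidistributive, and that meet-semidistributivity of $L$ coincides with join-semidistributivity of the dual lattice $L^{d}$. Since an equivalence relation respects $\vee$ and $\wedge$ symmetrically, a congruence of $L$ is precisely a congruence of $L^{d}$, so $\Con(L)=\Con(L^{d})$, while $\JI(L^{d})=\MI(L)$. Hence the defining condition for congruence uniformity is manifestly self-dual, and $L^{d}$ is congruence uniform whenever $L$ is. It therefore suffices to prove the single implication that a congruence-uniform lattice is join-semidistributive, and then to apply it to $L^{d}$ to obtain meet-semidistributivity of $L$; combining the two yields the claim.

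From congruence uniformity I extract only that the canonical surjection $\cg\colon\JI(L)\to\JI\bigl(\Con(L)\bigr)$ is in fact a bijection, in particular injective. The heart of the argument is thus the assertion that \emph{if $\cg$ is injective on $\JI(L)$, then $L$ is join-semidistributive}, which I would prove by contraposition. Suppose join-semidistributivity fails, witnessed by $a,b,c\in L$ with $a\vee b=a\vee c=:t$ but $s:=a\vee(b\wedge c)<t$. A short computation gives $a\le s\le t$ together with $b\vee s=c\vee s=t$, so inside the interval $[s,t]$ both $b$ and $c$ join with the bottom element up to the top. The idea is then to descend to join-irreducibles: along the $b$-side and along the $c$-side one selects join-irreducibles $j,j'\le t$ that complete $s$ to $t$, and one argues that the semidistributivity defect forces $\cg(j_{*},j)=\cg(j'_{*},j')$ with $j\neq j'$. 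Intuitively a single join-irreducible congruence must contract the top edge at $t$, and both sides force that same congruence, so $\cg$ cannot separate $j$ from $j'$; this collision contradicts injectivity.

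The main obstacle is precisely this translation of a join-semidistributivity defect into a coincidence of join-irreducible congruences. Carrying it out cleanly relies on two structural facts available here: that $\Con(L)$ is distributive, so comparisons of the congruences $\cg(p,q)$ are governed by the combinatorial description of $\cg(p,q)$ as the finest congruence contracting the single cover $p\lessdot q$ (equivalently, by the join-dependency relation on $\JI(L)$); and the finiteness of $L$, which guarantees the required join-irreducibles exist. An alternative route would invoke the characterization of finite congruence-uniform lattices as exactly those obtained from the one-element lattice by successively doubling intervals: the one-element lattice is trivially semidistributive, and one verifies that doubling a convex set preserves both semidistributive laws, whence the conclusion follows by induction on the number of doublings. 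In either route the genuine work is the same---showing that the relevant operation (the bijection $\cg$, respectively the doubling) is incompatible with a failure of semidistributivity.
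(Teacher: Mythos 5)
The paper itself offers no proof of this lemma---it is imported verbatim from Day's paper via the citation---so there is no internal argument to compare against, and your attempt must be judged on its own merits. Your overall strategy is sound and matches the standard treatments: the duality reduction is clean (indeed, the paper's definition of congruence uniformity \emph{already} demands that $\cg$ be a bijection for both $\Poset$ and $\Poset^{d}$, and $\Con$ of a lattice equals $\Con$ of its dual while $\JI(\Poset^{d})=\MI(\Poset)$), and the claim that injectivity of $\cg$ on $\JI$ forces join-semidistributivity is a true theorem. But as submitted, the central step is asserted rather than proven: you say ``one argues that the semidistributivity defect forces $\cg(j_{*},j)=\cg(j'_{*},j')$'' and explicitly defer the ``genuine work.'' That gap does close, and along exactly the lines you gesture at, so you should execute it: from $s=a\vee(b\wedge c)<t=a\vee b=a\vee c$ pick any $u$ with $s\leq u\lessdot t$; then $b\vee u=c\vee u=t$. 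Choose $j$ minimal among join-irreducible elements below $b$ with $j\not\leq u$ (such exist by finiteness since $b\vee u>u$); minimality forces $j_{*}\leq u$, so the prime quotients $[j_{*},j]$ and $[u,t]$ are perspective: any congruence contracting $j_{*}\lessdot j$ identifies $u=u\vee j_{*}$ with $u\vee j=t$, and any congruence contracting $u\lessdot t$ identifies $j_{*}=j\wedge u$ with $j\wedge t=j$; hence $\cg(j_{*},j)=\cg(u,t)$. Repeating below $c$ gives $j'$ with $\cg(j'_{*},j')=\cg(u,t)$, and $j\neq j'$ because $j=j'$ would give $j\leq b\wedge c\leq s\leq u$. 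This collision contradicts injectivity of $\cg$, completing your route (a).

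One genuine error to correct in your alternative route (b): after correctly invoking Day's characterization of finite congruence-uniform lattices as those obtained by successive doublings of \emph{intervals}, you then claim that ``doubling a convex set preserves both semidistributive laws.'' That is false in general: doublings of arbitrary convex sets produce exactly the congruence-\emph{normal} lattices, a class strictly larger than the congruence-uniform ones and not contained in the semidistributive lattices (indeed, congruence uniformity is equivalent to congruence normality together with semidistributivity). The preservation statement you need, and the one that makes the induction work, is for interval doublings only---which does hold and is essentially Day's original proof of this lemma.
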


\begin{lemma}[{\cite[Corollary~2.55]{freese95free}}]\label{lem:semidistributive_irreducibles}
    If $\Poset$ is a semidistributive lattice, then $\bigl\lvert\JI(\Poset)\bigr\rvert=\bigl\lvert\MI(\Poset)\bigr\rvert$.
\end{lemma}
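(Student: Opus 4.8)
Since $\Poset$ is finite, it suffices to produce a bijection between $\JI(\Poset)$ and $\MI(\Poset)$, and I would build one directly from the semidistributivity hypothesis. The guiding idea is that in a semidistributive lattice the \emph{double-arrow} incidence, which pairs a join-irreducible $j$ with a meet-irreducible $m$ exactly when $j\wedge m=j_{*}$ and $j\vee m=m^{*}$ (here $j_{*}$ is the unique lower cover of $j$ and $m^{*}$ the unique element covering $m$), is the graph of a bijection. Concretely, for $j\in\JI(\Poset)$ I define $\kappa(j)$ to be a maximal element $m$ of the set $M_{j}\defs\{x\in P : x\geq j_{*}\text{ and }j\not\leq x\}$, which is nonempty since $j_{*}\in M_{j}$, and I claim that $\kappa$ is a well-defined bijection $\JI(\Poset)\to\MI(\Poset)$.

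First, in any finite lattice a maximal element $m$ of $M_{j}$ is meet-irreducible: if $m=a\wedge b$ with $a,b>m$, then $a\geq j$ and $b\geq j$ by maximality, so $m\geq j$, contradicting $j\not\leq m$. Second---and this is the crucial point where the hypothesis is genuinely used---$M_{j}$ has a \emph{unique} maximal element. Indeed, suppose $m_{1}\neq m_{2}$ were two maximal elements. Then $j\wedge m_{1}=j\wedge m_{2}=j_{*}$, while $m_{1}\vee m_{2}$ lies strictly above $m_{1}$ and hence above $j$. Applying the meet-semidistributive law to $j\wedge m_{1}=j\wedge m_{2}$ gives $j\wedge(m_{1}\vee m_{2})=j_{*}$, contradicting $j\leq m_{1}\vee m_{2}$. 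Thus $\kappa$ is well defined and lands in $\MI(\Poset)$. Dualizing the construction in $\Poset^{d}$, which is again semidistributive since the two implications in the definition are interchanged by duality, yields a map $\lambda\colon\MI(\Poset)\to\JI(\Poset)$ sending a meet-irreducible $m$ to the unique minimal element of $\{x : x\leq m^{*}\text{ and }x\not\leq m\}$, where the join-semidistributive law now supplies the uniqueness.

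Finally I would verify that $\kappa$ and $\lambda$ are mutually inverse, by showing that each is described by the symmetric double-arrow relation above. If $m=\kappa(j)$, then $j\wedge m=j_{*}$ holds by construction, while its upper cover $m^{*}$ satisfies $m^{*}\geq j$ by maximality of $m$ in $M_{j}$, so $j\vee m=m^{*}$; conversely, any $m$ with $j\wedge m=j_{*}$ and $j\vee m=m^{*}$ is a maximal element of $M_{j}$ and hence equals $\kappa(j)$ by uniqueness. The analogous characterization holds for $\lambda$, so $\kappa(j)=m\Leftrightarrow\lambda(m)=j$, whence $\kappa$ is a bijection and $\lvert\JI(\Poset)\rvert=\lvert\MI(\Poset)\rvert$. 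The main obstacle is precisely the uniqueness of the maximal element of $M_{j}$: meet-irreducibility and the inverse relations are formal and hold in any finite lattice, but uniqueness is exactly where semidistributivity is indispensable, as it can fail for general finite lattices.
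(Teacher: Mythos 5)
Your proof is correct, and every step checks out: meet-semidistributivity (applied with $p=j$, $q=m_{1}$, $r=m_{2}$) is used exactly where it is needed to force the uniqueness of the maximal element of $M_{j}$, the verification that $\kappa(j)$ and $\lambda(m)$ are characterized by the symmetric relation $j\wedge m=j_{*}$, $j\vee m=m^{*}$ is sound, and the dual argument legitimately applies since semidistributivity is a self-dual hypothesis. The paper offers no proof of its own---it quotes the statement from Freese, Je\v{z}ek and Nation (Corollary~2.55)---and your argument is essentially the standard one from that reference, namely the $\kappa$-bijection between $\JI(\Poset)$ and $\MI(\Poset)$, so there is nothing to compare beyond noting that you have reconstructed the cited source's approach.
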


\subsection{Trim lattices}

According to \cite{markowsky92primes}, a lattice $\Poset$ is \defn{extremal} if $\bigl\lvert\JI(\Poset)\bigr\rvert=\ell(\Poset)=\bigl\lvert\MI(\Poset)\bigr\rvert$. However, every lattice can be embedded as an interval into an extremal lattice; see \cite[Theorem~14(ii)]{markowsky92primes}.  Thus, extremality alone is not necessarily inherited by intervals.  

A lattice property, which is stronger than extremality and behaves well with respect to sublattices and quotients, was introduced in \cite{thomas06analogue}.  An element $p\in P$ is \defn{left modular} if for all $r,q\in P$ with $r\leq q$ it holds that
\begin{displaymath}
    (r\vee p)\wedge q = r\vee(p\wedge q).
\end{displaymath}
Then, $\Poset$ is \defn{left modular} if it has a maximal chain consisting of $\ell(\Poset)+1$ left-modular elements.  Left-modular lattices have interesting topological properties, though not in the scope of this paper; see for instance~\cite{liu00left,mcnamara06poset}.  We are rather interested in lattices that are both extremal and left modular; following \cite{thomas06analogue} we call such lattices \defn{trim}.

\begin{proposition}[\cite{thomas06analogue,thomas19rowmotion}]\label{prop:trim_preserving}
    The class of trim lattices is preserved under lattice homomorphisms, under passing to intervals, and under taking (finitely many) direct products.
\end{proposition}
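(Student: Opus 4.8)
The plan is to verify the three closure properties separately, using the definition that a lattice is trim precisely when it is both extremal and left modular. Recall that extremality is a numerical condition relating $\lvert\JI(\Poset)\rvert$, $\lvert\MI(\Poset)\rvert$ and $\ell(\Poset)$, whereas left modularity asks for a maximal chain of left-modular elements of length $\ell(\Poset)$. Throughout, the organising tool is the edge-labeling attached to a left-modular maximal chain: if $\least=x_{0}\lessdot x_{1}\lessdot\cdots\lessdot x_{\ell}=\grtst$ is such a chain, then labeling a cover $u\lessdot v$ by $\lambda(u,v)=\min\{i\colon x_{i}\vee u\geq v\}$ yields an EL-labeling, and trimness is equivalent to the statement that $j\mapsto\lambda(j_{*},j)$ is a bijection $\JI(\Poset)\to\{1,\dots,\ell\}$ (and dually for $\MI(\Poset)$). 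Most of the work is to track how this labeling behaves under each operation.

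Direct products are routine. For $\Poset\times\mathbf{Q}$ the join-irreducibles are precisely the elements having a join-irreducible entry in one coordinate and the bottom element in the other, so $\lvert\JI(\Poset\times\mathbf{Q})\rvert=\lvert\JI(\Poset)\rvert+\lvert\JI(\mathbf{Q})\rvert$, and likewise for meet-irreducibles; since lengths add, $\ell(\Poset\times\mathbf{Q})=\ell(\Poset)+\ell(\mathbf{Q})$, and extremality is inherited. For left modularity, the defining identity $(r\vee p)\wedge q=r\vee(p\wedge q)$ is checked coordinatewise, so a pair of left-modular elements is left modular; concatenating a left-modular maximal chain of $\Poset$ (inside $\Poset\times\{\least_{\mathbf{Q}}\}$) with one of $\mathbf{Q}$ (inside $\{\grtst_{\Poset}\}\times\mathbf{Q}$) produces a maximal chain of left-modular elements of length $\ell(\Poset)+\ell(\mathbf{Q})$. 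Hence the product is trim, and induction handles finitely many factors.

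The interval case is the substantive one. Given a left-modular maximal chain $C\colon\least=x_{0}\lessdot\cdots\lessdot x_{\ell}=\grtst$ of $\Poset$ and an interval $[a,b]$, the plan is to push $C$ down via the order-preserving projection $\pi(x)=(x\vee a)\wedge b$ and to restrict $\lambda$. First I would show that $\pi$ sends left-modular elements of $\Poset$ to left-modular elements of $[a,b]$, recalling that an interval is a sublattice, so its meets and joins agree with those of $\Poset$; unwinding the defining identity for $\pi(x_{i})$ against $a\leq r\leq q\leq b$ and repeatedly invoking left modularity of $x_{i}$ reduces it to an identity already valid in $\Poset$. After deleting repetitions, $\pi(C)$ is then a left-modular maximal chain of $[a,b]$, so $[a,b]$ is left modular. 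Extremality follows from the labeling: the restriction of $\lambda$ to the covers of $[a,b]$ is the EL-labeling induced by $\pi(C)$, the set $S$ of labels occurring in $[a,b]$ satisfies $\lvert S\rvert=\ell([a,b])$, and $j\mapsto\lambda(j_{*},j)$ embeds $\JI([a,b])$ into $S$ while the dual map embeds $\MI([a,b])$ into $S$. Counting then forces both to be bijections, whence $[a,b]$ is extremal and therefore trim. The main obstacle is precisely the claim that $\pi$ preserves left modularity, since this is where the left-modular identity must be applied repeatedly and the interaction of $\vee a$ and $\wedge b$ with each $x_{i}$ controlled.

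Finally, for a surjective lattice homomorphism $f\colon\Poset\to\mathbf{Q}$ I would argue that $f$ carries the whole labeled structure onto that of $\mathbf{Q}$. The image of a left-modular element is left modular: given $r'\leq q'$ in $\mathbf{Q}$, lift them to representatives $r\leq q$ in $\Poset$ with $f(r)=r'$ and $f(q)=q'$, which exists because the congruence classes are intervals (Lemma~\ref{lem:congruences_combin}); applying $f$ to the identity for $p$ then proves it for $f(p)$. Thus $f(C)$ is a saturated chain of left-modular elements from $\least$ to $\grtst$ in $\mathbf{Q}$, of length $\ell(\mathbf{Q})$ once repetitions are removed, so $\mathbf{Q}$ is left modular. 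For extremality, each join-irreducible of $\mathbf{Q}$ is the image of a join-irreducible of $\Poset$ whose defining cover is not contracted by $f$, and the labeling identifies these with exactly the labels surviving in $\mathbf{Q}$; this gives $\lvert\JI(\mathbf{Q})\rvert=\ell(\mathbf{Q})=\lvert\MI(\mathbf{Q})\rvert$. Hence $\mathbf{Q}$ is trim. I expect the bookkeeping of which irreducibles and labels survive under $f$ to be the delicate point here, mirroring the interval case.
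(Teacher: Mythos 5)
The paper does not actually prove this proposition: it is imported wholesale from the cited sources (Thomas, \emph{An analogue of distributivity for ungraded lattices}, and Thomas--Williams, \emph{Rowmotion in slow motion}), so there is no internal proof to compare against. Your sketch reconstructs essentially the arguments of those references: the coordinatewise treatment of products is complete and correct; the projection $x\mapsto(x\vee a)\wedge b$ of the left-modular chain is exactly how Thomas proves that intervals of trim lattices are trim (and your claim that this projection preserves left modularity is true --- one verifies, for $a\leq r\leq q\leq b$ and $m=(p\vee a)\wedge b$, that both $(r\vee m)\wedge q$ and $r\vee(m\wedge q)$ equal $(r\vee p)\wedge q$ by two applications of left modularity of $p$, to the pairs $r\leq q$ and $r\leq b$); and pushing the chain and labels through a surjective homomorphism is the route taken for quotients, where your lifting argument for left modularity of $f(p)$ is correct (indeed simpler than you suggest: lift $r',q'$ arbitrarily and replace $q$ by $q\vee r$; the interval property of congruence classes is not needed).

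Be aware, though, of where the real content sits and of one ordering slip. The statements you assert --- that the restricted labeling is the one induced by the projected chain, that $j\mapsto\lambda(j_{*},j)$ embeds $\JI([a,b])$ (and dually $\MI([a,b])$) into the surviving label set $S$, and that $\JI(\mathbf{Q})$ consists of images of join-irreducibles whose cover is uncontracted --- are precisely the technical heart of the cited theorems, and your sketch defers them rather than proves them. Also, you assert $\lvert S\rvert=\ell([a,b])$ \emph{before} running the count, which is circular as written: at that point you only know $\lvert S\rvert$ equals the length of the deduplicated projected chain, which is a priori only $\leq\ell([a,b])$ (a saturated chain in a non-graded lattice need not have maximal length, and the same issue arises for the image chain in the quotient case, whose length you assert to be $\ell(\mathbf{Q})$). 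The fix is to run the inequalities in the right order: with $m$ the length of the projected chain, $m=\lvert S\rvert\geq\lvert\JI([a,b])\rvert\geq\ell([a,b])\geq m$, using the general fact that $\ell(\Poset)\leq\lvert\JI(\Poset)\rvert$ in any finite lattice (the sets $\{j\in\JI\colon j\leq c_{i}\}$ strictly increase along a chain); this forces all equalities at once, including maximality of the projected chain, and the same device closes the quotient case. With that reordering, and granting the deferred lemmas from the cited sources, your plan is sound and is the standard proof.
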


It is, however, not necessarily true that any sublattice of a trim lattice is trim again, see \cite[Theorem~3]{thomas06analogue}.  For semidistributive lattices it is somewhat easier to verify trimness, as we only need to establish extremality.

\begin{theorem}[{\cite[Theorem~1.4]{thomas19rowmotion}}]\label{thm:semidistributive_extremal_is_trim}
    Every semidistributive and extremal lattice is trim.
\end{theorem}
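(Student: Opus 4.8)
The plan is to use the definition $\text{trim} = \text{extremal} + \text{left modular}$ directly: since extremality is assumed, it suffices to exhibit a maximal chain consisting of $\ell(\Poset)+1$ left-modular elements. Write $n = \ell(\Poset) = \bigl\lvert\JI(\Poset)\bigr\rvert = \bigl\lvert\MI(\Poset)\bigr\rvert$, the last equality coming from extremality. The first step is to invoke Markowsky's structure theory of extremal lattices \cite{markowsky92primes} to obtain a canonical maximum chain (the ``spine'') $\least = c_0 \lessdot c_1 \lessdot \cdots \lessdot c_n = \grtst$, together with enumerations $\JI(\Poset) = \{j_1,\ldots,j_n\}$ and $\MI(\Poset) = \{m_1,\ldots,m_n\}$ aligned with its edges, so that $c_i = j_1 \vee \cdots \vee j_i = m_{i+1} \wedge \cdots \wedge m_n$ with $j_i \le c_i$, $j_i \not\le c_{i-1}$ and $m_i \ge c_{i-1}$, $m_i \not\ge c_i$. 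This spine is my candidate left-modular chain.

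The heart of the argument is to prove that each $c_i$ is left modular. Fixing $c = c_i$ and $r \le q$, the inequality $(r \vee c)\wedge q \ge r \vee (c \wedge q)$ holds in every lattice, so only the reverse needs proof. Setting $t = r \vee (c \wedge q)$ and $s = (r \vee c)\wedge q$, a short computation gives $t \le s$, together with $c \vee t = c \vee s = c \vee r$ and $c \wedge t = c \wedge s = c \wedge q$; hence left modularity fails at $c$ precisely when some such interval $[t,s]$ is nontrivial, that is, when $c$ is a common ``complement'' of two comparable elements $t < s$.

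This is exactly where semidistributivity must enter, and it is the main obstacle. Semidistributivity by itself does \emph{not} forbid this configuration: the pentagon $N_5$ is semidistributive, and in it the unique non-left-modular proper element is precisely a common complement of two comparable elements. The point is that in $N_5$ this bad element lies \emph{off} the spine $\least \lessdot x \lessdot z \lessdot \grtst$, so the argument is forced to use that $c = c_i$ sits on the spine. Concretely, I would feed the canonical join representation $c_i = j_1 \vee \cdots \vee j_i$ and the canonical meet representation $c_i = m_{i+1} \wedge \cdots \wedge m_n$ supplied by the extremal enumeration into the relation $t < s$, and argue that in a join- and meet-semidistributive lattice these rigid representations force either a join-irreducible not among $j_1,\ldots,j_n$ or a meet-irreducible escaping $m_1,\ldots,m_n$, contradicting $\bigl\lvert\JI(\Poset)\bigr\rvert = \bigl\lvert\MI(\Poset)\bigr\rvert = n$. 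Equivalently, one may build the edge-labeling afforded by meet-semidistributivity, where each cover $x \lessdot y$ carries the unique canonical joinand it adds, show it is an EL-labeling whose only increasing maximal chain is the spine, and invoke the known correspondence between left-modular maximal chains and such labelings.

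Once every $c_i$ is left modular, the spine is a maximal chain of $n+1 = \ell(\Poset)+1$ left-modular elements, so $\Poset$ is left modular; together with the assumed extremality, this is exactly trimness. The genuinely delicate part is the third step: quantifying how semidistributivity, which tolerates pentagons in general, nevertheless prevents any pentagon from straddling a spine element, via the rigidity of the canonical join and meet representations enforced by $\bigl\lvert\JI(\Poset)\bigr\rvert = \ell(\Poset) = \bigl\lvert\MI(\Poset)\bigr\rvert$.
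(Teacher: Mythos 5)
First, a framing remark: the paper does not prove this statement at all --- it is imported verbatim from Thomas and Williams \cite{thomas19rowmotion}*{Theorem~1.4} --- so your attempt has to be measured against their argument rather than anything in this article. What you have is correct as far as it goes and is genuinely the right strategy: the reduction of left modularity to the non-existence of a pentagon $t<s$ with $c\vee t=c\vee s$ and $c\wedge t=c\wedge s$ is verified correctly, the $N_{5}$ example showing that semidistributivity alone cannot suffice is exactly the right sanity check, and Markowsky's spine $c_{i}=j_{1}\vee\cdots\vee j_{i}=m_{i+1}\wedge\cdots\wedge m_{n}$ is correctly invoked. But the step you yourself call the heart is announced rather than executed: ``I would feed the canonical representations \ldots{} and argue that \ldots{}'' is a plan, not a proof, and your proposed contradiction target is also slightly off --- since $\bigl\lvert\JI(\Poset)\bigr\rvert=\bigl\lvert\MI(\Poset)\bigr\rvert=n$ is an \emph{assumption} (extremality plus Lemma~\ref{lem:semidistributive_irreducibles}), a pentagon over a spine element does not produce an ``extra'' irreducible beyond the enumerated ones; the clash lives elsewhere. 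The alternative EL-labeling route has the same hole: proving the labeling is EL with the spine as unique increasing chain \emph{is} the missing work, not a substitute for it.

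Here is how the gap is actually closed, essentially as in \cite{thomas19rowmotion}. Suppose $c=c_{k}$ sits in a pentagon with $t<s$, and choose a cover $u\lessdot v$ with $t\leq u\lessdot v\leq s$, so that $c\vee u=c\vee v$ and $c\wedge u=c\wedge v$. Join-semidistributivity gives the canonical label $j=\min\{x\colon x\vee u=v\}$, a join-irreducible, and meet-semidistributivity dually gives the meet-irreducible label $m=\max\{x\colon x\geq u,\ x\wedge v=u\}$. Two one-line computations then pin down where these labels sit relative to the spine: if $j\leq c$ then $j\leq c\wedge v=c\wedge u\leq u$, contradicting $j\vee u=v$, so $j\not\leq c_{k}$, forcing $j=j_{i}$ with $i>k$; if $c\leq m$ then $v\leq c\vee u\leq m$, contradicting $v\wedge m=u$, so $c_{k}\not\leq m$, forcing $m=m_{i'}$ with $i'\leq k$. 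The decisive input is now the bijection $\kappa\colon\JI(\Poset)\to\MI(\Poset)$ of a finite semidistributive lattice (the structural fact behind \cite{freese95free}*{Corollary~2.55}): any two covers with the same join label have the same meet label. Applying the label computation to the spine cover $c_{i-1}\lessdot c_{i}$ shows its labels are exactly $(j_{i},m_{i})$, so $\kappa(j_{i})=m_{i}$; but the pentagon cover carries labels $(j_{i},m_{i'})$ with $i>k\geq i'$, contradicting injectivity of $\kappa$. This is the precise sense in which semidistributivity ``prevents a pentagon from straddling a spine element,'' and it is the content your proposal gestures at but does not supply.
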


\section{Basics on Coxeter groups}
    \label{sec:coxeter_groups}
\subsection{Coxeter groups}

We start with some algebraic background on Coxeter groups.  A \defn{Coxeter group} is a group $W$ with identity element $\id$ and a distinguished set $S=\{s_{1},s_{2},\ldots,s_{n}\}$ of (Coxeter) generators that admits a presentation of the following form:
\begin{displaymath}
    W = \bigl\langle S\colon (s_{i}s_{j})^{m_{i,j}}=\id\bigr\rangle
\end{displaymath}
such that for all $i,j\in[n]\defs\{1,2,\ldots,n\}$ it holds that
\begin{itemize}
    \item $m_{i,j}\geq 1$,
    \item $m_{i,j}=1$ if and only if $i=j$,
    \item $m_{i,j}=m_{j,i}$.
\end{itemize}
The parameters $m_{i,j}$ form the \defn{Coxeter matrix} of $W$.  Note that $m_{i,j}=\infty$ is allowed if there is no relation between the generators $s_{i}$ and $s_{j}$.

A \defn{word} for $w\in W$ is a representation of $w$ by a group product $w_{i_{1}}w_{i_{2}}\cdots w_{i_{k}}$, where $w_{i_{j}}\in W$ for all $j$.  Such a word is \defn{$S$-reduced} if $w_{i_{j}}\in S$ for all $j$ and $w$ cannot be expressed as a product of fewer than $k$ generators. The number of letters in an $S$-reduced word for $w$ is its \defn{Coxeter length}, denoted by $\ell_{S}(w)$.  If $W$ is finite, then there exists a unique element $\wo$, called the \defn{longest element}, which maximizes $\ell_{S}$.

\subsection{The weak order}

The \defn{(left) weak order} on $W$ is defined by 
\begin{displaymath}
    u\weakorder v \quad \text{if and only if}\quad
    \ell_{S}(u)+\ell_{S}(vu^{-1}) = \ell_{S}(v).
\end{displaymath}
The weak order on $W$ constitutes a graded poset whose rank function is precisely $\ell_{S}$.  This order is called \textit{left} weak order, because we have a covering pair $u\weakcover v$ if and only if there exists $s\in S$ such that $v=su$ and $\ell_{S}(v)=\ell_{S}(u)+1$.  In other words, covering pairs are determined by left multiplication with a Coxeter generator.  We sometimes write $\Weak(W)$ instead of $(W,\weakorder)$.

\begin{remark}
    We may as well define a \textit{right} weak order on $W$ in an analogous fashion.  The two orders are isomorphic via reversal of $S$-reduced words.  For our purposes, it will be more convenient to work with the left weak order, even though the right weak order is more frequently used in the literature.
\end{remark}

For our purposes, the following two properties of the weak order are of major interest.

\begin{theorem}[{\cite[Theorem~3.2.1]{bjorner05combinatorics}}]\label{thm:weak_order_lattice}
    $\Weak(W)$ is a meet-semilattice. It is a lattice if and only if $W$ is finite.
\end{theorem}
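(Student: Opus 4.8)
The plan is to characterise the weak order through inversion sets and then construct meets out of bounded joins, leaving the lattice dichotomy to an analysis of upper bounds.

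First I would introduce the set of reflections $T\defs\{wsw^{-1}\colon w\in W,\,s\in S\}$ and the (left) inversion set $\invset(w)\defs\{t\in T\colon\ell_{S}(tw)<\ell_{S}(w)\}$. Using the exchange and deletion properties of Coxeter groups one shows that $\ell_{S}(w)=\lvert\invset(w)\rvert$ and, more importantly, that $u\weakorder v$ holds if and only if $\invset(u)\subseteq\invset(v)$. This recasts the weak order as a containment order on a distinguished family of finite subsets of $T$ and immediately yields two structural facts: $\id$ is the unique minimum, since $\invset(\id)=\emptyset$, and every principal order ideal $\{w\colon w\weakorder u\}$ is finite, since any element below $u$ has length at most $\ell_{S}(u)$.

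The heart of the argument is the following bounded-join property: \emph{if $u,v\in W$ admit a common upper bound, then $u\vee v$ exists.} To prove it I would use the characterisation of inversion sets as the \emph{biclosed} subsets of $T$, that is, those $A$ for which both $A$ and $T\setminus A$ are closed under the rank-two reflection subgroups they meet. Given a common upper bound $w$, the biclosed sets $\invset(u)$ and $\invset(v)$ are both contained in the biclosed set $\invset(w)$, and I would show that the smallest biclosed set containing $\invset(u)\cup\invset(v)$ and contained in $\invset(w)$ is again the inversion set of a genuine group element, namely $u\vee v$. \textbf{The main obstacle lies exactly here}: intersections of biclosed sets need not be biclosed in general, so one must exploit that all the sets in play sit inside the single finite biclosed set $\invset(w)$ in order both to guarantee that a least enclosing biclosed set exists and to realise it by an actual element of $W$.

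Granting the bounded-join property, the meet-semilattice claim follows formally. For arbitrary $u,v$ the set $L$ of common lower bounds is nonempty, as $\id\in L$, and finite, since its elements have length at most $\min\{\ell_{S}(u),\ell_{S}(v)\}$. If $x,y\in L$, then $u$ is a common upper bound of $x$ and $y$, so $x\vee y$ exists; and because $u$ and $v$ are both upper bounds of $\{x,y\}$, the least upper bound satisfies $x\vee y\weakorder u$ and $x\vee y\weakorder v$, \ie $x\vee y\in L$. Thus $L$ is finite, nonempty, and closed under joins, hence has a greatest element, which is precisely $u\wedge v$. For the final dichotomy: if $W$ is finite, then $\wo$ is a common upper bound of every pair, so all joins—and therefore all meets—exist and $\Weak(W)$ is a lattice. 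Conversely, if $W$ is infinite I would produce two reflections $r_{1},r_{2}$ whose product has infinite order, which exist precisely when $W$ is infinite, and take $u=r_{1}$, $v=r_{2}$; any common upper bound $w$ would satisfy $r_{1},r_{2}\in\invset(w)$, forcing the biclosed set $\invset(w)$ to contain every reflection of the infinite dihedral subgroup $\langle r_{1},r_{2}\rangle$ lying between $r_{1}$ and $r_{2}$, which contradicts the finiteness of $\invset(w)$. Hence $u$ and $v$ have no join and $\Weak(W)$ is not a lattice.
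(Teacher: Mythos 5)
The paper does not prove this statement at all --- it is quoted verbatim from Bj{\"o}rner--Brenti with a citation --- so your proposal can only be measured against the standard literature proof, and there it falls short at exactly the load-bearing step. Your global architecture is the classical one (order the group by containment of inversion sets; prove that pairs with a common upper bound have a join; convert bounded joins into meets using that principal order ideals are finite and contain $\id$; use $\wo$ in the finite case), but the bounded-join property is asserted, not proved. You correctly identify that intersections of biclosed sets need not be biclosed, and then simply posit that a \emph{least} biclosed set containing $\invset(u)\cup\invset(v)$ and contained in $\invset(w)$ exists and is the inversion set of an actual group element. Finiteness of $\invset(w)$ does not rescue this: among the finitely many biclosed sets squeezed between the union and $\invset(w)$ one gets \emph{minimal} ones for free, but minimality is not minimum, and uniqueness is precisely what must be shown --- it is the whole content of the semidistributive-flavoured hard part of the theorem. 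Moreover, the lemma you lean on, that finite biclosed subsets of $T$ are exactly the inversion sets of elements, is itself a nontrivial theorem requiring proof or citation. The published proofs circumvent this entirely: Bj{\"o}rner--Brenti construct the meet by induction on length via the lifting/exchange property. (A small convention slip as well: for the paper's \emph{left} weak order, the containment characterization uses the \emph{right} inversion sets $\{t\in T\colon \ell_{S}(wt)<\ell_{S}(w)\}$, as the paper notes via Proposition~3.1.3 of Bj{\"o}rner--Brenti, not the left inversion sets you defined.)

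The infinite-case argument has a second gap: you invoke the claim that $W$ is infinite if and only if it contains two reflections whose product has infinite order. For finitely generated $W$ this is true but genuinely nontrivial (one must produce an infinite dihedral reflection subgroup), and it is false without the finite-rank hypothesis: an infinite Coxeter group with all $m_{i,j}=2$ off the diagonal has weak order isomorphic to the lattice of finite subsets of $S$, which incidentally shows the theorem's dichotomy itself presumes $\lvert S\rvert<\infty$. A much cheaper route is available from ingredients you already have: if $\Weak(W)$ were a lattice, iterated pairwise joins would give a common upper bound $w$ of the finite set $S$; since $\invset(s)=\{s\}$, this means every $s\in S$ lies in $\invset(w)$, \ie every simple reflection is a descent of $w$, and then the coset factorization $w=w^{J}w_{J}$ with $J=S$ forces $w_{J}$ to be the longest element of $W_{J}=W$, so $W$ is finite. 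In summary: correct skeleton and correct finite case, but the two pillars --- the bounded-join property and the existence of the infinite dihedral pair --- are exactly the places where the real work lives, and both are left unproved.
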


\begin{theorem}[\cite{caspard04cayley,reading11sortable}]\label{thm:weak_order_congruence_uniform}
    $\Weak(W)$ is join-semidistributive. If $W$ is finite, then $\Weak(W)$ is congruence uniform.
\end{theorem}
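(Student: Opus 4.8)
The plan is to carry out everything in the standard inversion-set model of the weak order. For $w\in W$ write $\invset(w)$ for its set of (left) inversions, regarded as a subset of the positive roots $\Phi^{+}$; then $u\weakorder v$ if and only if $\invset(u)\subseteq\invset(v)$, and the image of this map is exactly the collection of \emph{biclosed} subsets of $\Phi^{+}$, i.e.\ those $A$ for which both $A$ and $\Phi^{+}\setminus A$ are convex in every rank-two subsystem. The preceding theorem already supplies that $\Weak(W)$ is a meet-semilattice, and a lattice when $W$ is finite; in the biclosed-set picture the meet of $\invset(u)$ and $\invset(v)$ is the largest biclosed set contained in $\invset(u)\cap\invset(v)$, while the join (when it exists) is the smallest biclosed set containing $\invset(u)\cup\invset(v)$. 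All of the lattice-theoretic verification will be pushed onto these sets.

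For the first assertion I would establish join-semidistributivity through \emph{canonical join representations}, using the standard criterion that a finite lattice is join-semidistributive exactly when every element admits such a representation (and arguing in finite intervals to cover infinite $W$). The join-irreducibles of $\Weak(W)$ are the elements with a single left descent, and each such $j$ is pinned down by the unique root entering $\invset(j)$ above $\invset(j_{*})$, where $j_{*}\weakcover j$. For arbitrary $w$, the irreducibles $j\weakorder w$ with $j_{*}\not\weakorder w$ — one for each descent of $w$ — furnish an irredundant join representation, and the biclosedness of $\invset(w)$ shows it is the canonical one. The mechanism that makes this work is that all the convexity conditions, and hence the comparison forced by an equality $u\vee v=u\vee w$, are governed by the rank-two (dihedral) subsystems spanned by the roots in play; in the dihedral case the weak order is a polygon, where join-semidistributivity is immediate, and patching these local verifications together through the biclosed closure yields $u\vee(v\wedge w)=u\vee v$.

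For the finite case I would upgrade this to congruence uniformity via Day's characterization: a finite lattice is congruence uniform if and only if it is \emph{bounded}, obtainable from the one-element lattice by finitely many interval doublings. Equivalently, by the theorem relating covers to join-irreducible congruences, it suffices to show that the surjection $\cg\colon\JI(\Poset)\to\JI\bigl(\Con(\Poset)\bigr)$ is injective, after which the requirement for the dual $\Poset^{d}$ is automatic: the map $w\mapsto\wo w$ complements inversion sets and is therefore an order-reversing bijection, so $\Weak(W)$ is self-dual. The plan for injectivity is to identify the join-irreducible congruences of $\Weak(W)$ with the \emph{shards} of the Coxeter arrangement and to check that sending a cover $j_{*}\weakcover j$ to the shard labelled by its inversion root is a bijection onto the shards. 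The main obstacle is precisely this injectivity, i.e.\ congruence normality: one must verify that the \emph{forcing} order among shards is acyclic, so that no two distinct join-irreducibles collapse the same join-irreducible congruence. This acyclicity is the substantive, type-independent input; granting it, Day's theorem together with the join-semidistributivity above gives that $\Weak(W)$ is congruence uniform, and Lemma~\ref{lem:congruence_uniform_semidistributive} then returns semidistributivity for free in the finite case.
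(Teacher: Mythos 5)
The paper offers no proof of this statement: it is imported wholesale from \cite{caspard04cayley} and \cite{reading11sortable}, so your proposal can only be measured against those works, whose two-pronged architecture you have correctly reconstructed (Day-style boundedness/doubling for the finite case, as in \cite{caspard04cayley}, and shards with the forcing order, as in Reading's work). The trouble is that at both pivotal points the proposal assumes what it must prove. For join-semidistributivity, the criterion you invoke --- a finite lattice is join-semidistributive if and only if every element admits a canonical join representation --- makes the existence of canonical join representations \emph{equivalent} to the conclusion; exhibiting candidate joinands is therefore no progress unless you actually verify canonicity, and your verification (``patching local rank-two checks through the biclosed closure'') is not an argument: join-semidistributivity does not pass from dihedral subsystems to $W$ by any formal patching, and the genuine proofs (Bj{\"o}rner--Edelman--Ziegler for simplicial posets of regions, or Reading's polygonality machinery) do real work precisely here. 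Worse, your description of the candidate joinands is literally vacuous: if $j\weakorder w$ then $j_{*}\weakless j\weakorder w$ forces $j_{*}\weakorder w$ by transitivity, so \emph{no} join-irreducible satisfies ``$j\weakorder w$ and $j_{*}\not\weakorder w$''; the correct canonical joinand attached to a cover $w'\weakcover w$ is the minimal element of $\{x\colon x\weakorder w,\;x\not\weakorder w'\}$, one for each descent, which is what you presumably intended.

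For the finite case, your self-duality reduction is sound --- indeed $\invset(\wo w)=T\setminus\invset(w)$, so $w\mapsto\wo w$ is an antiautomorphism of $\Weak(W)$ and injectivity of $\cg\colon\JI(\Weak(W))\to\JI\bigl(\Con(\Weak(W))\bigr)$ transfers to the dual --- but the remaining claim, that distinct join-irreducibles generate distinct join-irreducible congruences, is exactly the congruence-normality statement encoded in the acyclicity (antisymmetry) of shard forcing, and you explicitly ``grant'' it. That acyclicity, equivalently the realization of $\Weak(W)$ by a sequence of interval doublings, \emph{is} the content of the cited theorems; a proof that takes it as input establishes nothing beyond the formal bookkeeping around it. In short: the proposal is an accurate road map of the literature proofs the paper cites, but both of its substantive steps --- canonicity of the descent joinands and acyclicity of forcing --- remain black boxes, so as a proof it has a genuine gap.
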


\subsection{Reflections and inversions}

The weak order on $W$ can be expressed equivalently via inclusion on special set systems. To explain this connection, we define the set of \defn{reflections} of $W$ by
\begin{displaymath}
	T \defs \bigl\{wsw^{-1}\colon s\in S, w\in W\bigr\}.
\end{displaymath}
Clearly, $S\subseteq T$ and we usually call the elements of $S$ the \defn{simple} reflections of $W$.

A \defn{(right) inversion} of $w\in W$ is a reflection $t\in T$ such that $\ell_{S}(wt)<\ell_{S}(w)$, and we define the \defn{(right) inversion set} of
$w$ by
\begin{displaymath}
    \invset(w) \defs \bigl\{t\in T\colon \ell_{S}(wt)<\ell_{S}(w)\bigr\}.
\end{displaymath}
We have the following correspondence between inversion sets and length of $S$-reduced words.

\begin{lemma}[{\cite[Corollary~1.4.5]{bjorner05combinatorics}}]\label{lem:length_inversions}
    For $w\in W$, we have $\bigl\lvert\invset(w)\bigr\rvert=\ell_{S}(w)$.
\end{lemma}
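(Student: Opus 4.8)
The plan is to exhibit an explicit bijection between $\invset(w)$ and the positions of the letters in a reduced word for $w$; this will immediately give $\lvert\invset(w)\rvert=\ell_{S}(w)$. Fix an $S$-reduced word $w=s_{i_1}s_{i_2}\cdots s_{i_k}$ with $k=\ell_{S}(w)$, and for each $j\in\{1,\dots,k\}$ set
\[
t_j\defs s_{i_k}s_{i_{k-1}}\cdots s_{i_{j+1}}\,s_{i_j}\,s_{i_{j+1}}\cdots s_{i_{k-1}}s_{i_k},
\]
the conjugate of the simple reflection $s_{i_j}$ by the suffix $s_{i_{j+1}}\cdots s_{i_k}$, which is a genuine reflection. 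A direct cancellation gives $wt_j=s_{i_1}\cdots\widehat{s_{i_j}}\cdots s_{i_k}$, the word obtained from the chosen reduced word by deleting its $j$-th letter. This element has Coxeter length at most $k-1<\ell_{S}(w)$, so $t_j\in\invset(w)$ for every $j$, and hence $\{t_1,\dots,t_k\}\subseteq\invset(w)$.

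For the lower bound $\lvert\invset(w)\rvert\ge k$ I would show that $j\mapsto t_j$ is injective. Assume $t_a=t_b$ with $a<b$. Conjugating this identity by $s_{i_{b+1}}\cdots s_{i_k}$ and simplifying reduces it to
\[
s_{i_a}=s_{i_{a+1}}\cdots s_{i_{b-1}}\,s_{i_b}\,s_{i_{b-1}}\cdots s_{i_{a+1}}.
\]
Multiplying on the right by $s_{i_{a+1}}\cdots s_{i_b}$ and cancelling then expresses the factor $s_{i_a}s_{i_{a+1}}\cdots s_{i_b}$ of $w$, which has $b-a+1$ letters, as a product of only $b-a-1$ letters. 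This contradicts the basic fact that every subword of an $S$-reduced word is itself $S$-reduced. Thus the $t_j$ are pairwise distinct and $\lvert\invset(w)\rvert\ge k$.

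It remains to prove the matching upper bound $\invset(w)\subseteq\{t_1,\dots,t_k\}$. Given any $t\in\invset(w)$, so that $\ell_{S}(wt)<\ell_{S}(w)$, I would invoke the Strong Exchange Condition: there is an index $j$ for which $wt$ is obtained from $s_{i_1}\cdots s_{i_k}$ by deleting the $j$-th letter, and comparing with the deletion computed above forces $t=t_j$. Combining the two bounds yields $\lvert\invset(w)\rvert=k=\ell_{S}(w)$, as claimed.

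The main obstacle is precisely this last step. Everything through the lower bound is elementary and self-contained from the definitions of reflections, inversions, and reducedness in the excerpt; the surjectivity onto $\{t_1,\dots,t_k\}$ is the one genuinely nontrivial input, being exactly the content of the Strong Exchange Condition. If one prefers not to quote it, the alternative is an induction on $\ell_{S}(w)$ via the recursion $\invset(ws)=\{s\}\sqcup s\,\invset(w)\,s$ valid whenever $\ell_{S}(ws)=\ell_{S}(w)+1$; the delicate point there is the same, namely verifying for a reflection $t\ne s$ that $t\in\invset(ws)$ holds if and only if $sts\in\invset(w)$, which again rests on a length/exchange argument.
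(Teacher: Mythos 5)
The paper offers no proof of this lemma at all: it is imported verbatim from Bj{\"o}rner--Brenti (Corollary~1.4.5), and the paper later restates the key fact $\invset(w)=\{t_{1},\ldots,t_{k}\}$ with the same citation in Section~3.4. Your argument is correct and is essentially the standard proof from that source --- the deletion identity $wt_{j}=s_{i_1}\cdots\widehat{s_{i_j}}\cdots s_{i_k}$, the cancellation argument for distinctness (note that your auxiliary fact holds for \emph{contiguous} factors of a reduced word, not arbitrary scattered subwords, but a contiguous factor is all you use), and the Strong Exchange Condition for surjectivity, which is the one genuinely nontrivial input and is legitimately quoted.
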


Proposition~3.1.3 in \cite{bjorner05combinatorics} states that
\begin{displaymath}
    u\weakorder v \quad\text{if and only if}\quad \invset(u)\subseteq\invset(v).
\end{displaymath}
A \defn{cover inversion} of $w$ is $t\in\invset(w)$ such that there exists $s\in S$ with $sw=wt$.  The name comes from the fact that, by definition, the cover inversions of $w$ are in bijection with the elements covered by $w$ in $\Weak(W)$.  The \defn{cover inversion set} of $w$ is denoted by $\covset(w)$.

\subsection{Roots and aligned elements}

A Coxeter group admits a representation as a group of reflections acting on a Euclidean vector space~\cite{humphreys90reflection}; these reflections correspond bijectively to the elements of $T$.  Such a reflection sends a non-zero vector to its negative and fixes a hyperplane pointwise.  We may choose two (mutually inverse) normal vectors to every such hyperplane and obtain the \defn{root system} associated with $W$.  Moreover, by choosing an appropriate half-space, we may partition the root system into \defn{positive} and \defn{negative} roots; the collection of positive roots is denoted by $\Phi_{W}^{+}$. This sets up a bijection $t\mapsto \alpha_{t}$ from $T$ to $\Phi_{W}^{+}$. Let us, conversely, denote by $t_{\alpha}$ the reflection corresponding to the positive root $\alpha\in\Phi_{W}^{+}$.

Let $\wb=a_{1}a_{2}\cdots a_{k}$ be an $S$-reduced word for $w\in W$. For $i\in[k]$, we define $t_{i}\defs a_{k}a_{k-1}\cdots a_{k-i+1}\cdots a_{k-1}a_{k}$. Then, as explained in \cite[Section~1.3]{bjorner05combinatorics}, 
\begin{displaymath}
    \invset(w) = \{t_{1},t_{2},\ldots,t_{k}\}.
\end{displaymath}
In fact, we obtain a \textit{linear order} on $\invset(w)$ in this way, called the \defn{inversion order} of $w$, denoted by $\invorder(\wb)$.

\begin{remark}
	Let us emphasize that throughout this article we use letters in a normal font (\eg $w$) for group elements, while boldface letters (\eg $\wb$) indicate words. 
    Likewise, $\invset(w)$ is a \emph{set} of inversions, while $\invorder(\wb)$ indicates a \emph{tuple} (or linear order) of the inversions of $w$ determined by the $S$-reduced word $\wb$.
\end{remark}

The next definition is central for this article.

\begin{definition}[{\cite[Definition~43]{muehle19tamari}}]\label{def:w_alignment}
    An element $x\in W$ is \defn{$\wb$-aligned} if $x\weakorder w$ and whenever $t_{\alpha}\prec t_{a\alpha+b\beta}\prec t_{\beta}$ with respect to $\invorder(\wb)$ for $a,b\in\mathbb{N}$, then $t_{a\alpha+b\beta}\in\covset(x)$ implies $t_{\alpha}\in\invset(x)$.
\end{definition}

Notice that the condition ``$t_{\alpha}\prec t_{a\alpha+b\beta}\prec t_{\beta}$ with respect to $\invorder(\wb)$'' in Definition~\ref{def:w_alignment} requires that $\{t_{\alpha},t_{a\alpha+b\beta},t_{\beta}\}\subseteq\invset(w)$.  We denote the set of $\wb$-aligned elements of $W$ by $\Align(W,\wb)$. We shall see examples of aligned elements when $W=B_n$ (see Example~\ref{ex-B-aligned}).

\begin{remark}\label{rem:cover_vs_inversions}
    Note that, if we plug in $w=\wo$ in Definition~\ref{def:w_alignment}, then we obtain the usual definition of Coxeter-alignment as in \cite[Section~4]{reading07clusters} except that we check only cover inversions (instead of arbitrary inversions). This difference plays a crucial role when establishing a ``Coxeter-Catalan''-like theory for parabolic quotients of Coxeter groups. We give a more concrete illustration in Example~\ref{ex:aligned_elements_subtlety}.
\end{remark}

\section{The Coxeter group of type $B$}

\subsection{A permutation representation}
    \label{sec:sign_symmetric_permutations}
The finite Coxeter groups were classified by Coxeter~\cite{coxeter35complete}, and we are mainly concerned with the Coxeter groups of type $B$. If we consider the set $S=\{s_{0},s_{1},\ldots,s_{n-1}\}$, then the group
\begin{displaymath}
    \Hyper_{n} \defs \bigl\langle S\colon s_{i}^{2}=\id, (s_{0}s_{1})^{4}=\id, (s_{i},s_{i+1})^{3}=\id\;\text{for}\;i>0, (s_{i}s_{j})^{2}=\id\;\text{for}\;j>i+1\bigr\rangle.
\end{displaymath}
is the \defn{Coxeter group of type $B$}.

Combinatorially, we may realize $\Hyper_{n}$ as follows. For $n>0$ we define the set of \defn{signed integers} by
\begin{displaymath}
    \pm[n] \defs \{{-}n,-{n}{+}1,\ldots,{-}2,{-}1,1,2,\ldots,n{-}1,n\}.
\end{displaymath}
A permutation $\pi$ of $\pm[n]$ is \defn{sign-symmetric} if $\pi(-i)=-\pi(i)$ for all $i\in[n]$. Then, equivalently, $\Hyper_{n}$ is the group of all sign-symmetric permutations of $\pm[n]$; we usually call it the \defn{hyperoctahedral group} of degree $n$. It is easily checked that $\bigl\lvert\Hyper_{n}\bigr\rvert=2^{n}n!$.

Let $\sleft i\sright$ denote the sign-symmetric permutation that exchanges $i$ and $-i$, and $\lleft i\;j\rright$ the sign-symmetric permutation that exchanges the values $i$ and $j$ (and simultaneously $-i$ and $-j$), where we do not make assumptions about the sign of $i$ and $j$.  Then, the assignment $s_{0}\mapsto\sleft 1\sright$ and $s_{i}\mapsto\lleft i\;i{+}1\rright$ for $i\in[n-1]$ allows us to switch from the Coxeter representation of $\Hyper_{n}$ to its permutation representation (when multiplying by $s_i$ on the left of a sign-symmetric permutation). Via the induced isomorphism, we may transfer the notions of reflection set, Coxeter length, (cover) inversions, and weak order to sign-symmetric permutations.

\begin{note}
    The set of reflections of $\Hyper_{n}$ can be partitioned into three types: the ones that exchange $i$ with $-i$, the ones that exchange two positive values (and their opposite values), and the ones that exchange a positive and a negative value (and their opposite).
    They will respectively be denoted by $\sleft i\sright$, $\lleft i\;j\rright$, and $\lleft -j\;i \rright$. Since changing the signs of all elements or exchanging both elements represent the same reflection, we shall write the reflections in the following non-ambiguous way
    \begin{equation}\label{eq:reflection_convention}
        \begin{split}
            \sleft i\sright &\text{ with } i>0, \\
            \lleft i\;j\rright &\text{ with } 0<i<j, \\
            \lleft -j\;i\rright &\text{ with } 0<i<j. \\
        \end{split}
    \end{equation}

    Note that, by definition, every reflection is itself an involution.
\end{note}

\begin{lemma}\label{lem:type_b_inversions}
    Let $\pi\in\Hyper_{n}$ and let $i,j\in[n]$.  Then
    \begin{itemize}
        \item $\sleft i\sright\in\invset(\pi)$ if and only if $\pi(i)<0$;
        \item $\lleft i\;j\rright\in\invset(\pi)$ with $i<j$ if and only if $\pi(i)>\pi(j)$;
        \item $\lleft {-}j\;i\rright\in\invset(\pi)$ with $i<j$ if and only if $\pi(-j)>\pi(i)$.
    \end{itemize}
    The length of $\pi$ is as usual its number of inversions.
    Moreover, 
    \begin{itemize}
        \item $\sleft i\sright\in\covset(\pi)$ if and only if $\pi(i)=-1$;
        \item $\lleft i\;j\rright\in\covset(\pi)$ with $i<j$ if and only if $\pi(i)=\pi(j)+1$;
        \item $\lleft {-}j\;i\rright\in\covset(\pi)$ with $i<j$ if and only if $\pi(-j)=\pi(i)+1$.
    \end{itemize}
\end{lemma}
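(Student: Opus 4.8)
The plan is to read off both lists directly from the definitions $\invset(\pi)=\{t\in T:\ell_S(\pi t)<\ell_S(\pi)\}$ and ``$t\in\covset(\pi)$ iff $t\in\invset(\pi)$ and $s\pi=\pi t$ for some $s\in S$''. For the inversion part I would first record that $\pi t$ is obtained from $\pi$ by transposing the two entries (and their negatives) exchanged by $t$, so that deciding whether $t$ is an inversion amounts to deciding whether this transposition lowers $\ell_S$. Identifying the positive roots of $\Hyper_n$ with $e_i$ $(1\le i\le n)$, $e_j-e_i$ and $e_j+e_i$ $(i<j)$, so that $\sleft i\sright=t_{e_i}$, $\lleft i\;j\rright=t_{e_j-e_i}$ and $\lleft -j\;i\rright=t_{e_i+e_j}$, and letting $\pi$ act by $e_k\mapsto e_{\pi(k)}$ with the convention $e_{-k}=-e_k$, the standard criterion $t_\alpha\in\invset(\pi)\iff\pi(\alpha)\in\Phi^-$ reduces the three claims to sign computations: $e_{\pi(i)}<0\iff\pi(i)<0$; $e_{\pi(j)}-e_{\pi(i)}<0\iff\pi(i)>\pi(j)$; and $e_{\pi(i)}+e_{\pi(j)}<0\iff\pi(i)+\pi(j)<0$, the last being equivalent to $\pi(-j)>\pi(i)$. (Alternatively one may cite the description of the length of signed permutations in \cite{bjorner05combinatorics}*{Section~8.1}.)

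The claim ``the length of $\pi$ is its number of inversions'' is then immediate. Every reflection is of exactly one of the three types in \eqref{eq:reflection_convention}, so the three displayed families are pairwise disjoint and their union is all of $\invset(\pi)$; hence their total cardinality equals $\bigl\lvert\invset(\pi)\bigr\rvert$, which is $\ell_S(\pi)$ by Lemma~\ref{lem:length_inversions}.

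For the cover inversions I would use that $s\pi=\pi t$ forces $s=\pi t\pi^{-1}$, so that $t\in\covset(\pi)$ exactly when $t\in\invset(\pi)$ and the conjugate $\pi t\pi^{-1}$ is a simple reflection. Conjugation by $\pi$ relabels the exchanged entries: $\pi\sleft i\sright\pi^{-1}=\sleft \lvert\pi(i)\rvert\sright$, while $\pi\lleft i\;j\rright\pi^{-1}$ exchanges $\pi(i)$ and $\pi(j)$, and $\pi\lleft -j\;i\rright\pi^{-1}$ exchanges $\pi(i)$ and $-\pi(j)$. Now $\sleft m\sright\in S$ iff $m=1$, and a reflection exchanging two values (and their negatives) lies in $S$ iff those values have the same sign and adjacent absolute values, \ie form a pair $\{k,k+1\}$ with $k\ge 1$ (the mixed type $\lleft -j\;i\rright$ is never simple). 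Intersecting each simplicity condition with the matching inversion condition from the first part yields, respectively, $\pi(i)<0$ with $\lvert\pi(i)\rvert=1$, giving $\pi(i)=-1$; $\pi(i)>\pi(j)$ with $\{\pi(i),\pi(j)\}$ adjacent and of equal sign, giving $\pi(i)=\pi(j)+1$; and $\pi(i)+\pi(j)<0$ with $\{\pi(i),-\pi(j)\}$ adjacent and of equal sign, giving $\pi(-j)=\pi(i)+1$.

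The conceptually light but error-prone part is the sign bookkeeping, and this is where I expect the main obstacle. One must carry the four sign subcases (both entries positive, both negative, and the two mixed ones) correctly through both computations: in the inversion step, that $\pi(i)>\pi(j)$, resp.\ $\pi(i)+\pi(j)<0$, is precisely the condition for the image root to be negative in every subcase; and in the cover step, that ``same sign and adjacent absolute values'' combined with the inversion inequality collapses to the single clean equality $\pi(i)=\pi(j)+1$, resp.\ $\pi(-j)=\pi(i)+1$, with no straddling-zero exceptions. Once the analysis is organized by sign, each implication reduces to a one-line check.
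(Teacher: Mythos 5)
Your proof is correct; it coincides with the paper's argument on one half and takes a genuinely different route on the other. For the cover inversions you and the paper do the same thing: the paper ``decodes the action of the $s_{i}$ on the left'' ($s_{0}$ exchanges the values $1$ and $-1$, and $s_{i}$ exchanges the values $i$ and $i{+}1$ along with their negatives), which is exactly your criterion that $\pi t\pi^{-1}$ be a simple reflection. For the inversion characterization, however, the paper argues directly in the permutation representation --- right multiplication by $\sleft i\sright$ flips the sign of $\pi(i)$, right multiplication by $\lleft i\;j\rright$ swaps the entries in positions $i$ and $j$, together with a rather sketchy exchange-style argument about lengths --- whereas you invoke the criterion that $t_{\alpha}\in\invset(\pi)$ exactly when $\pi$ maps $\alpha$ to a negative root, reducing everything to sign computations against the witness vector $(1,2,\ldots,n)$ that the paper itself uses to cut out $\Phi^{+}$. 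What each buys: the paper's argument is self-contained at that point in the text, while yours is mechanical and rigorous precisely where the paper only gestures, at the modest cost of using the dictionary $\varepsilon_{i}\leftrightarrow\sleft i\sright$, $-\varepsilon_{i}+\varepsilon_{j}\leftrightarrow\lleft i\;j\rright$, $\varepsilon_{i}+\varepsilon_{j}\leftrightarrow\lleft{-}j\;i\rright$, which the paper sets up only in the subsection \emph{after} this lemma (harmless, since that identification is independent of the lemma); your derivation of the middle claim from Lemma~\ref{lem:length_inversions} is likewise cleaner than the paper's.

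One small point you assert rather than verify: your simplicity test for exchange-type conjugates (``same sign and adjacent absolute values'') silently excludes the possibility that the conjugate is $s_{0}$, \ie that the exchanged values are $\{1,-1\}$; were that configuration possible, it would satisfy the inversion inequality while violating the equality $\pi(i)=\pi(j)+1$, so the lemma genuinely depends on ruling it out. It is indeed impossible, by sign-symmetry and injectivity: for $\lleft i\;j\rright$ one would need $\pi(j)=-\pi(i)=\pi(-i)$, forcing $j=-i$, absurd for $i,j\in[n]$; for $\lleft{-}j\;i\rright$ one would need $\pi(i)=-\pi(-j)=\pi(j)$, forcing $i=j$. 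This one-line check is exactly the content of your claimed ``no straddling-zero exceptions'' and deserves to be written out.
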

\begin{proof}
    For the definition of the (right) inversions, consider a sign-symmetric permutation $\pi$. Then one easily checks that multiplying it by a generator $s_i$ on its right either adds or subtracts one to its length. Moreover, if its length is not zero, there is always a way to subtract one to its length by applying a suitable generator, that is, if there is an inversion between $i$ and $j$, then there exists an index $k$ such that $k$ and $k+1$ is an inversion.

    Since composing $\pi\in\Hyper_{n}$ with $\sleft i\sright$ on the right effectively swaps the sign of $\pi(i)$; composing $\pi\in\Hyper_{n}$ with $\lleft i\;j\rright$ effectively exchanges the entries of $\pi$ in the $i\th$ and $j\th$ position, the first part of the claim holds.

    Now, concerning the cover inversions, one just has to decode the action of the $s_i$ on the \textit{left} of a sign-symmetric permutation: $s_0$ exchanges the values $1$ and $-1$, and $s_i$ exchanges the values $i$ and $i+1$ along with $-i$ and $-i-1$, hence the statement.
\end{proof}

\begin{corollary}\label{cor-si-less-invs}
    Let $\pi\in\Hyper_n$. Then $\ell_{S}(\pi s_{i})<\ell_{S}(\pi)$ if and only if
    \begin{itemize}
        \item $i=0$ and $\pi(1)<0$; or
        \item $i\neq 0$ and either $\pi(i)>0$ and $\pi(i)>\pi(i+1)$, or $\pi(i)<0$ and $\pi(i)<\pi(i+1)$.
    \end{itemize}
\end{corollary}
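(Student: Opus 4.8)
The plan is to reduce the statement directly to Lemma~\ref{lem:type_b_inversions}. By the very definition of the inversion set, a simple reflection $s\in S$ satisfies $\ell_{S}(\pi s)<\ell_{S}(\pi)$ if and only if $s\in\invset(\pi)$; since each $s_{i}$ is itself a reflection, it suffices to decide membership of $s_{i}$ in $\invset(\pi)$ and to translate the outcome back into the permutation representation. Thus the whole content of the corollary is a specialization of Lemma~\ref{lem:type_b_inversions} to the generators of $\Hyper_{n}$.

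First I would dispatch the case $i=0$. Under the permutation representation we have $s_{0}=\sleft 1\sright$, which is already written in the normalized form of~\eqref{eq:reflection_convention}. The first bullet of Lemma~\ref{lem:type_b_inversions} then asserts that $\sleft 1\sright\in\invset(\pi)$ precisely when $\pi_{1}<0$, which is exactly the first alternative.

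Next, for $i\neq 0$ we have $s_{i}=\lleft i\; i+1\rright$, again in normalized form since $0<i<i+1$. Here the second bullet of Lemma~\ref{lem:type_b_inversions} governs membership in $\invset(\pi)$, and I would read off the resulting condition on the two consecutive entries $\pi_{i}$ and $\pi_{i+1}$, organizing the discussion according to the sign of $\pi_{i}$ so as to land on the two sub-alternatives in the statement.

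The argument carries essentially no obstacle beyond this bookkeeping: the mathematical substance is contained entirely in Lemma~\ref{lem:type_b_inversions}. The only two points to be handled with care are (a) invoking the definitional equivalence $\ell_{S}(\pi s_{i})<\ell_{S}(\pi)\iff s_{i}\in\invset(\pi)$, and (b) confirming that both $s_{0}$ and $s_{i}$ are presented in the normalized convention of~\eqref{eq:reflection_convention} before the corresponding bullet of the lemma is applied. Both are immediate from the permutation representation, so the corollary follows as a direct specialization of the lemma to the simple reflections.
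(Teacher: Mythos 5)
Your overall strategy---reducing the corollary to Lemma~\ref{lem:type_b_inversions} via the definitional equivalence $\ell_{S}(\pi s_{i})<\ell_{S}(\pi)\iff s_{i}\in\invset(\pi)$ together with the identifications $s_{0}=\sleft 1\sright$ and $s_{i}=\lleft i\;i{+}1\rright$---is exactly the derivation the paper leaves implicit (the corollary is stated without a separate proof, immediately after the lemma), and it disposes of the case $i=0$ correctly. The gap is in the final step for $i\neq 0$, which you dismiss as bookkeeping and assert ``lands on the two sub-alternatives.'' It does not. The second bullet of Lemma~\ref{lem:type_b_inversions} gives the sign-independent condition $\pi(i)>\pi(i{+}1)$; splitting by the sign of $\pi_{i}=\pi(i)$ therefore yields ``$\pi_{i}>0$ and $\pi_{i}>\pi_{i+1}$'' or ``$\pi_{i}<0$ and $\pi_{i}>\pi_{i+1}$'' (in which case necessarily $\pi_{i+1}<\pi_{i}<0$)---the \emph{opposite} inequality to the corollary's second sub-alternative as literally written. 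Concretely, for $n=2$ and $\pi$ with right part $\overline{1}\;\overline{2}$, one has $s_{1}\in\invset(\pi)$ (the length drops from $4$ to $3$), yet ``$\pi_{1}<\pi_{2}$'' fails since $-1\not<-2$; conversely, for the permutation in the example following Lemma~\ref{lem:type_b_inversions}, whose right part begins $\overline{2}\;4\;\ldots$, the literal condition ``$\pi_{1}<0$ and $\pi_{1}<\pi_{2}$'' holds, yet $\lleft 1\;2\rright\notin\invset(\pi)$ by the paper's own inversion list.

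So the translation from the lemma to the displayed conditions is the entire content of the corollary, and it cannot be waved through: either the second clause is to be read with the comparison taken on the numerals under the bars (a descent with $\pi_{i}<0$ forces $\pi_{i+1}<\pi_{i}<0$, \ie $\lvert\pi_{i}\rvert<\lvert\pi_{i+1}\rvert$, which is presumably the intended reading of ``$\pi_{i}<\pi_{i+1}$''), or the inequality in the statement needs to be reversed. A complete proof must carry out the case analysis explicitly---in particular noting that $\pi_{i}<0<\pi_{i+1}$ is never a descent, which the literal second clause would wrongly allow---and must reconcile or flag this discrepancy. As written, your argument would silently establish the (correct) uniform statement ``$i\neq 0$ and $\pi_{i}>\pi_{i+1}$'' rather than the statement actually given.
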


We usually represent sign-symmetric permutations via their \defn{long one-line notation}, \ie for $\pi\in\Hyper_{n}$ we write down the values
\begin{displaymath}
    \pi({-}n),\pi({-}n{+}1),\ldots,\pi({-}1),\pi(1),\ldots,\pi(n{-}1),\pi(n)
\end{displaymath}
in that order. We represent negative values by an overbar rather than a minus sign for stylistic reasons, and we add a vertical bar between $\pi(-1)$ and $\pi(1)$ to emphasize the symmetry.  If no ambiguities may arise, we omit the commas in this sequence. The \defn{right part} of $\pi\in\Hyper_{n}$ is the sequence $\pi(1),\pi(2),\ldots,\pi(n)$.  Clearly, the right part determines $\pi$ due to the sign-symmetry.
We shall make use of the following notation:
\begin{displaymath}
    i^{+} \defs \begin{cases} i+1, & \text{if}\;i\neq\overline{1},\\ 1, & \text{otherwise.}\end{cases}
\end{displaymath}
One easily finds the cover inversions of a permutation in this notation: start with $\overline1$ and then read $1$, $2$, and so on. If $i^+$ is to the left of $i$, their positions form a cover inversion.

\begin{example}
    Consider $\pi\in\Hyper_{n}$ given by
    \begin{displaymath}
        \pi = \overline{9}\;\overline{7}\;\overline{8}\;\overline{5}\;\overline{6}\;1\;\overline{3}\;\overline{4}\;2\mid \overline{2}\;4\;3\;\overline{1}\;6\;5\;8\;7\;9.
    \end{displaymath}
    One easily checks that $\pi=s_{0}s_{1}s_{2}s_{7}s_{5}s_{3}s_{2}s_{0}$. Then, 
    \begin{displaymath}
        \invset(w) = \Bigl\{\sleft 1\sright,\sleft 4\sright,\lleft 2\;3\rright, \lleft 2\;4\rright,\lleft 3\;4\rright,\lleft 5\;6\rright, \lleft 7\;8\rright,\lleft {-4}\;1\rright\Bigr\},
    \end{displaymath}
    and
    \begin{displaymath}
        \covset(w) = \Bigl\{\sleft 4\sright,\lleft 2\;3\rright,\lleft 5\;6\rright,\lleft 7\;8\rright\Bigr\}.
    \end{displaymath}
\end{example}

\subsection{Roots for the Coxeter group of type $B$}
    \label{sec:type_b_roots}
For $n>0$, and $i\in[n]$, we denote by $\varepsilon_{i}$ the $i\th$ unit vector in $\mathbb{R}^{n}$.  Then, it is well-known that the following is indeed a root system for the Coxeter group of type $B$:
\begin{displaymath}
    \Phi = \bigl\{\pm\varepsilon_{i}\colon i\in[n]\bigr\} \uplus \bigl\{\pm(\varepsilon_{i}\pm\varepsilon_{j})\colon 1\leq i<j\leq n\bigr\},
\end{displaymath}
see for instance \cite[Section~5.3]{grove85finite} for a detailed explanation.  Let us consider the set
\begin{displaymath}
    \Phi^{+} \defs \bigl\{\varepsilon_{i}\colon i\in[n]\bigr\} \uplus \bigl\{-\varepsilon_{i}+\varepsilon_{j}\colon 1\leq i<j\leq n\bigr\}
     \uplus\bigl\{\varepsilon_{i}+\varepsilon_{j}\colon 1\leq i<j\leq n\bigr\}.
\end{displaymath}
Then, the subset
\begin{displaymath}
    \Pi = \bigl\{\varepsilon_{1},-\varepsilon_{1}+\varepsilon_{2},-\varepsilon_{2}+\varepsilon_{3},\ldots,-\varepsilon_{n-1}+\varepsilon_{n}\bigr\}
\end{displaymath}
of \defn{simple} roots consists of linearly independent vectors and spans $\Phi^{+}$. In particular, expanding any $\alpha\in\Phi^{+}$ in terms of $\Pi$ yields a linear combination with nonnegative coefficients. This is enough to conclude that $\Phi^{+}$ is indeed a choice of positive roots for $\Hyper_{n}$. We may equivalently see this by observing that if $\mathbf{v}=(1,2,\ldots,n)\in\mathbb{R}^{n}$, then any $\alpha\in\Phi^{+}$ has $(\alpha,\mathbf{v})>0$ and any $\alpha\in\Phi\setminus\Phi^{+}$ has $(\alpha,\mathbf{v})<0$ with respect to the standard scalar product of $\mathbb{R}^{n}$.

In the following lemma, we abbreviate $\alpha_{i,j}\defs -\varepsilon_{i}+\varepsilon_{j}$ and $\overline{\alpha}_{i,j}\defs\varepsilon_{i}+\varepsilon_{j}$.

\begin{lemma}\label{lem:type_b_decompositions}
    Let $\alpha\in\Phi_{+}$, and let $i,k\in[n]$ with $i<k$.  The following list of decompositions of $\alpha$ as nonnegative linear combinations of two positive roots is exhaustive.
    \begin{itemize}
        \item If $\alpha=\varepsilon_{i}$, then 
            \begin{displaymath}
                \alpha = \alpha_{j,i}+\varepsilon_{j}\quad\text{for}\;1\leq j<i.
            \end{displaymath}
        \item If $\alpha=\alpha_{i,k}$, then
            \begin{displaymath}
                \alpha = \alpha_{i,j}+\alpha_{j,k}\quad\text{for}\;i<j<k.
            \end{displaymath}
        \item If $\alpha=\overline{\alpha}_{i,k}$, then 
            \begin{displaymath}
                \alpha = \begin{cases}
                    \varepsilon_{i}+\varepsilon_{k}, & \\
                    \overline{\alpha}_{i,j}+\alpha_{j,k}, & \text{for}\;1\leq j<k,\\
                    \alpha_{j,i}+\overline{\alpha}_{j,k}, & \text{for}\;1\leq j<i.
                \end{cases}
            \end{displaymath}
    \end{itemize}
\end{lemma}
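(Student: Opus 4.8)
The plan is to treat the statement as a finite, coordinate-wise verification, using that every element of $\Phi^{+}$ is a $\{-1,0,1\}$-vector with at most two nonzero coordinates whose nonzero coordinate of largest index equals $+1$. A ``decomposition'' here is a solution of $\alpha=a\beta+b\gamma$ with $\beta,\gamma\in\Phi^{+}$ and coefficients $a,b\in\mathbb{N}$, as demanded by Definition~\ref{def:w_alignment}. That each displayed decomposition is a genuine one is checked by adding the two vectors (for the middle family one reads the degenerate index $j=i$ as $\overline{\alpha}_{i,i}=2\varepsilon_{i}$), so the real content is \emph{exhaustiveness}: no other pairs $\{\beta,\gamma\}$ with admissible coefficients occur.

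For exhaustiveness I would read the identity coordinate by coordinate: $\alpha_{m}=a\beta_{m}+b\gamma_{m}\in\{-1,0,1\}$ while $a,b\geq 1$. Hence at any coordinate lying in the support of both $\beta$ and $\gamma$ the two entries must have opposite signs, since equal signs would force an absolute value of at least $a+b\geq 2$. Now let $q$ be the largest index at which $\beta$ or $\gamma$ is nonzero; by the opposite-sign constraint together with positivity (the top entry of a positive root is $+1$), exactly one of them---say $\beta$---is nonzero at $q$, with $\beta_{q}=+1$, so $\alpha_{q}=a$ forces the coefficient of the root attaining $q$ to equal $1$. Looking next at the top coordinate $q'$ of the other root $\gamma$, where $\gamma_{q'}=+1$, the same bound gives its coefficient at most $2$, with the value $2$ possible only when $\beta_{q'}=-1$, that is, when $\beta=\alpha_{q',q}$ and $\gamma=\varepsilon_{q'}$.

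This splits the analysis into two cases. When both coefficients equal $1$, the problem reduces to listing the ways a positive root is a sum $\beta+\gamma$ of two positive roots; splitting on the shape of $\alpha$ and using the small supports recovers exactly the coefficient-one entries of the three cases (for $\overline{\alpha}_{i,k}$ these are $\varepsilon_{i}+\varepsilon_{k}$ together with the families $\overline{\alpha}_{i,j}+\alpha_{j,k}$ with $j\neq i$ and $\alpha_{j,i}+\overline{\alpha}_{j,k}$). The remaining case forces $\alpha=\alpha_{q',q}+2\varepsilon_{q'}=\overline{\alpha}_{q',q}$, which is exactly the instance $j=i$ of the middle family. I expect this last point to be the main obstacle: one must notice that the only decomposition with a coefficient different from $1$ lives in a rank-two subsystem of type $B_{2}$ and expresses a long root $\overline{\alpha}_{i,k}$ as $2\varepsilon_{i}+\alpha_{i,k}$, and one must make sure it is recorded rather than discarded. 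A related subtlety worth stating is that the restriction $a,b\in\mathbb{N}$ is essential: over the reals there are further decompositions such as $\varepsilon_{i}=\tfrac{1}{2}\alpha_{j,i}+\tfrac{1}{2}\overline{\alpha}_{j,i}$ for $j<i$, which are excluded precisely because the coefficients in Definition~\ref{def:w_alignment} are natural numbers.
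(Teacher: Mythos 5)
Your proof is correct and takes essentially the same route as the paper, whose proof is simply the remark that this is a straightforward computation; your coordinate-wise sign analysis is a careful organization of exactly that finite verification. In particular, you correctly isolate the unique decomposition with a coefficient $2$, namely $\overline{\alpha}_{i,k}=2\varepsilon_{i}+\alpha_{i,k}$, which is precisely the degenerate case $j=i$ (where $\overline{\alpha}_{i,i}=2\varepsilon_{i}$) that the paper's proof explicitly flags.
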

\begin{proof}
This is a straightforward computation. Note that for $i=j$, we get $\overline{\alpha}_{i,j}=2\varepsilon_{i}$ in the second decomposition of the third case.
\end{proof}

In order to match $\Phi^{+}$ with the reflections of $\Hyper_{n}$, we identify a positive root $\varepsilon_{i}$ with the reflection $\sleft i\sright$, a positive root $-\varepsilon_{i}+\varepsilon_{j}$ with the reflection $\lleft i\;j\rright$, and a positive root $\varepsilon_{i}+\varepsilon_{j}$ with the reflection $\lleft {-}j\;i\rright$.

\subsection{A special family of aligned sign-symmetric permutations}
    \label{sec:type_b_alignment}
We consider the \defn{linear Coxeter element} $\linc\defs s_{0}s_{1}\cdots s_{n-1}$, whose long one-line notation is
\begin{displaymath}
	1,\overline n,\ldots,\overline3,\overline2\mid 2,3,\ldots,n,\overline1.
\end{displaymath}
The \defn{$\linc$-sorting word} of $w\in\Hyper_{n}$ is the $S$-reduced word for $w$ that appears as rightmost as possible in the half-infinite word
\begin{displaymath}
    ~^{\infty}\linc \defs \ldots \mid s_{n-1}\cdots s_{1}s_{0}\mid s_{n-1}\cdots s_{1}s_{0}
\end{displaymath}
consisting of infinitely many copies of the reverse of $\linc$. The $\linc$-sorting word of $w$ is denoted by $\wb(\linc)$.  In order to describe the inversion order of $\wbo(\linc)$, we prove the following general lemma regarding $\linc$-sorting words of particular elements of $\Hyper_{n}$. For $i<k$, let us abbreviate
\begin{displaymath}
    s_{k\dots i} \defs s_{k}s_{k-1}\cdots s_{i+1}s_{i}
\end{displaymath}
and
\begin{displaymath}
    s_{i\dots k} \defs s_{i}s_{i+1}\cdots s_{k-1}s_{k}.
\end{displaymath}
Moreover, we use the notation $[i,k]\defs\{i,i{+}1,\ldots,k\}$.

\begin{lemma}\label{lem:sorting_word_negative}
    If the right part of a sign-symmetric permutation $w\in\Hyper_{n}$ can be written as the concatenation of intervals in the following form
    \begin{equation}
        [\overline{i_1{+}i_2},\overline{i_1{+}1}], [\overline{i_1{+}i_2{+}i_3},\overline{i_1{+}i_2{+}1}]\dots [\overline{i_1{+}\cdots{+}i_r},\overline{i_1{+}\cdots{+}i_{r-1}{+}1}], [1,i_1],
    \end{equation}
    then its $\linc$-sorting word is obtained by reading the following list from top to bottom, left to right:
    \begin{equation}\label{prods}\begin{aligned}
        & s_{n-i_{r}\dots 0} && s_{n-i_{r}+1\dots 0} && \cdots && s_{n-1\dots 0}\\
        & s_{n-i_{r-1}\dots 0} && s_{n-i_{r-1}+1\dots 0} && \cdots && s_{n-1\dots 0}\\
        & \cdots\\
        & s_{n-i_{2}\dots 0} && s_{n-i_{2}+1\dots 0} && \cdots && s_{n-1\dots 0}.
    \end{aligned}\end{equation}
\end{lemma}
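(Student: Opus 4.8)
The plan is to compute the $\linc$-sorting word directly from its defining property as the rightmost reduced subword of ${}^{\infty}\linc$ and to verify by induction on $r$ that this greedy procedure produces exactly the word displayed in~\eqref{prods}. The starting point is a structural observation: each factor $s_{j\dots 0}=s_{j}s_{j-1}\cdots s_{0}$ is precisely a suffix of one copy of the reversed Coxeter element $s_{n-1\dots 0}=s_{n-1}\cdots s_{1}s_{0}$. Hence, reading~\eqref{prods} from right to left and placing its successive factors into successive copies of ${}^{\infty}\linc$---with the rightmost factor $s_{n-1\dots 0}$ filling the rightmost copy entirely---exhibits~\eqref{prods} as a genuine subword of ${}^{\infty}\linc$. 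It then remains to show that it is the \emph{rightmost} reduced subword spelling $w$; reducedness and the fact that it spells $w$ will fall out of the same analysis.

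Concretely, I would construct the rightmost subword greedily from the right: choose its last letter to sit at the rightmost position of ${}^{\infty}\linc$ whose generator $s_{i}$ is a right descent of $w$ (that is, $\ell_{S}(ws_{i})<\ell_{S}(w)$, which Corollary~\ref{cor-si-less-invs} reads off directly from the long one-line notation), then choose its previous letter at the rightmost remaining position carrying a right descent of the already-reduced element $ws_{i}$, and iterate. Since we only ever multiply on the right by right descents, the length drops by exactly one at each step; if the process terminates at $\id$ after $k$ steps, then $\ell_{S}(w)=k$, the product of the selected generators read left to right equals $w$, that word is $S$-reduced (Lemma~\ref{lem:length_inversions}), and by construction it is the rightmost subword. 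Thus the whole statement reduces to identifying which positions the greedy selects.

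The inductive step carries the work. I claim that, starting from $w$, the greedy sweeps through the first $i_{2}$ copies of ${}^{\infty}\linc$ and selects exactly the letters of the last row $s_{n-i_{2}\dots 0}\,s_{n-i_{2}+1\dots 0}\cdots s_{n-1\dots 0}$: in the rightmost copy every generator $s_{0},s_{1},\dots,s_{n-1}$ is a right descent and is taken (flipping the sign of the leading negative value via $s_{0}$ and then sliding it past the positive block), while at the top of copies $2,\dots,i_{2}$ the generators $s_{n-1},\dots,s_{n-i_{2}+1}$ fail to be right descents and are skipped, which is exactly what shortens the successive factors. Tracking the effect on the right part of $w$ through Lemma~\ref{lem:type_b_inversions} and Corollary~\ref{cor-si-less-invs}, I would show that after these $i_{2}$ copies the element has been reduced to a sign-symmetric permutation $w'$ of exactly the same shape, now with interval data $(i_{1}+i_{2},i_{3},i_{4},\dots,i_{r})$---the first negative block of size $i_{2}$ has been absorbed into the positive block, which grows from $[1,i_{1}]$ to $[1,i_{1}+i_{2}]$, leaving one fewer negative block. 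Applying the induction hypothesis to $w'$ yields the remaining rows (those for $i_{3},\dots,i_{r}$, i.e.\ the first $r-2$ rows of~\eqref{prods}) as a prefix, and concatenation reproduces~\eqref{prods} in full; the base case $r=1$ is the identity permutation with empty sorting word.

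The main obstacle is the bookkeeping inside this sweep: one must verify, copy by copy and position by position, exactly which generators are right descents of the successively updated permutation, with particular care for the special generator $s_{0}$ (the sign change opening each copy) and for pinning down the locations of the skipped generators at the block boundary. The key checkpoint is confirming that the intermediate element reached at the end of copy $i_{2}$ is \emph{precisely} the claimed $w'$ and that its leading value is again negative, so that the recursion restarts cleanly at $s_{0}$ of copy $i_{2}+1$; once this is established, the inductive structure closes and simultaneously delivers all three required facts---that~\eqref{prods} is a subword of ${}^{\infty}\linc$, that it spells $w$, and that it is the rightmost such subword, hence the $\linc$-sorting word $\wb(\linc)$.
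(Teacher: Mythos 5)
Your proposal is correct and follows essentially the same route as the paper's own proof: both rely on the greedy right-to-left characterization of the rightmost reduced subword of ${}^{\infty}\linc$ (select at each step the rightmost available position carrying a right descent, so the length drops by one per letter and reducedness is automatic), and both induct on $r$ by peeling off the last row of \eqref{prods}, using the sign-flip-via-$s_{0}$-then-slide-right dynamics of each factor $s_{j\dots 0}$ to show that the greedy selects exactly those letters and that the resulting intermediate element again has the required block form with merged data $(i_{1}{+}i_{2},i_{3},\dots,i_{r})$. If anything, you make explicit two points the paper leaves implicit, namely the precise identity of the intermediate permutation $w'$ and which generators are skipped at the left end of copies $2,\dots,i_{2}$.
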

\begin{proof}
    Given that $~^{\infty}\linc$ is the repetition of the same sequence $s_{n-1}\cdots s_1s_0$, the rightmost reduced word of $w$ in it is obtained by computing, given a sign-symmetric permutation $\pi$ and a value $j$, the first value $k$ within the ordered sequence $j,\ldots,n-1,1,\ldots,j-1$ such that $\ell_{S}(\pi s_k)<\ell_{S}(\pi)$. The induction begins with the pair $(w,0)$.

    Now, the property holds by an easy induction on $r$: one gets again such an element by multiplying $w$ on the right by the sequence of $s_i$ of the last line of~\eqref{prods} read from right to left. This product is the rightmost one in $~^{\infty}\linc$ since each product $s_{0\ldots n-j}$ first changes the sign of the first letter then moves it to the right until it is followed by a letter greater than itself or at the right extremity of the permutation. Since at the right of its final position we have an increasing sequence, the next possible generator we can use is $s_0$, hence the induction.

    Note that each product by an $s_k$ either changes the first value into a positive one or exchanges it with a letter smaller than itself so that the length of the element decreases at each step, hence showing that the final result is indeed a reduced word of $w$.
\end{proof}

\begin{example}\label{ex:longest_word_3121}
    Consider the sign-symmetric permutation $w\in\Hyper_{7}$ given by its right part as follows:
    \begin{displaymath}
        \overline{3}\;\overline{2}\;\overline{1}\;\overline{4}\;\overline{6}\;\overline{5}\;\overline{7} = [\overline{3},\overline{1}],[\overline{4},\overline{4}],[\overline{6},\overline{5}],[\overline{7},\overline{7}].
    \end{displaymath}
    The parameters used in Lemma~\ref{lem:sorting_word_negative} are $r=5$, $i_{1}=0$, $i_{2}=3$, $i_{3}=1$, $i_{4}=2$, $i_{5}=1$.  We get
    \begin{displaymath}
        \wb(\linc) = s_{6\dots 0} s_{5\dots 0} s_{6\dots 0} s_{6\dots 0} s_{4\dots 0} s_{5\dots 0} s_{6\dots 0},
    \end{displaymath}
    since
    \begin{align*}
        (\overline{3}\;\overline{2}\;\overline{1}\;\overline{4}\;\overline{6}\;\overline{5}\;\overline{7}) & 
        s_{0\dots6} s_{0\dots5}s_{0\dots4}s_{0\dots6}s_{0\dots6}s_{0\dots5}s_{0\dots6}\\
        & = (\overline{2}\;\overline{1}\;\overline{4}\;\overline{6}\;\overline{5}\;\overline{7}\;3) s_{0\dots5}s_{0\dots4}s_{0\dots6}s_{0\dots6}s_{0\dots5}s_{0\dots6} \\
        & = (\overline{1}\;\overline{4}\;\overline{6}\;\overline{5}\;\overline{7}\;2\;3) s_{0\dots4}s_{0\dots6}s_{0\dots6}s_{0\dots5}s_{0\dots6} \\
        & = (\overline{4}\;\overline{6}\;\overline{5}\;\overline{7}\;1\;2\;3) s_{0\dots6}s_{0\dots6}s_{0\dots5}s_{0\dots6} \\
        & = (\overline{6}\;\overline{5}\;\overline{7}\;1\;2\;3\;4) s_{0\dots6}s_{0\dots5}s_{0\dots6} \\
        & = (\overline{7}\;1\;2\;3\;4\;5\;6) s_{0\dots6} \\
        & = (1\;2\;3\;4\;5\;6\;7),
    \end{align*}
    as one can check by hand (or computer).
\end{example}

\begin{corollary}\label{cor:linc_sorting_longest}
    The $\linc$-sorting word of the longest element $\wo\in\Hyper_{n}$ is given by $\wbo(\linc)=(s_{n-1}\cdots s_{1}s_{0})^{n}$.
\end{corollary}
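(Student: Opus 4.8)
The plan is to derive the corollary as a direct specialization of Lemma~\ref{lem:sorting_word_negative}. The first step is to pin down the longest element $\wo\in\Hyper_{n}$ concretely: it is the sign-symmetric permutation sending $i\mapsto-i$ for every $i\in[n]$, so that its right part is $\overline{1}\,\overline{2}\cdots\overline{n}$. I would justify this using Lemma~\ref{lem:type_b_inversions}: for $\pi$ with $\pi(i)=-i$ one checks that reflections of all three types lie in $\invset(\pi)$, namely $\sleft i\sright$ (since $\pi(i)<0$), every $\lleft i\;j\rright$ (since $-i>-j$ for $i<j$), and every $\lleft{-}j\;i\rright$ (since $\pi(-j)=j>-i=\pi(i)$). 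Hence $\invset(\wo)=T$ and $\pi$ is maximal in the weak order; equivalently $\ell_{S}(\wo)=n+2\binom{n}{2}=n^{2}$, the known length of the longest element of $\Hyper_{n}$.

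The second step is to fit this right part into the interval shape demanded by Lemma~\ref{lem:sorting_word_negative}. Since the right part $\overline{1}\,\overline{2}\cdots\overline{n}$ is strictly increasing in absolute value, every maximal decreasing run of barred entries is a singleton, so each $\overline{k}$ is its own interval $[\overline{k},\overline{k}]$. Comparing with the general form, this forces the parameters $r=n+1$, $i_{1}=0$ (so the trailing block $[1,i_{1}]$ is empty), and $i_{2}=i_{3}=\cdots=i_{n+1}=1$. Indeed, with these values the $k\th$ interval $[\overline{i_{1}+\cdots+i_{k+1}},\overline{i_{1}+\cdots+i_{k}+1}]$ collapses to $[\overline{k},\overline{k}]=\overline{k}$, recovering the right part exactly.

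Finally, I would substitute these parameters into the product~\eqref{prods}. Each line of that display runs from $s_{n-i_{j}\dots 0}$ to $s_{n-1\dots 0}$ and therefore consists of $i_{j}$ factors; since every $i_{j}=1$ for $j\geq 2$, each line reduces to the single factor $s_{n-1\dots 0}=s_{n-1}\cdots s_{1}s_{0}$. As there are $r-1=n$ such lines, reading the display from top to bottom yields $(s_{n-1}\cdots s_{1}s_{0})^{n}$, which is the claimed $\linc$-sorting word $\wbo(\linc)$.

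The only point requiring care is the bookkeeping in the second step---matching the indices of the interval decomposition to the parameters $i_{j}$---but once the singleton structure of the right part is observed this is immediate. I do not anticipate a genuine obstacle, since all the substantive work is already carried by Lemma~\ref{lem:sorting_word_negative}; the corollary is a clean evaluation of that lemma at the extreme parameter choice.
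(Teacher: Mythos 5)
Your proposal is correct and takes essentially the same route as the paper: the paper's proof likewise observes that $\wo$ has right part $\overline{1}\,\overline{2}\cdots\overline{n}$ and applies Lemma~\ref{lem:sorting_word_negative} with $r=n+1$, $i_{1}=0$ and $i_{2}=\cdots=i_{r}=1$, so that each of the $n$ lines of \eqref{prods} collapses to the single factor $s_{n-1}\cdots s_{1}s_{0}$. Your extra verification via Lemma~\ref{lem:type_b_inversions} that this permutation is indeed the longest element is a harmless elaboration of a fact the paper simply asserts.
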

\begin{proof}
    Note that $\wo\in\Hyper_{n}$ is determined by the right part $\overline{1}\;\overline{2}\;\ldots\;\overline{n}$.  Thus, we may apply Lemma~\ref{lem:sorting_word_negative} with $r=n+1$, $i_{1}=0$ and $i_{2}=i_{3}=\cdots=i_{r}=1$.  Then, each line of \eqref{prods} consists of the word $s_{n-1\ldots 0}$ and there is a total of $n$ rows.
\end{proof}

Starting from the $\linc$-sorting word of $\wo\in\Hyper_{n}$ we can explicitly compute the inversion order $\invorder\bigl(\wbo(\linc)\bigr)$.  We defer a formal proof to Section~\ref{sec:parabolic_longest_elements}.

For $i\in[n]$, we consider the ordered list of pairs
\begin{displaymath}\begin{aligned}
    & (i,i{-}1), && \ldots, && (i,2), && (i,1), && (i,{-}n), && \ldots, && (i,{-}i{-}1), && (i,{-}i).
\end{aligned}\end{displaymath}
The $n$ possible rows are now arranged such that first components increase from top to bottom and pairs with the same second component appear in the same column.  If we now read from top to bottom, right to left, then we obtain the inversion order $\invorder\bigl(\wbo(\linc)\bigr)$ by identifying a pair $(i,{-}i)$ with the reflection $\sleft i\sright$, a pair $(i,j)$ for $i>j>0$ with the transposition $\lleft j\;i\rright$ and a pair $(i,{-}j)$ for $i<j$ with the transposition $\lleft {-}j\;i\rright$.
\begin{example}
    Let $n=4$.  Then $\linc=s_{0}s_{1}s_{2}s_{3}$ and
    \begin{displaymath}
        \wbo(\linc) = s_{3}s_{2}s_{1}s_{0}s_{3}s_{2}s_{1}s_{0}s_{3}s_{2}s_{1}s_{0}s_{3}s_{2}s_{1}s_{0}.
    \end{displaymath}
    The previously mentioned arrangement of the sixteen possible pairs is:
    \begin{displaymath}\begin{aligned}
        & && && && (1,-4) && (1,-3) && (1,-2) && (1,-1)\\
        & && && (2,1) && (2,-4) && (2,-3) && (2,-2)\\
        & && (3,2) && (3,1) && (3,-4) && (3,-3)\\
        & (4,3) && (4,2) && (4,1) && (4,-4)
    \end{aligned}\end{displaymath}
    The replacement from above produces the following inversion order:
    \begin{displaymath}\begin{aligned}
        && && && && && \succ && \lleft -4\;1\rright && \succ && \lleft -3\;1\rright && \succ && \lleft -2\;1\rright && \succ && \sleft 1\sright \\
        && && && \succ && \lleft 1\;2\rright && \succ && \lleft -4\;2\rright && \succ && \lleft -3\;2\rright && \succ && \sleft 2\sright \\
        && \succ && \lleft 2\;3\rright && \succ && \lleft 1\;3\rright && \succ && \lleft -4\;3\rright && \succ && \sleft 3\sright \\
        \lleft 3\;4\rright && \succ && \lleft 2\;4\rright && \succ && \lleft 1\;4\rright && \succ && \sleft 4\sright  \\
    \end{aligned}\end{displaymath}
\end{example}

We may thus characterize the members of $\Align\bigl(\Hyper_{n},\wbo(\linc)\bigr)$ via their inversion sets.

\begin{lemma}\label{lem:type_b_forcing}
    Let $\pi\in\Hyper_{n}$. Then $\pi\in\Align\bigl(\Hyper_{n},\wbo(\linc)\bigr)$ if and only if for every $1\leq i<k\leq n$ it holds that
    \begin{itemize}
        \item if $\sleft i\sright\in\covset(\pi)$, then $\sleft j\sright\in\invset(\pi)$ for all $1\leq j<i$;
        \item if $\lleft i\;k\rright\in\covset(\pi)$, then $\lleft i\;j\rright\in\invset(\pi)$ for all $i<j<k$;
        \item if $\lleft {-}k\;i\rright\in\covset(\pi)$, then 
        \begin{itemize}
            \item $\sleft i\sright\in\invset(\pi)$,
            \item $\lleft {-}j\;i\rright\in\invset(\pi)$ for all $1\leq j<k$, $j\neq i$,
            \item $\lleft {-}k\;j\rright\in\invset(\pi)$ for all $1\leq j<i$.
        \end{itemize}
    \end{itemize}
\end{lemma}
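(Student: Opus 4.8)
The plan is to unwind Definition~\ref{def:w_alignment} in the case $w=\wo$ and to read off the forced inversions type by type. Since $\wo$ is the longest element, $\invset(\wo)=T$, so the requirement $x\weakorder\wo$ and the condition $\{t_\alpha,t_{a\alpha+b\beta},t_\beta\}\subseteq\invset(\wo)$ hold automatically for every $\pi\in\Hyper_n$. Hence $\pi\in\Align\bigl(\Hyper_n,\wbo(\linc)\bigr)$ if and only if the following holds: whenever $t_\gamma\in\covset(\pi)$ and $\gamma=a\alpha+b\beta$ writes the associated positive root as a nonnegative combination of two positive roots with $t_\alpha\prec t_\gamma\prec t_\beta$ in $\invorder\bigl(\wbo(\linc)\bigr)$, then $t_\alpha\in\invset(\pi)$. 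The lemma follows once I determine, for each type of cover inversion $t_\gamma$ and each admissible decomposition, which summand plays the role of the forced reflection $t_\alpha$.

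The main tool is a comparison rule for $\invorder\bigl(\wbo(\linc)\bigr)$ extracted from the explicit description preceding the lemma. Identifying a reflection with its pair (first component, second component) and translating through the root--reflection dictionary of Section~\ref{sec:type_b_roots}, the order increases first with the first component $i$, and within a fixed $i$ it runs through $\sleft i\sright$, then $\lleft -j\;i\rright$ for $j=i+1,\dots,n$, and finally $\lleft j\;i\rright$ for $j=1,\dots,i-1$; in particular every $\lleft -j\;i\rright$ precedes every $\lleft j'\;i\rright$, and all of row $i$ is preceded by every reflection with strictly smaller first component. This rule locates both summands of any decomposition from Lemma~\ref{lem:type_b_decompositions} relative to $t_\gamma$.

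I then run the case analysis along the three types of $\gamma$. If $\gamma=\varepsilon_i$, the decompositions are $\varepsilon_i=\alpha_{j,i}+\varepsilon_j$ with $j<i$; here $\varepsilon_j\leftrightarrow\sleft j\sright$ lies in the smaller row $j<i$ while $\alpha_{j,i}\leftrightarrow\lleft j\;i\rright$ lies later in row $i$, so $\sleft j\sright$ is forced, yielding the first bullet. If $\gamma=\alpha_{i,k}$, the decompositions are $\alpha_{i,k}=\alpha_{i,j}+\alpha_{j,k}$ with $i<j<k$; here $\alpha_{i,j}\leftrightarrow\lleft i\;j\rright$ lies in the smaller row $j<k$ and $\alpha_{j,k}\leftrightarrow\lleft j\;k\rright$ later in row $k$, so $\lleft i\;j\rright$ is forced, yielding the second bullet. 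If $\gamma=\overline\alpha_{i,k}$, so that $t_\gamma=\lleft -k\;i\rright$ sits in row $i$, the three families each have their earlier summand below $t_\gamma$: the splitting $\varepsilon_i+\varepsilon_k$ forces $\sleft i\sright$; the splittings $\overline\alpha_{i,j}+\alpha_{j,k}$ force $\lleft -j\;i\rright$ for $i<j<k$ and $\lleft -i\;j\rright$ for $j<i$, the partner $\alpha_{j,k}$ always landing in the larger row $k$; and the splittings $\alpha_{j,i}+\overline\alpha_{j,k}$ force $\lleft -k\;j\rright$ for $j<i$, the partner $\lleft j\;i\rright$ being a positive-column entry of row $i$ and hence later than $t_\gamma$. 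These are exactly the three sub-bullets of the third case. Because the bullets are literally the instantiations of the displayed forcing condition over all decompositions, both implications of the equivalence hold at once.

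The step needing the most care is the bookkeeping of the third case. I must reconcile notation: a forced summand $\overline\alpha_{i,j}$ with $j<i$ equals $\varepsilon_j+\varepsilon_i$ and, using the convention of~\eqref{eq:reflection_convention}, is written $\lleft -i\;j\rright$, which is precisely what the bullet ``$\lleft -j\;i\rright$ for $1\le j<k$, $j\neq i$'' records in the range $j<i$. I must also confirm exhaustiveness beyond Lemma~\ref{lem:type_b_decompositions}: the only nonnegative combination with a coefficient exceeding one is $\overline\alpha_{i,k}=2\varepsilon_i+\alpha_{i,k}$, for which $\varepsilon_i\leftrightarrow\sleft i\sright$ is again the earlier summand, so it forces $\sleft i\sright$, already present from the splitting $\varepsilon_i+\varepsilon_k$, and contributes nothing new. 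Finally, I would remark that in each decomposition $t_\gamma$ genuinely lies strictly between its two summands — as it must, $\invorder\bigl(\wbo(\linc)\bigr)$ being a reflection order — which is anyway visible from the comparison rule above.
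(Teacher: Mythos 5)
Your proof is correct and takes essentially the same route as the paper's: both reduce alignment (via Definition~\ref{def:w_alignment}, with $x\weakorder\wo$ and the membership conditions automatic) to running through the exhaustive decompositions of Lemma~\ref{lem:type_b_decompositions}, locating each summand in the explicit description of $\invorder\bigl(\wbo(\linc)\bigr)$, and reading off the $\prec$-earlier summand as the forced inversion. Your two extra bookkeeping points match the paper's as well — the coefficient-two case $2\varepsilon_{i}+\alpha_{i,k}$ forcing $\sleft i\sright$, and the rewriting of $\overline{\alpha}_{i,j}$ with $j<i$ as $\lleft -i\;j\rright$ under the convention of~\eqref{eq:reflection_convention} — so there is no gap.
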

\begin{proof}
    Let $t$ be a reflection of $\Hyper_{n}$ such that $\alpha_{t}$ can be written as $\alpha_{t}=b_{1}\beta_{1}+b_{2}\beta_{2}$ for positive roots $\beta_{1},\beta_{2}$ and positive integers $b_{1},b_{2}$. Assume further that $t_{\beta_{1}}\prec t\prec t_{\beta_{2}}$ with respect to $\invorder\bigl(\wbo(\linc)\bigr)$. Then, by Definition~\ref{def:w_alignment}, it is enough to show that $t\in\covset(\pi)$ implies $t_{\beta_{1}}\in\invset(\pi)$.

    \medskip

    We use Lemma~\ref{lem:type_b_decompositions} and the description of $\invorder\bigl(\wbo(\linc)\bigr)$ from above. To avoid lengthy conversions from reflections to roots, we use the notation ``$t_{1}+t_{2}$'' as a short-hand for ``$t_{\alpha_{t_{1}}+\alpha_{t_{2}}}$''. We consider the possible cases, and pick $i,k\in[n]$ with $i<k$.

    \begin{enumerate}[(i)]
        \item If $t=\sleft i\sright$, then by Lemma~\ref{lem:type_b_decompositions}, we have $t=\lleft j\;i\rright+\sleft j\sright$ for $0<j<i$.  In the inversion order, we clearly have $\sleft j\sright\prec\lleft j\;i\rright$.
        \item If $t=\lleft i\;k\rright$, then by Lemma~\ref{lem:type_b_decompositions}, we have $t=\lleft i\;j\rright+\lleft j\;k\rright$ for $i<j<k$.  In the inversion order, we clearly have $\lleft i\;j\rright\prec\lleft j\;k\rright$.
        \item If $t=\lleft {-}k\;i\rright$, then by Lemma~\ref{lem:type_b_decompositions}, we have the following three options.
        \begin{enumerate}[(a)]
            \item Say $t=\sleft i\sright+\sleft k\sright$. In the inversion order, we have $\sleft i\sright\prec\sleft k\sright$.
            \item Say $t=\lleft {-}j\;i\rright+\lleft j\;k\rright$ for $1\leq j<k$.  If $j=i$, then we actually have $t=2\sleft i\sright+\lleft i\;k\rright$.  In the inversion order, we have $\sleft i\sright\prec\lleft i\;k\rright$.  If $j\neq i$, then we have $\lleft{-}j\;i\rright\prec\lleft j\;k\rright$ in the inversion order.
            \item Say $t=\lleft j\;i\rright+\lleft {-}k\;j\rright$ for $1\leq j<i$. Since $j<i<k$, we get $\lleft j\;i\rright\prec\lleft {-}k\;j\rright$ in the inversion order.
        \end{enumerate}
    \end{enumerate}

    These considerations show that if $t\in\covset(\pi)$, then $\invset(\pi)$ must contain the specified inversions for $\pi$ to be $\wbo(\linc)$-aligned.
\end{proof}

Using the permutation representation of $\Hyper_{n}$, we can identify $\wbo(\linc)$-aligned elements in terms of pattern avoidance. A permutation $\pi\in\Hyper_{n}$ has a \defn{type-$B$ $231$-pattern} if there exist indices $-n\leq i<j<k\leq n$ such that $\pi(i)=\pi(k)^+$ as well as $\pi(j)>\pi(i)$ and $j,k>0$. In other words, we look for subwords of the long one-line notation of $\pi$ standardizing to $231$ with the requirement that the positions of the '$2$' and the '$1$' form a cover inversion and and the positions of the '$3$' and the '$1$' are positive. Let $\Hyper_{n}(231)$ be the set of sign-symmetric permutations without a type-$B$ $231$-pattern.

\begin{example}\label{ex-B-aligned}
    Let $n=5$ and consider the sign-symmetric permutation
    \begin{displaymath}
        \pi = \overline{4}\;\overline{3}\;\overline{5}\;1\;2\mid\overline{2}\;\overline{1}\;5\;3\;4.
    \end{displaymath}
    We get
    \begin{displaymath}
        \invset(\pi) = \Bigl\{\sleft 1\sright,\sleft 2\sright,\lleft {-}2\;1\rright,\lleft 3\;4\rright,\lleft 3\;5\rright\Bigr\}\quad\text{and}\quad\covset(\pi) = \Bigl\{\sleft 2\sright,\lleft 3\;5\rright\Bigr\}.
    \end{displaymath}
    Comparing this with Lemma~\ref{lem:type_b_forcing}, we note that $\pi$ is $\wbo(\linc)$-aligned, and that $\pi$ does not have any type-$B$ $231$-pattern. There are three ``usual'' $231$ patterns, determined by the positions $(\overline{4},\overline{3},\overline{5})$, $(1,2,\overline{2})$ and $(1,2,\overline{1})$. However, in each case the middle position is negative, and therefore none of them is a type-$B$ $231$-pattern.

    \medskip

    On the other hand, consider the sign-symmetric permutation
    \begin{displaymath}
        \sigma = 2\;\overline{3}\;5\;\overline{1}\;\hgl{\overline{4}} \mid \hgl{4}\;1\;\hgl{\overline{5}}\;3\;\overline{2}.
    \end{displaymath}
    We get
    \begin{align*}
        \invset(\sigma)
        & = \Bigl\{\lleft 1\;2\rright, \lleft 1\;3\rright, \lleft 1\;4\rright,\lleft 1\;5\rright, \lleft 2\;3\rright, \lleft 2\;5\rright,\lleft 4\;5\rright, \lleft {-}3\;1\rright, \lleft {-}3\;2\rright,\\
        & \kern1cm \lleft {-}3\;4\rright, \lleft {-}3\;5\rright,
              \lleft {-}5\;2\rright, \sleft 3\sright, \sleft 5\sright\Bigr\},\\
        \covset(\sigma)
        & = \Bigl\{\lleft 1\;4\rright, \lleft {-}3\;1\rright,\lleft {-}5\;2\rright\Bigr\}.
    \end{align*}
    Comparing this with Lemma~\ref{lem:type_b_forcing}, we observe that $\sigma$ is not $\wbo(\linc)$-aligned, because for instance $\lleft {-3}\;1\rright\in\covset(v)$ but $\sleft 1\sright\notin\invset(v)$. This manifests in the type-$B$ $231$-pattern in positions $(-1,1,3)$ highlighted above in boldface, because $\sigma(-1)=-4=-5+1=\sigma(3)+1$ (thus $(-1,3)$ is a cover inversion of $\sigma$) and $\sigma(1)=4>-4=\sigma(-1)$ (thus $(-1,1)$ is not an inversion of $\sigma$).
\end{example}

\begin{lemma}[{\cite[Lemma~4.9]{reading07clusters}}]\label{lem:type_b_alignment}
    A sign-symmetric permutation $\pi\in\Hyper_{n}$ is $\wbo(\linc)$-aligned if and only if $\pi$ does not have a type-$B$ $231$-pattern.
\end{lemma}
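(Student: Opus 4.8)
The plan is to derive the pattern characterization from the inversion-theoretic one in Lemma~\ref{lem:type_b_forcing}, translated into combinatorial terms via Lemma~\ref{lem:type_b_inversions}. I will prove the contrapositive in both directions, showing that $\pi$ fails to be $\wbo(\linc)$-aligned if and only if $\pi$ contains a type-$B$ $231$-pattern. Throughout I write a prospective pattern as a triple of positions $p<q<r$ with $q,r>0$, $\pi(p)=\pi(r)^+$, and $\pi(q)>\pi(p)$ (renaming the indices of the definition to free up $i,j,k$ for the forcing conditions). The key dictionary is that $\pi(p)=\pi(r)^+$ with $p<r$ is exactly the statement that the successor of the value $\pi(r)$ sits to its left, so that $(p,r)$ is a cover inversion; Lemma~\ref{lem:type_b_inversions} then names the underlying reflection, and the middle index $q$ supplies the missing forced inversion.

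For the direction ``pattern $\Rightarrow$ not aligned'', I would classify the cover inversion $(p,r)$ by the sign of $p$ and the value $\pi(r)$. If $0<p<r$ it is $\lleft p\;r\rright$ (and necessarily $\pi(r)\neq-1$, since otherwise the value $-1$ would be forced into two distinct positions); if $p=-r$ it is $\sleft r\sright$ (which forces $\pi(r)=-1$, hence $\pi(p)=1$); and if $p<0$ with $p\neq-r$ it is some $\lleft -k\;i\rright$, where whether $\{p,r\}=\{-k,i\}$ or $\{p,r\}=\{-i,k\}$ is dictated by whether $|p|>r$ or $|p|<r$. In each case the positive index $q$ strictly between $p$ and $r$ contradicts exactly one of the forced inversions demanded by Lemma~\ref{lem:type_b_forcing}: the inequality $\pi(q)>\pi(p)$ rules out the required $\lleft p\;q\rright\in\invset(\pi)$, or $\sleft q\sright\in\invset(\pi)$, or $\lleft -k\;q\rright\in\invset(\pi)$, or (after converting via $\pi(-x)=-\pi(x)$) the required $\lleft -q\;i\rright\in\invset(\pi)$, as appropriate. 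The one delicate subcase is $q=-p$ inside the $\lleft -k\;i\rright$ analysis: here $q$ is excluded from the second family of forced inversions, but then $\pi(q)=-\pi(p)>0$ makes $\sleft q\sright$ itself the violated inversion.

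For the converse ``not aligned $\Rightarrow$ pattern'', I would run the same dictionary backwards. From a violated forced inversion I read off the cover inversion and the offending index $j$, and then assemble the pattern: a failure of the clause governing $\sleft i\sright$ yields $(p,q,r)=(-i,j,i)$; a failure of the clause governing $\lleft i\;k\rright$ yields $(i,j,k)$; and a failure of the clause governing $\lleft -k\;i\rright$ yields $(-i,i,k)$, $(-i,j,k)$, or $(-k,j,i)$ according to which of the three forced inversions fails. In every case I would verify, using Lemma~\ref{lem:type_b_inversions} and the sign-symmetry $\pi(-x)=-\pi(x)$, that $p<q<r$, that $q,r>0$, that $\pi(p)=\pi(r)^+$, and that the failed inversion condition is precisely the inequality $\pi(q)>\pi(p)$ needed for a $231$-pattern.

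The main obstacle I anticipate is the bookkeeping around reflections of the form $\lleft -k\;i\rright$. Such a reflection is carried by the two position-pairs $\{-k,i\}$ and $\{-i,k\}$, so one must consistently choose the representation placing the ``$1$'' and the ``$3$'' of the pattern in \emph{positive} positions, and one must track how the successor convention $i^+$ (with $(-1)^+=1$) transforms under the attendant sign changes. The recurring sanity check that keeps the case analysis both exhaustive and disjoint is the observation that whenever the cover inversion is genuinely of type $\lleft p\;r\rright$ or $\lleft -k\;i\rright$ rather than $\sleft r\sright$, one has $\pi(r)\neq-1$; otherwise a single value $-1$ would be forced to occupy two distinct positions.
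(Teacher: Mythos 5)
Your proposal is correct and follows exactly the route the paper takes: the paper's proof of Lemma~\ref{lem:type_b_alignment} is the one-liner ``immediate from Lemmas~\ref{lem:type_b_inversions} and \ref{lem:type_b_forcing}'', and your argument is precisely that translation spelled out, including the genuinely needed observations the paper leaves implicit (the wraparound case $\pi(r)=-1$ forcing $p=-r$, the two position-pair representations of $\lleft -k\;i\rright$, and the $q=-p$ subcase where $\sleft q\sright$ is the violated inversion). No gaps; your detailed case analysis matches the forcing conditions of Lemma~\ref{lem:type_b_forcing} clause for clause.
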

\begin{proof}
    This is immediate from Lemmas~\ref{lem:type_b_inversions} and \ref{lem:type_b_forcing}.
\end{proof}

The weak order on the set $\Hyper_{n}(231)$ inherits some remarkable properties from the weak order on the full group $\Hyper_{n}$.  Let us call the poset
\begin{displaymath}
    \Tamari_{B}(n)\defs\Weak\bigl(\Hyper_{n}(231)\bigr)
\end{displaymath}
the \defn{type-$B$ Tamari lattice}.

Figure~\ref{fig:weak_order_b3} shows the lattice $\Weak\bigl(\Hyper_{3}\bigr)$ where we have highlighted the intervals characterized by the projection map that sends an element to the largest aligned permutation smaller than itself.  Figure~\ref{fig:tamari_b3} shows $\Tamari_{B}(3)$, which is the quotient lattice induced by the highlighted intervals in Figure~\ref{fig:weak_order_b3}.  In these figures, we have colored the positions in the long one-line notation by symmetrically placed colors.  This foreshadows our representation of the elements in parabolic quotients of $\Hyper_{n}$ in the next section.

\begin{sidewaysfigure}
    \vspace*{12cm}
    \begin{tikzpicture}[scale=.7]
		\def\x{3};
		\def\y{2};
		\def\s{.7};
		\def\t{.8};	
        \coordinate(n1) at (6*\x,1*\y);
		\coordinate(n2) at (5*\x,2*\y);
		\coordinate(n3) at (6*\x,2*\y);
		\coordinate(n4) at (7*\x,2*\y);
		\coordinate(n5) at (4*\x,3*\y);
		\coordinate(n6) at (5*\x,3*\y);
		\coordinate(n7) at (6*\x,3*\y);
		\coordinate(n8) at (7*\x,3*\y);
		\coordinate(n9) at (8*\x,3*\y);
		\coordinate(n10) at (3*\x,4*\y);
		\coordinate(n11) at (4*\x,4*\y);
		\coordinate(n12) at (5*\x,4*\y);
		\coordinate(n13) at (6*\x,4*\y);
		\coordinate(n14) at (7*\x,4*\y);
		\coordinate(n15) at (8*\x,4*\y);
		\coordinate(n16) at (9*\x,4*\y);
		\coordinate(n17) at (2*\x,5*\y);
		\coordinate(n18) at (3*\x,5*\y);
		\coordinate(n19) at (4*\x,5*\y);
		\coordinate(n20) at (5*\x,5*\y);
		\coordinate(n21) at (6*\x,5*\y);
		\coordinate(n22) at (7*\x,5*\y);
		\coordinate(n23) at (8*\x,5*\y);
		\coordinate(n24) at (9*\x,5*\y);
		\coordinate(n25) at (1*\x,6*\y);
		\coordinate(n26) at (2*\x,6*\y);
		\coordinate(n27) at (3*\x,6*\y);
		\coordinate(n28) at (4*\x,6*\y);
		\coordinate(n29) at (5*\x,6*\y);
		\coordinate(n30) at (6*\x,6*\y);
		\coordinate(n31) at (7*\x,6*\y);
		\coordinate(n32) at (8*\x,6*\y);
		\coordinate(n33) at (1*\x,7*\y);
		\coordinate(n34) at (2*\x,7*\y);
		\coordinate(n35) at (3*\x,7*\y);
		\coordinate(n36) at (4*\x,7*\y);
		\coordinate(n37) at (5*\x,7*\y);
		\coordinate(n38) at (6*\x,7*\y);
		\coordinate(n39) at (7*\x,7*\y);
		\coordinate(n40) at (2*\x,8*\y);
		\coordinate(n41) at (3*\x,8*\y);
		\coordinate(n42) at (4*\x,8*\y);
		\coordinate(n43) at (5*\x,8*\y);
		\coordinate(n44) at (6*\x,8*\y);
		\coordinate(n45) at (3*\x,9*\y);
		\coordinate(n46) at (4*\x,9*\y);
		\coordinate(n47) at (5*\x,9*\y);
		\coordinate(n48) at (4*\x,10*\y);
		\draw[thick](n1) -- (n2);
		\draw[thick](n1) -- (n3);
		\draw[thick](n1) -- (n4);
		\draw[thick](n2) -- (n5);
		\draw[thick](n2) -- (n7);
		\draw[thick](n3) -- (n6);
		\draw[thick](n3) -- (n8);
		\draw[thick](n4) -- (n7);
		\draw[thick](n4) -- (n9);
		\draw[thick](n5) -- (n10);
		\draw[thick](n5) -- (n11);
		\draw[thick](n6) -- (n11);
		\draw[thick](n6) -- (n13);
		\draw[thick](n7) -- (n12);
		\draw[thick](n8) -- (n13);
		\draw[thick](n8) -- (n15);
		\draw[thick](n9) -- (n14);
		\draw[thick](n9) -- (n16);
		\draw[thick](n10) -- (n17);
		\draw[thick](n10) -- (n18);
		\draw[thick](n11) -- (n18);
		\draw[thick](n12) -- (n19);
		\draw[thick](n12) -- (n21);
		\draw[thick](n13) -- (n20);
		\draw[thick](n14) -- (n21);
		\draw[thick](n14) -- (n23);
		\draw[thick](n15) -- (n22);
		\draw[thick](n15) -- (n24);
		\draw[thick](n16) -- (n23);
		\draw[thick](n16) -- (n24);
		\draw[thick](n17) -- (n25);
		\draw[thick](n17) -- (n27);
		\draw[thick](n18) -- (n26);
		\draw[thick](n19) -- (n27);
		\draw[thick](n19) -- (n29);
		\draw[thick](n20) -- (n28);
		\draw[thick](n20) -- (n30);
		\draw[thick](n21) -- (n29);
		\draw[thick](n22) -- (n30);
		\draw[thick](n22) -- (n32);
		\draw[thick](n23) -- (n31);
		\draw[thick](n24) -- (n32);
		\draw[thick](n25) -- (n33);
		\draw[thick](n25) -- (n34);
		\draw[thick](n26) -- (n33);
		\draw[thick](n26) -- (n35);
		\draw[thick](n27) -- (n34);
		\draw[thick](n28) -- (n35);
		\draw[thick](n28) -- (n37);
		\draw[thick](n29) -- (n36);
		\draw[thick](n30) -- (n37);
		\draw[thick](n31) -- (n38);
		\draw[thick](n31) -- (n39);
		\draw[thick](n32) -- (n39);
		\draw[thick](n33) -- (n40);
		\draw[thick](n34) -- (n41);
		\draw[thick](n35) -- (n40);
		\draw[thick](n36) -- (n41);
		\draw[thick](n36) -- (n43);
		\draw[thick](n37) -- (n42);
		\draw[thick](n38) -- (n43);
		\draw[thick](n38) -- (n44);
		\draw[thick](n39) -- (n44);
		\draw[thick](n40) -- (n45);
		\draw[thick](n41) -- (n46);
		\draw[thick](n42) -- (n45);
		\draw[thick](n42) -- (n47);
		\draw[thick](n43) -- (n46);
		\draw[thick](n44) -- (n47);
		\draw[thick](n45) -- (n48);
		\draw[thick](n46) -- (n48);
		\draw[thick](n47) -- (n48);
		\draw(n1) node[scale=\s]{\AlphaPerm{1,1,1,1,1,1}{c3,c2,c1,c1,c2,c3}{\overline{3},\overline{2},\overline{1},1,2,3}{.5}{.25}{white!50!gray}{\t}};
		\draw(n2) node[scale=\s]{\AlphaPerm{1,1,1,1,1,1}{c3,c2,c1,c1,c2,c3}{\overline{2},\overline{3},\overline{1},1,3,2}{.5}{.25}{white!50!gray}{\t}};
		\draw(n3) node[scale=\s]{\AlphaPerm{1,1,1,1,1,1}{c3,c2,c1,c1,c2,c3}{\overline{3},\overline{1},\overline{2},2,1,3}{.5}{.25}{white!50!gray}{\t}};
		\draw(n4) node[scale=\s]{\AlphaPerm{1,1,1,1,1,1}{c3,c2,c1,c1,c2,c3}{\overline{3},\overline{2},1,\overline{1},2,3}{.5}{.25}{white!50!gray}{\t}};
		\draw(n5) node[scale=\s]{\AlphaPerm{1,1,1,1,1,1}{c3,c2,c1,c1,c2,c3}{\overline{1},\overline{3},\overline{2},2,3,1}{.5}{.25}{white!50!gray}{\t}};
		\draw(n6) node[scale=\s]{\AlphaPerm{1,1,1,1,1,1}{c3,c2,c1,c1,c2,c3}{\overline{2},\overline{1},\overline{3},3,1,2}{.5}{.25}{white!50!gray}{\t}};
		\draw(n7) node[scale=\s]{\AlphaPerm{1,1,1,1,1,1}{c3,c2,c1,c1,c2,c3}{\overline{2},\overline{3},1,\overline{1},3,2}{.5}{.25}{white!50!gray}{\t}};
		\draw(n8) node[scale=\s]{\AlphaPerm{1,1,1,1,1,1}{c3,c2,c1,c1,c2,c3}{\overline{3},1,\overline{2},2,\overline{1},3}{.5}{.25}{white!50!gray}{\t}};
		\draw(n9) node[scale=\s]{\AlphaPerm{1,1,1,1,1,1}{c3,c2,c1,c1,c2,c3}{\overline{3},\overline{1},2,\overline{2},1,3}{.5}{.25}{white!50!gray}{\t}};
		\draw(n10) node[scale=\s]{\AlphaPerm{1,1,1,1,1,1}{c3,c2,c1,c1,c2,c3}{1,\overline{3},\overline{2},2,3,\overline{1}}{.5}{.25}{white!50!gray}{\t}};
		\draw(n11) node[scale=\s]{\AlphaPerm{1,1,1,1,1,1}{c3,c2,c1,c1,c2,c3}{\overline{1},\overline{2},\overline{3},3,2,1}{.5}{.25}{white!50!gray}{\t}};
		\draw(n12) node[scale=\s]{\AlphaPerm{1,1,1,1,1,1}{c3,c2,c1,c1,c2,c3}{\overline{1},\overline{3},2,\overline{2},3,1}{.5}{.25}{white!50!gray}{\t}};
		\draw(n13) node[scale=\s]{\AlphaPerm{1,1,1,1,1,1}{c3,c2,c1,c1,c2,c3}{\overline{2},1,\overline{3},3,\overline{1},2}{.5}{.25}{white!50!gray}{\t}};
		\draw(n14) node[scale=\s]{\AlphaPerm{1,1,1,1,1,1}{c3,c2,c1,c1,c2,c3}{\overline{2},\overline{1},3,\overline{3},1,2}{.5}{.25}{white!50!gray}{\t}};
		\draw(n15) node[scale=\s]{\AlphaPerm{1,1,1,1,1,1}{c3,c2,c1,c1,c2,c3}{\overline{3},2,\overline{1},1,\overline{2},3}{.5}{.25}{white!50!gray}{\t}};
		\draw(n16) node[scale=\s]{\AlphaPerm{1,1,1,1,1,1}{c3,c2,c1,c1,c2,c3}{\overline{3},1,2,\overline{2},\overline{1},3}{.5}{.25}{white!50!gray}{\t}};
		\draw(n17) node[scale=\s]{\AlphaPerm{1,1,1,1,1,1}{c3,c2,c1,c1,c2,c3}{2,\overline{3},\overline{1},1,3,\overline{2}}{.5}{.25}{white!50!gray}{\t}};
		\draw(n18) node[scale=\s]{\AlphaPerm{1,1,1,1,1,1}{c3,c2,c1,c1,c2,c3}{1,\overline{2},\overline{3},3,2,\overline{1}}{.5}{.25}{white!50!gray}{\t}};
		\draw(n19) node[scale=\s]{\AlphaPerm{1,1,1,1,1,1}{c3,c2,c1,c1,c2,c3}{1,\overline{3},2,\overline{2},3,\overline{1}}{.5}{.25}{white!50!gray}{\t}};
		\draw(n20) node[scale=\s]{\AlphaPerm{1,1,1,1,1,1}{c3,c2,c1,c1,c2,c3}{\overline{1},2,\overline{3},3,\overline{2},1}{.5}{.25}{white!50!gray}{\t}};
		\draw(n21) node[scale=\s]{\AlphaPerm{1,1,1,1,1,1}{c3,c2,c1,c1,c2,c3}{\overline{1},\overline{2},3,\overline{3},2,1}{.5}{.25}{white!50!gray}{\t}};
		\draw(n22) node[scale=\s]{\AlphaPerm{1,1,1,1,1,1}{c3,c2,c1,c1,c2,c3}{\overline{2},3,\overline{1},1,\overline{3},2}{.5}{.25}{white!50!gray}{\t}};
		\draw(n23) node[scale=\s]{\AlphaPerm{1,1,1,1,1,1}{c3,c2,c1,c1,c2,c3}{\overline{2},1,3,\overline{3},\overline{1},2}{.5}{.25}{white!50!gray}{\t}};
		\draw(n24) node[scale=\s]{\AlphaPerm{1,1,1,1,1,1}{c3,c2,c1,c1,c2,c3}{\overline{3},2,1,\overline{1},\overline{2},3}{.5}{.25}{white!50!gray}{\t}};
		\draw(n25) node[scale=\s]{\AlphaPerm{1,1,1,1,1,1}{c3,c2,c1,c1,c2,c3}{3,\overline{2},\overline{1},1,2,\overline{3}}{.5}{.25}{white!50!gray}{\t}};
		\draw(n26) node[scale=\s]{\AlphaPerm{1,1,1,1,1,1}{c3,c2,c1,c1,c2,c3}{2,\overline{1},\overline{3},3,1,\overline{2}}{.5}{.25}{white!50!gray}{\t}};
		\draw(n27) node[scale=\s]{\AlphaPerm{1,1,1,1,1,1}{c3,c2,c1,c1,c2,c3}{2,\overline{3},1,\overline{1},3,\overline{2}}{.5}{.25}{white!50!gray}{\t}};
		\draw(n28) node[scale=\s]{\AlphaPerm{1,1,1,1,1,1}{c3,c2,c1,c1,c2,c3}{1,2,\overline{3},3,\overline{2},\overline{1}}{.5}{.25}{white!50!gray}{\t}};
		\draw(n29) node[scale=\s]{\AlphaPerm{1,1,1,1,1,1}{c3,c2,c1,c1,c2,c3}{1,\overline{2},3,\overline{3},2,\overline{1}}{.5}{.25}{white!50!gray}{\t}};
		\draw(n30) node[scale=\s]{\AlphaPerm{1,1,1,1,1,1}{c3,c2,c1,c1,c2,c3}{\overline{1},3,\overline{2},2,\overline{3},1}{.5}{.25}{white!50!gray}{\t}};
		\draw(n31) node[scale=\s]{\AlphaPerm{1,1,1,1,1,1}{c3,c2,c1,c1,c2,c3}{\overline{1},2,3,\overline{3},\overline{2},1}{.5}{.25}{white!50!gray}{\t}};
		\draw(n32) node[scale=\s]{\AlphaPerm{1,1,1,1,1,1}{c3,c2,c1,c1,c2,c3}{\overline{2},3,1,\overline{1},\overline{3},2}{.5}{.25}{white!50!gray}{\t}};
		\draw(n33) node[scale=\s]{\AlphaPerm{1,1,1,1,1,1}{c3,c2,c1,c1,c2,c3}{3,\overline{1},\overline{2},2,1,\overline{3}}{.5}{.25}{white!50!gray}{\t}};
		\draw(n34) node[scale=\s]{\AlphaPerm{1,1,1,1,1,1}{c3,c2,c1,c1,c2,c3}{3,\overline{2},1,\overline{1},2,\overline{3}}{.5}{.25}{white!50!gray}{\t}};
		\draw(n35) node[scale=\s]{\AlphaPerm{1,1,1,1,1,1}{c3,c2,c1,c1,c2,c3}{2,1,\overline{3},3,\overline{1},\overline{2}}{.5}{.25}{white!50!gray}{\t}};
		\draw(n36) node[scale=\s]{\AlphaPerm{1,1,1,1,1,1}{c3,c2,c1,c1,c2,c3}{2,\overline{1},3,\overline{3},1,\overline{2}}{.5}{.25}{white!50!gray}{\t}};
		\draw(n37) node[scale=\s]{\AlphaPerm{1,1,1,1,1,1}{c3,c2,c1,c1,c2,c3}{1,3,\overline{2},2,\overline{3},\overline{1}}{.5}{.25}{white!50!gray}{\t}};
		\draw(n38) node[scale=\s]{\AlphaPerm{1,1,1,1,1,1}{c3,c2,c1,c1,c2,c3}{1,2,3,\overline{3},\overline{2},\overline{1}}{.5}{.25}{white!50!gray}{\t}};
		\draw(n39) node[scale=\s]{\AlphaPerm{1,1,1,1,1,1}{c3,c2,c1,c1,c2,c3}{\overline{1},3,2,\overline{2},\overline{3},1}{.5}{.25}{white!50!gray}{\t}};
		\draw(n40) node[scale=\s]{\AlphaPerm{1,1,1,1,1,1}{c3,c2,c1,c1,c2,c3}{3,1,\overline{2},2,\overline{1},\overline{3}}{.5}{.25}{white!50!gray}{\t}};
		\draw(n41) node[scale=\s]{\AlphaPerm{1,1,1,1,1,1}{c3,c2,c1,c1,c2,c3}{3,\overline{1},2,\overline{2},1,\overline{3}}{.5}{.25}{white!50!gray}{\t}};
		\draw(n42) node[scale=\s]{\AlphaPerm{1,1,1,1,1,1}{c3,c2,c1,c1,c2,c3}{2,3,\overline{1},1,\overline{3},\overline{2}}{.5}{.25}{white!50!gray}{\t}};
		\draw(n43) node[scale=\s]{\AlphaPerm{1,1,1,1,1,1}{c3,c2,c1,c1,c2,c3}{2,1,3,\overline{3},\overline{1},\overline{2}}{.5}{.25}{white!50!gray}{\t}};
		\draw(n44) node[scale=\s]{\AlphaPerm{1,1,1,1,1,1}{c3,c2,c1,c1,c2,c3}{1,3,2,\overline{2},\overline{3},\overline{1}}{.5}{.25}{white!50!gray}{\t}};
		\draw(n45) node[scale=\s]{\AlphaPerm{1,1,1,1,1,1}{c3,c2,c1,c1,c2,c3}{3,2,\overline{1},1,\overline{2},\overline{3}}{.5}{.25}{white!50!gray}{\t}};
		\draw(n46) node[scale=\s]{\AlphaPerm{1,1,1,1,1,1}{c3,c2,c1,c1,c2,c3}{3,1,2,\overline{2},\overline{1},\overline{3}}{.5}{.25}{white!50!gray}{\t}};
		\draw(n47) node[scale=\s]{\AlphaPerm{1,1,1,1,1,1}{c3,c2,c1,c1,c2,c3}{2,3,1,\overline{1},\overline{3},\overline{2}}{.5}{.25}{white!50!gray}{\t}};
		\draw(n48) node[scale=\s]{\AlphaPerm{1,1,1,1,1,1}{c3,c2,c1,c1,c2,c3}{3,2,1,\overline{1},\overline{2},\overline{3}}{.5}{.25}{white!50!gray}{\t}};
		\begin{pgfonlayer}{background}
			\filldraw[draw=gray,fill=white!50!gray,rounded corners](5.55*\x,.85*\y) -- (6.45*\x,.85*\y) -- (6.45*\x,1.25*\y) -- (5.55*\x,1.25*\y) -- cycle;
			\filldraw[draw=gray,fill=white!50!gray,rounded corners](6.55*\x,1.85*\y) -- (7.45*\x,1.85*\y) -- (7.45*\x,2.25*\y) -- (6.55*\x,2.25*\y) -- cycle;
			\filldraw[draw=gray,fill=white!50!gray,rounded corners](5.55*\x,1.85*\y) -- (6.45*\x,1.85*\y) -- (6.45*\x,2.25*\y) -- (7.45*\x,2.85*\y) -- (7.45*\x,3.25*\y) -- (8.45*\x,3.85*\y) -- (8.45*\x,4.25*\y) -- (7.45*\x,4.85*\y) -- (7.45*\x,5.25*\y) -- (6.55*\x,5.25*\y) -- (6.55*\x,4.85*\y) -- (7.55*\x,4.25*\y) -- (7.55*\x,3.85*\y) -- (6.55*\x,3.25*\y) -- (6.55*\x,2.85*\y) -- (5.55*\x,2.25*\y) -- cycle;
			\filldraw[draw=gray,fill=white!50!gray,rounded corners](4.55*\x,1.85*\y) -- (5.45*\x,1.85*\y) -- (5.45*\x,2.25*\y) -- (4.45*\x,2.85*\y) -- (4.45*\x,3.25*\y) -- (3.45*\x,3.85*\y) -- (3.45*\x,4.25*\y) -- (2.45*\x,4.85*\y) -- (2.45*\x,5.25*\y) -- (1.45*\x,5.85*\y) -- (1.45*\x,6.25*\y) -- (.55*\x,6.25*\y) -- (.55*\x,5.85*\y) -- (1.55*\x,5.25*\y) -- (1.55*\x,4.85*\y) -- (2.55*\x,4.25*\y) -- (2.55*\x,3.85*\y) -- (3.55*\x,3.25*\y) -- (3.55*\x,2.85*\y) -- (4.55*\x,2.25*\y) -- cycle;
			\filldraw[draw=gray,fill=white!50!gray,rounded corners](7.55*\x,2.85*\y) -- (8.45*\x,2.85*\y) -- (8.45*\x,3.25*\y) -- (7.55*\x,3.25*\y) -- cycle;
			\filldraw[draw=gray,fill=white!50!gray,rounded corners](5.55*\x,2.85*\y) -- (6.45*\x,2.85*\y) -- (6.45*\x,3.25*\y) -- (5.45*\x,3.85*\y) -- (5.45*\x,4.25*\y) -- (4.45*\x,4.85*\y) -- (4.45*\x,5.25*\y) -- (3.45*\x,5.85*\y) -- (3.45*\x,6.25*\y) -- (2.45*\x,6.85*\y) -- (2.45*\x,7.25*\y) -- (1.55*\x,7.25*\y) -- (1.55*\x,6.85*\y) -- (2.55*\x,6.25*\y) -- (2.55*\x,5.85*\y) -- (3.55*\x,5.25*\y) -- (3.55*\x,4.85*\y) -- (4.55*\x,4.25*\y) -- (4.55*\x,3.85*\y) -- (5.55*\x,3.25*\y) -- cycle;
			\filldraw[draw=gray,fill=white!50!gray,rounded corners](8.55*\x,3.85*\y) -- (9.45*\x,3.85*\y) -- (9.45*\x,4.25*\y) -- (8.55*\x,4.25*\y) -- cycle;
			\filldraw[draw=gray,fill=white!50!gray,rounded corners](6.55*\x,3.85*\y) -- (7.45*\x,3.85*\y) -- (7.45*\x,4.25*\y) -- (6.55*\x,4.25*\y) -- cycle;
			\filldraw[draw=gray,fill=white!50!gray,rounded corners](8.55*\x,4.85*\y) -- (9.45*\x,4.85*\y) -- (9.45*\x,5.25*\y) -- (8.45*\x,5.85*\y) -- (8.45*\x,6.25*\y) -- (7.55*\x,6.25*\y) -- (7.55*\x,5.85*\y) -- (8.55*\x,5.25*\y) -- cycle;
			\filldraw[draw=gray,fill=white!50!gray,rounded corners](5.55*\x,4.85*\y) -- (6.45*\x,4.85*\y) -- (6.45*\x,5.25*\y) -- (5.45*\x,5.85*\y) -- (5.45*\x,6.25*\y) -- (4.45*\x,6.85*\y) -- (4.45*\x,7.25*\y) -- (3.45*\x,7.85*\y) -- (3.45*\x,8.25*\y) -- (2.55*\x,8.25*\y) -- (2.55*\x,7.85*\y) -- (3.55*\x,7.25*\y) -- (3.55*\x,6.85*\y) -- (4.55*\x,6.25*\y) -- (4.55*\x,5.85*\y) -- (5.55*\x,5.25*\y) -- cycle;
			\filldraw[draw=gray,fill=white!50!gray,rounded corners](7.55*\x,4.85*\y) -- (8.45*\x,4.85*\y) -- (8.45*\x,5.25*\y) -- (7.55*\x,5.25*\y) -- cycle;
			\filldraw[draw=gray,fill=white!50!gray,rounded corners](6.55*\x,5.85*\y) -- (7.45*\x,5.85*\y) -- (7.45*\x,6.25*\y) -- (6.55*\x,6.25*\y) -- cycle;
			\filldraw[draw=gray,fill=white!50!gray,rounded corners](6.55*\x,6.85*\y) -- (7.45*\x,6.85*\y) -- (7.45*\x,7.25*\y) -- (6.55*\x,7.25*\y) -- cycle;
			\filldraw[draw=gray,fill=white!50!gray,rounded corners](5.55*\x,6.85*\y) -- (6.45*\x,6.85*\y) -- (6.45*\x,7.25*\y) -- (5.55*\x,7.25*\y) -- cycle;
			\filldraw[draw=gray,fill=white!50!gray,rounded corners](5.55*\x,7.85*\y) -- (6.45*\x,7.85*\y) -- (6.45*\x,8.25*\y) -- (5.55*\x,8.25*\y) -- cycle;
			\filldraw[draw=gray,fill=white!50!gray,rounded corners](4.55*\x,7.85*\y) -- (5.45*\x,7.85*\y) -- (5.45*\x,8.25*\y) -- (4.45*\x,8.85*\y) -- (4.45*\x,9.25*\y) -- (3.55*\x,9.25*\y) -- (3.55*\x,8.85*\y) -- (4.55*\x,8.25*\y) -- cycle;
			\filldraw[draw=gray,fill=white!50!gray,rounded corners](4.55*\x,8.85*\y) -- (5.45*\x,8.85*\y) -- (5.45*\x,9.25*\y) -- (4.55*\x,9.25*\y) -- cycle;
			\filldraw[draw=gray,fill=white!50!gray,rounded corners](3.55*\x,9.85*\y) -- (4.45*\x,9.85*\y) -- (4.45*\x,10.25*\y) -- (3.55*\x,10.25*\y) -- cycle;
			\filldraw[draw=gray,fill=white!50!gray,rounded corners](4.55*\x,2.85*\y) -- (5.45*\x,2.85*\y) -- (5.45*\x,3.25*\y) -- (6.45*\x,3.85*\y) -- (6.45*\x,4.25*\y) -- (5.45*\x,4.85*\y) -- (5.45*\x,5.25*\y) -- (6.45*\x,5.85*\y) -- (6.45*\x,6.25*\y) -- (5.45*\x,6.85*\y) -- (5.45*\x,7.25*\y) -- (4.45*\x,7.85*\y) -- (4.45*\x,8.25*\y) -- (3.55*\x,8.25*\y) -- (3.55*\x,7.85*\y) -- (4.55*\x,7.25*\y) -- (4.55*\x,6.85*\y) -- (5.55*\x,6.25*\y) -- (5.55*\x,5.85*\y) -- (4.55*\x,5.25*\y) -- (4.55*\x,4.85*\y) -- (5.55*\x,4.25*\y) -- (5.55*\x,3.85*\y) -- (4.55*\x,3.25*\y) -- cycle;
			\fill[white!50!gray,rounded corners](4.55*\x,4.85*\y) -- (5.45*\x,4.85*\y) -- (5.45*\x,5.25*\y) -- (4.45*\x,5.85*\y) -- (4.45*\x,6.25*\y) -- (5.45*\x,6.85*\y) -- (5.45*\x,7.25*\y) -- (4.55*\x,7.25*\y) -- (4.55*\x,6.85*\y) -- (3.55*\x,6.25*\y) -- (3.55*\x,5.85*\y) -- (4.55*\x,5.25*\y) -- cycle;
			\draw[gray,rounded corners](4.55*\x,5*\y) -- (4.55*\x,5.25*\y) -- (3.55*\x,5.85*\y) -- (3.55*\x,6.25*\y) -- (4.55*\x,6.85*\y) -- (4.55*\x,7*\y);
			\draw[gray,rounded corners](5*\x,5.525*\y) -- (4.45*\x,5.85*\y) -- (4.45*\x,6.25*\y) -- (5*\x,6.59*\y);
			\filldraw[draw=gray,fill=white!50!gray,rounded corners](3.55*\x,3.85*\y) -- (4.45*\x,3.85*\y) -- (4.45*\x,4.25*\y) -- (3.45*\x,4.85*\y) -- (3.45*\x,5.25*\y) -- (2.45*\x,5.85*\y) -- (2.45*\x,6.25*\y) -- (1.45*\x,6.85*\y) -- (1.45*\x,7.25*\y) -- (2.45*\x,7.85*\y) -- (2.45*\x,8.25*\y) -- (3.45*\x,8.85*\y) -- (3.45*\x,9.25*\y) -- (2.55*\x,9.25*\y) -- (2.55*\x,8.85*\y) -- (1.55*\x,8.25*\y) -- (1.55*\x,7.85*\y) -- (.55*\x,7.25*\y) -- (.55*\x,6.85*\y) -- (1.55*\x,6.25*\y) -- (1.55*\x,5.85*\y) -- (2.55*\x,5.25*\y) -- (2.55*\x,4.85*\y) -- (3.55*\x,4.25*\y) -- cycle;
			\fill[white!50!gray,rounded corners](2.45*\x,8.25*\y) -- (2.45*\x,7.85*\y) -- (3.45*\x,7.25*\y) -- (3.45*\x,6.85*\y) -- (2.45*\x,6.25*\y) -- (2.45*\x,5.85*\y) -- (1.55*\x,5.85*\y) -- (1.55*\x,6.25*\y) -- (2.55*\x,6.85*\y) -- (2.55*\x,7.25*\y) -- (1.55*\x,7.85*\y) -- (1.55*\x,8.25*\y) -- cycle;
			\draw[gray,rounded corners](2.45*\x,6*\y) -- (2.45*\x,6.25*\y) -- (3.45*\x,6.85*\y) -- (3.45*\x,7.25*\y) -- (2.45*\x,7.85*\y) -- (2.45*\x,8*\y);
			\draw[gray,rounded corners](2*\x,6.525*\y) -- (2.55*\x,6.85*\y) -- (2.55*\x,7.25*\y) -- (2*\x,7.59*\y);
		\end{pgfonlayer}
	\end{tikzpicture}
    \caption{The weak order on $\Hyper_{3}$.} 
    \label{fig:weak_order_b3}
\end{sidewaysfigure}

\begin{figure}
    \centering
    \begin{tikzpicture}[scale=.7]\small
		\def\x{2};
		\def\y{2};
		\def\s{.7};
		\def\t{.8};	
		\coordinate(n1) at (6*\x,1*\y);
		\coordinate(n2) at (6*\x,2*\y);
		\coordinate(n3) at (7.5*\x,2.5*\y);
		\coordinate(n4) at (8*\x,3*\y);
		\coordinate(n5) at (6.75*\x,4.25*\y);
		\coordinate(n6) at (9*\x,4*\y);
		\coordinate(n7) at (7.75*\x,5.25*\y);
		\coordinate(n8) at (9*\x,5*\y);
		\coordinate(n9) at (1*\x,6*\y);
		\coordinate(n10) at (2.25*\x,5.75*\y);
		\coordinate(n11) at (7.25*\x,5.75*\y);
		\coordinate(n12) at (2.5*\x,7.5*\y);
		\coordinate(n13) at (1*\x,7*\y);
		\coordinate(n14) at (5.75*\x,7.25*\y);
		\coordinate(n15) at (7.25*\x,6.75*\y);
		\coordinate(n16) at (3*\x,8*\y);
		\coordinate(n17) at (5.75*\x,8.25*\y);
		\coordinate(n18) at (4*\x,9*\y);
		\coordinate(n19) at (5.25*\x,8.75*\y);
		\coordinate(n20) at (4*\x,10*\y);
		\draw[thick](n1) -- (n2);
		\draw[thick](n1) -- (n3);
		\draw[thick](n1) -- (n9);
		\draw[thick](n2) -- (n8);
		\draw[thick](n2) -- (n10);
		\draw[thick](n3) -- (n4);
		\draw[thick](n3) -- (n12);
		\draw[thick](n4) -- (n5);
		\draw[thick](n4) -- (n6);
		\draw[thick](n5) -- (n7);
		\draw[thick](n5) -- (n16);
		\draw[thick](n6) -- (n7);
		\draw[thick](n6) -- (n8);
		\draw[thick](n7) -- (n11);
		\draw[thick](n8) -- (n15);
		\draw[thick](n9) -- (n12);
		\draw[thick](n9) -- (n13);
		\draw[thick](n10) -- (n13);
		\draw[thick](n10) -- (n19);
		\draw[thick](n11) -- (n14);
		\draw[thick](n11) -- (n15);
		\draw[thick](n12) -- (n16);
		\draw[thick](n13) -- (n20);
		\draw[thick](n14) -- (n17);
		\draw[thick](n14) -- (n18);
		\draw[thick](n15) -- (n17);
		\draw[thick](n16) -- (n18);
		\draw[thick](n17) -- (n19);
		\draw[thick](n18) -- (n20);
		\draw[thick](n19) -- (n20);
		\draw(n1) node[scale=\s]{\AlphaPerm{1,1,1,1,1,1}{c3,c2,c1,c1,c2,c3}{\overline{3},\overline{2},\overline{1},1,2,3}{.5}{.25}{white}{\t}};
		\draw(n2) node[scale=\s]{\AlphaPerm{1,1,1,1,1,1}{c3,c2,c1,c1,c2,c3}{\overline{3},\overline{1},\overline{2},2,1,3}{.5}{.25}{white}{\t}};
		\draw(n3) node[scale=\s]{\AlphaPerm{1,1,1,1,1,1}{c3,c2,c1,c1,c2,c3}{\overline{3},\overline{2},1,\overline{1},2,3}{.5}{.25}{white}{\t}};
		\draw(n4) node[scale=\s]{\AlphaPerm{1,1,1,1,1,1}{c3,c2,c1,c1,c2,c3}{\overline{3},\overline{1},2,\overline{2},1,3}{.5}{.25}{white}{\t}};
		\draw(n5) node[scale=\s]{\AlphaPerm{1,1,1,1,1,1}{c3,c2,c1,c1,c2,c3}{\overline{2},\overline{1},3,\overline{3},1,2}{.5}{.25}{white}{\t}};
		\draw(n6) node[scale=\s]{\AlphaPerm{1,1,1,1,1,1}{c3,c2,c1,c1,c2,c3}{\overline{3},1,2,\overline{2},\overline{1},3}{.5}{.25}{white}{\t}};
		\draw(n7) node[scale=\s]{\AlphaPerm{1,1,1,1,1,1}{c3,c2,c1,c1,c2,c3}{\overline{2},1,3,\overline{3},\overline{1},2}{.5}{.25}{white}{\t}};
		\draw(n8) node[scale=\s]{\AlphaPerm{1,1,1,1,1,1}{c3,c2,c1,c1,c2,c3}{\overline{3},2,1,\overline{1},\overline{2},3}{.5}{.25}{white}{\t}};
		\draw(n9) node[scale=\s]{\AlphaPerm{1,1,1,1,1,1}{c3,c2,c1,c1,c2,c3}{\overline{2},\overline{3},\overline{1},1,3,2}{.5}{.25}{white}{\t}};
		\draw(n10) node[scale=\s]{\AlphaPerm{1,1,1,1,1,1}{c3,c2,c1,c1,c2,c3}{\overline{2},\overline{1},\overline{3},3,1,2}{.5}{.25}{white}{\t}};
		\draw(n11) node[scale=\s]{\AlphaPerm{1,1,1,1,1,1}{c3,c2,c1,c1,c2,c3}{\overline{1},2,3,\overline{3},\overline{2},1}{.5}{.25}{white}{\t}};
		\draw(n12) node[scale=\s]{\AlphaPerm{1,1,1,1,1,1}{c3,c2,c1,c1,c2,c3}{\overline{2},\overline{3},1,\overline{1},3,2}{.5}{.25}{white}{\t}};
		\draw(n13) node[scale=\s]{\AlphaPerm{1,1,1,1,1,1}{c3,c2,c1,c1,c2,c3}{\overline{1},\overline{2},\overline{3},3,2,1}{.5}{.25}{white}{\t}};
		\draw(n14) node[scale=\s]{\AlphaPerm{1,1,1,1,1,1}{c3,c2,c1,c1,c2,c3}{1,2,3,\overline{3},\overline{2},\overline{1}}{.5}{.25}{white}{\t}};
		\draw(n15) node[scale=\s]{\AlphaPerm{1,1,1,1,1,1}{c3,c2,c1,c1,c2,c3}{\overline{1},3,2,\overline{2},\overline{3},1}{.5}{.25}{white}{\t}};
		\draw(n16) node[scale=\s]{\AlphaPerm{1,1,1,1,1,1}{c3,c2,c1,c1,c2,c3}{\overline{1},\overline{2},3,\overline{3},2,1}{.5}{.25}{white}{\t}};
		\draw(n17) node[scale=\s]{\AlphaPerm{1,1,1,1,1,1}{c3,c2,c1,c1,c2,c3}{1,3,2,\overline{2},\overline{3},\overline{1}}{.5}{.25}{white}{\t}};
		\draw(n18) node[scale=\s]{\AlphaPerm{1,1,1,1,1,1}{c3,c2,c1,c1,c2,c3}{2,1,3,\overline{3},\overline{1},\overline{2}}{.5}{.25}{white}{\t}};
		\draw(n19) node[scale=\s]{\AlphaPerm{1,1,1,1,1,1}{c3,c2,c1,c1,c2,c3}{2,3,1,\overline{1},\overline{3},\overline{2}}{.5}{.25}{white}{\t}};
		\draw(n20) node[scale=\s]{\AlphaPerm{1,1,1,1,1,1}{c3,c2,c1,c1,c2,c3}{3,2,1,\overline{1},\overline{2},\overline{3}}{.5}{.25}{white}{\t}};
	\end{tikzpicture}
    \caption{The lattice $\Tamari_{B}(3)$.}
    \label{fig:tamari_b3}
\end{figure}

\begin{theorem}[\cite{reading06cambrian,ingalls09noncrossing}]\label{thm:tamari_type_b}
    For $n>0$, the poset $\Tamari_{B}(n)$ is both a sublattice and a quotient lattice (for the projection sending a sign-symmetric permutation to the largest aligned permutation smaller than itself) of $\Weak(\Hyper_{n})$. Moreover, it is congruence uniform and trim.
\end{theorem}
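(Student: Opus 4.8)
The plan is to treat $\Tamari_{B}(n)$ as the Cambrian lattice attached to the linear Coxeter element $\linc$ and to read off all four assertions from the inversion-set description of aligned elements together with the lattice-theoretic toolbox of Section~\ref{sec:posets_lattices}. By Lemma~\ref{lem:type_b_alignment} the underlying set $\Hyper_{n}(231)$ coincides with $\Align\bigl(\Hyper_{n},\wbo(\linc)\bigr)$, so throughout I would argue with the forcing conditions on inversion sets recorded in Lemma~\ref{lem:type_b_forcing}. The first task is the sublattice claim. Here I would use that the inversion sets of $\Hyper_{n}$ are exactly the closed (biconvex) subsets of $\Phi^{+}$, that meets in $\Weak(\Hyper_{n})$ correspond to intersections of inversion sets, and that joins correspond to the closure of the union. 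The point is then to check that the forcing condition of Lemma~\ref{lem:type_b_forcing} is stable under both operations: the cover inversions of an intersection are controlled by those of the two factors, and similarly for the closure of a union, so that $\invset(x)\cap\invset(y)$ and the closure of $\invset(x)\cup\invset(y)$ again satisfy the forcing clauses. Stability under meet and join is precisely the statement that $\Hyper_{n}(231)$ is a sublattice of $\Weak(\Hyper_{n})$.

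For the quotient claim I would introduce the projection $\rho$ sending $\pi$ to the largest aligned element below it; closure under join, just established, makes $\rho$ well defined, since $\rho(\pi)$ is simply the join of all aligned elements $\weakorder\pi$. Let $\Theta$ be the equivalence relation whose classes are the fibres of $\rho$, and verify the three conditions of Lemma~\ref{lem:congruences_combin}: that each fibre is an interval of $\Weak(\Hyper_{n})$ with minimum $\rho(\pi)$, that the down-projection $\rho$ is order preserving, and that the map sending $\pi$ to the maximum of its fibre is order preserving. I expect these to follow from a local, cover-by-cover analysis, deciding for each cover $\sigma\weakcover\pi$ of $\Weak(\Hyper_{n})$ whether it survives or is contracted by reading off the three clauses of Lemma~\ref{lem:type_b_forcing}. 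Granting this, $\Theta$ is a lattice congruence and, choosing minimal (aligned) representatives, $\Weak(\Hyper_{n})/\Theta\cong\Tamari_{B}(n)$, which is the quotient assertion. This local combinatorial bookkeeping, together with the meet/join stability above, is the main obstacle of the whole argument.

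The remaining two properties then follow formally. Since $\Hyper_{n}$ is finite, $\Weak(\Hyper_{n})$ is congruence uniform by Theorem~\ref{thm:weak_order_congruence_uniform}; as $\Tamari_{B}(n)$ is a quotient lattice it is the image of a surjective lattice homomorphism, so Proposition~\ref{prop:congruence_uniform_preserving} makes it congruence uniform as well. By Lemma~\ref{lem:congruence_uniform_semidistributive} it is therefore semidistributive, whence Lemma~\ref{lem:semidistributive_irreducibles} gives $\bigl\lvert\JI\bigl(\Tamari_{B}(n)\bigr)\bigr\rvert=\bigl\lvert\MI\bigl(\Tamari_{B}(n)\bigr)\bigr\rvert$. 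To conclude trimness via Theorem~\ref{thm:semidistributive_extremal_is_trim} it then suffices to establish extremality, that is, that this common number of irreducibles equals $\ell\bigl(\Tamari_{B}(n)\bigr)$. I would show both quantities equal the number $n^{2}$ of reflections of $\Hyper_{n}$: each join-irreducible aligned element is pinned down by the single new inversion created at its unique lower cover, giving an injection into the $n^{2}$ reflections, while a maximal chain from $\id$ to $\wo$ inside $\Hyper_{n}(231)$ has length $\ell_{S}(\wo)=n^{2}$ because $\wo$ is itself aligned. Extremality together with semidistributivity then yields that $\Tamari_{B}(n)$ is trim.
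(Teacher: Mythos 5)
The decisive flaw is in your sublattice step: meets in $\Weak(\Hyper_{n})$ do \emph{not} correspond to intersections of inversion sets. An intersection of two inversion sets is in general not an inversion set at all: for $u,v\in\Hyper_{3}$ with right parts $3\;1\;2$ and $2\;3\;1$ one gets $\invset(u)\cap\invset(v)=\bigl\{\lleft 1\;3\rright\bigr\}$, which is the inversion set of no element (any $\pi$ with $\pi(1)\leq\pi(2)\leq\pi(3)$ cannot have $\pi(1)>\pi(3)$), and indeed $u\wedge v=\id$. The correct statement is that $\invset(u\wedge v)$ is the largest subset of the intersection that is itself an inversion set, and even with this correction your plan does not go through as written: the clauses of Lemma~\ref{lem:type_b_forcing} are hypotheses on \emph{cover} inversions, and a cover inversion of $u\wedge v$ (or of the join) need not be a cover inversion of $u$ or of $v$, so the forcing conditions satisfied by the factors say nothing directly about the meet or join. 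Closure of the aligned elements under the lattice operations is precisely the hard content of Reading's theorem, which is proved by induction on length and parabolic rank for sortable elements, not by an inversion-set stability check; note that the paper does not prove Theorem~\ref{thm:tamari_type_b} itself but imports it from \cite{reading06cambrian} and \cite{ingalls09noncrossing}. Your derivation of the projection $\rho$ from join-closure is also fragile: in the parabolic setting the sublattice property genuinely fails (see the remark concerning $\alpha=(0,2,1)$), yet the projection still exists --- the paper's Lemma~\ref{lem:unique_maximal} constructs it by induction on the number of inversions with no sublattice input, which is the more robust route.

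The remainder of your outline matches, in skeleton, what the paper actually proves in Section~\ref{sec:parabolic_tamari_lattice} for the parabolic generalization (of which this theorem is the case $\alpha=(0,1,\ldots,1)$), but two of your asserted steps are exactly where the substantive work sits. First, the existence of a maximum in each fibre of $\rho$ (condition (iii) of Lemma~\ref{lem:congruences_combin}) is not routine cover-by-cover bookkeeping: the paper needs a second, upward projection built from $(\alpha,312)$-avoidance and the order-reversing map $\iota(\pi)=\pi\woa$, together with Propositions~\ref{prop:cover_equivalence} and~\ref{prop:two-ends-meet}. Second, your length computation asserts a maximal chain of length $n^{2}$ inside $\Hyper_{n}(231)$ ``because $\wo$ is itself aligned''; alignment of the top element does not produce a chain of aligned elements. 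The paper realizes such a chain as the suffixes of $\wbo(\linc)=(s_{n-1}\cdots s_{1}s_{0})^{n}$ and proves every suffix is aligned via the skew-shape description of the inversion order (Proposition~\ref{prop:alpha_tamari_length}, using Notes~\ref{note:split_forcing} and~\ref{note:join_forcing}). Granting that chain, your counting logic is sound: $\ell\leq\lvert\JI\rvert$, your injection from join-irreducibles to the $n^{2}$ reflections, and Lemmas~\ref{lem:congruence_uniform_semidistributive} and~\ref{lem:semidistributive_irreducibles} with Theorem~\ref{thm:semidistributive_extremal_is_trim} do yield extremality and trimness, and the congruence-uniformity transfer via Proposition~\ref{prop:congruence_uniform_preserving} is correct. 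The gap is thus concentrated in the sublattice argument and the two unproved constructions above.
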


\section{Parabolic quotients in type $B$}
    \label{sec:parabolic_type_b}
\subsection{A distinguished set of representatives}
    \label{sec:representatives}
Let $W$ be a Coxeter group, whose set of simple reflections is $S$. Any $J\subseteq S$ generates the \defn{parabolic} subgroup $W_{J}$ of $W$. The \defn{parabolic} quotient $W^{J}\defs W/W_{J}$ consists of the left cosets of $W_J$ in $W$. By construction, every such coset has a member of minimal Coxeter length. In the following, we identify the quotient $W^{J}$ with this set of minimal length representatives. Equivalently, we can define the set $W^{J}$ as follows:
\begin{displaymath}
    W^{J} = \bigl\{w\in W\mid \ell_{S}(ws)>\ell_{S}(w)\;\text{for all}\;s\in J\bigr\}.
\end{displaymath}

If we consider the Coxeter group of type $B$ in its permutation representation defined above, we may describe the parabolic quotients neatly as follows. Let $n>0$, and consider a composition $\alpha=(\alpha_{1},\alpha_{2},\ldots,\alpha_{r})$ of $n$, \ie $\alpha_{1}+\alpha_{2}+\cdots+\alpha_{r}=n$.  Let us define $p_{0}\defs 0$ and $p_{i}\defs\alpha_{1}+\alpha_{2}+\cdots+\alpha_{i}$.  The number of compositions of $n$ is $2^{n-1}$, because we may identify $\alpha$ with the subset $\{p_{1},p_{2},\ldots,p_{r-1}\}$ of $[n-1]$.  Conversely, with $\{k_{1},k_{2},\ldots,k_{r-1}\}\subseteq [n-1]$ we may associate the composition $(k_{1},k_{2}-k_{1},k_{3}-k_{2},\ldots,k_{r-1}-k_{r-2},n-k_{r-1})$ of $n$.

A \defn{type-$B$ composition} of $n>0$ is a composition of $n$ with a possible \defn{zero-component} $\alpha_{0}\defs 0$ in the beginning. A type-$B$ composition is \defn{split} if it has a zero-component, and it is \defn{join} otherwise.

The number of type-$B$ compositions of $n$ is $2^{n}$, because any type-$B$ composition $\alpha$ is associated with a unique subset $J_{\alpha}\subseteq S$. The non-zero-components of $\alpha$ determine a subset of $\{s_{1},s_{2},\ldots,s_{n-1}\}$ as before, and we add $s_{0}$ to this set if and only if $\alpha$ is split.

Therefore, type-$B$ compositions are in natural bijection with the subsets of the simple reflections of $\Hyper_{n}$, so that if $\alpha$ is a type-$B$ composition, then there exists a unique parabolic quotient of $\Hyper_{n}$ determined by the \textit{complement} $S\setminus J_{\alpha}$. We write $\Hyper_{\alpha}$ for the resulting set of minimal-length representatives (see Example~\ref{ex-parab}).

\begin{remark}
	Let us emphasize the difference between the notions $\Hyper_{n}$ where the subscript is an \textit{integer}, and $\Hyper_{\alpha}$ where the subscript is a type-$B$ \textit{composition}.  The first denotes the full hyperoctahedral group of degree $n$ and the second denotes the set of minimal length representatives of the parabolic quotient associated with $\alpha$.  Clearly, if $\alpha=(0,1,1,\ldots,1)$ is a composition of $n$, then $\Hyper_{(0,1,1,\ldots,1)}=\Hyper_{n}$.
\end{remark}

Let $\alpha$ be a type-$B$ composition with $r$ non-zero-components $\alpha_{1},\alpha_{2},\ldots,\alpha_{r}$ appearing in that order. As before, we define $p_{i}=\alpha_{1}+\alpha_{2}+\cdots+\alpha_{i}$, and we set $\overline{p}_{i}\defs -p_{i}$.  We define $\Part(\alpha)$ as the following partition of $\pm[n]$:
\begin{displaymath}
    \begin{cases}
        \bigl\{[\overline{n},\overline{p}_{r-1}{-}1],\ldots,[\overline{p}_{2},\overline{p}_{1}{-}1],[\overline{p}_{1},\overline{1}],[1,p_{1}],[p_{1}{+}1,p_{2}],\ldots, [p_{r-1}{+}1,n]\bigr\},
        & \text{if}\;\alpha\;\text{is split},\\
        \bigl\{[\overline{n},\overline{p}_{r-1}{-}1],\ldots,[\overline{p}_{2},\overline{p}_{1}{-}1],[\overline{p}_{1},p_{1}],[p_{1}{+}1,p_{2}],\ldots,[p_{r-1}{+}1,n]\bigr\},
        & \text{if}\;\alpha\;\text{is join}.
    \end{cases}
\end{displaymath}
This means, if $\alpha$ is split, that we partition $\pm[n]$ into $2r$ symmetrically placed blocks, and if $\alpha$ is join, we partition it into $2r-1$ blocks, where the middle block contains positive and negative positions. This partition also explains the terminology ``join'' and ``split'' for the type-$B$ compositions. The partition corresponding to a join composition has a block joining positive and negative positions, the partition corresponding to a split composition does not.

For an index $a\in[n]$ of the right part, we say that $a$ is \defn{in the $i\th$ $\alpha$-region} if $p_{i-1}\leq a<p_{i}$, where $p_{0}=0$. We sometimes write $\indicator_{\alpha}(a)=i$ in this situation.

\begin{lemma}\label{lem:increasing_blocks}
    Let $\alpha$ be a type-$B$ composition. A sign-symmetric permutation $\pi\in\Hyper_{n}$ belongs to $\Hyper_{\alpha}$ if and only if the long one-line notation of $\pi$, when partitioned according to $\Part(\alpha)$, is increasing in each block.
\end{lemma}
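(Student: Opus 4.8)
The plan is to unwind the definition of the parabolic quotient into a list of local inequalities on the right part of $\pi$, and then to recognize these inequalities as precisely the statement that each block of $\Part(\alpha)$ is increasing. Recall that $\Hyper_{\alpha}$ is the set of minimal-length representatives for the parabolic quotient determined by $K\defs S\setminus J_{\alpha}$, so that by definition $\pi\in\Hyper_{\alpha}$ if and only if $\ell_{S}(\pi s)>\ell_{S}(\pi)$ for every $s\in K$. First I would make $K$ explicit: writing $p_{1}<p_{2}<\cdots<p_{r-1}$ for the partial sums of the non-zero components, the generators in $J_{\alpha}\cap\{s_{1},\ldots,s_{n-1}\}$ are exactly $\{s_{p_{1}},\ldots,s_{p_{r-1}}\}$, while $s_{0}\in J_{\alpha}$ precisely when $\alpha$ is split. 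Hence $K$ consists of all $s_{i}$ with $i\in[n-1]\setminus\{p_{1},\ldots,p_{r-1}\}$, together with $s_{0}$ exactly when $\alpha$ is join.

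Next I would translate each membership condition $\ell_{S}(\pi s)>\ell_{S}(\pi)$ into an inequality, using the inversion description of Lemma~\ref{lem:type_b_inversions} (equivalently the right-descent criterion of Corollary~\ref{cor-si-less-invs}). Since $s_{i}=\lleft i\;i{+}1\rright$ lies in $\invset(\pi)$ if and only if $\pi(i)>\pi(i{+}1)$, the condition $\ell_{S}(\pi s_{i})>\ell_{S}(\pi)$ is equivalent to $\pi(i)<\pi(i{+}1)$ for $i\geq 1$; likewise $s_{0}=\sleft 1\sright\in\invset(\pi)$ if and only if $\pi(1)<0$, so $\ell_{S}(\pi s_{0})>\ell_{S}(\pi)$ is equivalent to $\pi(1)>0$. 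Combining with the previous step, $\pi\in\Hyper_{\alpha}$ is equivalent to the conjunction of the conditions $\pi(i)<\pi(i{+}1)$ for every $i\in[n-1]\setminus\{p_{1},\ldots,p_{r-1}\}$ and, when $\alpha$ is join, the additional condition $\pi(1)>0$.

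It then remains to identify this conjunction with the assertion that $\pi$ is increasing on each block of $\Part(\alpha)$. The right-hand blocks $[p_{j-1}{+}1,p_{j}]$ are increasing precisely when $\pi(i)<\pi(i{+}1)$ for all non-cut indices $i$, which is the first family of conditions. For the negative blocks I would invoke sign-symmetry: the identity $\pi(-m)=-\pi(m)$ shows that $\pi$ is increasing on $[\overline{p}_{j},\overline{p}_{j-1}{-}1]$ if and only if it is increasing on the mirror block $[p_{j-1}{+}1,p_{j}]$, so the negative blocks impose no new constraints. The only genuinely new information sits at the center: when $\alpha$ is split the positions $-1$ and $1$ lie in different blocks, so no inequality is required across the bar, matching the absence of $s_{0}$ from $K$; when $\alpha$ is join the central block $[\overline{p}_{1},p_{1}]$ forces $\pi(-1)<\pi(1)$, which by sign-symmetry reads $\pi(1)>0$, matching the $s_{0}$-condition.

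Since every step above is an equivalence, chaining them yields the lemma. I expect the only delicate point to be the central-block bookkeeping: one must check that ``increasing across the bar'' contributes exactly the single inequality $\pi(-1)<\pi(1)$ and nothing more, and that this inequality corresponds to $s_{0}$, so that the split/join dichotomy of $\Part(\alpha)$ lines up precisely with the presence or absence of $s_{0}$ in $K$. Everything else is a routine translation between descents, inversions, and block inequalities.
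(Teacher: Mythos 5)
Your proof is correct and follows essentially the same route as the paper's: unwind the definition of $\Hyper_{\alpha}$ via the complement $S\setminus J_{\alpha}$, translate each condition $\ell_{S}(\pi s)>\ell_{S}(\pi)$ into the local inequalities $\pi(i)<\pi(i{+}1)$ (resp.\ $\pi(1)>0$ for $s_{0}$) using Lemma~\ref{lem:type_b_inversions}, and match these against the blocks of $\Part(\alpha)$. You merely spell out the bookkeeping that the paper compresses into ``comparing this with the definition of $\Part(\alpha)$ yields the claim,'' in particular the sign-symmetry argument showing the negative blocks are redundant and that the join case contributes exactly the one cross-bar inequality $\pi(-1)<\pi(1)$ corresponding to $s_{0}$.
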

\begin{proof}
    Let $\alpha$ be a type-$B$ composition, and let $J_{\alpha}$ be the corresponding subset of the simple reflections $S=\{s_{0},s_{1},\ldots,s_{n-1}\}$.  Note that we use the set $S\setminus J_{\alpha}$ to construct the parabolic quotient $ \Hyper_{\alpha}$.

    Let $\pi\in\Hyper_{n}$ and $s\in S$ be such that $\ell_{S}(\pi')>\ell_{S}(\pi)$ for the product $\pi'=\pi s$.  Since $s$ is an involution, we get $s\in\invset(\pi')$, and according to Lemma~\ref{lem:type_b_inversions} we have $\pi'(1)<0$ if $s=\sleft 1\sright$ or $\pi'(i)>\pi'(i{+}1)$ if $s=\lleft i\;i{+}1\rright$. Comparing this with the definition of $\Part(\alpha)$ yields the claim.
\end{proof}

\begin{example}\label{ex-parab}
    Let $n=3$ and consider $\alpha=(0,1,2)$.  Then $\alpha$ is split and we obtain $J_{\alpha}=\{s_{0},s_{1}\}$, which means that $S\setminus J_{\alpha}=\{s_{2}\}$.  The long one-line notation is therefore partitioned into four parts: $\{-3,-2\},\{-1\},\{1\},\{2,3\}$. We highlight the first and last part in green and the second and third part in orange.  Then, $\Hyper_{\alpha}$ has the following 24 elements:
    \begin{center}
	   $\begin{aligned}
		& \hspace*{-.3cm}\raisebox{\rsp}{\AlphaPerm{2,1,1,2}{c2,c1,c1,c2}{\overline{3},\overline{2},\overline{1},1,2,3}{.5}{.25}{white}{1}}\hspace*{-.3cm},
		&& \hspace*{-.3cm}\raisebox{\rsp}{\AlphaPerm{2,1,1,2}{c2,c1,c1,c2}{\overline{3},2,\overline{1},1,\overline{2},3}{.5}{.25}{white}{1}}\hspace*{-.3cm},
		&& \hspace*{-.3cm}\raisebox{\rsp}{\AlphaPerm{2,1,1,2}{c2,c1,c1,c2}{\overline{2},3,\overline{1},1,\overline{3},2}{.5}{.25}{white}{1}}\hspace*{-.3cm},
		&& \hspace*{-.3cm}\raisebox{\rsp}{\AlphaPerm{2,1,1,2}{c2,c1,c1,c2}{2,3,\overline{1},1,\overline{3},\overline{2}}{.5}{.25}{white}{1}}\hspace*{-.3cm},\\
		& \hspace*{-.3cm}\raisebox{\rsp}{\AlphaPerm{2,1,1,2}{c2,c1,c1,c2}{\overline{3},\overline{2},1,\overline{1},2,3}{.5}{.25}{white}{1}}\hspace*{-.3cm},
		&& \hspace*{-.3cm}\raisebox{\rsp}{\AlphaPerm{2,1,1,2}{c2,c1,c1,c2}{\overline{3},2,1,\overline{1},\overline{2},3}{.5}{.25}{white}{1}}\hspace*{-.3cm},
		&& \hspace*{-.3cm}\raisebox{\rsp}{\AlphaPerm{2,1,1,2}{c2,c1,c1,c2}{\overline{2},3,1,\overline{1},\overline{3},2}{.5}{.25}{white}{1}}\hspace*{-.3cm},
		&& \hspace*{-.3cm}\raisebox{\rsp}{\AlphaPerm{2,1,1,2}{c2,c1,c1,c2}{2,3,1,\overline{1},\overline{3},\overline{2}}{.5}{.25}{white}{1}}\hspace*{-.3cm},\\
		& \hspace*{-.3cm}\raisebox{\rsp}{\AlphaPerm{2,1,1,2}{c2,c1,c1,c2}{\overline{3},\overline{1},\overline{2},2,1,3}{.5}{.25}{white}{1}}\hspace*{-.3cm},
		&& \hspace*{-.3cm}\raisebox{\rsp}{\AlphaPerm{2,1,1,2}{c2,c1,c1,c2}{\overline{3},1,\overline{2},2,\overline{1},3}{.5}{.25}{white}{1}}\hspace*{-.3cm},
		&& \hspace*{-.3cm}\raisebox{\rsp}{\AlphaPerm{2,1,1,2}{c2,c1,c1,c2}{\overline{1},3,\overline{2},2,\overline{3},1}{.5}{.25}{white}{1}}\hspace*{-.3cm},
		&& \hspace*{-.3cm}\raisebox{\rsp}{\AlphaPerm{2,1,1,2}{c2,c1,c1,c2}{1,3,\overline{2},2,\overline{3},\overline{1}}{.5}{.25}{white}{1}}\hspace*{-.3cm},\\
		& \hspace*{-.3cm}\raisebox{\rsp}{\AlphaPerm{2,1,1,2}{c2,c1,c1,c2}{\overline{3},\overline{1},2,\overline{2},1,3}{.5}{.25}{white}{1}}\hspace*{-.3cm},
		&& \hspace*{-.3cm}\raisebox{\rsp}{\AlphaPerm{2,1,1,2}{c2,c1,c1,c2}{\overline{3},1,2,\overline{2},\overline{1},3}{.5}{.25}{white}{1}}\hspace*{-.3cm},
		&& \hspace*{-.3cm}\raisebox{\rsp}{\AlphaPerm{2,1,1,2}{c2,c1,c1,c2}{\overline{1},3,2,\overline{2},\overline{3},1}{.5}{.25}{white}{1}}\hspace*{-.3cm},
		&& \hspace*{-.3cm}\raisebox{\rsp}{\AlphaPerm{2,1,1,2}{c2,c1,c1,c2}{1,3,2,\overline{2},\overline{3},\overline{1}}{.5}{.25}{white}{1}}\hspace*{-.3cm},\\
		& \hspace*{-.3cm}\raisebox{\rsp}{\AlphaPerm{2,1,1,2}{c2,c1,c1,c2}{\overline{2},\overline{1},\overline{3},3,1,2}{.5}{.25}{white}{1}}\hspace*{-.3cm},
		&& \hspace*{-.3cm}\raisebox{\rsp}{\AlphaPerm{2,1,1,2}{c2,c1,c1,c2}{\overline{2},1,\overline{3},3,\overline{1},2}{.5}{.25}{white}{1}}\hspace*{-.3cm},
		&& \hspace*{-.3cm}\raisebox{\rsp}{\AlphaPerm{2,1,1,2}{c2,c1,c1,c2}{\overline{1},2,\overline{3},3,\overline{2},1}{.5}{.25}{white}{1}}\hspace*{-.3cm},
		&& \hspace*{-.3cm}\raisebox{\rsp}{\AlphaPerm{2,1,1,2}{c2,c1,c1,c2}{1,2,\overline{3},3,\overline{2},\overline{1}}{.5}{.25}{white}{1}}\hspace*{-.3cm},\\
		& \hspace*{-.3cm}\raisebox{\rsp}{\AlphaPerm{2,1,1,2}{c2,c1,c1,c2}{\overline{2},\overline{1},3,\overline{3},1,2}{.5}{.25}{white}{1}}\hspace*{-.3cm},
		&& \hspace*{-.3cm}\raisebox{\rsp}{\AlphaPerm{2,1,1,2}{c2,c1,c1,c2}{\overline{2},1,3,\overline{3},\overline{1},2}{.5}{.25}{white}{1}}\hspace*{-.3cm},
		&& \hspace*{-.3cm}\raisebox{\rsp}{\AlphaPerm{2,1,1,2}{c2,c1,c1,c2}{\overline{1},2,3,\overline{3},\overline{2},1}{.5}{.25}{white}{1}}\hspace*{-.3cm},
		&& \hspace*{-.3cm}\raisebox{\rsp}{\AlphaPerm{2,1,1,2}{c2,c1,c1,c2}{1,2,3,\overline{3},\overline{2},\overline{1}}{.5}{.25}{white}{1}}\hspace*{-.3cm}.\\
	   \end{aligned}$
    \end{center}

    If we choose $\alpha'=(1,2)$, then $\alpha'$ is join and we obtain $J_{\alpha'}=\{s_{1}\}$ which gives us $S\setminus\{J_{\alpha'}\}=\{s_{0},s_{2}\}$. We may partition the long one-line notation into three parts $\{-3,-2\},\{,-1,1\},\{2,3\}$, and we highlight the first and the last part in green and the middle part in orange. The set $\Hyper_{\alpha'}$ consists of the following twelve elements:
    \begin{center}
	   $\begin{aligned}
		& \hspace*{-.3cm}\raisebox{\rsp}{\AlphaPerm{2,2,2}{c2,c1,c2}{\overline{3},\overline{2},\overline{1},1,2,3}{.5}{.25}{white}{1}}\hspace*{-.3cm},
		&& \hspace*{-.3cm}\raisebox{\rsp}{\AlphaPerm{2,2,2}{c2,c1,c2}{\overline{3},2,\overline{1},1,\overline{2},3}{.5}{.25}{white}{1}}\hspace*{-.3cm},
		&& \hspace*{-.3cm}\raisebox{\rsp}{\AlphaPerm{2,2,2}{c2,c1,c2}{\overline{2},3,\overline{1},1,\overline{3},2}{.5}{.25}{white}{1}}\hspace*{-.3cm},
		&& \hspace*{-.3cm}\raisebox{\rsp}{\AlphaPerm{2,2,2}{c2,c1,c2}{2,3,\overline{1},1,\overline{3},\overline{2}}{.5}{.25}{white}{1}}\hspace*{-.3cm},\\
		& \hspace*{-.3cm}\raisebox{\rsp}{\AlphaPerm{2,2,2}{c2,c1,c2}{\overline{3},\overline{1},\overline{2},2,1,3}{.5}{.25}{white}{1}}\hspace*{-.3cm},
		&& \hspace*{-.3cm}\raisebox{\rsp}{\AlphaPerm{2,2,2}{c2,c1,c2}{\overline{3},1,\overline{2},2,\overline{1},3}{.5}{.25}{white}{1}}\hspace*{-.3cm},
		&& \hspace*{-.3cm}\raisebox{\rsp}{\AlphaPerm{2,2,2}{c2,c1,c2}{\overline{1},3,\overline{2},2,\overline{3},1}{.5}{.25}{white}{1}}\hspace*{-.3cm},
		&& \hspace*{-.3cm}\raisebox{\rsp}{\AlphaPerm{2,2,2}{c2,c1,c2}{1,3,\overline{2},2,\overline{3},\overline{1}}{.5}{.25}{white}{1}}\hspace*{-.3cm},\\
		& \hspace*{-.3cm}\raisebox{\rsp}{\AlphaPerm{2,2,2}{c2,c1,c2}{\overline{2},\overline{1},\overline{3},3,1,2}{.5}{.25}{white}{1}}\hspace*{-.3cm},
		&& \hspace*{-.3cm}\raisebox{\rsp}{\AlphaPerm{2,2,2}{c2,c1,c2}{\overline{2},1,\overline{3},3,\overline{1},2}{.5}{.25}{white}{1}}\hspace*{-.3cm},
		&& \hspace*{-.3cm}\raisebox{\rsp}{\AlphaPerm{2,2,2}{c2,c1,c2}{\overline{1},2,\overline{3},3,\overline{2},1}{.5}{.25}{white}{1}}\hspace*{-.3cm},
		&& \hspace*{-.3cm}\raisebox{\rsp}{\AlphaPerm{2,2,2}{c2,c1,c2}{1,2,\overline{3},3,\overline{2},\overline{1}}{.5}{.25}{white}{1}}\hspace*{-.3cm}.
	   \end{aligned}$
    \end{center}
\end{example}

%

\subsection{Longest elements in parabolic quotients}
    \label{sec:parabolic_longest_elements}
The next result establishes that the set of minimal length representatives of parabolic quotients behave well with respect to the left weak order.

\begin{theorem}[{\cite[Theorem~4.1]{bjorner88generalized}}]\label{thm:weak_order_quotients}
    Let $W$ be a finite Coxeter group, and let $J\subseteq S$ be a set of simple reflections. There exists an element $w_{\circ}^J\in W^{J}$ such that the weak order $\Weak(W^{J})$ is isomorphic to the interval $[\id,\woa]$ in $\Weak(W)$.  
\end{theorem}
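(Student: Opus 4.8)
The plan is to show that, as a subset of $W$, the parabolic quotient $W^{J}$ coincides with a principal order ideal of the left weak order, so that the desired isomorphism is simply the inclusion map (the order on $W^{J}$ being the one inherited from $\Weak(W)$). Concretely, I will exhibit a single element $\woa\in W^{J}$ that is the $\weakorder$-maximum of $W^{J}$ and then prove the set equality $W^{J}=[\id,\woa]$.

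First I would translate the membership condition into inversion sets. Since for $s\in J$ one has $\ell_{S}(ws)>\ell_{S}(w)$ exactly when $s\notin\invset(w)$, we may write $W^{J}=\bigl\{w\in W:\invset(w)\cap J=\emptyset\bigr\}$. Because $u\weakorder v$ is equivalent to $\invset(u)\subseteq\invset(v)$, this description makes $W^{J}$ a \emph{down-set} of $\Weak(W)$: if $u\weakorder w$ and $\invset(w)\cap J=\emptyset$, then $\invset(u)\cap J\subseteq\invset(w)\cap J=\emptyset$, so $u\in W^{J}$. As $\invset(\id)=\emptyset$, the ideal $W^{J}$ contains $\id$ as its minimum.

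Next I would pin down the maximum. Let $z$ denote the longest element of the parabolic subgroup $W_{J}$ and set $\woa\defs\wo z$. A length computation using $\ell_{S}(\wo u)=\ell_{S}(\wo)-\ell_{S}(u)$, applied with $u=zs$ and the fact that every $s\in J$ is a right descent of $z$, yields $\ell_{S}(\woa s)=\ell_{S}(\woa)+1$ for all $s\in J$, so $\woa\in W^{J}$. Passing to the root realization, where $\invset(w)$ corresponds to the positive roots $\alpha$ with $w(\alpha)\notin\Phi_{W}^{+}$ and where $\wo$ negates every positive root, one finds that $\woa=\wo z$ sends $\alpha$ to a negative root if and only if $z(\alpha)\in\Phi_{W}^{+}$, i.e. if and only if $\alpha\notin\Phi_{W_{J}}^{+}$. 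Hence $\invset(\woa)$ is precisely the set of reflections whose roots lie in $\Phi_{W}^{+}\setminus\Phi_{W_{J}}^{+}$. For an arbitrary $w\in W^{J}$, the standard characterization of minimal coset representatives gives $w(\Phi_{W_{J}}^{+})\subseteq\Phi_{W}^{+}$, so $\invset(w)$ contains no reflection with root in $\Phi_{W_{J}}^{+}$; therefore $\invset(w)\subseteq\invset(\woa)$ and $w\weakorder\woa$. Thus $\woa$ is the $\weakorder$-maximum of $W^{J}$, and since $W^{J}$ is a down-set containing $\woa$, everything below $\woa$ lies in $W^{J}$, which gives $W^{J}=[\id,\woa]$ and the claimed isomorphism.

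The main obstacle is the single nondefinitional step: upgrading the defining condition ``$w$ has no simple right inversion in $J$'' to ``$w$ has no right inversion whose root lies in the full parabolic subsystem $\Phi_{W_{J}}^{+}$''. This is exactly the convexity (biclosedness) of inversion sets, equivalently the characterization $w\in W^{J}\iff w(\Phi_{W_{J}}^{+})\subseteq\Phi_{W}^{+}$; I would either invoke it as a standard fact or prove it by writing a positive root of $\Phi_{W_{J}}^{+}$ as a $W_{J}$-image of a simple root of $J$ and inducting on length within $W_{J}$. Everything else reduces to bookkeeping with inversion sets and the length-reversing property of $\wo$.
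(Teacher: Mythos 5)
The paper does not prove this statement at all: it is imported verbatim from Bj{\"o}rner and Wachs (Theorem~4.1 of the cited reference), so there is no internal proof to compare against. Your argument is correct and self-contained, and it is essentially the standard proof of the parabolic special case: the identification $W^{J}=\{w\colon \invset(w)\cap J=\emptyset\}$ together with $u\weakorder v\iff\invset(u)\subseteq\invset(v)$ makes $W^{J}$ a down-set of $\Weak(W)$, and $\woa=\wo w_{0,J}$ (with $w_{0,J}$ the longest element of $W_{J}$) is its maximum because $\invset(\woa)$ consists exactly of the reflections with roots in $\Phi_{W}^{+}\setminus\Phi_{W_{J}}^{+}$. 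You also correctly isolate the one genuinely nontrivial step, namely upgrading ``no simple right inversion in $J$'' to ``no right inversion with root in $\Phi_{W_{J}}^{+}$''; this closes easily, since $w\in W^{J}$ forces $w(\alpha_{s})\in\Phi_{W}^{+}$ for all $s\in J$, and any $\alpha\in\Phi_{W_{J}}^{+}$ is a nonnegative combination of the $\alpha_{s}$, whence $w(\alpha)$ is a nonnegative combination of positive roots and is therefore positive. Note that your proof actually yields the stronger conclusion that $W^{J}$ \emph{equals} $[\id,\woa]$ as a subposet (not merely an abstract isomorphism), which is the form in which the paper later uses the result; the only thing worth double-checking, which you do implicitly and consistently, is the compatibility of conventions (left cosets, left weak order, right inversion sets).
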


\begin{example}
    We continue Example~\ref{ex-parab} with $\alpha=(0,1,2)$, and observe that its maximal element is $\hspace*{-.2cm}\raisebox{\rsp}{\AlphaPerm{2,1,1,2}{c2,c1,c1,c2}{2,3,1,\overline{1},\overline{3},\overline{2}}{.5}{.25}{white}{1}}\hspace*{-.3cm}$. With $\alpha'=(1,2)$, the maximal element is $\hspace*{-.2cm}\raisebox{\rsp}{\AlphaPerm{2,2,2}{c2,c1,c2}{2,3,\overline{1},1,\overline{3},\overline{2}}{.5}{.25}{white}{1}}\hspace*{-.3cm}$.
\end{example}

It is our declared goal to describe the $\wboa(\linc)$-aligned elements of $\Hyper_{\alpha}$, and we therefore need a good understanding of the \defn{parabolic longest element} $\woa$ (which is the element $w_{\circ}^J$ from Theorem~\ref{thm:weak_order_quotients} for the parabolic quotient $\Hyper_\alpha$) and its $\linc$-sorting word.
The long one-line notation of $\woa$ is obtained by filling the blocks of $\Part(\alpha)$ from left to right with the largest available values so that the increasing property per block from Lemma~\ref{lem:increasing_blocks} is satisfied. This is made precise in the following lemma.

\begin{lemma}\label{lem:parabolic_long_element}
    Let $\alpha$ be a type-$B$ composition, and let $\woa$ be the longest element of $\Hyper_{\alpha}$.  For $a\in[n]$ with $\indicator_{\alpha}(a)=i$, we have
    \begin{displaymath}
        \woa(a) = \begin{cases}a, & \text{if}\;\alpha\;\text{is join and}\;i=1,\\
        -(p_{i}+p_{i-1}+1-a), & \text{otherwise}.\end{cases}
    \end{displaymath}
\end{lemma}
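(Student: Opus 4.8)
The plan is to exhibit the permutation $\sigma\in\Hyper_n$ prescribed by the claimed formula and to prove directly that it is the maximum of the weak order on $\Hyper_\alpha$; by Theorem~\ref{thm:weak_order_quotients} this maximum is exactly $\woa$. Since the weak order is governed by inclusion of (right) inversion sets, it suffices to check two things: that $\sigma\in\Hyper_\alpha$, and that $\invset(w)\subseteq\invset(\sigma)$ for every $w\in\Hyper_\alpha$. I first record the structural feature of $\sigma$ that drives everything: reading the long one-line notation of $\sigma$ from left to right, the values increase strictly inside each block of $\Part(\alpha)$, and the value-ranges of the successive blocks are pairwise disjoint and strictly decreasing. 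Concretely, the formula assigns to the $i\th$ positive $\alpha$-region the values $-[p_{i-1}{+}1,p_i]$ in increasing order (and to region $1$ the values $[1,p_1]$ in the join case), with the remaining blocks fixed by sign-symmetry. A short computation --- comparing the minimum value of each block with the maximum value of the block immediately to its right, and treating the central block(s) according to whether $\alpha$ is split or join --- confirms both the within-block monotonicity and the across-block strict decrease.

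From the within-block monotonicity and Lemma~\ref{lem:increasing_blocks} we immediately get $\sigma\in\Hyper_\alpha$. For the domination of inversion sets I would use the following dictionary, read off from Lemma~\ref{lem:type_b_inversions}: every reflection $t$ of $\Hyper_n$ is encoded by a pair of positions $(x,y)$ with $x<y$ --- namely $(-i,i)$ for $\sleft i\sright$, $(i,k)$ for $\lleft i\;k\rright$, and $(-k,i)$ for $\lleft -k\;i\rright$ --- in such a way that $t\in\invset(\pi)$ if and only if $\pi(x)>\pi(y)$. These canonical pairs range over one representative of each sign-symmetric class of position pairs, so the correspondence is a bijection onto the reflections of $\Hyper_n$. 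Now fix $w\in\Hyper_\alpha$ and $t\in\invset(w)$ with associated pair $(x,y)$. Because $w$ is increasing on every block, the inequality $w(x)>w(y)$ forces $x$ and $y$ to lie in different blocks of $\Part(\alpha)$; but then the across-block strict decrease of $\sigma$ yields $\sigma(x)>\sigma(y)$, that is, $t\in\invset(\sigma)$. Hence $\invset(w)\subseteq\invset(\sigma)$, so $w\weakorder\sigma$, and $\sigma$ is the maximum of $\Hyper_\alpha$; therefore $\sigma=\woa$.

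The conceptual content is thus light: once the two monotonicity properties of $\sigma$ are in place, the inversion-set domination is a one-line consequence of the block dictionary. I expect the only real work --- and the main source of case-checking --- to be the verification of the across-block strict decrease at the centre of the one-line notation, where the split and join cases genuinely differ (a split $\alpha$ produces two mirror blocks $[\overline p_1,\overline 1]$ and $[1,p_1]$ carrying the values $[1,p_1]$ and $-[1,p_1]$, whereas a join $\alpha$ produces a single central block $[\overline p_1,p_1]$ fixed by $\sigma$), together with making the reflection-to-position-pair bijection precise so that ``lying in a common block'' is unambiguous. Neither difficulty is deep, but both require the explicit values supplied by the formula rather than the informal greedy description preceding the lemma.
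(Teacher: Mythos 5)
Your proposal is correct, and it is worth noting that the paper's own proof of this lemma is literally the single sentence ``This is a straightforward computation,'' leaning on the informal greedy description (fill the blocks of $\Part(\alpha)$ left to right with the largest available values) given just before the statement. So your argument is less a different route than the justification the paper omits, and it is sound at every step. The two structural facts you isolate do hold: the formula places the values $-p_{i},\ldots,-(p_{i-1}+1)$ increasingly in the $i\th$ positive region (and fixes $1,\ldots,p_{1}$ in the central block when $\alpha$ is join), and sign-symmetry then makes the block value-ranges pairwise disjoint and strictly decreasing from left to right, the minimum of each block exceeding the maximum of the next, including at the centre in both the split and join cases. Membership $\sigma\in\Hyper_{\alpha}$ is then exactly Lemma~\ref{lem:increasing_blocks}, and your position-pair dictionary is a faithful restatement of Lemma~\ref{lem:type_b_inversions}; encoding $\sleft i\sright$ as the pair $(-i,i)$ also quietly handles the join subtlety that for $i\leq\alpha_{1}$ both positions lie in the central block, so this reflection can never be an inversion of any element of $\Hyper_{\alpha}$, consistent with Lemma~\ref{lem:join_first_block_positive}. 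Since any inversion of $w\in\Hyper_{\alpha}$ must straddle two blocks and $\sigma$ inverts every cross-block pair, $\invset(w)\subseteq\invset(\sigma)$ for all $w$, and then either Theorem~\ref{thm:weak_order_quotients} or simply Lemma~\ref{lem:length_inversions} (inversion-set domination forces maximal length) identifies $\sigma$ with $\woa$. What your approach buys over the paper's bare assertion is that it certifies simultaneously that the greedy filling lies in $\Hyper_{\alpha}$ and that it is the weak-order maximum, rather than presupposing that the greedy description characterizes $\woa$. One incidental remark: your reading of the regions, namely $p_{i-1}<a\leq p_{i}$ for the $i\th$ region, is the intended one; the paper's displayed condition $p_{i-1}\leq a<p_{i}$ is an off-by-one slip, as the stated formula and Example~\ref{ex:longest_shapes} confirm.
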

\begin{proof}
    This is a straightforward computation.
\end{proof}

For the description of the $\linc$-sorting word of $\woa$, it will be convenient to model it combinatorially using certain skew shapes.  This is foreshadowed in the arrangement of the inversions of $\wo$ for the full hyperoctahedral group $\Hyper_{n}$ after Corollary~\ref{cor:linc_sorting_longest}.

Given integers $\lambda_{1}\geq\lambda_{2}\geq\cdots$, a \defn{Ferrers diagram} of shape $(\lambda_{1},\lambda_{2},\ldots)$ is a left-aligned arrangement of unit boxes, where the $i\th$ row consists of $\lambda_{i}$ boxes. We use the English notation, \ie the first row is at the top. We usually do not distinguish between the Ferrers diagram and the integer partition it represents.

For two Ferrers diagrams $\lambda,\mu$, the associated \defn{skew diagram} $\lambda/\mu$ consists of all boxes of $\lambda$ that are not boxes of $\mu$. See Figure~\ref{fig:skew_diagram_5} for an illustration.

\begin{figure}[ht]
    \centering
    \begin{tikzpicture}
		\def\x{.4};
		\foreach \k in {0,...,6}{
			\draw(\k*\x,0*\x) -- (\k*\x,5*\x);
		}
		\foreach \k in {0,...,4}{
			\draw(0*\x,\k*\x) -- ({(6+\k)*\x},\k*\x) -- ({(6+\k)*\x},5*\x);
		}
		\draw(0*\x,5*\x) -- (10*\x,5*\x);
		\begin{pgfonlayer}{background}
			\fill[white!50!gray](1*\x,0*\x) -- (6*\x,0*\x) -- (6*\x,1*\x) -- (7*\x,1*\x) -- (7*\x,2*\x) -- (8*\x,2*\x) -- (8*\x,3*\x) -- (9*\x,3*\x) -- (9*\x,4*\x) -- (10*\x,4*\x) -- (10*\x,5*\x) -- (5*\x,5*\x) -- (5*\x,4*\x) -- (4*\x,4*\x) -- (4*\x,3*\x) -- (3*\x,3*\x) -- (3*\x,2*\x) -- (2*\x,2*\x) -- (2*\x,1*\x) -- (1*\x,1*\x) -- (1*\x,0*\x);
		\end{pgfonlayer}
	\end{tikzpicture}
    \caption{The skew shape $\lambda/\mu$, where $\lambda=(10,9,8,7,6)$ and $\mu=(5,4,3,2,1)$, consists of the gray boxes.}
    \label{fig:skew_diagram_5}
\end{figure}

Fix a type-$B$ composition of $n$ with non-zero components $\alpha_{1},\alpha_{2},\ldots,\alpha_{r}$.  We first consider the Ferrers diagram $\mu_{(\alpha)}=(\mu_{1},\mu_{2},\ldots,\mu_{n})$ given by
\begin{displaymath}
    \mu_{k} \defs \alpha_{\indicator_{\alpha}(k)}+\alpha_{\indicator_{\alpha}(k)+1}+\cdots+\alpha_{r}.
\end{displaymath}
In other words, the $k\th$ entry of $\mu$ is the sum of the sizes of the $\alpha$-regions starting from the one containing $k$.  Now, we consider the Ferrers diagram $\lambda_{(\alpha)}=(\lambda_{1},\lambda_{2},\ldots,\lambda_{n})$, where
\begin{displaymath}
    \lambda_{k} \defs
    \begin{cases}
        2n-\alpha_{1}, & \text{if}\;\alpha\;\text{is join and}\;k\leq\alpha_{1},\\
        2n+1-k, & \text{otherwise}.
    \end{cases}
\end{displaymath}
We write $\Skew(\alpha)\defs\lambda_{(\alpha)}/\mu_{(\alpha)}$.

We now construct a filling of $\Skew(\alpha)$ as follows.  Let $k\in[n]$.  If $\alpha$ is split or $k>\alpha_{1}$, then we fill the $k\th$ row of $\Skew(\alpha)$ with the Coxeter generators $s_{0},s_{1},\ldots$ \textit{from right to left} until all the boxes in that row are exhausted.  If $\alpha$ is join and $k\leq\alpha_{1}$, then we fill the $k\th$ row of $\Skew(\alpha)$ with the Coxeter generators $s_{\alpha_{1}-k+1},s_{\alpha_{1}-k+2},\ldots$ \textit{also from right to left} until all the boxes are exhausted. We denote by $\wb_{\alpha}$ the word obtained by reading the Coxeter generators in $\Skew(\alpha)$ from bottom to top, left to right.

\begin{example}
    \label{ex:longest_shapes}
    Consider the split composition $\alpha=(0,3,1,2,1)$. Then $n=7$, $\mu=(7,7,7,4,3,3,1)$, and $\lambda=(14,13,12,11,10,9,8)$. The skew shape $\Skew(\alpha)$ together with its filling is shown below.
    \begin{center}
        \begin{tikzpicture}
		\small
		\def\x{.5};
		\def\n{7};	
		\def\m{6};	
		\foreach \a in {0,...,\m}{
			\draw(\a*\x,0*\x) -- (\a*\x,\a*\x) -- ({(\a+\n+1)*\x)},\a*\x) -- ({(\a+\n+1)*\x},\n*\x);
		}
		\draw(\n*\x,0*\x) -- (\n*\x,\n*\x) -- (2*\n*\x,\n*\x);
		\foreach \a in {0,...,\m}{
			\foreach \b in {0,...,\m}{
				\draw({(\n+\a+1-\b-.5)*\x},{(\a+.5)*\x}) node{$s_{\b}$};
			}
		}
		\crossout{2.5}{1.5}{\x}
		\crossout{5.5}{4.5}{\x}
		\crossout{6.5}{4.5}{\x}
		\crossout{6.5}{5.5}{\x}
		\begin{pgfonlayer}{background}
			\fill[c4](0*\x,0*\x) -- (1*\x,0*\x) -- (1*\x,1*\x) -- cycle;
			\fill[c3](1*\x,1*\x) -- (3*\x,1*\x) -- (3*\x,3*\x) -- cycle;
			\fill[c2](3*\x,3*\x) -- (4*\x,3*\x) -- (4*\x,4*\x) -- cycle;
			\fill[c1](4*\x,4*\x) -- (7*\x,4*\x) -- (7*\x,7*\x) -- cycle;
		\end{pgfonlayer}
	   \end{tikzpicture}
    \end{center}
    We obtain the following reading word (bottom to top, left to right), where we have separated the rows by vertical bars:
    \begin{multline*}
        \wb_{\alpha} = s_{6}s_{5}s_{4}s_{3}s_{2}s_{1}s_{0}
        \mid s_{5}s_{4}s_{3}s_{2}s_{1}s_{0}
        \mid s_{6}s_{5}s_{4}s_{3}s_{2}s_{1}s_{0}
        \mid s_{6}s_{5}s_{4}s_{3}s_{2}s_{1}s_{0}
        \mid\\
            s_{4}s_{3}s_{2}s_{1}s_{0}
        \mid s_{5}s_{4}s_{3}s_{2}s_{1}s_{0}
        \mid s_{6}s_{5}s_{4}s_{3}s_{2}s_{1}s_{0}
    \end{multline*}
    that may also be rewritten as
    \begin{equation}
        \begin{split}
            \wb_{\alpha} &= s_{6\ldots0}\, s_{5\ldots0}\, s_{6\ldots0}\,s_{6\ldots0}\, s_{4\ldots0}\, s_{5\ldots0}\, s_{6\ldots0} 
        \end{split}
    \end{equation}
    This word represents the following sign-symmetric permutation
    \begin{displaymath}
        \woa = \hspace*{-.25cm}\raisebox{\rsp}{\AlphaPerm{1,2,1,3,3,1,2,1}{c4,c3,c2,c1,c1,c2,c3,c4}{7,5,6,4,1,2,3,\overline{3},\overline{2},\overline{1},\overline{4},\overline{6},\overline{5},\overline{7}}{.5}{.25}{white}{1}}\hspace*{-.1cm},
    \end{displaymath}
    which is the longest element of $\Hyper_{\alpha}$.
	
    Next we consider the join composition $\alpha'=(4,2,2)$. In that case, $n=8$, $\mu=(8,8,8,8,4,4,2,2)$, and $\lambda=(12,12,12,12,12,11,10,9)$. The skew shape $\Skew(\alpha')$ together with its filling is shown below.
    \begin{center}
		\begin{tikzpicture}\small
		\def\x{.5};
		\def\n{8};	
		\def\m{7};	
		\def\r{3};
		\def\s{4};
		\foreach \a in {0,...,\r}{
			\draw(\a*\x,0*\x) -- (\a*\x,\a*\x) -- ({(\a+\n+1)*\x)},\a*\x) -- ({(\a+\n+1)*\x},\n*\x);
		}
		\foreach \a in {\s,...,\m}{
			\draw(\a*\x,0*\x) -- (\a*\x,\a*\x) -- (12*\x,\a*\x);
		}
		\draw(\n*\x,0*\x) -- (\n*\x,\n*\x) -- (12*\x,\n*\x);
		\foreach \a in {0,...,\r}{
			\foreach \b in {0,...,\m}{
				\draw({(\n+\a+1-\b-.5)*\x},{(\a+.5)*\x}) node{$s_{\b}$};
			}
		}
		\foreach \a in {\s,...,\m}{
			\setcounter{i}{\n-\a+\s-1}
			\foreach \b in {1,...,\thei}{
				\setcounter{n}{\a+\b-\s}
				\draw({(2*\n-\s+1-\b-.5)*\x},{(\a+.5)*\x}) node{$s_{\then}$};
			}
		}
		\draw(12*\x,3.9*\x) -- (12*\x,8*\x);
		\crossout{1.5}{.5}{\x}
		\crossout{3.5}{2.5}{\x}
		\crossout{5.5}{4.5}{\x}
		\crossout{6.5}{4.5}{\x}
		\crossout{7.5}{4.5}{\x}
		\crossout{6.5}{5.5}{\x}
		\crossout{7.5}{5.5}{\x}
		\crossout{7.5}{6.5}{\x}
		\begin{pgfonlayer}{background}
			\fill[c3](0*\x,0*\x) -- (2*\x,0*\x) -- (2*\x,2*\x) -- cycle;
			\fill[c2](2*\x,2*\x) -- (4*\x,2*\x) -- (4*\x,4*\x) -- cycle;
			\fill[c1](4*\x,4*\x) -- (8*\x,4*\x) -- (8*\x,8*\x) -- cycle;
		\end{pgfonlayer}
	   \end{tikzpicture}
    \end{center}
    The reading word is 
    \begin{multline*}
    \wb_{\alpha'} = s_{6}s_{5}s_{4}s_{3}s_{2}s_{1}s_{0}
        \mid s_{7}s_{6}s_{5}s_{4}s_{3}s_{2}s_{1}s_{0}
        \mid s_{6}s_{5}s_{4}s_{3}s_{2}s_{1}s_{0}
        \mid s_{7}s_{6}s_{5}s_{4}s_{3}s_{2}s_{1}s_{0}
        \mid\\
    s_{4}s_{3}s_{2}s_{1}
        \mid s_{5}s_{4}s_{3}s_{2}
        \mid s_{6}s_{5}s_{4}s_{3}
        \mid s_{7}s_{6}s_{5}s_{4},
    \end{multline*}
    that may also be rewritten as
    \begin{equation}
      \wb_{\alpha'}
        = s_{6\ldots0}\, s_{7\ldots0}\, s_{6\ldots0}\, s_{7\ldots0}\, s_{4\ldots1}\, s_{5\ldots2}\, s_{6\ldots3}\, s_{7\ldots4}.
    \end{equation}
    It represents
    \begin{displaymath}
        \woab = \hspace*{-.2cm}\raisebox{\rsp}{\AlphaPerm{2,2,8,2,2}{c3,c2,c1,c2,c3}{7,8,5,6,\overline{4},\overline{3},\overline{2},\overline{1},1,2,3,4,\overline{6},\overline{5},\overline{8},\overline{7}}{.5}{.25}{white}{1}}\hspace*{-.1cm},
    \end{displaymath}
    which is the longest element of $\Hyper_{\alpha'}$.
\end{example}

The previous examples suggest that $\wb_{\alpha}$ is a candidate for $\wboa(\linc)$. This is indeed true as the next proposition shows, whose proof makes use of Lemma~\ref{lem:sorting_word_negative}.

\begin{proposition}\label{prop:wbwoa}
    For every type-$B$ composition $\alpha$, the word $\wb_{\alpha}$ is the $\linc$-sorting word of $\woa$.
\end{proposition}
\begin{proof}
    If $\alpha$ is split, we already are in the case of Lemma~\ref{lem:sorting_word_negative} with $i_1=0$. If $\alpha$ is join, then the first values on the right part of $w$ are positive, and the first generator one can apply is at the end of the first $\alpha$-region. Then this value goes to the extreme right of the permutation and the same idea applies again to the other elements of the first $\alpha$-region: they move to the right until they get next to a value greater than themselves. This exactly accounts for the first $i_1$ rows of $\Skew(\alpha)$ read from right to left.  Again, each product decreases the length of the permutation, hence proving that we have a reduced word. After that product of generators, we are back with a permutation of the type shown in Lemma~\ref{lem:sorting_word_negative}, hence the result.
\end{proof}

\begin{corollary}\label{cor:parabolic_longest_length}
    If $\alpha=(\alpha_{1},\alpha_{2},\ldots,\alpha_{r})$ is a type-$B$ composition of $n$, then
    \begin{displaymath}
        \ell_S(\woa) = n^2 - \sum_{i=1}^r \binom{\alpha_i}{2} -
        \begin{cases}
            \binom{\alpha_{1}+1}{2}, & \text{if}\;\alpha\;\text{is join},\\
            0, & \text{if}\;\alpha\;\text{is split}.
        \end{cases}
    \end{displaymath}
\end{corollary}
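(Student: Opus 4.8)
The plan is to count the letters in $\wb_{\alpha}$. By Proposition~\ref{prop:alpha_sorting_word} the word $\wb_{\alpha}$ is the $\linc$-sorting word of $\woa$, hence an $S$-reduced word for $\woa$; together with Lemma~\ref{lem:length_inversions} this shows that $\ell_{S}(\woa)$ equals the number of boxes of $\Skew(\alpha)=\lambda_{(\alpha)}/\mu_{(\alpha)}$, since the filling places exactly one generator in each box. Thus the entire statement reduces to the purely combinatorial identity $\ell_{S}(\woa)=\sum_{k=1}^{n}(\lambda_{k}-\mu_{k})$, and I would just sum the row lengths of $\lambda_{(\alpha)}$ and $\mu_{(\alpha)}$ separately and subtract.

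First I would evaluate $\sum_{k=1}^{n}\mu_{k}$, which uses the same formula in both the split and the join case. Grouping the indices $k$ according to the $\alpha$-region containing them, each of the $\alpha_{i}$ indices in the $i$-th region contributes $\mu_{k}=\alpha_{i}+\alpha_{i+1}+\cdots+\alpha_{r}$, so that $\sum_{k}\mu_{k}=\sum_{i=1}^{r}\alpha_{i}\sum_{j\geq i}\alpha_{j}=\sum_{i\leq j}\alpha_{i}\alpha_{j}$. Splitting off the diagonal and using $\bigl(\sum_{i}\alpha_{i}\bigr)^{2}=n^{2}$ yields the clean closed form $\sum_{k}\mu_{k}=\tfrac12\bigl(n^{2}+\sum_{i}\alpha_{i}^{2}\bigr)$.

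Next I would treat $\sum_{k}\lambda_{k}$. In the split case $\lambda_{k}=2n+1-k$ for every $k$, giving $\sum_{k}\lambda_{k}=\tfrac12(3n^{2}+n)$; subtracting the value of $\sum_{k}\mu_{k}$ and rewriting $\sum_{i}\alpha_{i}^{2}=2\sum_{i}\binom{\alpha_{i}}{2}+n$ produces exactly $n^{2}-\sum_{i}\binom{\alpha_{i}}{2}$. In the join case the only change is that the first $\alpha_{1}$ rows are shortened from $2n+1-k$ to $2n-\alpha_{1}$, so $\sum_{k}\lambda_{k}$ differs from the split value by $\sum_{k=1}^{\alpha_{1}}\bigl((2n-\alpha_{1})-(2n+1-k)\bigr)=\sum_{k=1}^{\alpha_{1}}(k-\alpha_{1}-1)=-\binom{\alpha_{1}+1}{2}$. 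This correction is precisely the extra $-\binom{\alpha_{1}+1}{2}$ distinguishing the two branches of the asserted formula, so the join case follows from the split computation with this single adjustment.

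The argument contains no genuine obstacle: the reduction to box-counting is immediate from Proposition~\ref{prop:alpha_sorting_word}, and the remaining work is elementary summation. The only points requiring care are the region-wise evaluation leading to the identity $\sum_{k}\mu_{k}=\tfrac12\bigl(n^{2}+\sum_{i}\alpha_{i}^{2}\bigr)$ and the bookkeeping that converts the quadratic expressions into the binomial coefficients $\binom{\alpha_{i}}{2}$ and $\binom{\alpha_{1}+1}{2}$; isolating the join correction as a separate sum of $\alpha_{1}$ terms keeps the two cases from interfering.
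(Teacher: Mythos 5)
Your proposal is correct and takes essentially the same approach as the paper: both reduce $\ell_{S}(\woa)$ to counting the boxes of $\Skew(\alpha)$ via Proposition~\ref{prop:alpha_sorting_word}, since the sorting word is reduced and fills the shape with one generator per box. The only difference is presentational---where the paper counts by removing a triangular piece of size $\binom{\alpha_i}{2}$ for each component (plus $\binom{\alpha_1+1}{2}$ in the join case) from the skew arrangement of $n$ rows of $n$ boxes, you sum the row lengths $\lambda_k-\mu_k$ explicitly, which is simply a more detailed rendering of the same count.
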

\begin{proof}
    By definition, $\ell_{S}(\woa)$ is equal to the number of letters in the $\linc$-sorting word $\wboa(\linc)$, and by Proposition~\ref{prop:wbwoa} this number is equal to the number of boxes in $\Skew(\alpha)$.
    Then the statement is immediate since we start with a skew arrangement of $n$ rows, each consisting of $n$ boxes, and then we remove a triangular part for each component of $\alpha$ on the left. If $\alpha$ is join, we remove another triangular part on the right.
\end{proof}

Let us now exploit the skew shape used in the construction of $\wboa(\linc)$ to determine the corresponding inversion order $\invorder\bigl(\wboa(\linc)\bigr)$. We start with an example.

\begin{example}\label{ex:inversion_orders}
    We continue on Example~\ref{ex:longest_shapes}. First, let us consider $\alpha=(0,3,1,2,1)$. If we write the $i\th$ inversion in $\invorder(\wboa(\linc)\bigr)$ in the cell, which corresponds to the $(\ell_{S}(\woa)-i+1)\st$ letter of $\wb_{\alpha}$ (recall that the last letter of $\wbo$ is the right-most one on the top row), we obtain the following filling of $\Skew(\alpha)$.
    \begin{center}
	   \begin{tikzpicture}\small
		\def\x{.85};
		\def\y{.5};
		\def\n{7};	
		\def\m{6};	
		\foreach \a in {0,...,\m}{
		\draw(\a*\x,0*\y) -- (\a*\x,\a*\y) -- ({(\a+\n+1)*\x)},\a*\y)
			-- ({(\a+\n+1)*\x},\n*\y);
		}
		\draw(\n*\x,0*\y) -- (\n*\x,\n*\y) -- (2*\n*\x,\n*\y);
		\filldraw[draw=c4,fill=c4](0*\x,0*\y) -- (1*\x,0*\y) -- (1*\x,1*\y) -- cycle;
		\filldraw[draw=c3,fill=c3](1*\x,1*\y) -- (3*\x,1*\y) -- (3*\x,3*\y) -- cycle;
		\filldraw[draw=c2,fill=c2](3*\x,3*\y) -- (4*\x,3*\y) -- (4*\x,4*\y) -- cycle;
		\filldraw[draw=c1,fill=c1](4*\x,4*\y) -- (7*\x,4*\y) -- (7*\x,7*\y) -- cycle;
		\draw(0*\x,0*\y) -- (1*\x,0*\y) -- (1*\x,1*\y) -- (3*\x,1*\y) -- (3*\x,3*\y) -- (4*\x,3*\y) -- (4*\x,4*\y) -- (7*\x,4*\y) -- (7*\x,7*\y) -- (14*\x,7*\y);
		\draw(13.5*\x,6.5*\y) node[scale=.7]{$\sleft 1\sright$};
		\draw(12.5*\x,6.5*\y) node[scale=.7]{$\lleft {-}2\;1\rright$};
		\draw(11.5*\x,6.5*\y) node[scale=.7]{$\lleft {-}3\;1\rright$};
		\draw(10.5*\x,6.5*\y) node[scale=.7]{$\lleft {-}4\;1\rright$};
		\draw(9.5*\x,6.5*\y) node[scale=.7]{$\lleft {-}5\;1\rright$};
		\draw(8.5*\x,6.5*\y) node[scale=.7]{$\lleft {-}6\;1\rright$};
		\draw(7.5*\x,6.5*\y) node[scale=.7]{$\lleft {-}7\;1\rright$};
		\draw(12.5*\x,5.5*\y) node[scale=.7]{$\sleft 2\sright$};
		\draw(11.5*\x,5.5*\y) node[scale=.7]{$\lleft {-}3\;2\rright$};
		\draw(10.5*\x,5.5*\y) node[scale=.7]{$\lleft {-}4\;2\rright$};
		\draw(9.5*\x,5.5*\y) node[scale=.7]{$\lleft {-}5\;2\rright$};
		\draw(8.5*\x,5.5*\y) node[scale=.7]{$\lleft {-}6\;2\rright$};
		\draw(7.5*\x,5.5*\y) node[scale=.7]{$\lleft {-}7\;2\rright$};
		\draw(11.5*\x,4.5*\y) node[scale=.7]{$\sleft 3\sright$};
		\draw(10.5*\x,4.5*\y) node[scale=.7]{$\lleft {-}4\;3\rright$};
		\draw(9.5*\x,4.5*\y) node[scale=.7]{$\lleft {-}5\;3\rright$};
		\draw(8.5*\x,4.5*\y) node[scale=.7]{$\lleft {-}6\;3\rright$};
		\draw(7.5*\x,4.5*\y) node[scale=.7]{$\lleft {-}7\;3\rright$};
		\draw(10.5*\x,3.5*\y) node[scale=.7]{$\sleft 4\sright$};
		\draw(9.5*\x,3.5*\y) node[scale=.7]{$\lleft {-}5\;4\rright$};
		\draw(8.5*\x,3.5*\y) node[scale=.7]{$\lleft {-}6\;4\rright$};
		\draw(7.5*\x,3.5*\y) node[scale=.7]{$\lleft {-}7\;4\rright$};
		\draw(6.5*\x,3.5*\y) node[scale=.7]{$\lleft 3\;4\rright$};
		\draw(5.5*\x,3.5*\y) node[scale=.7]{$\lleft 2\;4\rright$};
		\draw(4.5*\x,3.5*\y) node[scale=.7]{$\lleft 1\;4\rright$};
		\draw(9.5*\x,2.5*\y) node[scale=.7]{$\sleft 5\sright$};
		\draw(8.5*\x,2.5*\y) node[scale=.7]{$\lleft {-}6\;5\rright$};
		\draw(7.5*\x,2.5*\y) node[scale=.7]{$\lleft {-}7\;5\rright$};
		\draw(6.5*\x,2.5*\y) node[scale=.7]{$\lleft 3\;5\rright$};
		\draw(5.5*\x,2.5*\y) node[scale=.7]{$\lleft 2\;5\rright$};
		\draw(4.5*\x,2.5*\y) node[scale=.7]{$\lleft 1\;5\rright$};
		\draw(3.5*\x,2.5*\y) node[scale=.7]{$\lleft 4\;5\rright$};
		\draw(8.5*\x,1.5*\y) node[scale=.7]{$\sleft 6\sright$};
		\draw(7.5*\x,1.5*\y) node[scale=.7]{$\lleft {-}7\;6\rright$};
		\draw(6.5*\x,1.5*\y) node[scale=.7]{$\lleft 3\;6\rright$};
		\draw(5.5*\x,1.5*\y) node[scale=.7]{$\lleft 2\;6\rright$};
		\draw(4.5*\x,1.5*\y) node[scale=.7]{$\lleft 1\;6\rright$};
		\draw(3.5*\x,1.5*\y) node[scale=.7]{$\lleft 4\;6\rright$};
		\draw(7.5*\x,.5*\y) node[scale=.7]{$\sleft 7\sright$};
		\draw(6.5*\x,.5*\y) node[scale=.7]{$\lleft 3\;7\rright$};
		\draw(5.5*\x,.5*\y) node[scale=.7]{$\lleft 2\;7\rright$};
		\draw(4.5*\x,.5*\y) node[scale=.7]{$\lleft 1\;7\rright$};
		\draw(3.5*\x,.5*\y) node[scale=.7]{$\lleft 4\;7\rright$};
		\draw(2.5*\x,.5*\y) node[scale=.7]{$\lleft 6\;7\rright$};
		\draw(1.5*\x,.5*\y) node[scale=.7]{$\lleft 5\;7\rright$};
		\draw(.5*\x,.25*\y)    node[text=gray!50!black,scale=.7]{$-7$};
		\draw(1.5*\x,1.25*\y)  node[text=gray!50!black,scale=.7]{$-5$};
		\draw(2.5*\x,1.25*\y)  node[text=gray!50!black,scale=.7]{$-6$};
		\draw(3.5*\x,3.25*\y)  node[text=gray!50!black,scale=.7]{$-4$};
		\draw(4.5*\x,4.25*\y)  node[text=gray!50!black,scale=.7]{$-1$};
		\draw(5.5*\x,4.25*\y)  node[text=gray!50!black,scale=.7]{$-2$};
		\draw(6.5*\x,4.25*\y)  node[text=gray!50!black,scale=.7]{$-3$};
		\draw(7.5*\x,7.25*\y)  node[text=gray!50!black,scale=.7]{$7$};
		\draw(8.5*\x,7.25*\y)  node[text=gray!50!black,scale=.7]{$6$};
		\draw(9.5*\x,7.25*\y)  node[text=gray!50!black,scale=.7]{$5$};
		\draw(10.5*\x,7.25*\y) node[text=gray!50!black,scale=.7]{$4$};
		\draw(11.5*\x,7.25*\y) node[text=gray!50!black,scale=.7]{$3$};
		\draw(12.5*\x,7.25*\y) node[text=gray!50!black,scale=.7]{$2$};
		\draw(13.5*\x,7.25*\y) node[text=gray!50!black,scale=.7]{$1$};
		\draw(14*\x,6.5*\y)  node[text=gray!50!black,scale=.7,anchor=west]{$-1$};
		\draw(13*\x,5.5*\y)  node[text=gray!50!black,scale=.7,anchor=west]{$-2$};
		\draw(12*\x,4.5*\y)  node[text=gray!50!black,scale=.7,anchor=west]{$-3$};
		\draw(11*\x,3.5*\y)  node[text=gray!50!black,scale=.7,anchor=west]{$-4$};
		\draw(10*\x,2.5*\y)  node[text=gray!50!black,scale=.7,anchor=west]{$-5$};
		\draw(9*\x,1.5*\y)   node[text=gray!50!black,scale=.7,anchor=west]{$-6$};
		\draw(8*\x,.5*\y)    node[text=gray!50!black,scale=.7,anchor=west]{$-7$};
	   \end{tikzpicture}
    \end{center}
    Note that each cell corresponds to the pair $(i,j)$ where $i$ is the label of its row and $j$ the label of its column (written outside of the shape in the picture above) up to taking the opposite of both elements.

    \medskip
    
    Now, let us consider $\alpha'=(4,2,2)$.  As before, we fill $\Skew(\alpha')$ with the inversions according to $\invorder\bigl(\mathbf{w}_{\mathsf{o};\alpha'}(\linc)\bigr)$.
    \begin{center}
	   \begin{tikzpicture}\small
		\def\x{.85};
		\def\y{.5};
		\def\n{8};	
		\def\m{7};	
		\def\r{3};
		\def\s{4};
		\foreach \a in {0,...,\r}{
			\draw(\a*\x,0*\y) -- (\a*\x,\a*\y) -- ({(\a+\n+1)*\x)},\a*\y) -- ({(\a+\n+1)*\x},\n*\y);
		}
		\foreach \a in {\s,...,\m}{
			\draw(\a*\x,0*\y) -- (\a*\x,\a*\y) -- (12*\x,\a*\y);
		}
		\draw(\n*\x,0*\y) -- (\n*\x,\n*\y) -- (12*\x,\n*\y);
		\filldraw[draw=c3,fill=c3](0*\x,0*\y) -- (2*\x,0*\y) -- (2*\x,2*\y) -- cycle;
		\filldraw[draw=c2,fill=c2](2*\x,2*\y) -- (4*\x,2*\y) -- (4*\x,4*\y) -- cycle;
		\filldraw[draw=c1,fill=c1](4*\x,4*\y) -- (8*\x,4*\y) -- (8*\x,8*\y) -- cycle;
		\draw(0*\x,0*\y) -- (2*\x,0*\y) -- (2*\x,2*\y) -- (4*\x,2*\y) -- (4*\x,4*\y) -- (8*\x,4*\y) -- (8*\x,8*\y) -- (12*\x,8*\y);
		\draw(11.5*\x,7.5*\y) node[scale=.7]{$\lleft 4\;5\rright$};
		\draw(10.5*\x,7.5*\y) node[scale=.7]{$\lleft 4\;6\rright$};
		\draw(9.5*\x,7.5*\y) node[scale=.7]{$\lleft 4\;7\rright$};
		\draw(8.5*\x,7.5*\y) node[scale=.7]{$\lleft 4\;8\rright$};
		\draw(11.5*\x,6.5*\y) node[scale=.7]{$\lleft 3\;5\rright$};
		\draw(10.5*\x,6.5*\y) node[scale=.7]{$\lleft 3\;6\rright$};
		\draw(9.5*\x,6.5*\y) node[scale=.7]{$\lleft 3\;7\rright$};
		\draw(8.5*\x,6.5*\y) node[scale=.7]{$\lleft 3\;8\rright$};
		\draw(11.5*\x,5.5*\y) node[scale=.7]{$\lleft 2\;5\rright$};
		\draw(10.5*\x,5.5*\y) node[scale=.7]{$\lleft 2\;6\rright$};
		\draw(9.5*\x,5.5*\y) node[scale=.7]{$\lleft 2\;7\rright$};
		\draw(8.5*\x,5.5*\y) node[scale=.7]{$\lleft 2\;8\rright$};
		\draw(11.5*\x,4.5*\y) node[scale=.7]{$\lleft 1\;5\rright$};
		\draw(10.5*\x,4.5*\y) node[scale=.7]{$\lleft 1\;6\rright$};
		\draw(9.5*\x,4.5*\y) node[scale=.7]{$\lleft 1\;7\rright$};
		\draw(8.5*\x,4.5*\y) node[scale=.7]{$\lleft 1\;8\rright$};
		\draw(11.5*\x,3.5*\y) node[scale=.7]{$\sleft 5\sright$};
		\draw(10.5*\x,3.5*\y) node[scale=.7]{$\lleft {-}6\;5\rright$};
		\draw(9.5*\x,3.5*\y) node[scale=.7]{$\lleft {-}7\;5\rright$};
		\draw(8.5*\x,3.5*\y) node[scale=.7]{$\lleft {-}8\;5\rright$};
		\draw(7.5*\x,3.5*\y) node[scale=.7]{$\lleft {-}5\;1\rright$};
		\draw(6.5*\x,3.5*\y) node[scale=.7]{$\lleft {-}5\;2\rright$};
		\draw(5.5*\x,3.5*\y) node[scale=.7]{$\lleft {-}5\;3\rright$};
		\draw(4.5*\x,3.5*\y) node[scale=.7]{$\lleft {-}5\;4\rright$};
		\draw(10.5*\x,2.5*\y) node[scale=.7]{$\sleft 6\sright$};
		\draw(9.5*\x,2.5*\y) node[scale=.7]{$\lleft {-}7\;6\rright$};
		\draw(8.5*\x,2.5*\y) node[scale=.7]{$\lleft {-}8\;6\rright$};
		\draw(7.5*\x,2.5*\y) node[scale=.7]{$\lleft {-}6\;1\rright$};
		\draw(6.5*\x,2.5*\y) node[scale=.7]{$\lleft {-}6\;2\rright$};
		\draw(5.5*\x,2.5*\y) node[scale=.7]{$\lleft {-}6\;3\rright$};
		\draw(4.5*\x,2.5*\y) node[scale=.7]{$\lleft {-}6\;4\rright$};
		\draw(9.5*\x,1.5*\y) node[scale=.7]{$\sleft 7\sright$};
		\draw(8.5*\x,1.5*\y) node[scale=.7]{$\lleft {-}8\;7\rright$};
		\draw(7.5*\x,1.5*\y) node[scale=.7]{$\lleft {-}7\;1\rright$};
		\draw(6.5*\x,1.5*\y) node[scale=.7]{$\lleft {-}7\;2\rright$};
		\draw(5.5*\x,1.5*\y) node[scale=.7]{$\lleft {-}7\;3\rright$};
		\draw(4.5*\x,1.5*\y) node[scale=.7]{$\lleft {-}7\;4\rright$};
		\draw(3.5*\x,1.5*\y) node[scale=.7]{$\lleft 6\;7\rright$};
		\draw(2.5*\x,1.5*\y) node[scale=.7]{$\lleft 5\;7\rright$};
		\draw(8.5*\x,.5*\y) node[scale=.7]{$\sleft 8\sright$};
		\draw(7.5*\x,.5*\y) node[scale=.7]{$\lleft {-}8\;1\rright$};
		\draw(6.5*\x,.5*\y) node[scale=.7]{$\lleft {-}8\;2\rright$};
		\draw(5.5*\x,.5*\y) node[scale=.7]{$\lleft {-}8\;3\rright$};
		\draw(4.5*\x,.5*\y) node[scale=.7]{$\lleft {-}8\;4\rright$};
		\draw(3.5*\x,.5*\y) node[scale=.7]{$\lleft 6\;8\rright$};
		\draw(2.5*\x,.5*\y) node[scale=.7]{$\lleft 5\;8\rright$};
		\draw(.5*\x,.25*\y)    node[text=gray!50!black,scale=.7]{$-7$};
		\draw(1.5*\x,.25*\y)   node[text=gray!50!black,scale=.7]{$-8$};
		\draw(2.5*\x,2.25*\y)  node[text=gray!50!black,scale=.7]{$-5$};
		\draw(3.5*\x,2.25*\y)  node[text=gray!50!black,scale=.7]{$-6$};
		\draw(4.5*\x,4.25*\y)  node[text=gray!50!black,scale=.7]{$4$};
		\draw(5.5*\x,4.25*\y)  node[text=gray!50!black,scale=.7]{$3$};
		\draw(6.5*\x,4.25*\y)  node[text=gray!50!black,scale=.7]{$2$};
		\draw(7.5*\x,4.25*\y)  node[text=gray!50!black,scale=.7]{$1$};
		\draw(8.5*\x,8.25*\y)  node[text=gray!50!black,scale=.7]{$8$};
		\draw(9.5*\x,8.25*\y)  node[text=gray!50!black,scale=.7]{$7$};
		\draw(10.5*\x,8.25*\y) node[text=gray!50!black,scale=.7]{$6$};
		\draw(11.5*\x,8.25*\y) node[text=gray!50!black,scale=.7]{$5$};
		\draw(12*\x,7.5*\y)  node[text=gray!50!black,scale=.7,anchor=west]{$4$};
		\draw(12*\x,6.5*\y)  node[text=gray!50!black,scale=.7,anchor=west]{$3$};
		\draw(12*\x,5.5*\y)  node[text=gray!50!black,scale=.7,anchor=west]{$2$};
		\draw(12*\x,4.5*\y)  node[text=gray!50!black,scale=.7,anchor=west]{$1$};
		\draw(12*\x,3.5*\y)  node[text=gray!50!black,scale=.7,anchor=west]{$-5$};
		\draw(11*\x,2.5*\y)  node[text=gray!50!black,scale=.7,anchor=west]{$-6$};
		\draw(10*\x,1.5*\y)  node[text=gray!50!black,scale=.7,anchor=west]{$-7$};
		\draw(9*\x,.5*\y)    node[text=gray!50!black,scale=.7,anchor=west]{$-8$};
	   \end{tikzpicture}
    \end{center}
    Note that again each cell corresponds to the pair $(i,j)$ where $i$ is the label of its row and $j$ the label of its column, up to taking the opposite of both elements.
\end{example}

This property is true in general. To state it precisely before proving it, let us construct a filling of $\Skew(\alpha)$ by the inversions of $\woa$. First, we label the rows and the columns of $\Skew(\alpha)$ as follows.
\begin{enumerate}[(i)]
    \item The first $n$ columns (from left to right) are labeled by $\woa(n)$,$\woa(n-1)$, $\ldots$, $\woa(1)$.  The next $n-\alpha_{1}$ columns are labeled by $n, n-1, \ldots, \alpha_1+1$.  If $\alpha$ is split, then we label the remaining $\alpha_{1}$ columns by $\alpha_{1},\alpha_{1}-1,\ldots,1$.  If $\alpha$ is join, there are no columns left.
    \item If $\alpha$ is split, then we label the rows (from top to bottom) by $-1$, $-2$, $\ldots$, $-n$.  If $\alpha$ is join, then we label the first $\alpha_{1}$ rows by $\alpha_{1},\alpha_{1}-1,\ldots,1$, and the remaining $n-\alpha_{1}$ rows by $-\alpha_{1}-1,-\alpha_{1}-2,\ldots,-n$.
\end{enumerate}

\medskip

Now we fill the cells of $\Skew(\alpha)$, and we consider a cell in a row labeled by $r$ and a column labeled by $c$. 
\begin{enumerate}[(i)]
    \item If $r>0$ and $c>0$, then we fill this cell by $\lleft r\;c\rright$. This can only happen if $\alpha$ is join, and then necessarily $r<c$.
    \item If $r<0$ and $c=-r$, then we fill this cell by $\sleft r\sright$. 
    \item If $r<0$ and $c>0$ with $c>-r$, then we fill this cell by $\lleft{-}c\;{-}r\rright$.
    \item If $r<0$ and $c>0$ with $c<-r$, then we fill this cell by $\lleft r\;c\rright$.
    \item If $r<0$ and $c<0$, then we fill this cell by $\lleft {-}c\;{-}r\rright$.  By construction, we are in the first $n$ columns, thus $r<c$.
\end{enumerate}

\begin{proposition}\label{prop:alpha_inversion_order}
    The filling of $\Skew(\alpha)$, read from top to bottom and from right to left, yields the inversion order $\invorder\bigl(\wboa(\linc)\bigr)$.
\end{proposition}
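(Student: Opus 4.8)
The plan is to reduce the statement to a single root computation and then run an induction over the cells of $\Skew(\alpha)$. By Proposition~\ref{prop:alpha_sorting_word}, $\wb_{\alpha}=a_{1}a_{2}\cdots a_{k}$ is the $\linc$-sorting word of $\woa$, where the letters are read from $\Skew(\alpha)$ bottom to top and left to right; thus $a_{k}$ occupies the top-right cell and $a_{1}$ the bottom-left cell. Recall that the inversion order lists the reflections $t_{1}\prec t_{2}\prec\cdots\prec t_{k}$ with $t_{i}=a_{k}a_{k-1}\cdots a_{k-i+1}\cdots a_{k-1}a_{k}$. Reading $\Skew(\alpha)$ from top to bottom and right to left processes the letters in the order $a_{k},a_{k-1},\ldots,a_{1}$, so the $i\th$ visited cell carries $a_{k-i+1}$ and should receive $t_{i}$. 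Writing $u_{i-1}\defs a_{k}a_{k-1}\cdots a_{k-i+2}$ for the product of the generators in the first $i-1$ visited cells, we have $t_{i}=u_{i-1}\,a_{k-i+1}\,u_{i-1}^{-1}$. Hence it suffices to show that the positive representative of $u_{i-1}(\alpha_{a_{k-i+1}})$ is the root identified, via the correspondence $\varepsilon_{i}\leftrightarrow\sleft i\sright$, $-\varepsilon_{i}+\varepsilon_{j}\leftrightarrow\lleft i\;j\rright$, $\varepsilon_{i}+\varepsilon_{j}\leftrightarrow\lleft{-}j\;i\rright$ of Section~\ref{sec:type_b_roots}, with the reflection prescribed by the filling rules (i)--(v) for that cell.

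I would carry this out by induction on the visited cells, sweeping $\Skew(\alpha)$ row by row from the top while maintaining the signed permutation $u$ accumulated so far. The engine of the argument is an invariant describing the action of $u$ on $\varepsilon_{1},\ldots,\varepsilon_{n}$ at the start of each row, and this is exactly what the column labels record. Since a full row of $\Skew(\alpha)$ contributes a factor $s_{b}s_{b-1}\cdots s_{0}$, completing one more row multiplies $u$ on the right by such a factor, and a direct computation shows that the values $u(\varepsilon_{j})$ then run through the column labeling $\woa(n),\woa(n-1),\ldots,\woa(1),n,n-1,\ldots$ furnished by Lemma~\ref{lem:parabolic_long_element}. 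Granting this invariant, applying the simple reflections of the current row in turn to the appropriate simple root peels off, one cell at a time, roots of the form $\pm\varepsilon_{|r|}\pm\varepsilon_{|c|}$ (or $\varepsilon_{|r|}$) whose column index $|c|$ is read from the column label and whose row index $|r|$ is the fixed row label; translating each such root back to a reflection according to the signs of $r$ and $c$ reproduces precisely rules (i)--(v). Lemma~\ref{lem:type_b_decompositions} guarantees that the roots so obtained exhaust $\invset(\woa)$.

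The split/join dichotomy would enter only through the first $\alpha_{1}$ rows. When $\alpha$ is join, row $k$ (for $k\leq\alpha_{1}$) carries a factor $s_{b}\cdots s_{a}$ with $a=\alpha_{1}+1-k\geq 1$ that never reaches $s_{0}$, so $u$ acts there by a genuine permutation with no sign change; the emerging roots are $-\varepsilon_{r}+\varepsilon_{c}$ with $0<r<c$, producing the positive–positive reflections $\lleft r\;c\rright$ of rule (i), in accordance with the positive row labels $\alpha_{1},\alpha_{1}-1,\ldots,1$. Once these rows are passed — or from the very start in the split case — every row factor reaches $s_{0}$, which is what makes $u$ switch signs and thereby yields the reflections $\sleft\cdot\sright$ and $\lleft{-}\cdot\;\cdot\rright$ of rules (ii), (iii) and (v). As a sanity check, specializing to $\alpha=(0,1,1,\ldots,1)$ recovers the arrangement of pairs described after Corollary~\ref{cor:linc_sorting_longest}, where $u$ after $m$ rows is simply $\linc^{m}$.

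The main obstacle is the bookkeeping of the evolving permutation $u$ across successive rows, and in particular proving the action-invariant cleanly enough that the column labels can be read off without a case-by-case recomputation inside each row. The most delicate point is the transition into and out of the first $\alpha_{1}$ rows in the join case, where the shape of the row factors changes and $u$ passes between sign-preserving and sign-changing behaviour. I would therefore isolate the statement about the partial products of $\wb_{\alpha}$ as a small auxiliary lemma and, once it is in place, let the identification of rules (i)--(v) follow by inspection of the signs of $r$ and $c$.
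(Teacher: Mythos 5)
Your proposal is correct and, at its core, runs on the same engine as the paper's proof: both compute each inversion as an explicit conjugate of a simple generator by a partial product of $\wb_{\alpha}$ and organize the verification as an induction over the cells of $\Skew(\alpha)$, resting on an auxiliary lemma describing those partial products explicitly. The difference is in the bookkeeping, which is organized dually. The paper conjugates by the \emph{prefix} product $\pi=\sbo^{(n)}_{k-1}\sbo^{(n)}_{\alpha_{j-1}}\cdots\sbo^{(n)}_{\alpha_1}$ of the rows above the cell (its auxiliary lemma gives the signed permutation $\sbo_i^{(n)}=s_{n-i\ldots0}\cdots s_{n-1\ldots0}$ in closed form) and pins down the resulting involution by computing its value on the single row index $t$, exploiting that a non-identity involution is determined by one non-fixed point; the column constancy within a part and the identification of topmost cells are then handled by a vertical induction. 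You instead sweep from the top maintaining the \emph{suffix} product $u$ and track the positive representative of $u(\alpha_{s})$; within a split-type row the emerging roots are $u(\varepsilon_1)$ and $u(\varepsilon_1)+u(\varepsilon_{j+1})$, and in a join first-part row $-u(\varepsilon_a)+u(\varepsilon_{j+1})$, which is exactly why your invariant (the action of $u$ on $\varepsilon_1,\ldots,\varepsilon_n$) is what the column labels record. Your organization is somewhat more systematic — all of rules (i)--(v) fall out uniformly from the signs of the row and column labels, and the split/join dichotomy is localized in the shape of the row factors — at the cost of carrying the full action of $u$ rather than one value.

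Two small corrections, neither fatal. First, with your processing order (top to bottom, right to left) and your definition $u_{i-1}=a_{k}a_{k-1}\cdots a_{k-i+2}$, completing a row multiplies $u$ on the right by the row read \emph{right to left}, \ie by $s_{0}s_{1}\cdots s_{b}$ (resp.\ $s_{a}s_{a+1}\cdots s_{b}$ with $a=\alpha_1+1-k$ in the first $\alpha_1$ rows of the join case), not by $s_{b}s_{b-1}\cdots s_{0}$ as you wrote; with the correct factor, $s_{0}s_{1}\cdots s_{b}$ acts as the signed cycle $\varepsilon_{j}\mapsto\varepsilon_{j+1}$ for $j\leq b$ and $\varepsilon_{b+1}\mapsto-\varepsilon_{1}$, which makes your row-to-row update of the invariant a routine computation, exactly the auxiliary lemma you propose to isolate. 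Second, the exhaustion of $\invset(\woa)$ does not come from Lemma~\ref{lem:type_b_decompositions}: it is automatic from the general fact, already quoted in the paper from \cite{bjorner05combinatorics}*{Section~1.3}, that for any $S$-reduced word the reflections $t_{1},\ldots,t_{k}$ are pairwise distinct and enumerate the whole inversion set; since $\Skew(\alpha)$ has $\ell_{S}(\woa)$ cells, nothing further is needed.
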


We shall need the following lemma whose proof is immediate by direct computation.

\begin{lemma}
    Consider the product
    \begin{equation}
        \sbo_i^{(n)} \defs s_{n-i\ldots0}\, s_{n-i+1\ldots 0}\, \cdots s_{n-1\ldots 0}.
    \end{equation}
    It sends $k>0$ to 
    \begin{equation}
        \left\{
        \begin{split}
            -n+k-1 & \text{\ \ if\ $k\leq i$}, \\
            k-i    & \text{\ \ if\ $k>i$}. \\
        \end{split}
    \right.
    \end{equation}
\end{lemma}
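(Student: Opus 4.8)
The plan is to establish the explicit action of the product $\sbo_i^{(n)} = s_{n-i\ldots 0}\, s_{n-i+1\ldots 0}\,\cdots\, s_{n-1\ldots 0}$ on positive integers by a direct, step-by-step computation, since the lemma is stated as an immediate consequence of tracking how each factor moves entries in the permutation representation. The factors are read left to right, so $\sbo_i^{(n)}$ is a composition of $i$ blocks; I would analyze a single block first and then iterate.

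First I would recall that in the permutation representation, applying the generator $s_j$ (for $j>0$) on the \emph{right} exchanges the entries in positions $j$ and $j{+}1$, while $s_0$ on the right swaps the sign of the entry in position $1$ (per the setup before Lemma~\ref{lem:type_b_inversions}). Reading $s_{n-1\ldots 0} = s_{n-1}s_{n-2}\cdots s_1 s_0$ from left to right as successive right-multiplications, the effect is a ``cascade'': the factor $s_{n-1}$ swaps positions $n{-}1,n$, then $s_{n-2}$ swaps $n{-}2,n{-}1$, and so on down to $s_1$ swapping positions $1,2$, finally $s_0$ flipping the sign in position $1$. Tracking a single entry through this cascade shows that the block $s_{n-1\ldots 0}$ cyclically shifts the first $n$ positions one step and negates what lands in position $1$; concretely, it realizes the map $\sbo_1^{(n)}$, sending the value originally in position $1$ to $-n$ and shifting the others. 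I would write out this single-block action precisely as the base case $i=1$, confirming it sends $k=1$ to $-n+1-1 = -n$ and $k>1$ to $k-1$.

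Next I would induct on $i$, treating $\sbo_i^{(n)} = \sbo_{i-1}^{(n)}\cdot s_{n-1\ldots 0}$ (the last factor in reading order being the rightmost applied). Assuming the formula for $\sbo_{i-1}^{(n)}$, I would compose with the additional block and verify the two cases: values $k\leq i$ are drawn down into the negative range to give $-n+k-1$, while values $k>i$ are shifted to $k-i$. The only care needed is to confirm that the threshold shifts correctly from $i-1$ to $i$ as one more block is prepended, and that the negated entries accumulate in the correct order $-n, -n+1, \ldots$ rather than colliding. Since each block is a simple cyclic cascade with one sign flip, this bookkeeping is routine, and the stated piecewise formula falls out directly.

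The main obstacle — really the only subtlety — is orientation bookkeeping: getting the reading direction of the factors consistent with the left-versus-right multiplication conventions fixed in Section~\ref{sec:sign_symmetric_permutations}, so that the cascade moves entries in the intended direction and the sign flips land on the intended positions. Because $\linc$-sorting words are read in a specific order and the paper works with the \emph{left} weak order but right inversions, I would double-check against the worked computation in Example~\ref{ex:longest_word_3121}, where an explicit chain of right-multiplications by such blocks is displayed; matching my single-block action to one line of that computation verifies the convention and makes the whole induction transparent.
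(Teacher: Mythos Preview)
Your overall approach---direct computation by induction on the number $i$ of blocks---is exactly what the paper intends when it says the lemma is ``immediate by direct computation,'' and your analysis of the base case $i=1$ (the single block $s_{n-1\ldots 0}$) is correct.

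However, the factorization you use in the inductive step is wrong. You write $\sbo_i^{(n)} = \sbo_{i-1}^{(n)}\cdot s_{n-1\ldots 0}$, but by definition $\sbo_{i-1}^{(n)} = s_{n-i+1\ldots 0}\cdots s_{n-1\ldots 0}$ already has $s_{n-1\ldots 0}$ as its rightmost factor, so appending another copy on the right does not produce $\sbo_i^{(n)}$. Passing from $i-1$ to $i$ adjoins the new block $s_{n-i\ldots 0}$ on the \emph{left}: the correct decomposition is $\sbo_i^{(n)} = s_{n-i\ldots 0}\cdot\sbo_{i-1}^{(n)}$, so that $\sbo_i^{(n)}(k) = s_{n-i\ldots 0}\bigl(\sbo_{i-1}^{(n)}(k)\bigr)$. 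The single block $s_{n-i\ldots 0}$ is easily computed by your cascade argument: it sends $1\mapsto -(n-i+1)$, sends $m\mapsto m-1$ for $2\leq m\leq n-i+1$, and fixes $m>n-i+1$ (and extends sign-symmetrically to negatives). Composing this with the induction hypothesis then yields the stated piecewise formula in each of the ranges $k\leq i-1$, $k=i$, and $k>i$. This is precisely the orientation hazard you yourself flagged; once the factorization is corrected, the rest of your plan goes through unchanged.
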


For example, with $n=7$ and $i=3$, one gets $\sbo_{3}^{(7)}=\overline{4}\;\overline{3}\;\overline{2}\;\overline{1}\;5\;6\;7\mid\overline{7}\;\overline{6}\;\overline{5}\;1\;2\;3\;4$. Its inverse is $1\;2\;3\;\overline{7}\;\overline{6}\;\overline{5}\;\overline{4}\mid 4\;5\;6\;7\;\overline{3}\;\overline{2}\;\overline{1}$.

\begin{proof}[Proof of Proposition~\ref{prop:alpha_inversion_order}]
    Recall that each inversion is obtained by conjugating an $s_i$ and so is itself an reflection, and in particular an involution. And an involution that is not the identity is characterized by the image of an element not sent to itself.

    Let us start with a split composition $\alpha=(0,\alpha_1,\alpha_2,\ldots,\alpha_r)$ of $n$ and compute the inversion corresponding to an $s_i$ in the $k$-th row of part $\alpha_j$ (so that $k\leq \alpha_j$). Note that $i+k\leq n$ since each row has one element less than the previous one inside $\sbo_{\alpha_j}$.

    Let us denote by $\pi$ the permutation defined by the product
    \begin{equation}
        \pi = \sbo^{(n)}_{k-1} \sbo^{(n)}_{\alpha_{j-1}} \dots \sbo^{(n)}_{\alpha_1}.
    \end{equation}
    Then the involution we want to compute is $w=\pi^{-1} s_{0\ldots i-1} s_i s_{i-1\ldots 0} \pi$.

    We are considering an element in row $t=\alpha_1+\alpha_2+\cdots+\alpha_{j-1}+k$ (from the top). Let us compute $w(t)$. First, $\pi(t)=1$: each product $s_{p\ldots0}$ decreases the value by $1$, and since there are $t-1$ such products, we get $1$. Then $s_{0\ldots i-1} s_i s_{i-1\ldots 0}(1)$ is $-i-1$.

    Now, either $k=1$ or $k>1$. If $k>1$ then $s_{0\ldots n-k+1}(-i-1)=-i-2$ since $i+1\leq n-k+1$. We now need to evaluate the remaining rows of $\pi^{-1}$ on $-i-2$ which is equal to the image of $t-1$ by the conjugate of the generator $s_{i+1}$ in row $k-1$ of $\alpha_j$, hence showing that the columns of the skew shape of $\alpha$ are filled with the same values within the same part of the composition.

    Now, if $k=1$, there are again two cases. Either $i+1\leq n-\alpha_{j-1}$ or not. If it is the case, that exactly means that the cell we consider has a cell above it and the same reasoning as before applies. Otherwise, we are in the topmost cell of its column. In that case, $s_{0\ldots \alpha_{j-1}}(-i-1)$ is either $1$ (if $i+1=\alpha_{j-1}+1$) or $-i-1$ itself. Applying the other elements $s_{0\ldots n-1}\cdots s_{0\ldots\alpha_{j-1}-1}$ of $\sbo_{j_1-1}^{-1}$ to this result will eventually get the value $1$ and then increment it at each product by an $s_{0\ldots p}$. Moreover, applying then the other elements of $\pi^{-1}$, it will still increment on each row, finally giving the value $n-i+\alpha_1+\alpha_2+\cdots+\alpha_{j-2}$, which is exactly the value of its column in $\Skew(\alpha)$.

    \medskip
    
    Let us now consider the join case. The labels of the rows of the first part go backwards, which is easily explained (see the proof of Proposition~\ref{prop:wbwoa}). Once that is noted, if the cell under consideration has a cell above it, the same argument as before works and one easily shows that the skew shape predicts the correct involution. If the cell is the topmost of its column, the explanation we saw before in the case of the split composition will apply again with one small difference: the first part behaves differently from the next ones since it is not multiplied on its left by a sequence $\sbo_k$ but by a sequence that does not contain the generator $s_0$. That explains why their images do not change sign, in contrast to other parts of $\alpha$.
\end{proof}

\subsection{$\wboa(\linc)$-aligned elements for parabolic quotients of $\Hyper_{n}$}
    \label{sec:parabolic_aligned_elements}
In this section we want to generalize Lemma~\ref{lem:type_b_forcing} to parabolic quotients of $\Hyper_{n}$.  So, fix a type-$B$ composition $\alpha$, and let $\pi\in\Hyper_{\alpha}$.  Let $t\in\covset(\pi)$.  We use the inversion order $\invorder\bigl(\wboa(\linc)\bigr)$ to describe the required implications.  

In the following proofs we will simply write ``aligned'' when we mean ``$\wboa(\linc)$-aligned''.  Moreover, most of the time $\invset(\woa)$ is a proper subset of the set of all reflections of $\Hyper_{n}$.  Therefore, if we have a reflection $t_{b\beta+c\gamma}\in\invset(\woa)$, where $t_{\beta}\notin\invset(\woa)$ or $t_{\gamma}\notin\invset(\woa)$, we do not need to consider the relative order of $\{t_{\beta},t_{b\beta+c\gamma},t_{\gamma}\}\cap\invset(\woa)$ with respect to $\invorder\bigl(\wboa(\linc)\bigr)$.  We then say that we \defn{discard} this decomposition.

\subsubsection{The split case}

\begin{lemma}\label{lem:split_forcing}
    If $\alpha$ is split, then $\pi\in\Hyper_{\alpha}$ is $\wboa(\linc)$-aligned if and only if for every cover inversion $t\in\covset(\pi)$ the following implications hold.
    \begin{itemize}
        \item If $t=\sleft i\sright$ for $0<i$, then $\sleft j\sright\in\invset(\pi)$ for all $1\leq j<i$ with $\indicator_{\alpha}(j)<\indicator_{\alpha}(i)$.
        \item If $t=\lleft i\;k\rright$ for $0<i<k$, then $\lleft i\;j\rright\in\invset(\pi)$ for all $i<j<k$ with $\indicator_{\alpha}(i)<\indicator_{\alpha}(j)<\indicator_{\alpha}(k)$.
        \item If $t=\lleft {-k}\;i\rright$ for $0<i<k$, then:
        \begin{itemize}
            \item $\sleft i\sright\in\invset(\pi)$,
            \item $\lleft {-}j\;i\rright\in\invset(\pi)$ for $1\leq j<k$ with $\indicator_{\alpha}(j)<\indicator_{\alpha}(k)$,
            \item $\lleft {-}k\;j\rright\in\invset(\pi)$ for $1\leq j<i$ with $\indicator_{\alpha}(j)<\indicator_{\alpha}(i)$.
        \end{itemize}
    \end{itemize}
\end{lemma}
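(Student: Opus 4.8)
The plan is to run the same case analysis as in the proof of Lemma~\ref{lem:type_b_forcing}, but now keeping careful track of which reflections actually occur in $\invset(\woa)$. By Definition~\ref{def:w_alignment}, a triple $t_{\beta_1}\prec t\prec t_{\beta_2}$ only constrains $\pi$ when all three reflections lie in $\invset(\woa)$. Since $\pi\in\Hyper_{\alpha}$ forces $\covset(\pi)\subseteq\invset(\pi)\subseteq\invset(\woa)$, the middle reflection $t$ (a cover inversion of $\pi$) is automatically present; hence a decomposition $\alpha_t=b_1\beta_1+b_2\beta_2$ of a cover inversion $t$ contributes a forcing condition precisely when both $t_{\beta_1}$ and $t_{\beta_2}$ belong to $\invset(\woa)$, and is discarded otherwise. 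This ``survival'' requirement is the only new feature compared with the full group.

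First I would compute $\invset(\woa)$ explicitly for split $\alpha$. Combining Lemma~\ref{lem:parabolic_long_element} (for split $\alpha$ every value $\woa(a)$ is negative, the right part is increasing within each $\alpha$-region, and it strictly decreases when passing from one region to the next) with the inversion criteria of Lemma~\ref{lem:type_b_inversions}, I would obtain: $\sleft i\sright\in\invset(\woa)$ for every $i$; $\lleft {-}j\;i\rright\in\invset(\woa)$ for all $0<i<j$; and, for $0<i<j$, that $\lleft i\;j\rright\in\invset(\woa)$ if and only if $\indicator_{\alpha}(i)<\indicator_{\alpha}(j)$, i.e.\ $i$ and $j$ lie in distinct regions. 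Thus the only reflections that can be absent from $\invset(\woa)$ are those of the form $\lleft i\;j\rright$ with $i,j$ in a common region.

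Next I would revisit each decomposition of Lemma~\ref{lem:type_b_decompositions} exactly as in the proof of Lemma~\ref{lem:type_b_forcing}. By Proposition~\ref{prop:alpha_inversion_order} the order $\invorder\bigl(\wboa(\linc)\bigr)$ agrees with the full order $\invorder\bigl(\wbo(\linc)\bigr)$ on any two reflections common to both, because $\Skew(\alpha)$ sits inside the full staircase under the same labelling and reading conventions. Consequently, whenever a decomposition survives, the identity of the forced summand $t_{\beta_1}$ is read off exactly as in the full-group case, and it only remains to impose the membership conditions from the previous paragraph. For $t=\sleft i\sright$ the summand $\lleft j\;i\rright$ survives iff $\indicator_{\alpha}(j)<\indicator_{\alpha}(i)$; for $t=\lleft i\;k\rright$ the pair $\lleft i\;j\rright,\lleft j\;k\rright$ survives iff $\indicator_{\alpha}(i)<\indicator_{\alpha}(j)<\indicator_{\alpha}(k)$; and for $t=\lleft {-}k\;i\rright$ the three sub-decompositions yield the unconditional forcing of $\sleft i\sright$, the forcing of $\lleft {-}j\;i\rright$ under the condition $\indicator_{\alpha}(j)<\indicator_{\alpha}(k)$, and the forcing of $\lleft {-}k\;j\rright$ under the condition $\indicator_{\alpha}(j)<\indicator_{\alpha}(i)$. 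Since Lemma~\ref{lem:type_b_decompositions} is exhaustive and $\pi\weakorder\woa$ is automatic for $\pi\in\Hyper_{\alpha}$, these are exactly the conditions listed, yielding both directions of the ``if and only if''.

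I expect the main obstacle to be the first step: pinning down $\invset(\woa)$ cleanly from Lemma~\ref{lem:parabolic_long_element}, in particular the inter-region monotonicity that makes $\lleft i\;j\rright$ an inversion precisely \emph{across} regions. The second delicate point is the bookkeeping in the $\lleft {-}k\;i\rright$ case, where one must correctly match each of the three surviving sub-decompositions to the right region inequality and absorb the degenerate branch $j=i$ (which collapses $\lleft {-}j\;i\rright+\lleft j\;k\rright$ to $2\sleft i\sright+\lleft i\;k\rright$ and merely re-forces $\sleft i\sright$).
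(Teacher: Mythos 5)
Your overall architecture is the paper's: exhaust the decompositions of Lemma~\ref{lem:type_b_decompositions}, discard any decomposition with a summand outside $\invset(\woa)$, and read the forced summand off the inversion order. Your computation of $\invset(\woa)$ for split $\alpha$ is also correct (all $\sleft i\sright$, all $\lleft{-}j\;i\rright$, and $\lleft i\;j\rright$ exactly across regions). But the step you lean on to finish — that $\invorder\bigl(\wboa(\linc)\bigr)$ agrees with $\invorder\bigl(\wbo(\linc)\bigr)$ on all reflections common to both, ``because $\Skew(\alpha)$ sits inside the full staircase under the same labelling'' — is false. The negative columns of $\Skew(\alpha)$ are labeled $\woa(n),\ldots,\woa(1)$, not $\wo(n),\ldots,\wo(1)$, and since $\woa$ reverses each $\alpha$-region these labels are permuted within regions relative to the full staircase. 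Concretely, for $\alpha=(0,3,1,2,1)$ with $n=7$, both $\lleft 1\;7\rright$ and $\lleft 3\;7\rright$ lie in $\invset(\woa)$ (indices $1,3$ in region $1$, index $7$ in region $4$), yet in the full order $\lleft 1\;7\rright\prec\lleft 3\;7\rright$, while the row of $\Skew(\alpha)$ labeled $-7$ reads, right to left, $\sleft 7\sright,\lleft 3\;7\rright,\lleft 2\;7\rright,\lleft 1\;7\rright,\lleft 4\;7\rright,\ldots$, so there $\lleft 3\;7\rright\prec\lleft 1\;7\rright$. Since Definition~\ref{def:w_alignment} is order-sensitive — swapping $t_{\beta_1}$ and $t_{\beta_2}$ around $t$ changes \emph{which} summand is forced — you cannot simply import the full-group order, and your derivation of the forced reflections is unjustified as written.

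The lemma you arrive at is nevertheless correct, and your bookkeeping of which decompositions survive matches the paper's, because the discrepancy happens never to touch a surviving triple: two common reflections can be oppositely ordered in the two orders only when they occupy the same row of $\Skew(\alpha)$ in two negative columns whose labels come from the same region, and survival rules this out. For instance, in the case $t=\lleft i\;k\rright=\lleft i\;j\rright+\lleft j\;k\rright$ the only same-row pair is $\lleft i\;k\rright,\lleft j\;k\rright$ in the columns labeled $-i,-j$, and survival demands $\indicator_{\alpha}(i)<\indicator_{\alpha}(j)$; in the $\lleft{-}k\;i\rright$ cases every same-row comparison pits a negative column against a positive one, and that relative position is independent of the labelling. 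To close the gap you must either prove this restricted-agreement claim for the surviving triples along the lines just sketched, or do what the paper does: locate each of the three reflections of a surviving decomposition in $\Skew(\alpha)$ by its row and column via Proposition~\ref{prop:alpha_inversion_order} and compare positions case by case, which directly yields relations such as $\sleft j\sright\prec t\prec\lleft j\;i\rright$ and $\lleft{-}k\;j\rright\prec t\prec\lleft j\;i\rright$ without any reference to the full-group order.
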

\begin{proof}
    If $t=\sleft i\sright$ for $0<i$, then, since $t$ is a cover inversion of $\pi$, we have $\pi(i)=-1$ by Lemma~\ref{lem:type_b_inversions}.  By Lemma~\ref{lem:type_b_decompositions} we have $t=\lleft j\;i\rright+\sleft j\sright$ for $1\leq j<i$.  If $\indicator_{\alpha}(j)=\indicator_{\alpha}(i)$, then $\lleft j\;i\rright\notin\invset(\woa)$ and we may discard this decomposition. Otherwise $\sleft j\sright$ appears in a row above $\sleft i\sright$ and $\lleft j\;i\rright$ is to the left of $\sleft i\sright$ in the same row. Therefore, we have $\sleft j\sright\prec t\prec\lleft j\;i\rright$, and for $\pi$ to be aligned, we need to have $\sleft j\sright\in\invset(\pi)$.

    \medskip

    If $t=\lleft i\;k\rright$ for $0<i<k$, then $\pi(i)=\pi(k)^+$ by Lemma~\ref{lem:type_b_inversions}, which means that $\indicator_{\alpha}(i)<\indicator_{\alpha}(k)$.  By Lemma~\ref{lem:type_b_decompositions}, we have $t=\lleft i\;j\rright+\lleft j\;k\rright$ for $i<j<k$.  By construction, $\lleft i\;j\rright$ is in the row labeled $-j$ and the column labeled by $-i$, while $\lleft j\;k\rright$ is in the row labeled $-k$ and in the column labeled by $-j$.  Now, $j<k$ implies that $\lleft i\;j\rright$ lies above $t$ in $\Skew(\alpha)$, while $\lleft j\;k\rright$ lies to the left of $t$.  We thus get the order $\lleft i\;j\rright\prec t\prec\lleft j\;k\rright$.  So, for $\pi$ to be aligned we need that $\lleft i\;j\rright\in\invset(\pi)$ whenever $\indicator_{\alpha}(i)<\indicator_{\alpha}(j)<\indicator_{\alpha}(k)$. Indeed, if $\indicator_{\alpha}(i)=\indicator_{\alpha}(j)$, then $\lleft i\;j\rright\notin\invset(\woa)$ so that we may discard the corresponding decomposition.  Analogously, we may discard the case $\indicator_{\alpha}(j)=\indicator_{\alpha}(k)$.

    \medskip

    If $t=\lleft {-}k\;i\rright$ for $0<i<k$, then $\pi(-i)=\pi(k)^+$ by Lemma~\ref{lem:type_b_inversions}.  By Lemma~\ref{lem:type_b_decompositions}, there are four decompositions to consider.
	
    If we take $t=\sleft i\sright+\sleft k\sright$, then $i<k$ implies $\sleft i\sright\prec t\prec\sleft k\sright$, so for $\pi$ to be aligned we need $\sleft i\sright\in\invset(\pi)$.  
	
    If we take $t=2\sleft i\sright+\lleft i\;k\rright$, then $i<k$ implies $\sleft i\sright\prec t\prec\lleft i\;k\rright$. Once again, for $\pi$ to be aligned we need $\sleft i\sright\in\invset(\pi)$.

    Say that $t=\lleft {-}j\;i\rright+\lleft j\;k\rright$ for $1\leq j<k$ and $j\neq i$.  If $\indicator_{\alpha}(j)=\indicator_{\alpha}(k)$, then $\lleft j\;k\rright\notin\invset(\woa)$, so that we may discard this decomposition. Otherwise, $\lleft {-}j\;i\rright$ appears in a row above $\lleft j\;k\rright$, because $i<k$.  Indeed, if $i<j$, then $\lleft {-}j\;i\rright$ appears in the row labeled by $-i$, and if $j<i$ then $\lleft {-}j\;i\rright$ appears in the row labeled by $-j$, which is above the row labeled $-i$. Therefore, we have $\lleft {-}j\;i\rright\prec t\prec\lleft j\;k\rright$ and for $\pi$ to be aligned, we need to have $\lleft {-}j\;i\rright\in\invset(\pi)$.
	
    Lastly, say that $t=\lleft j\;i\rright+\lleft {-}k\;j\rright$ for $1\leq j<i$. If $\indicator_{\alpha}(j)=\indicator_{\alpha}(i)$, then $\lleft j\;i\rright\notin\invset(\woa)$ and we may discard this decomposition.  If $\indicator_{\alpha}(j)<\indicator_{\alpha}(i)$, then $\lleft j\;i\rright$ is in the row labeled $-i$ and in the column labeled $-j$, and therefore it is to the left of $t$, which is in the row labeled $-i$ and in the column labeled $k$. Since $j<i<k$, we have that $\lleft {-}k\;j\rright$ is in the row labeled $-j$, which is above the row containing $t$.  Thus, we get $\lleft {-}k\;j\rright\prec t\prec \lleft j\;i\rright$.  For $\pi$ to be aligned we need to have $\lleft {-}k\;j\rright\in\invset(\pi)$.
\end{proof}

\begin{note}\label{note:split_forcing}
    When a relation is implied by another, we also say that it is \textit{forced} by that. Since the inversions are put in a skew partition by rows from the top-right corner cell to the left-bottom one, the alignment condition of Lemma~\ref{lem:split_forcing} can be translated to: any inversion forced by a given inversion is either in the same row as the given inversion and to its right, or in the same column and above it, as one can check on Example~\ref{ex:inversion_orders}.
\end{note}

Combinatorially speaking, we model aligned elements for $\Hyper_{\alpha}$ with $\alpha$ split as follows.

\begin{definition}\label{def:split_pattern}
    Let $\alpha$ be a split type-$B$ composition of $n$. A \defn{type-$B$ $(\alpha,231)$-split pattern} of a sign-symmetric permutation $\pi\in\Hyper_{\alpha}$ is a triple of indices $-n\leq i<j<k\leq n$ such that
    \begin{itemize}
        \item $i, j, k$ are in different regions, with $j > 0$;
        \item $\pi(i) = \pi(k)^+$, $\pi(i) < \pi(j)$.
    \end{itemize}
\end{definition}

\begin{proposition}\label{prop:split_pattern}
    Let $\alpha$ be a split type-$B$ composition of $n$. A sign-symmetric permutation $\pi\in\Hyper_{\alpha}$ is $\wboa(\linc)$-aligned if and only if it does not have a type-$B$ $(\alpha,231)$-split pattern.
\end{proposition}
\begin{proof}
    According to Lemma~\ref{lem:split_forcing}, we only need to show that the violation of any condition is equivalent to the existence of a type-$B$ $(\alpha,231)$-split pattern. We first consider the violation of conditions in Lemma~\ref{lem:split_forcing}.
    \begin{itemize}
        \item There are some $0 < j < i$ with $\indicator_\alpha(j) < \indicator_\alpha(i)$ such that $\sleft i \sright \in \covset(\pi)$ but $\sleft j \sright \notin \invset(\pi)$. Then we have $\pi(i) = -1$, $\pi(-i) = 1$ and $\pi(j) > 0$. Thus, $(-i, j, i)$ is a type-$B$ $(\alpha,231)$-split pattern.
        \item There are some $0 < i < j < k$ with $\indicator_\alpha(i) < \indicator_\alpha(j) < \indicator_\alpha(k)$ such that $\invd{i}{k} \in \covset(\pi)$ but $\invd{i}{j} \notin \invset(\pi)$. In this case, we have $\pi(i) = \pi(k)^+$, but $\pi(i) < \pi(j)$, and $(i, j, k)$ is thus a type-$B$ $(\alpha,231)$-split pattern.
        \item There are some $0 < i < k$ such that $\invd{-k}{i} \in \covset(\pi)$. In this case we have $\pi(-k) = \pi(i)^+$ and $\pi(-i) = \pi(k)^+$, then we have several subcases.
        \begin{itemize}
            \item Either $\invs{i} \notin \invset(\pi)$, in which we have $\pi(-i) < \pi(i)$, then we have $\indicator_\alpha(i) < \indicator_\alpha(k)$ as $\pi(i) > \pi(-i) > \pi(k)$. Therefore, $(-i, i, k)$ is a type-$B$ $(\alpha,231)$-split pattern.
            \item Or there is some $0 < j < k$ such that $\invd{-j}{i} \notin \invset(\pi)$. We thus have $\pi(-i) < \pi(j)$, and therefore $\indicator_\alpha(j) < \indicator_\alpha(k)$, as $\pi(j) > \pi(-i) > \pi(k)$. Therefore, $(-i, j, k)$ is a type-$B$ $(\alpha,231)$-split pattern.
            \item Or there is some $0 < j < i$ such that $\invd{-k}{j} \notin \invset(\pi)$. We thus have $\pi(j) > \pi(-k) > \pi(i)$, and as $\pi(-k) = \pi(i)^+$, we also have $\indicator_\alpha(j) < \indicator_\alpha(i)$. Therefore, $(-k, i, j)$ is a type-$B$ $(\alpha,231)$-split pattern.
        \end{itemize}
    \end{itemize}

    Now assume that we have a type-$B$ $(\alpha,231)$-split pattern $(i, j, k)$. Then we have $\invd{i}{k} \in \covset(\pi)$ but $\invd{i}{j} \notin \invset(\pi)$. There are several cases: either $i > 0$, or when $i < 0$, the positive integer $-i$ may fall in one of the intervals $0 < -i < j$, $-i = j$, $j < -i < k$, $-i = k$, or $-i > k$. We routinely check that each case leads to a violation of conditions in Lemma~\ref{lem:split_forcing}. We thus have the equivalence.
\end{proof}

\begin{example}\label{ex:split_patterns}
    Let us illustrate Proposition~\ref{prop:split_pattern}, with examples of the three types of type-$B$ $(\alpha,231)$-patterns with respect to the split composition $\alpha=(0,3,1,2,1)$.  The inversion order $\invorder\bigl(\wboa(\linc)\bigr)$ is shown in Example~\ref{ex:inversion_orders}.  We consider the following three members of $\Hyper_{\alpha}$:
    \begin{align*}
        \pi_{1} & = \hspace*{-.25cm}\raisebox{\rsp}{\AlphaPerm{1,2,1,3,3,1,2,1}{c4,c3,c2,c1,c1,c2,c3,c4}{4,\overline{7},\overline{3},\hgl{1},\overline{6},\overline{2},5,\overline{5},2,\hgl{6},\hgl{\overline{1}},3,7,\overline{4}}{.5}{.25}{white}{1}}\hspace*{-.1cm},\\
        \pi_{2} & = \hspace*{-.25cm}\raisebox{\rsp}{\AlphaPerm{1,2,1,3,3,1,2,1}{c4,c3,c2,c1,c1,c2,c3,c4}{4,\overline{5},\overline{1},6,\overline{2},3,7,\overline{7},\hgl{\overline{3}},2,\overline{6},1,\hgl{5},\hgl{\overline{4}}}{.5}{.25}{white}{1}}\hspace*{-.1cm},\\
        \pi_{3} & = \hspace*{-.25cm}\raisebox{\rsp}{\AlphaPerm{1,2,1,3,3,1,2,1}{c4,c3,c2,c1,c1,c2,c3,c4}{\overline{7},\overline{6},\hgl{3},\overline{2},\overline{5},\overline{4},\overline{1},1,\hgl{4},5,\hgl{2},\overline{3},6,7}{.5}{.25}{white}{1}}\hspace*{-.1cm}.
    \end{align*}

    In each case, the highlighted positions constitute a type-$B$ $(\alpha,231)$-split pattern.

    \begin{itemize}
        \item For $\pi_{1}$, we have $\sleft 4\sright\in\covset(\pi_{1})$ and $\lleft 3\;4\rright\in\invset(\pi_{1})$, but $\sleft 3\sright\notin\invset(\pi_{1})$. Moreover, we have $\sleft 4\sright=\lleft3\;4\rright+\sleft 3\sright$ and $\sleft 3\sright\prec\sleft 4\sright\prec\lleft 3\;4\rright$.  It follows that $\pi_{1}$ is not $\wboa(\linc)$-aligned.
        \item For $\pi_{2}$, we have $\lleft 2\;7\rright\in\covset(\pi_{2})$ and $\lleft 6\;7\rright\in\invset(\pi_{2})$, while $\lleft 2\;6\rright\notin\invset(\pi_{2})$.  Since $\lleft 2\;7\rright=\lleft 2\;6\rright+\lleft 6\;7\rright$ and $\lleft 2\;6\rright\prec\lleft 2\;7\rright\prec\lleft 6\;7\rright$, $\pi_{2}$ is not $\wboa(\linc)$-aligned. 
        \item For $\pi_{3}$, we have $\lleft{-}5\;4\rright\in\covset(\pi_{3})$ and $\lleft2\;4\rright\in\invset(\pi_{3})$, while $\lleft{-}5\;2\rright\notin\invset(\pi_{3})$.  At the same time, we have $\lleft{-}5\;4\rright=\lleft {-}5\;2\rright+\lleft 2\;4\rright$ and $\lleft{-}5\;2\rright\prec\lleft {-}5\;4\rright\prec\lleft 2\;3\rright$ in $\invorder\bigl(\wboa(\linc)\bigr)$.  Thus, $\pi_{3}$ is not $\wboa(\linc)$-aligned.
    \end{itemize}
\end{example}

\subsubsection{The join case}

Let us now consider the join case.

\begin{lemma}\label{lem:join_first_block_positive}
    If $\alpha$ is join, then for any $\pi\in\Hyper_{\alpha}$ we have $\pi(i)>0$ for $i\in[\alpha_{1}]$.
\end{lemma}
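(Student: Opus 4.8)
The plan is to characterize the parabolic quotient $\Hyper_{\alpha}$ for a join composition via Lemma~\ref{lem:increasing_blocks}, which states that $\pi\in\Hyper_{\alpha}$ if and only if the long one-line notation of $\pi$ is increasing within each block of $\Part(\alpha)$. When $\alpha$ is join, the partition $\Part(\alpha)$ has a distinguished middle block $[\overline{p}_{1},p_{1}]=\{-\alpha_{1},\ldots,-1,1,\ldots,\alpha_{1}\}$ that straddles the symmetry axis, and the claim concerns precisely the values $\pi(i)$ for $i\in[\alpha_{1}]$, i.e.\ the positive half of this central block.

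First I would record that by sign-symmetry we have $\pi(-i)=-\pi(i)$ for every $i\in[n]$, so the values $\pi(-\alpha_{1}),\ldots,\pi(-1),\pi(1),\ldots,\pi(\alpha_{1})$ occupy the central block and form a sign-symmetric sequence. By Lemma~\ref{lem:increasing_blocks}, membership in $\Hyper_{\alpha}$ forces this central block to be strictly increasing. Next I would argue by contradiction: suppose some $i\in[\alpha_{1}]$ has $\pi(i)<0$. Since the central block is increasing and $-i<i$ lies to the left of $i$ inside that block, we would then have $\pi(-i)<\pi(i)<0$, hence $-\pi(i)<0$; but $-\pi(i)=\pi(-i)$, which gives $\pi(-i)<0$ as well, and this contradicts the requirement that the $\alpha_{1}$ negative positions and the $\alpha_{1}$ positive positions in a central block of size $2\alpha_{1}$ together exhaust a symmetric set of $2\alpha_{1}$ distinct nonzero values. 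More cleanly, the sign-symmetry $\pi(-i)=-\pi(i)$ means the multiset of values in the central block is closed under negation, so exactly half of them are positive; since the block is increasing, the positive values must occupy the rightmost $\alpha_{1}$ positions, namely $\pi(1),\ldots,\pi(\alpha_{1})$.

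Concretely, the key step is: the central block consists of $2\alpha_{1}$ positions with values closed under negation and arranged in increasing order. An increasing sequence closed under negation has its negative entries strictly before its positive entries, and the split occurs exactly at the symmetry axis separating position $-1$ from position $1$. Therefore $\pi(i)>0$ precisely for the $\alpha_{1}$ rightmost positions $i\in[\alpha_{1}]$, which is the assertion. I expect no serious obstacle here; the only point requiring care is making the parity/negation-closure argument rigorous rather than hand-waving, namely verifying that an increasing sign-symmetric sequence cannot place a negative value at a positive position or a positive value at a negative position. This follows immediately because if $\pi(i)<0$ for some $i>0$ then $\pi(-i)=-\pi(i)>0$ while $-i<i$, violating monotonicity of the increasing block.

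\begin{proof}
By Lemma~\ref{lem:increasing_blocks}, since $\pi\in\Hyper_{\alpha}$ and $\alpha$ is join, the long one-line notation of $\pi$ is increasing in each block of $\Part(\alpha)$. The central block is $[\overline{p}_{1},p_{1}]$, consisting of the positions $-\alpha_{1},\ldots,-1,1,\ldots,\alpha_{1}$, and the values $\pi(-\alpha_{1}),\ldots,\pi(-1),\pi(1),\ldots,\pi(\alpha_{1})$ appearing there are strictly increasing. Suppose, for contradiction, that $\pi(i)<0$ for some $i\in[\alpha_{1}]$. By sign-symmetry, $\pi(-i)=-\pi(i)>0$. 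But $-i<i$, so $-i$ precedes $i$ inside the central block, and monotonicity forces $\pi(-i)<\pi(i)$. This contradicts $\pi(-i)>0>\pi(i)$. Hence $\pi(i)>0$ for all $i\in[\alpha_{1}]$.
\end{proof}
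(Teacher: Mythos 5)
Your proof is correct and is essentially the paper's own argument: the paper also applies Lemma~\ref{lem:increasing_blocks} to the central block containing both $-i$ and $i$, using sign-symmetry to get $\pi(i)>\pi(-i)=-\pi(i)$ and hence $\pi(i)>0$, with your version merely rephrasing this directly computed inequality as a proof by contradiction. The additional negation-closure discussion in your plan is unnecessary but harmless; the final proof environment contains exactly the right argument.
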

\begin{proof}
    Let $i\in[\alpha_{1}]$. By definition, we have $\pi(i)=-\pi(-i)$. Since $\alpha$ is join, $-i$ and $i$ lie in the same part of $\Part(\alpha)$, and by Lemma~\ref{lem:increasing_blocks} we must have $\pi(i)> \pi(-i) = -\pi(i)$. This implies $\pi(i)>0$.
\end{proof}

\begin{lemma}\label{lem:join_forcing}
    If $\alpha$ is join, then $\pi\in\Hyper_{\alpha}$ is $\wboa(\linc)$-aligned if and only if for every cover inversion $t\in\covset(\pi)$ the following implications hold.
    \begin{itemize}
        \item If $t=\sleft i\sright$ for $\alpha_{1}<i$, then $\sleft j\sright\in\invset(\pi)$ for $\alpha_{1}<j<i$ with $\indicator_{\alpha}(j)<\indicator_{\alpha}(i)$.
        \item If $t=\lleft i\;k\rright$ for $0<i<k$, then $\lleft i\;j\rright\in\invset(\pi)$ for $i<j<k$ with $\indicator_{\alpha}(i)<\indicator_{\alpha}(j)<\indicator_{\alpha}(k)$.
        \item If $t=\lleft {-}k\;i\rright$ for $0<i<k$, then:
        \begin{itemize}
            \item $\sleft i\sright\in\invset(\pi)$ for $i>\alpha_{1}$,
            \item $\lleft j\;k\rright\in\invset(\pi)$ for $1\leq j\leq \alpha_{1}$, $j\neq i$,
            \item $\lleft {-j}\;i\rright\in\invset(\pi)$ for $j>\alpha_{1}$, $j\neq i$,
            \item $\lleft j\;i\rright\in\invset(\pi)$ for $1\leq j\leq \alpha_{1}<i$,
            \item $\lleft {-k}\;j\rright\in\invset(\pi)$ for $i>j>\alpha_{1}$.
        \end{itemize}
    \end{itemize}
\end{lemma}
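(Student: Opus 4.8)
The plan is to mirror the proof of Lemma~\ref{lem:split_forcing} verbatim in spirit, replacing the split skew shape by the join one. By Definition~\ref{def:w_alignment}, for each cover inversion $t\in\covset(\pi)$ and each way of writing $\alpha_{t}=b_{1}\beta_{1}+b_{2}\beta_{2}$ as a nonnegative combination of two positive roots (these are enumerated in Lemma~\ref{lem:type_b_decompositions}), alignment demands that whenever $t_{\beta_{1}}\prec t\prec t_{\beta_{2}}$ in $\invorder\bigl(\wboa(\linc)\bigr)$ we have $t_{\beta_{1}}\in\invset(\pi)$. As before, I would read off all relative orders directly from the filled skew shape $\Skew(\alpha)$ via Proposition~\ref{prop:alpha_inversion_order}: a forced inversion sits either in the same row as $t$ and to its right, or in the same column and above $t$, as recorded in Note~\ref{note:split_forcing}. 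Any decomposition in which $\beta_{1}$ or $\beta_{2}$ fails to be an inversion of $\woa$ is discarded, since then the relative order in $\invorder\bigl(\wboa(\linc)\bigr)$ is vacuous.

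The one genuinely new ingredient is the effect of the join block. By Lemma~\ref{lem:join_first_block_positive} we have $\pi(a)>0$ for every $a\in[\alpha_{1}]$, and by Lemma~\ref{lem:parabolic_long_element} the element $\woa$ fixes these positions, so $\woa$ is increasing on $[\alpha_{1}]$. Consequently $\invset(\woa)$ contains no reflection $\invs{a}$ with $a\leq\alpha_{1}$ and no transposition internal to the first block; this is exactly what forces the constraints $\alpha_{1}<j<i$ in the first bullet and the clause ``$i>\alpha_{1}$'' in the third. On the other hand, $\invset(\woa)$ now \emph{does} contain transpositions $\invd{j}{k}$ with $j\leq\alpha_{1}<k$, because $\woa(j)=j>0>\woa(k)$; in $\Skew(\alpha)$ these occupy the positive-labeled first $\alpha_{1}$ rows, filled according to case~(i) of the labeling preceding Proposition~\ref{prop:alpha_inversion_order}. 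These inversions are precisely what produce the two extra clauses $\invd{j}{k}\in\invset(\pi)$ and $\invd{j}{i}\in\invset(\pi)$ for $j\leq\alpha_{1}$ in the third bullet, neither of which has a counterpart in the split case.

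With this understood, I would dispatch the three types of cover inversion in turn. For $t=\invs{i}$ and $t=\invd{i}{k}$ the reasoning is identical to the split case, except that the discard step now additionally eliminates a summand as soon as one of its indices lands in $[\alpha_{1}]$, which is what trims the range of $j$ to $j>\alpha_{1}$. The bulk of the work, and the main obstacle, is the case $t=\invd{-k}{i}$, which as a cover inversion forces $\pi(-i)=\pi(k)^{+}$ by Lemma~\ref{lem:type_b_inversions}. Here I must run through the four decompositions $\invs{i}+\invs{k}$, $2\invs{i}+\invd{i}{k}$, $\invd{-j}{i}+\invd{j}{k}$ for $j\neq i$, and $\invd{j}{i}+\invd{-k}{j}$, and for each one the relative position of the two summands in $\Skew(\alpha)$ now depends on whether $i$, $j$, and $k$ lie in the positive-labeled first block or below it. Carefully tracking the sign of each row label (positive for the top $\alpha_{1}$ rows, negative thereafter) through these subcases is what splits the analysis into the five stated sub-conditions; once the sign of every label is fixed, the positions are read off mechanically from the shape. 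The chief risk is therefore purely bookkeeping: ensuring that every decomposition from Lemma~\ref{lem:type_b_decompositions} is accounted for and that no admissible placement of $j$ relative to the threshold $\alpha_{1}$ is overlooked.
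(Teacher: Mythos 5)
Your proposal follows the paper's proof essentially verbatim: alignment via Definition~\ref{def:w_alignment}, the exhaustive decompositions of Lemma~\ref{lem:type_b_decompositions}, relative orders read off the filled skew shape of Proposition~\ref{prop:alpha_inversion_order} with its positive-labeled first $\alpha_{1}$ rows, and discarding any decomposition involving a non-inversion of $\woa$ (in particular $\invs{j}$ for $j\leq\alpha_{1}$ and transpositions internal to the first block, justified exactly as in the paper by Lemma~\ref{lem:join_first_block_positive} and Lemma~\ref{lem:parabolic_long_element}), which is precisely how the paper derives the five sub-conditions for $t=\invd{-k}{i}$, including the reversal that forces $\invd{j}{k}$ and $\invd{j}{i}$ rather than their partners when $j\leq\alpha_{1}$. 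One minor imprecision: for $t=\invd{i}{k}$ the argument is not quite ``identical to the split case''---when $i\leq\alpha_{1}$ the cell of $t$ moves to the positive-labeled row $i$ and the relations $\invd{i}{j}\prec t\prec\invd{j}{k}$ must be re-verified in that configuration (the paper treats this subcase explicitly)---but your general prescription of reading positions from $\Skew(\alpha)$ handles it and the forced conclusion is unchanged.
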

\begin{proof}
    If $t=\sleft i\sright$ for $0<i$, then $\pi(i)=-1$ by Lemma~\ref{lem:type_b_inversions}.  By Lemma~\ref{lem:join_first_block_positive}, we have $i>\alpha_{1}$, and by Lemma~\ref{lem:type_b_decompositions}, we have $t=\lleft j\;i\rright+\sleft j\sright$ for $1\leq j<i$.  If $\indicator_{\alpha}(j)=\indicator_{\alpha}(i)$, then we may discard this decomposition.  If $j\leq\alpha_{1}$, then by Lemma~\ref{lem:join_first_block_positive}, we have $\sleft j\sright\notin\invset(\woa)$, and we may discard this decomposition as well. For $\alpha_{1}<j<i$, we see that $\sleft j\sright$ lies in the row labeled by $-j$, which is above the row labeled $-i$ that contains $\sleft i\sright$. Moreover, $\lleft j\;i\rright$ lies in the row labeled $-i$, to the left of $\sleft i\sright$, which yields $\sleft j\sright\prec t\prec\lleft j\;i\rright$.  Thus, for $\pi$ to be aligned, we must have $\sleft j\sright\in\invset(\pi)$.

    \medskip

    If $t=\lleft i\;k\rright$ for $0<i<k$, then $\pi(i)=\pi(k)^+$ by Lemma~\ref{lem:type_b_inversions}, and therefore $\indicator_{\alpha}(i)<\indicator_{\alpha}(k)$ by Lemma~\ref{lem:increasing_blocks}.  By Lemma~\ref{lem:type_b_decompositions}, $\lleft i\;k\rright=\lleft i\;j\rright+\lleft j\;k\rright$ for $i<j<k$.  As in the proof of Lemma~\ref{lem:split_forcing}, we may discard the cases where $\indicator_{\alpha}(i)=\indicator_{\alpha}(j)$ or $\indicator_{\alpha}(j)=\indicator_{\alpha}(k)$.  Now, there are two options where $\lleft i\;k\rright$ can be.  If $i\leq \alpha_{1}$, then it is in the column labeled by $k$ and in the row labeled by $i$.  By construction, $j>\alpha_{1}$, such that $\lleft i\;j\rright$ lies to the right of $\lleft i\;k\rright$ in the same row, and $\lleft j\;k\rright$ lies in the row labeled $-k$ and in the column labeled $-j$, which is below $\lleft i\;k\rright$.  If $i>\alpha_{1}$, then $\lleft i\;k\rright$ lies in the row labeled $-k$ and in the column labeled $-i$.  Moreover, $\lleft i\;j\rright$ lies in the row labeled $-j$ (which is above the row labeled $-k$) and $\lleft j\;k\rright$ lies in the row labeled $-k$ and in the column labeled $-j$, to the left of $\lleft i\;k\rright$.  Thus, in both cases we have $\lleft i\;j\rright\prec t\prec\lleft j\;k\rright$, so that for $\pi$ to be aligned, we need $\lleft i\;j\rright\in\invset(\pi)$.

    \medskip

    If $t=\lleft {-}k\;i\rright$ for $0<i<k$, then $\pi(-i)=\pi(k)^+$ by Lemma~\ref{lem:type_b_inversions}.  By Lemma~\ref{lem:type_b_decompositions}, there are four decompositions to consider.
	
    If we take $t=\sleft i\sright+\sleft k\sright$, then Lemma~\ref{lem:join_first_block_positive} allows us to discard the case $i\leq\alpha_{1}$.  If $i>\alpha_{1}$, then by construction $\sleft i\sright$ lies above $\sleft k\sright$, and $\sleft i\sright$ lies in the same row as $\lleft {-}k\;i\rright$ but to the right.  Thus we have $\sleft i\sright\prec t\prec \sleft k\sright$, and for $\pi$ to be aligned, we need to have $\sleft i\sright\in\invset(\pi)$.
	
    If we take $t=2\sleft i\sright+\lleft i\;k\rright$, then we may again discard the case $i\leq\alpha_{1}$.  Since $k>i>\alpha_{1}$ we conclude that $\sleft i\sright\prec t\prec \lleft i\;k\rright$.
	
    Say that $t=\lleft {-}j\;i\rright+\lleft j\;k\rright$ for $1\leq j<k$ and $j\neq i$.  Once more, we can discard the case $\indicator_{\alpha}(j)=\indicator_{\alpha}(k)$.  If $j\leq \alpha_{1}$, then $i>\alpha_{1}$, because otherwise $\woa(-j)<\woa(i)$ by Lemma~\ref{lem:increasing_blocks}, so that we can discard this decomposition. Thus, $\lleft {-}k\;i\rright$ is in the row labeled $-i$.  Now, $\lleft j\;k\rright$ is in the row labeled $j$, and thus above $t$.  Since $j\leq\alpha_{1}$, $\lleft {-}j\;i\rright$ is in the row labeled $-i$ but to the left of $t$.  Therefore, we have $\lleft j\;k\rright\prec t\prec\lleft{-}j\;i\rright$.  If $j>\alpha_{1}$, then $\lleft{-}j\;i\rright$ is in the column labeled by $-j$ and $\lleft j\;k\rright$ is in the column labeled by $-k$.  If $i\leq\alpha_{1}$, then $\lleft{-}k\;i\rright$ is in the row labeled $-k$ but to the right of $\lleft j\;k\rright$, and if $i>\alpha_{1}$, then $\lleft{-}k\;i\rright$ is in the row labeled $-i$, which is above the row labeled $-j$.  Thus, we get $\lleft {-}j\;i\rright\prec t\prec\lleft j\;k\rright$.  Concluding, for $\pi$ to be aligned we need $\lleft j\;k\rright\in\invset(\pi)$ when $j\leq\alpha_{1}$, and $\lleft{-j}\;i\rright\in\invset(\pi)$ when $j>\alpha_{1}$.
	
    Lastly, say that $t=\lleft j\;i\rright+\lleft{-}k\;j\rright$ for $1\leq j<i$. If $\indicator_{\alpha}(i)=\indicator_{\alpha}(j)$, then we can discard this decomposition.  Thus, $\indicator_{\alpha}(j)<\indicator_{\alpha}(i)$.  In any case, $i>\alpha_{1}$, which means that $\lleft{-}k\;i\rright$ is in the row labeled by $-i$.  If $j\leq\alpha_{1}$, then $\lleft j\;i\rright$ is in the row labeled $j$, and $\lleft{-k}\;j\rright$ is in the row labeled by $-k$. This yields $\lleft j\;i\rright\prec t\prec\lleft{-}k\;i\rright$.  If $j>\alpha_{1}$, then $\lleft j\;i\rright$ is in the row labeled $-i$ but to the left of $t$, and $\lleft{-}k\;j\rright$ is in the row labeled by $-j$. Thus, we get $\lleft{-}k\;j\rright\prec t\prec\lleft j\;i\rright$.  Hence, for $\pi$ to be aligned we need $\lleft j\;i\rright\in\invset(\pi)$ if $j\leq\alpha_{1}$ and $\lleft{-}k\;j\rright \in \invset(\pi)$ if $j>\alpha_{1}$.
\end{proof}

\begin{note}\label{note:join_forcing}
    As in the split case, when a relation is implied by another, we also say that it is \textit{forced} by that one. Since the inversions are put in a skew partition by rows from the top-right corner cell to the left-bottom one, the alignment condition of Lemma~\ref{lem:join_forcing} may be translated to: any inversion forced by a given inversion is either on the same row as the given inversion and to its right, or in the same column and above it, as one can check on Example~\ref{ex:inversion_orders}.
\end{note}

Combinatorially speaking, we model aligned elements for $\Hyper_\alpha$ when
$\alpha$ is join as follows.

\begin{definition}\label{def:join_pattern}
    Let $\alpha$ be a join type-$B$ composition of $n$. A \defn{type-$B$ $(\alpha,231)$-join pattern} in a sign-symmetric permutation $\pi\in\Hyper_{\alpha}$ is a triple of indices $-n\leq i<j<k\leq n$ such that 
    \begin{itemize}
        \item $i, j, k$ are in different regions, with $j > 0$;
        \item $\pi(i) = \pi(k)^+$;
        \item Either $j > \alpha_1$ and $\pi(j) > \pi(i)$, or $0 < j \leq \alpha_1$ and $\pi(j) < \pi(k)$.
    \end{itemize}
\end{definition}

Comparing with the definition of split patterns, we see that the only difference in the join case is that, when the middle index $j$ is in the first region, the join one, $\pi(j)$ must be \textit{smaller} than the two consecutive values $\pi(i), \pi(k)$ instead of being larger. In this situation, we actually see a $312$-pattern.

\begin{proposition} \label{prop:join_pattern}
    Let $\alpha$ be a join type-$B$ composition of $n$. A sign-symmetric permutation $\pi\in\Hyper_{\alpha}$ is $\wboa(\linc)$-aligned if and only if it does not have a type-$B$ $(\alpha,231)$-join pattern.
\end{proposition}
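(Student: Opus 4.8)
The plan is to mirror the proof of Proposition~\ref{prop:split_pattern} almost verbatim, substituting the join forcing conditions of Lemma~\ref{lem:join_forcing} for the split ones. By Lemma~\ref{lem:join_forcing}, $\pi$ is $\wboa(\linc)$-aligned if and only if none of the listed implications is violated, so it is enough to prove that the violation of any single implication is equivalent to the existence of a type-$B$ $(\alpha,231)$-join pattern. As in the split case, I would move freely between reflections and value inequalities by means of Lemma~\ref{lem:type_b_inversions}, and I would invoke Lemma~\ref{lem:join_first_block_positive} whenever a participating index lies in the first (join) region, since there every entry of $\pi$ is forced to be positive.

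For the forward direction I would split on the shape of a cover inversion $t\in\covset(\pi)$. If $t=\invs{i}$ and the forced $\invs{j}$ is missing, then $\pi(i)=-1$ and $\pi(j)>0$, and $(-i,j,i)$ is a join pattern whose middle index satisfies $j>\alpha_1$, so the ``$j>\alpha_1$'' branch of Definition~\ref{def:join_pattern} applies. If $t=\invd{i}{k}$ and $\invd{i}{j}$ is missing, then $\pi(i)=\pi(k)^+$ and $\pi(i)<\pi(j)$; since $\indicator_\alpha(j)>\indicator_\alpha(i)\geq 1$ forces $j>\alpha_1$, the triple $(i,j,k)$ is again a ``$j>\alpha_1$'' pattern. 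The substantial case is $t=\invd{-k}{i}$, where each of the five forced inversions must be inspected; the subcases produce the triples $(-i,i,k)$, $(-i,j,k)$ and $(-k,j,i)$. The new feature, absent in the split case, is that when the missing forced inversion carries an index $j\leq\alpha_1$ — namely when $\invd{j}{k}$ or $\invd{j}{i}$ fails — the positive entry $\pi(j)$ is \emph{smaller} than the consecutive pair ($\pi(-i)=\pi(k)^+$, resp. $\pi(-k)=\pi(i)^+$), so that one reads off a local $312$, matching the ``$0<j\leq\alpha_1$ and $\pi(j)<\pi(k)$'' branch of Definition~\ref{def:join_pattern}.

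For the converse I would begin with a join pattern $(i,j,k)$, note that $\pi(i)=\pi(k)^+$ makes the positions $(i,k)$ a cover inversion — realized by a sign change, a positive transposition, or a mixed transposition according to the signs and magnitudes of $i$ and $k$ — while $(i,j)$ is not an inversion. I would then dispatch exactly as in the split proof: if $i>0$, the pattern is necessarily of ``$j>\alpha_1$'' type (two positive indices $i<j$ in distinct regions force $j>\alpha_1$) and reproduces the violation for $t=\invd{i}{k}$; if $i<0$, I would further split according to where $-i$ falls among the intervals $0<-i<j$, $-i=j$, $j<-i<k$, $-i=k$, $-i>k$, and according to whether $j\leq\alpha_1$ or $j>\alpha_1$. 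In each configuration the pattern rewrites as one of the cover inversions above together with a missing forced inversion from Lemma~\ref{lem:join_forcing}, and Lemma~\ref{lem:join_first_block_positive} excludes the configurations impossible in $\Hyper_\alpha$ (for instance a sign change $\invs{j}$ with $j\leq\alpha_1$).

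I expect the only real difficulty to be organizational rather than conceptual: the case $t=\invd{-k}{i}$ carries five forced inversions, and crossing these with the dichotomy ``first region versus later region'' for each participating index produces a sizeable collection of subcases. The point demanding genuine care is to confirm that the flipped inequality $\pi(j)<\pi(k)$ in the join region is matched precisely by the forcing of the positive inversions $\invd{j}{k}$ and $\invd{j}{i}$, and not by the negative-index clauses; tracking this via the skew-shape placement recorded in Note~\ref{note:join_forcing} — an inversion is forced only by one to its right in the same row or above it in the same column — keeps the correspondence transparent. Beyond this bookkeeping, each individual verification is the same routine value computation as in Proposition~\ref{prop:split_pattern}.
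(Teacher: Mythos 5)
Your proposal is correct and follows essentially the same route as the paper's proof: both reduce the $j>\alpha_1$ cases to the argument of Proposition~\ref{prop:split_pattern} and isolate the two genuinely new forcings of Lemma~\ref{lem:join_forcing} (a missing $\invd{j}{k}$ with $0<j\leq\alpha_1$, and a missing $\invd{j}{i}$ with $j\leq\alpha_1<i$, both attached to a cover inversion $\invd{-k}{i}$), matching them to the flipped-inequality branch of Definition~\ref{def:join_pattern}. The paper states this reduction more economically and only details the two exceptional conditions, whereas you propose to re-run the full case analysis, but the mathematical content is the same.
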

\begin{proof}
    Comparing Lemma~\ref{lem:split_forcing} and Lemma~\ref{lem:join_forcing} and using the same arguments in the proof of Proposition~\ref{prop:split_pattern} and the fact that $\pi \in \Hyper_\alpha$, we observe that $\pi$ has a type-$B$ $(\alpha,231)$-join pattern $(i, j, k)$ with $i < 0$ and $j > \alpha_1$ if and only if $\pi$ violates one of the conditions in Lemma~\ref{lem:join_forcing}, except for the following.
    \begin{itemize}
        \item There are some $0 < i' < k'$ and $0 < j' \leq \alpha_1$ with $j' \neq i'$ such that $\invd{-k'}{i'} \in \covset(\pi)$ but $\invd{j'}{k'} \notin \invset(\pi)$.
        \item There are some $0 < i' < k'$ and $0 < j' \leq \alpha_1$ with $i' > \alpha_1$ such that $\invd{-k'}{i'} \in \covset(\pi)$ but $\invd{j'}{i'} \notin \invset(\pi)$.
    \end{itemize}
    We now only need to show that the violation of the conditions above is equivalent to the existence of a type-$B$ $(\alpha,231)$-join pattern $(i, j, k)$ with $i < 0$ and $0 < j \leq \alpha_1$. 

    When the first condition is violated, we have $\pi(-i') = \pi(k')^+$ and $\pi(j') < \pi(k')$. It is not possible that $k'$ is also in the first region with $j'$, as then $i'$ and $-i'$ would also be in the same region, violating $\pi(-i') > \pi(k')$. Therefore, $(-i', j', k')$ is a type-$B$ $(\alpha,231)$-join pattern for $\pi$. When the second condition is violated, we have $\pi(-k') = \pi(i')^+$ and $\pi(j') < \pi(i')$. As $i' > \alpha_1$, we know that $i'$ and $j'$ are not in the same region. We thus have a type-$B$ $(\alpha,231)$-join pattern $(-k', j', i')$ for $\pi$.

    Now assume that we have in $\pi$ a type-$B$ $(\alpha,231)$-join pattern $(i, j, k)$ with $i < 0$ and $0 < j \leq \alpha_1$. By definition, we have $\invd{-k}{-i} \in \covset(\pi)$ but $\invd{j}{k} \notin \invset(\pi)$. If $0 < -i \leq k$, then $\pi$ violating the first condition above with $i' = -i$, $j' = j$ and $k' = k$. Note that in this case we cannot have $j = -i$, as then $i, -i$ would be in the first region, and we would have $\pi(k) < \pi(i) < \pi(j)$, which is impossible. If $-i > k$, then it is the second condition that is violated with $i' = k$, $j' = j$ and $k' = -i$. We thus have the equivalence.
\end{proof}

\begin{example}
    We now illustrate Proposition~\ref{prop:join_pattern} using the composition $\alpha'=(4,2,2)$.  The corresponding inversion order is shown in Example~\ref{ex:inversion_orders}.  It remains to illustrate examples of $(\alpha',231)$-join patterns, where the middle index is in the first $\alpha$-region.  The other cases are analogous to the split case.  We consider the following permutations of $\Hyper_{\alpha'}$:
    \begin{align*}
        \pi_{1} & =
        \hspace*{-.25cm}\raisebox{\rsp}{\AlphaPerm{2,2,8,2,2}{c3,c2,c1,c2,c3}{\hgl{\overline{6}},\overline{4},\overline{8},7,\overline{5},\overline{3},\overline{2},\overline{1},\hgl{1},2,3,5,\hgl{\overline{7}},8,4,6}{.5}{.25}{white}{1}}\hspace*{-.1cm},\\
        \pi_{2} & =
        \hspace*{-.25cm}\raisebox{\rsp}{\AlphaPerm{2,2,8,2,2}{c3,c2,c1,c2,c3}{\overline{7},\hgl{5},\overline{4},3,\overline{8},\overline{6},\overline{2},\overline{1},1,\hgl{2},6,8,\overline{3},\hgl{4},\overline{5},7}{.5}{.25}{white}{1}}\hspace*{-.1cm}.
    \end{align*}
    \begin{itemize}
        \item We have $\lleft{-}8\;5\rright\in\covset(\pi_{1})$ and $\lleft{-}8\;1\rright\in\invset(\pi_{1})$, but $\lleft 1\;8\rright\notin\invset(\pi_{1})$.  We have $\lleft{-}8\;5\rright=\lleft{-}5\;1\rright+\lleft 1\;8\rright$, but $\lleft 1\;8\rright\prec\lleft{-}8\;5\rright\prec\lleft{-}1\;5\rright$, so that $\pi_{1}$ is not $\wboa(\linc)$-aligned.
        \item We see that $\lleft{-}7\;6\rright\in\covset(\pi_{2})$ and $\lleft{-}7\;2\rright\in\invset(\pi_{2})$, but $\lleft2\;6\rright\notin\invset(\pi_{2})$.  It holds that $\lleft{-}7\;6\rright=\lleft{-}7\;2\rright+\lleft 2\;6\rright$ and $\lleft 2\;6\rright\prec\lleft{-}7\;6\rright\prec\lleft{-}7\;2\rright$.  Thus, $\pi_{2}$ is not $\wbo(\linc)$-aligned.
    \end{itemize}
\end{example}

\begin{example}\label{ex:aligned_elements_subtlety}
	Let us come back to the subtlety mentioned in Remark~\ref{rem:cover_vs_inversions}, namely that for the definition of $\wb$-aligned elements, where $\wb$ is the $c$-sorting word of the longest element in a parabolic quotient of $W$, only the cover inversions must pass a certain check.  Let us start with the observation that, in the case where the parabolic quotient is the whole group (\ie $J=\emptyset$), this difference is irrelevant because the conditions are equivalent, see \cite[Section~6.2]{muehle19tamari}.
	
	When $J\neq\emptyset$, however, this difference is indeed crucial. Consider the group $W=\Hyper_3$, and choose $\alpha=(0,2,1)$.  By Proposition~\ref{prop:alpha_inversion_order}, the inversion order $\invorder\bigl(\wboa(\linc)\bigr)$ is 
	\begin{displaymath}
		\invs{1} \prec \invd{-2}{1} \prec \invd{-3}{1} \prec \invs{2} \prec \invd{-3}{2} \prec \invs{3} \prec \invd{2}{3} \prec \invd{1}{3}.
	\end{displaymath}
	Let $w=\hspace*{-.25cm}\raisebox{\rsp}{\AlphaPerm{1,2,2,1}{c2,c1,c1,c2}{2,\overline{3},1,\overline{1},3,\overline{2}}{.5}{.25}{white}{1}}\hspace*{-.2cm}$. Then, we have
	\begin{align*}
		\invset(w) & = \Bigl\{\invs{1},\invd{-3}{1},\invs{3},\invd{2}{3},\invd{1}{3}\Bigr\},\\
		\covset(w) & = \Bigl\{\invs{1},\invd{1}{3}\Bigr\}.
	\end{align*}
	As $\invs{1}$ is a simple reflection and the only decomposition $\invd{1}{3}=\invd{1}{2}+\invd{2}{3}$ must be discarded because $\invd{1}{2}\notin\invset(\woa)$, we conclude that $w$ is $\wboa(\linc)$-aligned. However, the inversion $t=\invs{3}$ can be decomposed as 
	\begin{displaymath}
		\invs{3} = \invs{1}+\invd{1}{3} = \invs{2} + \invd{2}{3},
	\end{displaymath}
	and neither of these decompositions can be discarded, because $\Bigl\{\invs{1},\invs{2},\invd{2}{3},\invd{1}{3}\Bigr\}\subseteq\invset(\woa)$. As we have $\invs{2}\prec\invd{2}{3}$ and $\invs{2}\notin\invset(w)$, the condition from Definition~\ref{def:w_alignment} would fail for $w$ if all inversions were considered. 
	
	It can be verified that the $\wboa(\linc)$-aligned permutation $\hspace*{-.25cm}\raisebox{\rsp}{\AlphaPerm{1,2,2,1}{c2,c1,c1,c2}{3,\overline{1},2,\overline{2},1,\overline{3}}{.5}{.25}{white}{1}}\hspace*{-.2cm}$ exhibits the same behavior, so that only fourteen of the sixteen $\wboa(\linc)$-aligned elements pass the more restrictive condition of checking all inversions in Definition~\ref{def:w_alignment}.
\end{example}

\begin{remark}
	In light of Example~\ref{ex:aligned_elements_subtlety}, we notice that changing the condition from Definition~\ref{def:w_alignment} to checking \emph{all} inversions would in general produce a smaller set of ``aligned'' elements for parabolic quotients of a Coxeter group $W$. The reason why it is proposed in \cite{muehle19tamari} to only verify the defining conditions for cover inversions lies in the numerical coincidence mentioned in \cite[Conjecture~41]{muehle19tamari}, namely that we can construct other sets of ``Coxeter--Catalan''-like objects which share the cardinality of $\Align\bigl(W^J,\wbo^J(c)\bigr)$ but not that of the smaller collection.
\end{remark}

\section{The parabolic Tamari lattice in linear type $B$}
    \label{sec:parabolic_tamari_lattice}
\subsection{A projection map}

Let $\alpha$ be a type-$B$ composition $\alpha$ of $n$. A sign-symmetric permutation $\pi \in \Hyper_\alpha$ is \defn{$(\alpha, 231)$-avoiding} if $\pi$ has no type-$B$ $(\alpha, 231)$-split (resp. $(\alpha, 231)$-join) pattern when $\alpha$ is split (resp. join). We denote by $\Hyper_\alpha(231)$ the set of such sign-symmetric permutations. By Propositions~\ref{prop:split_pattern}~and~\ref{prop:join_pattern}, the set $\Hyper_\alpha(231)$ contains exactly the $\wboa(\linc)$-aligned elements in $\Hyper_\alpha$. This fact leads naturally to the following definition.

\begin{definition} \label{def:parabolic_Tamari}
    For a type-$B$ composition $\alpha$ of $n$, we define the associated \defn{type-$B$ parabolic Tamari lattice} by $\Tamari_{B}(\alpha) \defs \Weak(\Hyper_\alpha(231))$.
\end{definition}

To show that $\Tamari_B(\alpha)$ is indeed a lattice, we shall construct a surjective map from $\Hyper_{\alpha}$ to $\Hyper_{\alpha}(231)$ which we then use to prove the lattice property. We need the following lemma in our type-$B$ case, which is similar to Lemma~12 in \cite{muehle19tamari} for the type-$A$ case.

\begin{lemma} \label{lem:unique_maximal}
    For every $\pi \in \Hyper_\alpha$, there is a unique element $\pi_\downarrow \in \Hyper_\alpha(231)$ with $\invset(\pi_\downarrow) \subseteq \invset(\pi)$ such that $\invset(\pi_\downarrow)$ is maximal by inclusion for any such element.
\end{lemma}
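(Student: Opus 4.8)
The plan is to realize $\pi_\downarrow$ as the join, inside $\Weak(\Hyper_\alpha)$, of all aligned elements weakly below $\pi$, and then to prove that this join is itself aligned. Recall first that by Theorem~\ref{thm:weak_order_quotients} the order $\Weak(\Hyper_\alpha)$ is isomorphic to the interval $[\id,\woa]$ of $\Weak(\Hyper_n)$, hence a finite lattice; in particular joins exist, the join of two elements below $\pi$ is again below $\pi$, and since $[\id,\woa]$ is an interval these joins agree with those computed in $\Weak(\Hyper_n)$. Set
\[
A_\pi \defs \bigl\{x\in\Hyper_\alpha(231)\colon \invset(x)\subseteq\invset(\pi)\bigr\}.
\]
As $\id\in A_\pi$, the set is non-empty and finite, so it has maximal elements; and in a finite poset a unique maximal element is automatically the greatest element. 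Thus the lemma reduces to showing that $A_\pi$ is closed under join: if so, then $\pi_\downarrow\defs\bigvee A_\pi$ lies in $A_\pi$, has $\invset(\pi_\downarrow)\subseteq\invset(\pi)$, and dominates every aligned element below $\pi$, which is exactly the assertion.

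By finiteness it suffices to treat pairwise joins, and since $x\vee y\weakorder\pi$ already forces $x\vee y\in\Hyper_\alpha$ (it lies below $\woa$), the real content is: \emph{the join of two aligned elements is aligned}. I would argue on inversion sets, using the standard fact that $\invset(x\vee y)$ is the smallest inversion set containing $\invset(x)\cup\invset(y)$, obtained by closing $\invset(x)\cup\invset(y)$ under forming sums of reflections whose associated roots again sum to a positive root. Writing $z\defs x\vee y$, I invoke the forcing reformulation of alignment from Lemmas~\ref{lem:split_forcing} and~\ref{lem:join_forcing} (equivalent to Propositions~\ref{prop:split_pattern} and~\ref{prop:join_pattern}) and argue by contradiction: suppose some cover inversion $t\in\covset(z)$ forces, through one of the root decompositions of Lemma~\ref{lem:type_b_decompositions}, a reflection $t'$ with $t'\notin\invset(z)$.

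The organizing observation is that a cover inversion of $z$ is \emph{indecomposable} in $\invset(z)$: if $\alpha_t=\beta+\gamma$ with $t_\beta,t_\gamma\in\invset(z)$, then $t$ cannot be removed from $\invset(z)$ without breaking closedness, so $t\notin\covset(z)$. As the closure adds only sums, an indecomposable element of $\invset(z)$ must already lie in $\invset(x)\cup\invset(y)$, say $t\in\invset(x)$. If $t$ is moreover a cover inversion of $x$, then alignment of $x$ already forces $t'\in\invset(x)\subseteq\invset(z)$, contradicting $t'\notin\invset(z)$. The genuinely delicate case is $t\in\invset(x)\setminus\covset(x)$: here $t$ is removable from $\invset(z)$ yet not from $\invset(x)$, which (the closedness obstruction being excluded by indecomposability in $\invset(z)$) can only happen through a co-closedness relation $\alpha_\delta=\alpha_t+\alpha_\gamma$ with $\delta\in\invset(x)$ and $\gamma\in\invset(z)\setminus\invset(x)$ supplied by $y$. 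In this situation I would place $t,t',\delta,\gamma$ as cells of $\Skew(\alpha)$ and use the geometric form of the forcing from Notes~\ref{note:split_forcing} and~\ref{note:join_forcing} — every forced reflection sits in the same row to the right, or the same column above — to trace $t'$ back to a reflection forced by an honest cover inversion of $x$ or of $y$, thereby placing $t'$ in $\invset(x)\cup\invset(y)\subseteq\invset(z)$ for the desired contradiction. I expect this last step to be the main obstacle: controlling the interaction between the cover‑inversion–based forcing and the non‑cover inversions created by the closure requires a careful case analysis on the relative $\alpha$-regions $\indicator_{\alpha}$ of the indices, carried out separately in the split and join regimes and mirroring the cases already met in the proofs of Lemmas~\ref{lem:split_forcing} and~\ref{lem:join_forcing}.
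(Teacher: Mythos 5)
Your reduction is appealing, and the statement you reduce to is in fact true: since the paper eventually shows that the fibers of $\Pi^\alpha_\downarrow$ form a lattice congruence whose class minima are exactly the aligned elements, the set $\Hyper_\alpha(231)$ really is closed under join in $\Weak(\Hyper_\alpha)$. Several of your intermediate observations are also sound: a unique maximal element of the finite nonempty set $A_\pi$ is automatically its greatest element; and your claim $\covset(x\vee y)\subseteq\invset(x)\cup\invset(y)$ holds, indeed by a cleaner argument than the one you give --- if $t\in\covset(z)$ for $z=x\vee y$ and $t\notin\invset(x)\cup\invset(y)$, then the element covered by $z$ along $t$ has inversion set $\invset(z)\setminus\{t\}\supseteq\invset(x)\cup\invset(y)$, contradicting the minimality of the join; no closure description of $\invset(x\vee y)$ is needed for this. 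But as a proof the proposal has two genuine gaps. First, you invoke as ``standard'' that $\invset(x\vee y)$ is the closure of $\invset(x)\cup\invset(y)$ under positive-root sums; this is familiar in type $A$ (transitive closure), but in general Coxeter groups the relation between this $2$-closure and the join is a delicate matter, it is nowhere established or cited in the paper, and your analysis of the hard case leans on it (``the closedness obstruction being excluded by indecomposability''). Second, and decisively, the case $t\in\covset(z)$ with $t\in\invset(x)\setminus\covset(x)$ --- where alignment of $x$ tells you nothing about the reflection $t'$ forced by $t$ --- is precisely where the entire difficulty of the lemma is concentrated, and there you offer only a plan (``trace $t'$ back to an honest cover inversion of $x$ or of $y$'') with no actual argument. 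That this step cannot be soft is shown by the paper's own remark following the proof of Theorem~\ref{thm:parabolic_tamari_lattice}: for $\alpha=(0,2,1)$ the aligned elements are \emph{not} closed under meet, so $\Tamari_B(\alpha)$ is not a sublattice of $\Weak(\Hyper_\alpha)$, and any proof of join-closure must exploit the directional asymmetry of the forcing in Lemmas~\ref{lem:split_forcing} and~\ref{lem:join_forcing} in an essential way; nothing in your sketch does so yet.

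For comparison, the paper proves the lemma by induction on $\lvert\invset(\pi)\rvert$: if $\pi$ is not aligned, it removes a violating cover inversion $t$, sets $\sigma=t\cdot\pi$ and $\pi_\downarrow=\sigma_\downarrow$, and the whole content is showing that no aligned $\pi'$ with $\invset(\pi')\subseteq\invset(\pi)$ can contain $t$. This is done by choosing the middle index $j$ of the witnessing pattern extremal in its $\alpha$-region (minimal in the split case, maximal in the first region of the join case) and chasing the indices $k_d$ with $\pi'(k_d)=\pi'(k)+d$, deriving forbidden patterns in $\pi'$ or $\pi$ at each stage. Any completion of your approach would have to reproduce a case analysis of comparable depth inside your final step, so as it stands the proposal replaces the lemma's core difficulty by an equivalent unproven claim rather than resolving it.
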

\begin{proof}
    We proceed by induction on $|\invset(\pi)|$. When $|\invset(\pi)| = 0$, then $\pi$ is the identity, and the claim is clearly satisfied since $\pi_\downarrow = \pi$. Assume that the claim holds for $\pi \in \Hyper_\alpha$ with $k - 1$ inversions, and consider $\pi \in \Hyper_\alpha$ with $\lvert\invset(\pi)\rvert = k$. If $\pi \in \Hyper_\alpha(231)$, then the claim is again satisfied since $\pi_\downarrow = \pi$. Otherwise, there is some $t \in \covset(\pi)$ that violates the conditions in Lemma~\ref{lem:split_forcing} or Lemma~\ref{lem:join_forcing} according to $\alpha$ being split or join.
    Define $\sigma = t \cdot \pi$, so that $|\invset(\sigma)| = |\invset(\pi)| - 1$. Then define $\pi_\downarrow = \sigma_\downarrow$. Let us show that $\pi_\downarrow$ satisfies our claim. By definition, we always have $\pi_\downarrow \in \Hyper_\alpha(231)$. For $\pi' \in \Hyper_\alpha(231)$ with $\invset(\pi') \subset \invset(\pi)$, if $t \notin \invset(\pi')$, then we have $\pi' \weakorder \sigma$, and thus $\invset(\pi') \subseteq \invset(\sigma_\downarrow) = \invset(\pi_\downarrow)$ by induction hypothesis. We shall show that $t \in \invset(\pi')$ is impossible. In the following, by abuse of notation and in accordance with Lemma~\ref{lem:type_b_inversions}, we say that $\invd{a}{b}$ is an inversion of $\pi$ whenever $a - b$ and $\pi(a) - \pi(b)$ take different signs. We suppose that $t = \invd{i}{k}$.

    We first deal with the case where $\alpha$ is split. By Proposition~\ref{prop:split_pattern}, we have a type-$B$ ($\alpha$, 231)-split pattern $(i, j, k)$ in $\pi$ . Without loss of generality, we may assume that $j$ is the smallest such index in its region. We have $\indicator_\alpha(i) < \indicator_\alpha(j) < \indicator_\alpha(k)$. We know that $\invd{i}{k} \notin \covset(\pi')$, as the absent inversion that makes $\invd{i}{k}$ violating the conditions in Lemma~\ref{lem:split_forcing} is still absent in $\pi'$. We thus have $\pi'(i) - \pi'(k) \geq 2$. As $\invd{i}{j} \notin \invset(\pi)$, we also have $\invd{i}{j} \notin \invset(\pi')$, meaning that $\pi'(j) > \pi'(i) > \pi'(k)$.
  
    Let $k_d$ be the index such that $\pi'(k_d) = \pi'(k) + d$. We have $k_0 = k$. Let $d_* = \pi'(i) - \pi'(k) - 1$. We have $\pi'(k_{d_*}) = \pi'(i) - 1$. We now show inductively that $\indicator_\alpha(k_d) > \indicator_\alpha(j)$ for all $0 \leq d \leq d_*$. The case $d=0$ follows from $\indicator_\alpha(j) < \indicator_\alpha(k)$. Suppose that our claim holds for some $0 \leq d < d_*$. It is clear that $k_{d+1} \neq i$. If $\indicator_\alpha(k_{d+1}) < \indicator_\alpha(j)$, then we have $k_{d+1} < j < k_d$, thus $\invd{k_{d+1}}{k_d} \in \invset(\pi')$. As $\pi'(j) > \pi'(i) > \pi'(k_{d+1}) = \pi'(k_d)^+$ and $\indicator_\alpha(j) < \indicator_\alpha(k_d)$ by induction hypothesis, $(k_{d+1}, j, k_d)$ is a type-$B$ ($\alpha$, 231)-split pattern in $\pi'$, which is impossible. If $\indicator_\alpha(k_{d+1}) = \indicator_\alpha(j)$, then we have $\indicator_\alpha(k_{d+1}) < \indicator_\alpha(k)$, but $\pi'(k_{d+1}) > \pi'(k)$. Thus, $\invd{k_{d+1}}{k} \in \invset(\pi') \subseteq \invset(\pi)$, meaning that $\pi(k_{d+1}) > \pi(k)$. We thus have $(i, k_{d+1}, k)$ as a type-$B$ ($\alpha$, 231)-split pattern in $\pi$. However, as $\pi'(k_{d+1}) < \pi'(i) < \pi'(j)$ and $\indicator_\alpha(k_{d+1}) = \indicator_\alpha(j)$, we have $k_{d+1} < j$, which is impossible due to the minimality of $j$. We thus have $\indicator_\alpha(k_{d+1}) > \indicator_\alpha(j)$ to conclude the induction. But this is absurd, as it implies $\indicator_\alpha(i) < \indicator_\alpha(j) < \indicator_\alpha(k_{d_*})$, meaning that $(i, j, k_{d_*})$ is a type-$B$ $(\alpha, 231)$-split pattern in $\pi'$, which is impossible.
  
    When $\alpha$ is join, the only difference is when $\indicator_\alpha(j) = 1$ in the type-$B$ ($\alpha$, 231)-join pattern $(i, j, k)$. In this case, we take $j$ the maximal such index in the same region, and now we have $\pi'(j) < \pi'(k) < \pi'(i)$, as $\invd{j}{k} \notin \invset(\pi) \supseteq \invset(\pi')$. We observe that all patterns in the proof for $\alpha$ split above involving $j$ remain valid, except that for the case $\indicator_\alpha(k_{d+1}) = \indicator_\alpha(j)$. In this case, as $\pi'(k_{d+1}) < \pi'(i)$, we have $\invd{i}{k_{d+1}} \in \invset(\pi') \subset \invset(\pi)$, thus $\pi(k_{d+1}) < \pi(i)$. Therefore, $(i, k_{d+1}, k)$ is still a type-$B$ ($\alpha$, 231)-join pattern in $\pi$. However, as $\pi'(k_{d+1}) > \pi'(k) > \pi'(j)$ and $\indicator_\alpha(k_{d+1}) = \indicator_\alpha(j)$, we have $k_{d+1} > j$, violating the maximality of $j$ here. The modified argument thus applies to $\alpha$ that is join. We thus conclude that such $\pi'$ with $t \in \invset(\pi')$ cannot exist, whether $\alpha$ is split or join. This concludes the induction.
\end{proof}

\begin{remark}
    We saw in the proof of Lemma~\ref{lem:unique_maximal} that, to construct $\pi_\downarrow$ from $\pi$, we only need to spot some type-$B$ ($\alpha$, 231)-split (or join, depending on $\alpha$) pattern $(i, j, k)$, apply the inversion that exchanges $\pi(i)$ and $\pi(k)$, and then repeat the same procedure until there is no longer any such pattern. The result is $\pi_\downarrow$. Lemma~\ref{lem:unique_maximal} also ensures that all choices of inversion lead to the same $\pi_\downarrow$.
\end{remark}

By Lemma~\ref{lem:unique_maximal}, we see $\pi \mapsto \pi_\downarrow$ as a surjective map from $\Hyper_\alpha$ to $\Hyper_\alpha(231)$.

\begin{definition} \label{def:downward_projection}
  We define the downward projection map
  \begin{displaymath}
    \Pi^\alpha_\downarrow \colon \Hyper_\alpha \to \Hyper_\alpha(231), \quad \pi \mapsto \pi_\downarrow,
  \end{displaymath}
  where $\pi_\downarrow$ is the unique element in $\Hyper_\alpha(231)$ with the inversion set $\invset(\pi_\downarrow) \subseteq \invset(\pi)$ being maximal, whose existence is ensured by Lemma~\ref{lem:unique_maximal}.
\end{definition}

We have the following immediate consequence of the proof of Lemma~\ref{lem:unique_maximal}.

\begin{corollary} \label{coro:projection_fixpoint}
    A sign-symmetric permutation $\pi$ is in $\Hyper_\alpha(231)$ if and only if $\Pi_\downarrow^\alpha(\pi) = \pi$.
\end{corollary}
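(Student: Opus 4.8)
The plan is to read off both directions of the equivalence directly from Lemma~\ref{lem:unique_maximal} and Definition~\ref{def:downward_projection}, since the projection map $\Pi^\alpha_\downarrow$ has been set up precisely so that this statement reduces to a formality. I would organise the argument as two short implications and make sure that each invocation of the uniqueness clause in Lemma~\ref{lem:unique_maximal} is justified.

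First I would dispatch the ``if'' direction. By Definition~\ref{def:downward_projection}, the value $\Pi^\alpha_\downarrow(\pi) = \pi_\downarrow$ always lies in $\Hyper_\alpha(231)$, this being part of the conclusion of Lemma~\ref{lem:unique_maximal}. Consequently, if $\Pi^\alpha_\downarrow(\pi) = \pi$, then $\pi = \pi_\downarrow \in \Hyper_\alpha(231)$, and nothing further is required.

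For the ``only if'' direction, suppose $\pi \in \Hyper_\alpha(231)$. Then $\pi$ itself is an element of $\Hyper_\alpha(231)$ with $\invset(\pi) \subseteq \invset(\pi)$, and $\invset(\pi)$ is obviously the inclusion-maximal subset of $\invset(\pi)$. By the uniqueness asserted in Lemma~\ref{lem:unique_maximal}, the element $\pi_\downarrow$ distinguished by having inclusion-maximal inversion set contained in $\invset(\pi)$ must coincide with $\pi$. Hence $\Pi^\alpha_\downarrow(\pi) = \pi_\downarrow = \pi$.

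There is no genuine obstacle in this corollary: all of the combinatorial work has already been carried out in the inductive proof of Lemma~\ref{lem:unique_maximal}. The only subtlety worth flagging is that one must appeal to the \emph{uniqueness} part of that lemma rather than merely its existence part, observing that $\invset(\pi)$ is its own inclusion-maximal subset, so that $\pi$ is in fact the distinguished element produced by the lemma and not just one admissible candidate. Once this is noted, both implications follow immediately.
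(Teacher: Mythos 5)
Your proof is correct and matches the paper's intent: the paper states this corollary without proof as an ``immediate consequence'' of Lemma~\ref{lem:unique_maximal} (whose inductive proof sets $\pi_\downarrow=\pi$ whenever $\pi\in\Hyper_\alpha(231)$), and your two implications formalize exactly that. If anything, your argument is marginally cleaner, since it invokes only the statement of the lemma---$\invset(\pi)$ is trivially the inclusion-maximal candidate, so uniqueness forces $\pi_\downarrow=\pi$---rather than the internals of its proof.
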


\begin{proposition} \label{prop:parabolic_tamari_lattice}
  The poset $\Tamari_B(\alpha)$ is a lattice.
\end{proposition}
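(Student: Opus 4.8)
The plan is to realize $\Tamari_{B}(\alpha)$ as the image of a well-behaved projection defined on the whole parabolic weak order, and then to invoke the standard principle that the image of an order-preserving, contractive, idempotent self-map of a finite lattice is again a lattice. I would work inside $\Weak(\Hyper_{\alpha})$, which is itself a finite lattice: by Theorem~\ref{thm:weak_order_quotients} it is isomorphic to the interval $[\id,\woa]$ of $\Weak(\Hyper_{n})$, and every interval of a finite lattice is a finite lattice. The relevant map is the downward projection $\Pi_{\downarrow}^{\alpha}\colon\pi\mapsto\pi_{\downarrow}$ of Definition~\ref{def:downward_projection}, whose image is precisely $\Hyper_{\alpha}(231)$, the carrier of $\Tamari_{B}(\alpha)$.

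First I would record three properties of this projection. It is \emph{contractive}: since $\invset(\pi_{\downarrow})\subseteq\invset(\pi)$, we have $\pi_{\downarrow}\weakorder\pi$. It is \emph{idempotent}: by Corollary~\ref{coro:projection_fixpoint} every element of $\Hyper_{\alpha}(231)$ is a fixed point. It is \emph{order preserving}, and this is the one point that needs care. The key is that Lemma~\ref{lem:unique_maximal} identifies $\pi_{\downarrow}$ not merely as a maximal, but as the \emph{maximum} aligned element weakly below $\pi$; its proof shows that every $\pi'\in\Hyper_{\alpha}(231)$ with $\invset(\pi')\subseteq\invset(\pi)$ already satisfies $\invset(\pi')\subseteq\invset(\pi_{\downarrow})$. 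Granting this, if $\pi\weakorder\sigma$ then $\pi_{\downarrow}$ is an aligned element with $\invset(\pi_{\downarrow})\subseteq\invset(\pi)\subseteq\invset(\sigma)$, so the maximum property yields $\invset(\pi_{\downarrow})\subseteq\invset(\sigma_{\downarrow})$, i.e. $\pi_{\downarrow}\weakorder\sigma_{\downarrow}$.

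With these three properties the lattice property follows formally. For $a,b\in\Hyper_{\alpha}(231)$ I would show that their greatest lower bound in the induced order exists and equals $(a\wedge b)_{\downarrow}$, where $\wedge$ denotes the meet in $\Weak(\Hyper_{\alpha})$: this element is aligned and lies weakly below both $a$ and $b$ by contractivity, while any aligned common lower bound $c$ satisfies $c\weakorder a\wedge b$, and hence $c=c_{\downarrow}\weakorder(a\wedge b)_{\downarrow}$ by order preservation and idempotence. Thus $\Hyper_{\alpha}(231)$ is a finite meet-semilattice. It also has a greatest element, namely $\woa$ itself, which is aligned because every inversion demanded by the forcing conditions of Lemmas~\ref{lem:split_forcing} and~\ref{lem:join_forcing} already belongs to $\invset(\woa)$. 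A finite meet-semilattice with a top element is a lattice, so $\Tamari_{B}(\alpha)$ is a lattice.

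The only genuinely nontrivial ingredient is the strengthening of Lemma~\ref{lem:unique_maximal} from ``maximal'' to ``maximum'', which drives the order preservation of the projection; everything else is the routine projection-to-lattice argument, and the present proposition does not yet require the sharper congruence statement of Theorem~\ref{thm:parabolic_tamari_lattice}. I would therefore take care to invoke Lemma~\ref{lem:unique_maximal} in its maximum form and, if helpful, isolate that statement as a short corollary before assembling the proof.
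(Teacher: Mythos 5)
Your proof is correct and follows essentially the same route as the paper: both obtain the meet of two aligned elements as $\Pi^{\alpha}_{\downarrow}$ applied to their meet in $\Weak(\Hyper_{\alpha})$ (relying on the maximum form of Lemma~\ref{lem:unique_maximal}), observe that $\woa$ is a fixed point of the projection and hence the greatest element, and conclude via the classical fact that a finite meet-semilattice with a greatest element is a lattice. Your explicit isolation of the contractive, idempotent, order-preserving properties of the projection merely spells out what the paper leaves implicit in its citation of Lemma~\ref{lem:unique_maximal}.
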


\begin{proof}
    Let $\pi, \pi' \in \Hyper_\alpha(231)$, and let $\sigma$ denote the meet of $\pi$ and $\pi'$ in $\Weak(\Hyper_\alpha)$ (which exists by Theorems~\ref{thm:weak_order_lattice} and \ref{thm:weak_order_quotients}). Then, $\sigma_{\downarrow}\weakorder \pi$ and $\Pi_{\downarrow}^{\alpha}(\sigma)\weakorder \pi'$.

    By Lemma~\ref{lem:unique_maximal}, $\invset(\sigma_{\downarrow})$ is the unique maximal element of the set
    \begin{displaymath}
	   \text{INV}(\sigma) = \bigl\{X\colon\;X\subseteq\invset(\sigma)\;\text{and there is}\:\tau'\in\Hyper_\alpha(231)\;\text{such that}\;\invset(\tau')=X\bigr\}.
    \end{displaymath}
    Let $\tau\in\Hyper_\alpha(231)$ with $\tau\weakorder\pi$ and $\tau\weakorder\pi'$.  Then, necessarily, $\tau\weakorder\sigma$, which also implies that $\invset(\tau)\subseteq\invset(\sigma)$.  It follows that $\invset(\tau)\in\text{INV}(\sigma)$, and thus $\invset(\tau)\subseteq\invset(\sigma_{\downarrow})$. We conclude $\tau\weakorder\sigma_{\downarrow}$ and that $\sigma_{\downarrow}$ is the meet of $\pi$ and $\pi'$ in $\Tamari_{B}(\alpha)$.

    Furthermore, as $\woa$ contains all possible inversions, we have $\Pi^\alpha_\downarrow(\woa) = \woa$. Therefore, $\Tamari_B(\alpha)$ is a finite meet-semilattice with greatest element $\woa$. A classical result \cite[Chapter I, Example~1.27]{gratzer11lattice} thus implies that $\Tamari_B(\alpha)$ is a lattice.
\end{proof}

\begin{figure}[!tbh]
	\centering
	\begin{tikzpicture}
		\def\x{1.25};
		\def\y{1};
		\def\s{.6};
		\def\t{1};	
		\coordinate(n1) at (3*\x,1*\y);
		\coordinate(n2) at (2*\x,2*\y);
		\coordinate(n3) at (3*\x,3*\y);
		\coordinate(n4) at (1*\x,3*\y);
		\coordinate(n5) at (2*\x,4*\y);
		\coordinate(n6) at (0*\x,4*\y);
		\coordinate(n7) at (1*\x,5*\y);
		\coordinate(n8) at (-1*\x,5*\y);
		\coordinate(n9) at (2*\x,6*\y);
		\coordinate(n10) at (0*\x,6*\y);
		\coordinate(n11) at (1*\x,7*\y);
		\coordinate(n12) at (2*\x,8*\y);
		\draw[thick](n1) -- (n2);
		\draw[thick](n2) -- (n3);
		\draw[thick](n2) -- (n4);
		\draw[thick](n3) -- (n5);
		\draw[thick](n4) -- (n5);
		\draw[thick](n4) -- (n6);
		\draw[thick](n5) -- (n7);
		\draw[thick](n6) -- (n8);
		\draw[thick](n7) -- (n9);
		\draw[thick](n7) -- (n10);
		\draw[thick](n8) -- (n10);
		\draw[thick](n9) -- (n11);
		\draw[thick](n10) -- (n11);
		\draw[thick](n11) -- (n12);
		\draw(n1) node[scale=\s]{\AlphaPerm{2,2,2}{c2,c1,c2}{\overline{3},\overline{2},\overline{1},1,2,3}{.5}{.25}{white!50!gray}{\t}};
		\draw(n2) node[scale=\s]{\AlphaPerm{2,2,2}{c2,c1,c2}{\overline{3},\overline{1},\overline{2},2,1,3}{.5}{.25}{white!50!gray}{\t}};
		\draw(n3) node[scale=\s]{\AlphaPerm{2,2,2}{c2,c1,c2}{\overline{2},\overline{1},\overline{3},3,1,2}{.5}{.25}{white!50!gray}{\t}};
		\draw(n4) node[scale=\s]{\AlphaPerm{2,2,2}{c2,c1,c2}{\overline{3},1,\overline{2},2,\overline{1},3}{.5}{.25}{white!50!gray}{\t}};
		\draw(n5) node[scale=\s]{\AlphaPerm{2,2,2}{c2,c1,c2}{\overline{2},1,\overline{3},3,\overline{1},2}{.5}{.25}{white!50!gray}{\t}};
		\draw(n6) node[scale=\s]{\AlphaPerm{2,2,2}{c2,c1,c2}{\overline{3},2,\overline{1},1,\overline{2},3}{.5}{.25}{white!50!gray}{\t}};
		\draw(n7) node[scale=\s]{\AlphaPerm{2,2,2}{c2,c1,c2}{\overline{1},2,\overline{3},3,\overline{2},1}{.5}{.25}{white!50!gray}{\t}};
		\draw(n8) node[scale=\s]{\AlphaPerm{2,2,2}{c2,c1,c2}{\overline{2},3,\overline{1},1,\overline{3},2}{.5}{.25}{white!50!gray}{\t}};
		\draw(n9) node[scale=\s]{\AlphaPerm{2,2,2}{c2,c1,c2}{1,2,\overline{3},3,\overline{2},\overline{1}}{.5}{.25}{white!50!gray}{\t}};
		\draw(n10) node[scale=\s]{\AlphaPerm{2,2,2}{c2,c1,c2}{1,3,\overline{2},2,\overline{3},\overline{1}}{.5}{.25}{white!50!gray}{\t}};
		\draw(n11) node[scale=\s]{\AlphaPerm{2,2,2}{c2,c1,c2}{1,3,\overline{2},2,\overline{3},\overline{1}}{.5}{.25}{white!50!gray}{\t}};
		\draw(n12) node[scale=\s]{\AlphaPerm{2,2,2}{c2,c1,c2}{2,3,\overline{1},1,\overline{3},\overline{2}}{.5}{.25}{white!50!gray}{\t}};
		\begin{pgfonlayer}{background}
			\filldraw[draw=gray,fill=white!50!gray,rounded corners](2.25*\x,.85*\y) -- (3.8*\x,.85*\y) -- (3.8*\x,1.25*\y) -- (2.25*\x,1.25*\y) -- cycle;
			\filldraw[draw=gray,fill=white!50!gray,rounded corners](1.25*\x,1.85*\y) -- (2.8*\x,1.85*\y) -- (2.8*\x,2.25*\y) -- (1.25*\x,2.25*\y) -- cycle;
			\filldraw[draw=gray,fill=white!50!gray,rounded corners](.25*\x,2.85*\y) -- (1.8*\x,2.85*\y) -- (1.8*\x,3.25*\y) -- (.25*\x,3.25*\y) -- cycle;
			\filldraw[draw=gray,fill=white!50!gray,rounded corners](2.25*\x,2.85*\y) -- (3.8*\x,2.85*\y) -- (3.8*\x,3.25*\y) -- (2.25*\x,3.25*\y) -- cycle;
			\filldraw[draw=gray,fill=white!50!gray,rounded corners](1.25*\x,3.85*\y) -- (2.8*\x,3.85*\y) -- (2.8*\x,4.25*\y) -- (1.25*\x,4.25*\y) -- cycle;
			\filldraw[draw=gray,fill=white!50!gray,rounded corners](.25*\x,4.85*\y) -- (1.8*\x,4.85*\y) -- (1.8*\x,5.25*\y) -- (.25*\x,5.25*\y) -- cycle;
			\filldraw[draw=gray,fill=white!50!gray,rounded corners](-.75*\x,5.85*\y) -- (.8*\x,5.85*\y) -- (.8*\x,6.25*\y) -- (-.75*\x,6.25*\y) -- cycle;
			\filldraw[draw=gray,fill=white!50!gray,rounded corners](1.25*\x,5.85*\y) -- (2.8*\x,5.85*\y) -- (2.8*\x,6.25*\y) -- (1.25*\x,6.25*\y) -- cycle;
			\filldraw[draw=gray,fill=white!50!gray,rounded corners](.25*\x,6.85*\y) -- (1.8*\x,6.85*\y) -- (1.8*\x,7.25*\y) -- (.25*\x,7.25*\y) -- cycle;
			\filldraw[draw=gray,fill=white!50!gray,rounded corners](1.25*\x,7.85*\y) -- (2.8*\x,7.85*\y) -- (2.8*\x,8.25*\y) -- (1.25*\x,8.25*\y) -- cycle;
			\filldraw[draw=gray,fill=white!50!gray,rounded corners](-.75*\x,3.85*\y) -- (-.75*\x,4.25*\y) -- (-1.75*\x,4.85*\y) -- (-1.75*\x,5.25*\y) -- (-.2*\x,5.25*\y) -- (-.2*\x,4.85*\y) -- (.8*\x,4.25*\y) -- (.8*\x,3.85*\y) -- cycle;
		\end{pgfonlayer}
	\end{tikzpicture}
    \caption{The weak order on $\Hyper_{(1,2)}$ with the congruence classes with respect to $\Theta$ highlighted.}
    \label{fig:weak_order_12}
\end{figure}

\begin{figure}
    \centering
	\begin{tikzpicture}
		\def\x{1.25};
		\def\y{1};
		\def\s{.6};
		\def\t{1};	
		\draw(5*\x,0*\y) coordinate(n1);
		\draw(3*\x,2*\y) coordinate(n2);
		\draw(7*\x,2*\y) coordinate(n3);
		\draw(2*\x,3*\y) coordinate(n4);
		\draw(6*\x,3*\y) coordinate(n5);
		\draw(8*\x,3*\y) coordinate(n6);
		\draw(1*\x,4*\y) coordinate(n7);
		\draw(3*\x,4*\y) coordinate(n8);
		\draw(5*\x,4*\y) coordinate(n9);
		\draw(7*\x,4*\y) coordinate(n10);
		\draw(2*\x,5*\y) coordinate(n11);
		\draw(4*\x,5*\y) coordinate(n12);
		\draw(6*\x,5*\y) coordinate(n13);
		\draw(8*\x,5*\y) coordinate(n14);
		\draw(3*\x,6*\y) coordinate(n15);
		\draw(5*\x,6*\y) coordinate(n16);
		\draw(7*\x,6*\y) coordinate(n17);
		\draw(9*\x,6*\y) coordinate(n18);
		\draw(2*\x,7*\y) coordinate(n19);
		\draw(4*\x,7*\y) coordinate(n20);
		\draw(8*\x,7*\y) coordinate(n21);
		\draw(3*\x,8*\y) coordinate(n22);
		\draw(7*\x,8*\y) coordinate(n23);
		\draw(5*\x,10*\y) coordinate(n24);
		\draw[edge](n1) -- (n2);
		\draw[edge](n1) -- (n3);
		\draw[edge](n2) -- (n4);
		\draw[edge](n3) -- (n5);
		\draw[edge](n3) -- (n6);
		\draw[edge](n4) -- (n7);
		\draw[edge](n4) -- (n8);
		\draw[edge](n5) -- (n9);
		\draw[edge](n5) -- (n10);
		\draw[edge](n6) -- (n10);
		\draw[edge](n7) -- (n11);
		\draw[edge](n8) -- (n11);
		\draw[edge](n8) -- (n12);
		\draw[edge](n9) -- (n12);
		\draw[edge](n9) -- (n13);
		\draw[edge](n10) -- (n14);
		\draw[edge](n11) -- (n15);
		\draw[edge](n12) -- (n16);
		\draw[edge](n13) -- (n16);
		\draw[edge](n13) -- (n17);
		\draw[edge](n14) -- (n17);
		\draw[edge](n14) -- (n18);
		\draw[edge](n15) -- (n19);
		\draw[edge](n15) -- (n20);
		\draw[edge](n16) -- (n20);
		\draw[edge](n17) -- (n21);
		\draw[edge](n18) -- (n21);
		\draw[edge](n19) -- (n22);
		\draw[edge](n20) -- (n22);
		\draw[edge](n21) -- (n23);
		\draw[edge](n22) -- (n24);
		\draw[edge](n23) -- (n24);
		\draw(n1) node[scale=\s]{\AlphaPerm{2,1,1,2}{c2,c1,c1,c2}{\overline{3},\overline{2},\overline{1},1,2,3}{.5}{.25}{white!50!gray}{\t}};
		\draw(n2) node[scale=\s]{\AlphaPerm{2,1,1,2}{c2,c1,c1,c2}{\overline{3},\overline{2},1,\overline{1},2,3}{.5}{.25}{white!50!gray}{\t}};
		\draw(n3) node[scale=\s]{\AlphaPerm{2,1,1,2}{c2,c1,c1,c2}{\overline{3},\overline{1},\overline{2},2,1,3}{.5}{.25}{white!50!gray}{\t}};
		\draw(n4) node[scale=\s]{\AlphaPerm{2,1,1,2}{c2,c1,c1,c2}{\overline{3},\overline{1},2,\overline{2},1,3}{.5}{.25}{white!50!gray}{\t}};
		\draw(n5) node[scale=\s]{\AlphaPerm{2,1,1,2}{c2,c1,c1,c2}{\overline{3},1,\overline{2},2,\overline{1},3}{.5}{.25}{white!50!gray}{\t}};
		\draw(n6) node[scale=\s]{\AlphaPerm{2,1,1,2}{c2,c1,c1,c2}{\overline{2},\overline{1},\overline{3},3,1,2}{.5}{.25}{white!50!gray}{\t}};
		\draw(n7) node[scale=\s]{\AlphaPerm{2,1,1,2}{c2,c1,c1,c2}{\overline{2},\overline{1},3,\overline{3},1,2}{.5}{.25}{white!50!gray}{\t}};
		\draw(n8) node[scale=\s]{\AlphaPerm{2,1,1,2}{c2,c1,c1,c2}{\overline{3},1,2,\overline{2},\overline{1},3}{.5}{.25}{white!50!gray}{\t}};
		\draw(n9) node[scale=\s]{\AlphaPerm{2,1,1,2}{c2,c1,c1,c2}{\overline{3},2,\overline{1},1,\overline{2},3}{.5}{.25}{white!50!gray}{\t}};
		\draw(n10) node[scale=\s]{\AlphaPerm{2,1,1,2}{c2,c1,c1,c2}{\overline{2},1,\overline{3},3,\overline{1},2}{.5}{.25}{white!50!gray}{\t}};
		\draw(n11) node[scale=\s]{\AlphaPerm{2,1,1,2}{c2,c1,c1,c2}{\overline{2},1,3,\overline{3},\overline{1},2}{.5}{.25}{white!50!gray}{\t}};
		\draw(n12) node[scale=\s]{\AlphaPerm{2,1,1,2}{c2,c1,c1,c2}{\overline{3},2,1,\overline{1},\overline{2},3}{.5}{.25}{white!50!gray}{\t}};
		\draw(n13) node[scale=\s]{\AlphaPerm{2,1,1,2}{c2,c1,c1,c2}{\overline{2},3,\overline{1},1,\overline{3},2}{.5}{.25}{white!50!gray}{\t}};
		\draw(n14) node[scale=\s]{\AlphaPerm{2,1,1,2}{c2,c1,c1,c2}{\overline{1},2,\overline{3},3,\overline{2},1}{.5}{.25}{white!50!gray}{\t}};
		\draw(n15) node[scale=\s]{\AlphaPerm{2,1,1,2}{c2,c1,c1,c2}{\overline{1},2,3,\overline{3},\overline{2},1}{.5}{.25}{white!50!gray}{\t}};
		\draw(n16) node[scale=\s]{\AlphaPerm{2,1,1,2}{c2,c1,c1,c2}{\overline{2},3,1,\overline{1},\overline{3},2}{.5}{.25}{white!50!gray}{\t}};
		\draw(n17) node[scale=\s]{\AlphaPerm{2,1,1,2}{c2,c1,c1,c2}{\overline{1},3,\overline{2},2,\overline{3},1}{.5}{.25}{white!50!gray}{\t}};
		\draw(n18) node[scale=\s]{\AlphaPerm{2,1,1,2}{c2,c1,c1,c2}{1,2,\overline{3},3,\overline{2},\overline{1}}{.5}{.25}{white!50!gray}{\t}};
		\draw(n19) node[scale=\s]{\AlphaPerm{2,1,1,2}{c2,c1,c1,c2}{1,2,3,\overline{3},\overline{2},\overline{1}}{.5}{.25}{white!50!gray}{\t}};
		\draw(n20) node[scale=\s]{\AlphaPerm{2,1,1,2}{c2,c1,c1,c2}{\overline{1},3,2,\overline{2},\overline{3},1}{.5}{.25}{white!50!gray}{\t}};
		\draw(n21) node[scale=\s]{\AlphaPerm{2,1,1,2}{c2,c1,c1,c2}{1,3,\overline{2},2,\overline{3},\overline{1}}{.5}{.25}{white!50!gray}{\t}};
		\draw(n22) node[scale=\s]{\AlphaPerm{2,1,1,2}{c2,c1,c1,c2}{1,3,2,\overline{2},\overline{3},\overline{1}}{.5}{.25}{white!50!gray}{\t}};
		\draw(n23) node[scale=\s]{\AlphaPerm{2,1,1,2}{c2,c1,c1,c2}{2,3,\overline{1},1,\overline{3},\overline{2}}{.5}{.25}{white!50!gray}{\t}};
		\draw(n24) node[scale=\s]{\AlphaPerm{2,1,1,2}{c2,c1,c1,c2}{2,3,1,\overline{1},\overline{3},\overline{2}}{.5}{.25}{white!50!gray}{\t}};
		\begin{pgfonlayer}{background}
			\filldraw[draw=gray,fill=white!50!gray,rounded corners](4.25*\x,-.15*\y) -- (5.8*\x,-.15*\y) -- (5.8*\x,.25*\y) -- (4.25*\x,.25*\y) -- cycle;
			\filldraw[draw=gray,fill=white!50!gray,rounded corners](2.25*\x,1.85*\y) -- (3.8*\x,1.85*\y) -- (3.8*\x,2.25*\y) -- (2.25*\x,2.25*\y) -- cycle;
			\filldraw[draw=gray,fill=white!50!gray,rounded corners](6.25*\x,1.85*\y) -- (7.8*\x,1.85*\y) -- (7.8*\x,2.25*\y) -- (6.8*\x,2.85*\y) -- (6.8*\x,3.25*\y) -- (5.8*\x,3.85*\y) -- (5.8*\x,4.25*\y) -- (6.8*\x,4.85*\y) -- (6.8*\x,5.25*\y) -- (5.25*\x,5.25*\y) -- (5.25*\x,4.85*\y) -- (4.25*\x,4.25*\y) -- (4.25*\x,3.85*\y) -- (5.25*\x,3.25*\y) -- (5.25*\x,2.85*\y) -- (6.25*\x,2.25*\y) -- cycle;
			\filldraw[draw=gray,fill=white!50!gray,rounded corners](1.25*\x,2.85*\y) -- (2.8*\x,2.85*\y) -- (2.8*\x,3.25*\y) -- (1.25*\x,3.25*\y) -- cycle;    
			\filldraw[draw=gray,fill=white!50!gray,rounded corners](7.25*\x,2.85*\y) -- (8.8*\x,2.85*\y) -- (8.8*\x,3.25*\y) -- (7.8*\x,3.85*\y) -- (7.8*\x,4.25*\y) -- (8.8*\x,4.85*\y) -- (8.8*\x,5.25*\y) -- (9.8*\x,5.85*\y) -- (9.8*\x,6.25*\y) -- (8.8*\x,6.85*\y) -- (8.8*\x,7.25*\y) -- (7.8*\x,7.85*\y) -- (7.8*\x,8.25*\y) -- (6.25*\x,8.25*\y) -- (6.25*\x,7.85*\y) -- (7.25*\x,7.25*\y) -- (7.25*\x,6.85*\y) -- (8.25*\x,6.25*\y) -- (8.25*\x,5.75*\y) -- (7.25*\x,5.25*\y) -- (7.25*\x,4.85*\y) -- (6.25*\x,4.25*\y) -- (6.25*\x,3.85*\y) -- (7.25*\x,3.25*\y) -- cycle;
			\fill[white!50!gray,rounded corners](7.25*\x,4.85*\y) -- (7.25*\x,5.25*\y) -- (6.25*\x,5.85*\y) -- (6.25*\x,6.25*\y) -- (7.25*\x,6.85*\y) -- (7.25*\x,7.25*\y) -- (8.8*\x,7.25*\y) -- (8.8*\x,6.85*\y) -- (7.8*\x,6.25*\y) -- (7.8*\x,5.85*\y) -- (8.8*\x,5.25*\y) -- (8.8*\x,4.85*\y) -- cycle;
			\draw[gray,rounded corners](7.25*\x,5*\y) -- (7.25*\x,5.25*\y) -- (6.25*\x,5.85*\y) -- (6.25*\x,6.25*\y) -- (7.25*\x,6.85*\y) -- (7.25*\x,7*\y);
			\draw[gray,rounded corners](8.1*\x,5.68*\y) -- (7.8*\x,5.85*\y) -- (7.8*\x,6.25*\y) -- (8.03*\x,6.38*\y);
			\filldraw[draw=gray,fill=white!50!gray,rounded corners](.25*\x,3.85*\y) -- (1.8*\x,3.85*\y) -- (1.8*\x,4.25*\y) -- (.25*\x,4.25*\y) -- cycle;
			\filldraw[draw=gray,fill=white!50!gray,rounded corners](2.25*\x,3.85*\y) -- (3.8*\x,3.85*\y) -- (3.8*\x,4.25*\y) -- (2.25*\x,4.25*\y) -- cycle;
			\filldraw[draw=gray,fill=white!50!gray,rounded corners](1.25*\x,4.85*\y) -- (2.8*\x,4.85*\y) -- (2.8*\x,5.25*\y) -- (1.25*\x,5.25*\y) -- cycle;
			\filldraw[draw=gray,fill=white!50!gray,rounded corners](3.25*\x,4.85*\y) -- (4.8*\x,4.85*\y) -- (4.8*\x,5.25*\y) -- (5.8*\x,5.85*\y) -- (5.8*\x,6.25*\y) -- (4.25*\x,6.25*\y) -- (4.25*\x,5.85*\y) -- (3.25*\x,5.25*\y) -- cycle;
			\filldraw[draw=gray,fill=white!50!gray,rounded corners](2.25*\x,5.85*\y) -- (3.8*\x,5.85*\y) -- (3.8*\x,6.25*\y) -- (2.25*\x,6.25*\y) -- cycle;
			\filldraw[draw=gray,fill=white!50!gray,rounded corners](1.25*\x,6.85*\y) -- (2.8*\x,6.85*\y) -- (2.8*\x,7.25*\y) -- (1.25*\x,7.25*\y) -- cycle;
			\filldraw[draw=gray,fill=white!50!gray,rounded corners](3.25*\x,6.85*\y) -- (4.8*\x,6.85*\y) -- (4.8*\x,7.25*\y) -- (3.25*\x,7.25*\y) -- cycle;
			\filldraw[draw=gray,fill=white!50!gray,rounded corners](2.25*\x,7.85*\y) -- (3.8*\x,7.85*\y) -- (3.8*\x,8.25*\y) -- (2.25*\x,8.25*\y) -- cycle;
			\filldraw[draw=gray,fill=white!50!gray,rounded corners](4.25*\x,9.85*\y) -- (5.8*\x,9.85*\y) -- (5.8*\x,10.25*\y) -- (4.25*\x,10.25*\y) -- cycle;
		\end{pgfonlayer}
	\end{tikzpicture}
    \caption{The weak order on $\Hyper_{(0,1,2)}$ with the congruence classes with respect to $\Theta$ highlighted.}
    \label{fig:weak_order_012}
\end{figure}

\subsection{$\Tamari_{B}(\alpha)$ is a quotient lattice of the weak order}

To obtain stronger properties on $\Tamari_B(\alpha)$, we now show that it is also a quotient lattice of $\Weak(\Hyper_\alpha)$ using Lemma~\ref{lem:congruences_combin}, by defining an equivalence relation $\Theta$ on $\Hyper_\alpha$ which turns out to be a congruence, using the downward projection $\Pi^\alpha_\downarrow$.  This is illustrated in Figure~\ref{fig:weak_order_12} for $\alpha=(1,2)$ and in Figure~\ref{fig:weak_order_012} for $\alpha=(0,1,2)$.

\begin{definition} \label{def:congruence}
    Let $\Theta$ be the binary relation on $\Hyper_\alpha$ such that $(\pi, \pi') \in \Theta$ if and only if $\Pi^\alpha_\downarrow(\pi) = \Pi^\alpha_\downarrow(\pi')$. It is clear that $\Theta$ is an equivalence relation, with $\Pi^\alpha_\downarrow$ giving the smallest element of each equivalence class.
\end{definition}

We first show that each equivalence class of $\Theta$ is order-convex.

\begin{proposition} \label{prop:theta_order_convex}
    For any $\pi, \sigma, \pi'$ in $\Hyper_\alpha$ with $\pi \weakorder \sigma \weakorder \pi'$, if $\Pi^\alpha_\downarrow(\pi) = \Pi^\alpha_\downarrow(\pi')$, then $\Pi^\alpha_\downarrow(\sigma) = \Pi^\alpha_\downarrow(\pi)$.
\end{proposition}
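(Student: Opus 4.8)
The plan is to prove order-convexity by showing that the downward projection $\Pi^\alpha_\downarrow$ is order-preserving (monotone) with respect to the weak order, after which the claim follows immediately. Indeed, if $\Pi^\alpha_\downarrow$ is order-preserving and $\pi \weakorder \sigma \weakorder \pi'$, then applying monotonicity twice gives $\Pi^\alpha_\downarrow(\pi) \weakorder \Pi^\alpha_\downarrow(\sigma) \weakorder \Pi^\alpha_\downarrow(\pi')$; under the hypothesis $\Pi^\alpha_\downarrow(\pi) = \Pi^\alpha_\downarrow(\pi')$, the two outer terms coincide, forcing $\Pi^\alpha_\downarrow(\sigma)$ to equal them as well. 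So the entire content of the proposition reduces to the monotonicity of the projection.

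**To establish monotonicity**, suppose $\pi \weakorder \pi'$, which by the inversion-set characterization of the weak order means $\invset(\pi) \subseteq \invset(\pi')$. I want to conclude $\invset(\Pi^\alpha_\downarrow(\pi)) \subseteq \invset(\Pi^\alpha_\downarrow(\pi'))$. The natural route is to use the defining property from Lemma~\ref{lem:unique_maximal}: $\Pi^\alpha_\downarrow(\pi)$ is the \emph{unique} element of $\Hyper_\alpha(231)$ whose inversion set is contained in $\invset(\pi)$ and is maximal by inclusion among all such aligned elements. Now set $\rho = \Pi^\alpha_\downarrow(\pi)$. Then $\rho \in \Hyper_\alpha(231)$ and $\invset(\rho) \subseteq \invset(\pi) \subseteq \invset(\pi')$. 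Thus $\rho$ is an aligned element whose inversion set sits inside $\invset(\pi')$. By the maximality and uniqueness statement in Lemma~\ref{lem:unique_maximal} applied to $\pi'$, the element $\Pi^\alpha_\downarrow(\pi')$ is the unique inclusion-maximal such aligned element, so in particular $\invset(\rho) \subseteq \invset(\Pi^\alpha_\downarrow(\pi'))$. This is exactly $\Pi^\alpha_\downarrow(\pi) \weakorder \Pi^\alpha_\downarrow(\pi')$.

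**The main subtlety**, and the step I expect to require the most care, is justifying that Lemma~\ref{lem:unique_maximal} really delivers the comparison $\invset(\rho) \subseteq \invset(\Pi^\alpha_\downarrow(\pi'))$ directly, rather than merely the existence of \emph{some} maximal aligned element below $\pi'$. The point is that $\Pi^\alpha_\downarrow(\pi')$ is characterized as \emph{the} element of $\Hyper_\alpha(231)$ with inversion set maximal by inclusion among all aligned elements below $\pi'$; a priori ``maximal by inclusion'' need not mean ``largest'', so I must invoke the uniqueness part of the lemma to upgrade maximality to a genuine greatest element. Concretely, since $\rho$ is aligned with $\invset(\rho) \subseteq \invset(\pi')$, it is dominated by some inclusion-maximal aligned element below $\pi'$; but the lemma asserts this maximal element is unique, hence equals $\Pi^\alpha_\downarrow(\pi')$, so $\invset(\rho) \subseteq \invset(\Pi^\alpha_\downarrow(\pi'))$ as needed. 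It is worth noting that this argument is clean precisely because the lemma was stated with both maximality and uniqueness; without uniqueness one would need to rerun the iterative pattern-removal procedure from the remark following Lemma~\ref{lem:unique_maximal} and track inversions step by step, which would be considerably more laborious.
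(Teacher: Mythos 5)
Your proof is correct and follows essentially the same route as the paper: the paper's proof applies the maximality statement of Lemma~\ref{lem:unique_maximal} twice to the sandwiches $\Pi^\alpha_\downarrow(\sigma) \weakorder \sigma \weakorder \pi'$ and $\Pi^\alpha_\downarrow(\pi) \weakorder \pi \weakorder \sigma$, which is exactly your monotonicity argument unwound (the paper later records this monotonicity separately as Proposition~\ref{prop:projection_order_preserving} with the same one-line proof). Your careful upgrade of ``unique inclusion-maximal'' to ``greatest'' via finiteness is a legitimate reading of the lemma as stated, and is in fact established directly inside the paper's proof of Lemma~\ref{lem:unique_maximal}.
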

\begin{proof}
    Let $\pi_\downarrow = \Pi^\alpha_\downarrow(\pi) = \Pi^\alpha_\downarrow(\pi')$ and $\sigma_\downarrow = \Pi^\alpha_\downarrow(\sigma)$. As $\sigma_\downarrow \weakorder \sigma \weakorder \pi'$, by Lemma~\ref{lem:unique_maximal}, we have $\sigma_\downarrow \weakorder \pi_\downarrow$. The same argument on $\pi_\downarrow \weakorder \pi \weakorder \sigma$ gives $\pi_\downarrow \weakorder \sigma_\downarrow$. We thus have $\sigma_\downarrow = \pi_\downarrow$.
\end{proof}

From a dual point of view, we also define an upward projection.

\begin{definition} \label{def:312_pattern}
    Let $\alpha$ be a split type-B composition of $n$. A \defn{type-B $(\alpha,312)$-split pattern} of a sign-symmetric permutation $\pi \in \Hyper_\alpha$ is a triple of indices $-n \leq i < j < k \leq n$ such that
    \begin{itemize}
        \item $i$, $j$, $k$ are in different regions, with $j > 0$;
        \item $\pi(i) = \pi(k)^+, \pi(k) > \pi(j)$.
    \end{itemize}
    For $\alpha$ a join type-B composition of $n$, a \defn{type-B $(\alpha,312)$-join pattern} of $\pi \in \Hyper_\alpha$ is a triple of indices $-n \leq i < j < k\leq n$ such that
    \begin{itemize}
        \item $i$, $j$, $k$ are in different regions, with $j > 0$;
        \item $\pi(i) = \pi(k)^+$;
        \item either $j > \alpha_1$ and $\pi(k) > \pi(j)$, or $0 < j \leq \alpha_1$ and $\pi(j) > \pi(i)$.
    \end{itemize}
    We denote by $\Hyper_\alpha(312)$ the subset of elements in $\Hyper_\alpha$ that avoid type-B $(\alpha,312)$-split or -join patterns, depending on $\alpha$ being split or join.
\end{definition}

Similar to the case of type-B $(\alpha,231)$ patterns, we have the following lemma, whose proof is essentially identical to that of Lemma~\ref{lem:unique_maximal}. The only difference is that we are now looking at cover inversions with a smaller element in the middle, instead of a greater one in Lemma~\ref{lem:unique_maximal} (except in the case when $0 < j \leq \alpha_1$, where it is reversed).

\begin{lemma} \label{lem:unique_maximal_alt}
    For every $\pi \in \Hyper_\alpha$, there is a unique element $\pi_\dagger \in \Hyper_\alpha(312)$ with $\invset(\pi_\dagger) \subseteq \invset(\pi)$ such that $\invset(\pi_\dagger)$ is maximal by inclusion for any such element.
\end{lemma}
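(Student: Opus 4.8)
The plan is to follow the inductive argument of Lemma~\ref{lem:unique_maximal} \emph{mutatis mutandis}, systematically replacing $\pi_\downarrow$ by $\pi_\dagger$, the set $\Hyper_\alpha(231)$ by $\Hyper_\alpha(312)$, and the type-$B$ $(\alpha,231)$-patterns by the type-$B$ $(\alpha,312)$-patterns of Definition~\ref{def:312_pattern}. First I would induct on $\lvert\invset(\pi)\rvert$. The base case $\invset(\pi)=\emptyset$ forces $\pi=\id\in\Hyper_\alpha(312)$, so $\pi_\dagger=\pi$. For the inductive step, if $\pi\in\Hyper_\alpha(312)$ we again set $\pi_\dagger=\pi$; otherwise $\pi$ exhibits a type-$B$ $(\alpha,312)$-split (resp.\ join) pattern $(i,j,k)$, and I let $t=\invd{i}{k}$ be the cover inversion exchanging $\pi(i)$ and $\pi(k)$, put $\sigma=t\cdot\pi$ so that $\lvert\invset(\sigma)\rvert=\lvert\invset(\pi)\rvert-1$, and define $\pi_\dagger\defs\sigma_\dagger$ via the induction hypothesis.

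As in the model proof, the substance lies in showing that $\invset(\pi_\dagger)$ is the unique maximal inversion set of an element of $\Hyper_\alpha(312)$ contained in $\invset(\pi)$. For any $\pi'\in\Hyper_\alpha(312)$ with $\invset(\pi')\subseteq\invset(\pi)$ and $t\notin\invset(\pi')$ one has $\pi'\weakorder\sigma$, whence $\invset(\pi')\subseteq\invset(\sigma_\dagger)=\invset(\pi_\dagger)$ by induction. The crux is to rule out $t\in\invset(\pi')$. Here I would re-run the index-chasing of Lemma~\ref{lem:unique_maximal}: assuming $t\in\invset(\pi')$ and noting that $t$ cannot be a cover inversion of $\pi'$ (the inversion whose absence witnesses the pattern in $\pi$ is still absent in $\pi'$), so that $\pi'(i)-\pi'(k)\geq 2$, I form the chain of indices $k_d$ defined by $\pi'(k_d)=\pi'(k)+d$ for $0\leq d\leq d_*$ with $d_*\defs\pi'(i)-\pi'(k)-1$, and argue inductively that each $k_d$ lies in a region strictly between those of $j$ and $k$, so that $(i,j,k_{d_*})$ is a forbidden pattern in $\pi'$. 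The one genuine change is that in a $312$-pattern the middle value $\pi'(j)$ is \emph{smaller} than the flanking values rather than larger; accordingly every value comparison against $\pi'(j)$ is reversed, the intermediate and terminal patterns produced are type-$B$ $(\alpha,312)$-patterns, and I choose $j$ to be \emph{maximal} in its region, so that a region collision $\indicator_{\alpha}(k_{d+1})=\indicator_{\alpha}(j)$ forces $k_{d+1}>j$ (using that $\pi'$ is increasing within each block by Lemma~\ref{lem:increasing_blocks}) and contradicts maximality.

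The main obstacle, exactly as in Lemma~\ref{lem:unique_maximal}, is the bookkeeping rather than any new idea: one must check that each inequality and each extremal choice of $j$ is mirrored consistently, and in particular handle the join subcase $0<j\leq\alpha_{1}$ separately, where the defining inequality flips back (the middle value is then \emph{larger}, and one instead takes $j$ minimal, so that this subcase runs as in the split part of Lemma~\ref{lem:unique_maximal}). The remaining care is in the sign cases---whether $i>0$ or $i<0$, and where $-i$ falls relative to $j$ and $k$---which are enumerated just as before. Since the geometry of the forcing relation encoded by $\invorder\bigl(\wboa(\linc)\bigr)$ (Notes~\ref{note:split_forcing} and~\ref{note:join_forcing}) is unchanged and only the direction of the forced comparisons is reversed, I expect the argument to go through verbatim with these substitutions, yielding the unique maximal $\pi_\dagger\in\Hyper_\alpha(312)$.
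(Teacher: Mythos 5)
Your proposal is correct and takes essentially the same route as the paper, which disposes of Lemma~\ref{lem:unique_maximal_alt} by declaring its proof ``essentially identical'' to that of Lemma~\ref{lem:unique_maximal}, the only difference being that the middle element of the relevant patterns is now smaller rather than greater, except in the join subcase $0<j\leq\alpha_{1}$ where it is reversed---exactly the mirroring you describe. Your explicit bookkeeping (reversing each comparison against $\pi'(j)$, taking $j$ maximal in its region in general and minimal in the join subcase $0<j\leq\alpha_{1}$) is precisely the dual of the extremal choices made in the proof of Lemma~\ref{lem:unique_maximal}, so the argument goes through as you claim.
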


Similarly, we define the map $\Pi^\alpha_\dagger : \Hyper_\alpha \to \Hyper_\alpha(312)$ by taking $\Pi^\alpha_\dagger(\pi) = \pi_\dagger$ in Lemma~\ref{lem:unique_maximal_alt}.

\begin{definition} \label{def:upward_projection}
    We define a map $\iota : \Hyper_\alpha \to \Hyper_\alpha$ by taking $\iota(\pi) = \pi\woa{}$. In other words, to obtain $\iota(\pi)$, we replace all elements $k$ by $-k$ in $\pi$, and we reverse each $\alpha$-region of $\pi$ so that the elements are sorted. We check that $\iota(\pi)$ is in $\Hyper_\alpha$. We then define the upward projection map
    \begin{displaymath}
        \Pi^\alpha_\uparrow : \Hyper_\alpha \to \iota(\Hyper_\alpha(312)), \quad \pi \mapsto \iota(\Pi^\alpha_\dagger(\iota(\pi))).
    \end{displaymath}
\end{definition}

\begin{proposition} \label{prop:iota_dual}
  The map $\iota$ is order-reversing on $\Weak(\Hyper_\alpha)$.
\end{proposition}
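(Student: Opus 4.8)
The plan is to pass to right inversion sets, where the weak order becomes containment (by Lemma~\ref{lem:length_inversions} and the inversion-set description $\pi\weakorder\sigma\iff\invset(\pi)\subseteq\invset(\sigma)$), and then to show that on inversion sets the map $\iota$ amounts to complementation inside $\invset(\woa)$ followed by one fixed bijection; containment then visibly reverses.

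First I would record two preliminary facts about $\woa$. Since $\woa$ is the greatest element of $\Hyper_\alpha$ (Theorem~\ref{thm:weak_order_quotients}), every $\pi\in\Hyper_\alpha$ satisfies $\invset(\pi)\subseteq\invset(\woa)$; in particular, because $\iota(\pi)=\pi\woa$ again lies in $\Hyper_\alpha$, we also have $\invset\bigl(\iota(\pi)\bigr)\subseteq\invset(\woa)$. Second, $\woa$ is an involution: in type $B$ the longest element $\wo$ of $\Hyper_n$ acts as $-\mathrm{id}$ and is therefore central, and writing $\woa=\wo\,w_{K}$ with $w_{K}$ the longest element of the parabolic subgroup $W_{K}$, $K=S\setminus J_\alpha$, centrality gives $\woa^{2}=\wo^{2}w_{K}^{2}=\id$ (this is also visible directly from Lemma~\ref{lem:parabolic_long_element}).

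The heart of the argument is an inversion identity. Using the root model of Section~\ref{sec:type_b_roots}, recall that $t_\alpha\in\invset(w)$ precisely when $w(\alpha)$ is a negative root. Fix $t_\alpha\in\invset(\woa)$ and write $\woa(\alpha)=-\gamma$ with $\gamma\in\Phi^{+}$; then $(\pi\woa)(\alpha)=\pi(-\gamma)=-\pi(\gamma)$, so $t_\alpha\in\invset(\pi\woa)$ iff $\pi(\gamma)\in\Phi^{+}$ iff $t_\gamma\notin\invset(\pi)$. Here $t_\gamma=t_{\woa(\alpha)}=\woa\,t_\alpha\,\woa^{-1}$, so setting $\phi(t)\defs\woa\,t\,\woa^{-1}$ I obtain $t\in\invset(\pi\woa)\iff\phi(t)\notin\invset(\pi)$ for every $t\in\invset(\woa)$. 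Because $\woa^{2}=\id$, the conjugation $\phi$ is an involution, and the computation $\woa(\gamma)=-\woa^{2}(\alpha)=-\alpha$ shows that $\phi$ maps $\invset(\woa)$ bijectively onto itself. Combined with the containment $\invset(\pi\woa)\subseteq\invset(\woa)$ from the previous paragraph (which also guarantees that no reflection outside $\invset(\woa)$ can be an inversion of $\pi\woa$), reindexing by the involution $\phi$ yields
\[
  \invset\bigl(\iota(\pi)\bigr)=\phi\bigl(\invset(\woa)\setminus\invset(\pi)\bigr).
\]

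Order-reversal is then immediate. If $\pi\weakorder\sigma$, then $\invset(\pi)\subseteq\invset(\sigma)\subseteq\invset(\woa)$, hence $\invset(\woa)\setminus\invset(\sigma)\subseteq\invset(\woa)\setminus\invset(\pi)$; applying the bijection $\phi$ gives $\invset\bigl(\iota(\sigma)\bigr)\subseteq\invset\bigl(\iota(\pi)\bigr)$, that is, $\iota(\sigma)\weakorder\iota(\pi)$. Since $\iota$ is an involution, it is in fact an anti-automorphism of $\Weak(\Hyper_\alpha)$. I expect the only delicate point to be the inversion identity itself—specifically, verifying that $\phi$ really preserves $\invset(\woa)$ and that inversions of $\pi\woa$ never escape $\invset(\woa)$—which is exactly where the involutivity of $\woa$, and hence the type-$B$ centrality of $\wo$, is used. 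The explicit ``negate each value and re-sort each $\alpha$-region'' description of $\iota$ from Definition~\ref{def:upward_projection} can serve as a concrete cross-check throughout.
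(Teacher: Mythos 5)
Your proof is correct, but it takes a genuinely different route from the paper. The paper argues directly on the signed-permutation combinatorics: since $\iota(\pi)$ satisfies $\pi(i)=\iota(\pi)(-i)$ (except in the first region of a join composition, where $\pi(i)=\iota(\pi)(i)$), since elements of $\Hyper_\alpha$ never have inversions within an $\alpha$-region (Lemma~\ref{lem:increasing_blocks}), and since $\woa(i)>\woa(j)$ for positions $i<j$ in distinct regions, a cross-region pair $(i,j)$ contributes an inversion to $\pi$ precisely when the pair $\bigl(\woa(i),\woa(j)\bigr)$ does \emph{not} contribute one to $\iota(\pi)$ --- a pairwise complementation checked in place. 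You instead establish the global identity $\invset\bigl(\iota(\pi)\bigr)=\phi\bigl(\invset(\woa)\setminus\invset(\pi)\bigr)$ with $\phi(t)=\woa t\woa^{-1}$, via the root characterization of right inversions, the factorization $\woa=\wo w_{K}$, and the centrality of $\wo$ in type $B$ (which gives $\woa^{2}=\id$; as you note, this is also verifiable directly from Lemma~\ref{lem:parabolic_long_element}). Your key steps all check out: $\phi$ is an involution stabilizing $\invset(\woa)$ by the computation $\woa(\gamma)=-\alpha$, the containments $\invset(\pi),\invset(\iota(\pi))\subseteq\invset(\woa)$ follow from Theorem~\ref{thm:weak_order_quotients} and the fact (asserted in Definition~\ref{def:upward_projection}) that $\iota(\pi)\in\Hyper_\alpha$, and order-reversal then follows by complementing and applying the bijection $\phi$. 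What each approach buys: the paper's argument is elementary and self-contained at the level of Lemma~\ref{lem:type_b_inversions}; yours is structurally stronger --- it gives an explicit formula for $\invset(\iota(\pi))$, shows at once that $\iota$ is an involutive anti-automorphism of $\Weak(\Hyper_\alpha)$ rather than merely order-reversing, and generalizes verbatim to any parabolic quotient of a finite Coxeter group whose longest element is central.
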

\begin{proof}
    We only need to show that, for $\pi, \pi' \in \Hyper_\alpha$, we have $\invset(\pi) \subseteq \invset(\pi')$ if and only if $\invset(\iota(\pi')) \subseteq \invset(\iota(\pi))$. We observe that $\pi(i) = \iota(\pi)(-i)$ for all $i$, except when $\alpha$ is join and $\indicator(i) > 1$, and in this case we have $\pi(i) = \iota(\pi)(i)$. Since $\pi, \pi', \iota(\pi), \iota(\pi')$ are all in $\Hyper_\alpha$, there is no inversion in the same $\alpha$-region. Now, for $i < j$ in different $\alpha$-regions, we have $\woa(i) > \woa(j)$, thus the pair $i, j$ contributes to $\invset(\pi)$ if and only if the pair $\woa(i), \woa(j)$ does not contribute to $\invset(\iota(\pi))$. We then have the equivalence.
\end{proof}

\begin{proposition} \label{prop:projection_order_preserving}
    The maps $\Pi^\alpha_\downarrow$ and $\Pi^\alpha_\uparrow$ are order-preserving on $\Weak(\Hyper_\alpha)$.
\end{proposition}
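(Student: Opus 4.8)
The plan is to reduce both claims to the maximality property packaged in Lemma~\ref{lem:unique_maximal} (respectively Lemma~\ref{lem:unique_maximal_alt}), together with the fact that the weak order is inclusion of inversion sets. Recall that $u\weakorder v$ precisely when $\invset(u)\subseteq\invset(v)$, so asserting that $\Pi^\alpha_\downarrow$ is order-preserving amounts to showing that $\invset(\pi)\subseteq\invset(\pi')$ forces $\invset(\pi_\downarrow)\subseteq\invset(\pi'_\downarrow)$. The key point I would extract from the proof of Lemma~\ref{lem:unique_maximal} is that $\pi_\downarrow$ is not merely a maximal such element but the \emph{greatest} one: for every $\tau\in\Hyper_\alpha(231)$ with $\invset(\tau)\subseteq\invset(\pi)$ one has $\invset(\tau)\subseteq\invset(\pi_\downarrow)$. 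This is exactly what the induction there establishes, since it shows that any $\wboa(\linc)$-aligned element weakly below $\pi$ lands inside $\invset(\pi_\downarrow)$.

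Granting this, the argument for $\Pi^\alpha_\downarrow$ is immediate inclusion-chasing. Suppose $\pi\weakorder\pi'$, so $\invset(\pi)\subseteq\invset(\pi')$. Then $\pi_\downarrow\in\Hyper_\alpha(231)$ satisfies $\invset(\pi_\downarrow)\subseteq\invset(\pi)\subseteq\invset(\pi')$, so $\pi_\downarrow$ is one of the aligned elements lying weakly below $\pi'$. Applying the greatest-element property of $\pi'_\downarrow$ with $\tau=\pi_\downarrow$ yields $\invset(\pi_\downarrow)\subseteq\invset(\pi'_\downarrow)$, that is, $\Pi^\alpha_\downarrow(\pi)\weakorder\Pi^\alpha_\downarrow(\pi')$.

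For $\Pi^\alpha_\uparrow$ I would first observe that Lemma~\ref{lem:unique_maximal_alt} endows $\Pi^\alpha_\dagger$ with the very same greatest-element property, so the identical inclusion-chasing shows $\Pi^\alpha_\dagger$ is order-preserving. Since $\Pi^\alpha_\uparrow=\iota\circ\Pi^\alpha_\dagger\circ\iota$ and $\iota$ is order-reversing by Proposition~\ref{prop:iota_dual}, the composition of an order-reversing, an order-preserving, and an order-reversing map is order-preserving; explicitly, $\pi\weakorder\pi'$ gives $\iota(\pi')\weakorder\iota(\pi)$, hence $\Pi^\alpha_\dagger(\iota(\pi'))\weakorder\Pi^\alpha_\dagger(\iota(\pi))$, and applying $\iota$ once more reverses this back to $\Pi^\alpha_\uparrow(\pi)\weakorder\Pi^\alpha_\uparrow(\pi')$.

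The only genuine obstacle is the first step: one must be certain that the word ``maximal'' in Lemma~\ref{lem:unique_maximal} really produces a maximum and not merely a maximal element. Once that greatest-element reading is secured (and the induction in the proof of Lemma~\ref{lem:unique_maximal} does secure it, as does its analogue for Lemma~\ref{lem:unique_maximal_alt}), everything else is a purely formal manipulation of inversion-set inclusions and the order-reversal supplied by Proposition~\ref{prop:iota_dual}.
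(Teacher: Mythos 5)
Your proposal is correct and follows essentially the same route as the paper: the downward case is the same inclusion-chase via the maximality statement of Lemma~\ref{lem:unique_maximal} (whose inductive proof indeed secures the greatest-element reading you highlight, as does uniqueness of the maximal element in a finite poset), and the upward case is likewise obtained by conjugating the analogous argument for $\Pi^\alpha_\dagger$ (Lemma~\ref{lem:unique_maximal_alt}) with the order-reversing map $\iota$ from Proposition~\ref{prop:iota_dual}. Your version merely makes explicit the steps the paper leaves terse.
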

\begin{proof}
    For $\pi, \pi' \in \Hyper_\alpha$ with $\pi \weakorder \pi'$, we have $\Pi^\alpha_\downarrow(\pi) \weakorder \pi \weakorder \pi'$. By the maximality of $\Pi^\alpha_\downarrow(\pi')$ ensured by Lemma~\ref{lem:unique_maximal}, we have $\Pi^\alpha_\downarrow(\pi) \weakorder \Pi^\alpha_\downarrow(\pi')$, thus $\Pi^\alpha_\downarrow$ is order-preserving. We also obtain the case for $\Pi^\alpha_\uparrow$ from that of $\Pi^\alpha_\downarrow$ and Proposition~\ref{prop:iota_dual}.
\end{proof}

\begin{proposition}\label{prop:cover_equivalence}
    For $\sigma$ and $\pi\in\Hyper_\alpha$ with $\sigma \weakcover \pi$, let $t$ be the only inversion in
$\invset(\pi) \setminus \invset(\sigma) \subseteq \covset(\pi)$. The following are equivalent:
    \begin{enumerate}
        \item[(i)] The inversion $t$ violates the conditions in Lemma~\ref{lem:split_forcing} (when $\alpha$ is split) or in Lemma~\ref{lem:join_forcing} (when $\alpha$ is join). 
        \item[(ii)] $\Pi^\alpha_\downarrow(\pi) = \Pi^\alpha_\downarrow(\sigma)$.
        \item[(iii)] $\Pi^\alpha_\uparrow(\pi) = \Pi^\alpha_\uparrow(\sigma)$.
  \end{enumerate}
\end{proposition}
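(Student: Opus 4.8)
The plan is to prove the two equivalences $(i)\Leftrightarrow(ii)$ and $(i)\Leftrightarrow(iii)$, the second by dualizing the first through the order-reversing involution $\iota$. Throughout I use that $\sigma\weakcover\pi$ forces $\invset(\sigma)=\invset(\pi)\setminus\{t\}$ with $t\in\covset(\pi)$, and that exchanging the two consecutive values realizing a cover inversion alters exactly one inversion (every other value compares the same way to both). Hence the projection algorithm underlying Lemma~\ref{lem:unique_maximal} simply deletes, one at a time, those cover inversions that are the descent of an $(\alpha,231)$-pattern, and by that lemma the outcome $\pi_\downarrow$ is independent of the order of deletions. I also record that $\Pi^\alpha_\downarrow$ is order preserving (Proposition~\ref{prop:projection_order_preserving}), so that $\sigma_\downarrow\weakorder\pi_\downarrow$ in all cases.

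For $(i)\Rightarrow(ii)$ I argue directly. If $t$ violates the forcing conditions of Lemma~\ref{lem:split_forcing} or Lemma~\ref{lem:join_forcing}, then by Propositions~\ref{prop:split_pattern} and~\ref{prop:join_pattern} the inversion $t$ is the descent of an $(\alpha,231)$-pattern of $\pi$. By the remark following Lemma~\ref{lem:unique_maximal} I may begin building $\pi_\downarrow$ by removing exactly this pattern, that is, by deleting $t$; the resulting permutation is precisely $\sigma$. Since the projection does not depend on the order of deletions, $\pi_\downarrow=\sigma_\downarrow$, which is $(ii)$.

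For $(ii)\Rightarrow(i)$ I prove the contrapositive: if $t$ is a \emph{good} cover inversion (it satisfies all the forcing conditions), then $t\in\invset(\pi_\downarrow)$, and since $t\notin\invset(\sigma)\supseteq\invset(\sigma_\downarrow)$ this gives $\pi_\downarrow\neq\sigma_\downarrow$. The heart of the matter, and the main obstacle, is the survival claim that a good cover inversion is never one of the deleted inversions. I would prove it by induction along $\invorder\bigl(\wboa(\linc)\bigr)$, exploiting Notes~\ref{note:split_forcing} and~\ref{note:join_forcing}: every inversion forced by $t$ lies in the same row of $\Skew(\alpha)$ to the right of $t$, or in the same column above $t$, hence strictly precedes $t$ in the inversion order. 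Two observations make this robust. First, the algorithm only ever deletes cover inversions of the current permutation, so an inversion that is merely forced and not a cover inversion can never be removed; and a swap at positions disjoint from those of $t$ leaves $t$ an inversion. Second, a swap sharing a position with $t$ can only turn $t$ into a non-cover inversion (it stays an inversion), again protecting it from deletion. Carrying out the order-independent algorithm so that inversions are examined in the order of $\invorder\bigl(\wboa(\linc)\bigr)$, one shows that the inversions forced by $t$ remain present until $t$ would be treated, so that $t$ is never deleted and survives into $\pi_\downarrow$.

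Finally, $(i)\Leftrightarrow(iii)$ follows by duality. Since $\iota$ is order reversing (Proposition~\ref{prop:iota_dual}), the cover $\sigma\weakcover\pi$ becomes $\iota(\pi)\weakcover\iota(\sigma)$, and the distinguished inversion $t$ corresponds to a cover inversion $\hat t$ of $\iota(\sigma)$ realizing the same pair of consecutive values up to sign. By Definition~\ref{def:312_pattern} the $(\alpha,312)$-patterns are exactly the images under $\iota$ of the $(\alpha,231)$-patterns, so ``$t$ violates the $231$-forcing'' is equivalent to ``$\hat t$ violates the $312$-forcing of $\iota(\sigma)$''. Applying the $312$-analogue of $(i)\Leftrightarrow(ii)$ supplied by Lemma~\ref{lem:unique_maximal_alt} yields $\Pi^\alpha_\dagger\bigl(\iota(\pi)\bigr)=\Pi^\alpha_\dagger\bigl(\iota(\sigma)\bigr)$; conjugating back through $\Pi^\alpha_\uparrow=\iota\circ\Pi^\alpha_\dagger\circ\iota$ and using that $\iota$ is a bijection gives $(iii)$, completing the chain of equivalences.
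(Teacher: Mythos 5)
Your direction (i)$\Rightarrow$(ii) is correct and coincides with the paper's: since $t$ is bad, the inductive step of Lemma~\ref{lem:unique_maximal} lets the projection algorithm delete $t$ first, and order-independence gives $\Pi^\alpha_\downarrow(\pi)=\Pi^\alpha_\downarrow(\sigma)$. The genuine gap is in (ii)$\Rightarrow$(i), that is, in your ``survival claim'' that a good cover inversion $t$ is never deleted. Your two supporting observations do not close it. First, the assertion that an inversion which is ``merely forced and not a cover inversion can never be removed'' is only true at the step in question: in later stages of the algorithm a forced inversion of $t$ can very well become a cover inversion of the current permutation, and a \emph{bad} one, whereupon it is deleted --- and at that moment $t$ ceases to be good without ever having been ``treated''. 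Your proposed induction along $\invorder\bigl(\wboa(\linc)\bigr)$ cannot repair this: its hypothesis concerns good \emph{cover} inversions, whereas the inversions forced by $t$ need not be cover inversions of $\pi$ at all, let alone good ones, so the hypothesis says nothing about their survival; forcing is also not transitive in the row/column picture of Notes~\ref{note:split_forcing} and~\ref{note:join_forcing}, so one cannot simply propagate protection up the inversion order. Second, the protection offered by a swap sharing a position with $t$ is not permanent: after such a swap $t$ is a non-cover inversion (values differing by $2$), but a subsequent deletion can move the intermediate value out of the way and make $t$ a cover inversion again, so $t$ may oscillate between cover and non-cover status during the run. The sentence ``one shows that the inversions forced by $t$ remain present until $t$ would be treated'' is precisely the hard content of the proposition, and it is asserted rather than proved.

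The paper sidesteps this bookkeeping with a different induction, on $\lvert\invset(\sigma)\rvert$: if $\sigma\in\Hyper_\alpha(231)$ then $\Pi^\alpha_\downarrow(\pi)=\sigma$ forces $t$ to be the deleted (hence bad) inversion; otherwise one picks a bad cover inversion $t'\neq t$ of $\pi$, checks by a short case analysis that the conjugate $t\cdot t'\cdot t$ produces a type-$B$ $(\alpha,231)$-pattern in $\sigma$, deletes $t'$ from $\pi$ and $t\cdot t'\cdot t$ from $\sigma$ (neither projection changes, by the already-proved implication), and applies the induction hypothesis to the resulting smaller cover pair. If you want to keep your contrapositive formulation, you would need an argument of this conjugation type anyway. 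Finally, for (iii): reducing to the $312$-side via $\iota$ is indeed the paper's strategy, but note that Lemma~\ref{lem:unique_maximal_alt} only supplies the $312$-analogue of Lemma~\ref{lem:unique_maximal} (existence and uniqueness of $\pi_\dagger$), not a ready-made $312$-analogue of the equivalence (i)$\Leftrightarrow$(ii); you must still dualize the entire (ii)$\Rightarrow$(i) argument, so the gap above propagates to (iii)$\Rightarrow$(i) as well. One also has to be slightly careful with the pattern transport: when $\alpha$ is join and the middle index lies in the first region, the image pattern is $(\woa(k),\woa(j),\woa(i))$ rather than $(-\woa(i),-\woa(j),-\woa(k))$, a case distinction your sketch glosses over.
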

\begin{proof}
    Assume that (i) holds. Then $\pi \notin \Hyper_\alpha(231)$, and we are in the situation of the inductive step of the proof of Lemma~\ref{lem:unique_maximal}. We thus have (i) implies (ii). Now for (iii), by Proposition~\ref{prop:iota_dual}, we have $\iota(\pi) \weakcover \iota(\sigma)$, thus $\woa^{-1} \cdot t \cdot \woa$ is in $\covset(\iota(\sigma))$. We observe that, a type-$B$ $(\alpha, 231)$ pattern $(i, j, k)$ of $\pi$ induced by $t$ leads to a type-$B$ $(\alpha, 312)$ pattern in $\iota(\sigma)$ corresponding to $\woa^{-1} \cdot t \cdot \woa$, which is $(-\woa(i), -\woa(j), -\woa(k))$, except when $\alpha$ is join and $0 \leq j \leq \alpha_1$, where the pattern will be $(\woa(k), \woa(j), \woa(i))$. Using the same reasoning on (i) $\Rightarrow$ (ii) but applied to Lemma~\ref{lem:unique_maximal_alt} with $\iota(\sigma)$ and $\iota(\pi)$, we conclude that (i) implies (iii).
  
    Now we show that (ii) implies (i) by induction on $|\invset(\sigma)|$. As
    \begin{displaymath}
        \Pi^\alpha_\downarrow(\pi) = \Pi^\alpha_\downarrow(\sigma) \weakorder \sigma \weakless \pi,
    \end{displaymath}
    by Corollary \ref{coro:projection_fixpoint}, we have $\pi \notin \Hyper_\alpha(231)$. When $|\invset(\sigma)| = 0$, we have $\Pi^\alpha_\downarrow(\pi) = \Pi^\alpha_\downarrow(\sigma) = \sigma$, which can only occur when $t$ satisfies (i) to be removed from $\pi$ in the procedure of Lemma~\ref{lem:unique_maximal}. Suppose that (ii) implies (i) for any $\sigma'$ with $k-1$ inversions, and we now consider $\sigma$ with $|\invset(\sigma)| = k$. If $\sigma \in \Hyper_\alpha(231)$, then we conclude by the same reasoning as in the case $|\invset(\sigma)| = 0$. Otherwise, suppose that there is some $t' \in \covset(\pi)$ giving rise to a type-$B$ $(\alpha, 231)$ pattern $(i, j, k)$. If $t' = t$, then by the construction in Lemma~\ref{lem:unique_maximal}, we have $\Pi^\alpha_\downarrow(\pi) = \Pi^\alpha_\downarrow(t \cdot \pi) = \Pi^\alpha_\downarrow(\sigma)$, and the implication holds. Now suppose that $t \neq t'$. We check that $t \cdot t' \cdot t$ gives rise to a type-$B$ $(\alpha, 231)$ pattern in $\sigma$. If $t$ commutes with $t'$, then $(i, j, k)$ remains a valid pattern. Otherwise, $t$ may permute $\pi(i)$ or $\pi(k)$. Suppose that $t$ permutes $\pi(i)$. As $t \neq t'$, $t$ permutes $\pi(i)$ with $\pi(i)^+$ in $\sigma$. As $t$ is an inversion in $\pi$, the index $i'$ such that $\sigma(i') = \pi(i)$ satisfies $i' < i$, meaning that $(i', j, k)$ is a valid pattern in $\sigma$. The case where $t$ permutes $\pi(k)$ is similar. Now, applying the induction hypothesis to $t \cdot t' \cdot t \cdot \sigma$ and $t' \cdot \pi = t' \cdot t \cdot \sigma$, we have $\Pi^\alpha_\downarrow(t' \cdot \pi) = \Pi^\alpha_\downarrow(t \cdot t' \cdot t \cdot \sigma)$ implying $t$ satisfies (i). By Lemma~\ref{lem:unique_maximal}, we have $\Pi^\alpha_\downarrow(\pi) = \Pi^\alpha_\downarrow(t' \cdot \pi)$ and $\Pi^\alpha_\downarrow(\sigma) = \Pi^\alpha_\downarrow(t \cdot t' \cdot t \cdot \sigma)$. We thus have (ii) implies (i) in this case, which concludes the induction. The proof for (iii) implies (i) is similar.
\end{proof}

\begin{proposition} \label{prop:two-ends-meet}
    For $\sigma, \pi \in \Hyper_\alpha$, then $\Pi^\alpha_\downarrow(\sigma) = \Pi^\alpha_\downarrow(\pi)$ if and only if $\Pi^\alpha_\uparrow(\sigma) = \Pi^\alpha_\uparrow(\pi)$.
\end{proposition}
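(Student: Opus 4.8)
The plan is to prove the two implications by showing that the equivalence relation $\Theta$ of Definition~\ref{def:congruence} (the classes of $\Pi^\alpha_\downarrow$) and the analogous relation given by $\Pi^\alpha_\uparrow$ have identical classes, using Proposition~\ref{prop:cover_equivalence} as the bridge: on a \emph{covering} pair it already asserts that the two projections agree, since (ii) and (iii) are both equivalent to the forcing-violation~(i). The only additional ingredient I need is that any two elements in a common class can be joined by a saturated chain that never leaves the class, so that constancy of one projection along covers propagates to constancy of the other.

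First I would prove the forward implication. Suppose $m \defs \Pi^\alpha_\downarrow(\sigma) = \Pi^\alpha_\downarrow(\pi)$. Since $m\in\Hyper_\alpha(231)$ is a fixpoint of $\Pi^\alpha_\downarrow$ by Corollary~\ref{coro:projection_fixpoint}, we have $m\weakorder\sigma$ and $m\weakorder\pi$ with all three sharing the same $\Pi^\alpha_\downarrow$-value. Proposition~\ref{prop:theta_order_convex} says the $\Theta$-class is order-convex, so the weak-order intervals $[m,\sigma]$ and $[m,\pi]$ lie entirely inside it. As $\Weak(\Hyper_\alpha)$ is graded, I may pick a saturated chain $m = x_0\weakcover x_1\weakcover\cdots\weakcover x_\ell = \sigma$ inside $[m,\sigma]$. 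Each cover $x_i\weakcover x_{i+1}$ has both ends in the $\Theta$-class, so $\Pi^\alpha_\downarrow(x_i)=\Pi^\alpha_\downarrow(x_{i+1})$, i.e.\ condition~(ii) of Proposition~\ref{prop:cover_equivalence} holds; by the equivalence (ii)$\Leftrightarrow$(iii) we get $\Pi^\alpha_\uparrow(x_i)=\Pi^\alpha_\uparrow(x_{i+1})$. Hence $\Pi^\alpha_\uparrow$ is constant along the chain, giving $\Pi^\alpha_\uparrow(\sigma)=\Pi^\alpha_\uparrow(m)$; the identical argument for $\pi$ yields $\Pi^\alpha_\uparrow(\sigma)=\Pi^\alpha_\uparrow(\pi)$.

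For the converse I would run the mirror argument for $\Pi^\alpha_\uparrow$. This needs the dual of Proposition~\ref{prop:theta_order_convex}: the classes of $\Pi^\alpha_\uparrow$ are order-convex with a common \emph{maximum} $M \defs \Pi^\alpha_\uparrow(\sigma)=\Pi^\alpha_\uparrow(\pi)$. I would obtain this by transporting the order-convexity of the $\Pi^\alpha_\dagger$-classes—which is proved verbatim as Proposition~\ref{prop:theta_order_convex}, now resting on Lemma~\ref{lem:unique_maximal_alt}—through the order-reversing involution $\iota$ of Proposition~\ref{prop:iota_dual}. Order-convexity survives an order-reversing bijection, and since $\Pi^\alpha_\dagger(\iota(\pi))\weakorder\iota(\pi)$ gives, after applying $\iota$, the inequality $\pi\weakorder\Pi^\alpha_\uparrow(\pi)$, the value $\Pi^\alpha_\uparrow(\pi)$ is the maximum of its class. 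With $\sigma\weakorder M$ and $\pi\weakorder M$ all inside one order-convex $\Pi^\alpha_\uparrow$-class, I pick saturated chains from $\sigma$ up to $M$ and from $\pi$ up to $M$ within the class and apply the equivalence (iii)$\Leftrightarrow$(ii) of Proposition~\ref{prop:cover_equivalence} step by step, concluding $\Pi^\alpha_\downarrow(\sigma)=\Pi^\alpha_\downarrow(M)=\Pi^\alpha_\downarrow(\pi)$.

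The main obstacle is the bookkeeping for this converse: making the dual order-convexity statement fully precise, i.e.\ verifying that conjugation by $\iota$ really turns the $312$-avoidance projection $\Pi^\alpha_\dagger$ into $\Pi^\alpha_\uparrow$ with the claimed maximum behaviour, and that the extremal-element and order-convexity properties are preserved under the order-reversing involution. Once that is in place, both directions reduce to the same routine propagation-along-covers argument, and the whole proposition follows from the triple equivalence already established in Proposition~\ref{prop:cover_equivalence}.
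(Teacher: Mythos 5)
Your proof is correct, and it takes a mildly but genuinely different route from the paper's. The paper splits into two cases: for comparable $\sigma\weakless\pi$ it propagates Proposition~\ref{prop:cover_equivalence} along a saturated chain, and for incomparable pairs it invokes the lattice meet, observing via the maximality clause of Lemma~\ref{lem:unique_maximal} that $\Pi^\alpha_\downarrow(\sigma\wedge\pi)=\Pi^\alpha_\downarrow(\sigma)$ (and dually through $\iota$ and Lemma~\ref{lem:unique_maximal_alt}), which reduces everything to the comparable case. You avoid the meet and join entirely: you route saturated chains through the class minimum $m=\Pi^\alpha_\downarrow(\sigma)$, respectively the class maximum $M=\Pi^\alpha_\uparrow(\sigma)$, using Corollary~\ref{coro:projection_fixpoint} together with the order-convexity of Proposition~\ref{prop:theta_order_convex} (respectively its $\Pi^\alpha_\dagger$-analogue transported through the order-reversing map of Proposition~\ref{prop:iota_dual}) to keep the chain inside the class, and then apply the equivalence (ii)$\Leftrightarrow$(iii) of Proposition~\ref{prop:cover_equivalence} cover by cover. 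The paper's route buys brevity, since Lemma~\ref{lem:unique_maximal} settles the incomparable case in one line; your route needs only the order structure rather than the lattice operations, and it makes fully explicit the dual convexity and maximum-of-class statements that the paper compresses into ``the same argument, but passing by $\iota$''---explicitness that is anyway needed in Proposition~\ref{prop:congruence}, where each class must be an interval whose top is given by $\Pi^\alpha_\uparrow$. Two details you should record but which are not gaps: first, that $M$ is genuinely the maximum of its class requires $\Pi^\alpha_\uparrow(M)=M$, which follows from the $\Pi^\alpha_\dagger$-analogue of Corollary~\ref{coro:projection_fixpoint} exactly as you indicate; second, you call $\iota$ an involution and use $\iota(\iota(\pi))=\pi$ when deriving $\pi\weakorder\Pi^\alpha_\uparrow(\pi)$---the paper asserts only that $\iota$ is order-reversing, but involutivity does hold (negating all values and re-sorting each $\alpha$-region is visibly self-inverse; equivalently $\woa^{2}=\id$ because $\wo$ is central in $\Hyper_{n}$), so a one-line verification suffices.
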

\begin{proof}
    For the comparable case, we may suppose that $\sigma \weakless \pi$. The case $\sigma \weakcover \pi$ follows from Proposition~\ref{prop:cover_equivalence}, and other cases follows from sucessive application of the cover case.
  
    For the incomparable case, we observe that $\Pi^\alpha_\downarrow(\sigma \wedge \pi) = \Pi^\alpha_\downarrow(\sigma)$, as by Lemma~\ref{lem:unique_maximal}, $\Pi^\alpha_\downarrow(\sigma) = \Pi^\alpha_\downarrow(\pi)$ is the unique maximal element in $\Hyper_\alpha(231)$ below both $\pi$ and $\sigma$. With the same argument, but passing by $\iota$ and Lemma~\ref{lem:unique_maximal_alt}, we also have $\Pi^\alpha_\uparrow(\sigma \wedge \pi) = \Pi^\alpha_\uparrow(\sigma) = \Pi^\alpha_\uparrow(\pi)$. The equivalence thus also holds.
\end{proof}

\begin{proposition} \label{prop:congruence}
    The equivalence relation $\Theta$ defined in Definition~\ref{def:congruence} is a lattice congruence on $\Weak(\Hyper_\alpha)$, which is the interval $[\id,\woa]$ in $\Weak(\Hyper_n)$, and the related quotient lattice is $\Tamari_B(\alpha)$.
\end{proposition}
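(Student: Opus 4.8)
The plan is to verify the three conditions of Reading's characterization of lattice congruences (Lemma~\ref{lem:congruences_combin}) for the relation $\Theta$ on $\Weak(\Hyper_\alpha)$, which is a lattice because it is isomorphic to the interval $[\id,\woa]$ of $\Weak(\Hyper_n)$ by Theorem~\ref{thm:weak_order_quotients}. Since all of the substantive work has been packaged into the preceding propositions, the argument is essentially an assembly: I first pin down the extremal elements of each class, and then read off the three conditions.

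For the minimum, by Definition~\ref{def:congruence} together with Corollary~\ref{coro:projection_fixpoint} the element $\Pi^\alpha_\downarrow(\pi)$ lies in $[\pi]_\Theta$ and satisfies $\Pi^\alpha_\downarrow(\pi)\weakorder\pi$; moreover, for any $\sigma\in[\pi]_\Theta$ we have $\Pi^\alpha_\downarrow(\sigma)=\Pi^\alpha_\downarrow(\pi)$ and $\invset\bigl(\Pi^\alpha_\downarrow(\sigma)\bigr)\subseteq\invset(\sigma)$, so $\Pi^\alpha_\downarrow(\pi)\weakorder\sigma$. Hence $\Pi^\alpha_\downarrow(\pi)$ is the unique minimum of the class. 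Dually, I would argue that $\Pi^\alpha_\uparrow(\pi)$ is the unique maximum. By Proposition~\ref{prop:two-ends-meet} the fibers of $\Pi^\alpha_\uparrow$ coincide with the $\Theta$-classes. Since $\iota$ is an order-reversing bijection (Proposition~\ref{prop:iota_dual}) and $\Pi^\alpha_\dagger$ selects the minimum of each of its fibers (the mirror, via Lemma~\ref{lem:unique_maximal_alt}, of the statement just proved for $\Pi^\alpha_\downarrow$), the conjugate map $\Pi^\alpha_\uparrow=\iota\circ\Pi^\alpha_\dagger\circ\iota$ carries the $\Theta$-class of $\pi$ bijectively onto the $\dagger$-fiber of $\iota(\pi)$ and reverses the order; the minimum $\Pi^\alpha_\dagger(\iota(\pi))$ thus becomes the maximum $\Pi^\alpha_\uparrow(\pi)$ of $[\pi]_\Theta$.

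With the extrema identified, the three conditions follow. For (i), each class is order-convex by Proposition~\ref{prop:theta_order_convex} and has least element $\Pi^\alpha_\downarrow(\pi)$ and greatest element $\Pi^\alpha_\uparrow(\pi)$; an order-convex subset of a finite poset with a unique minimum and a unique maximum equals the interval between them, so every class is the interval $\bigl[\Pi^\alpha_\downarrow(\pi),\Pi^\alpha_\uparrow(\pi)\bigr]$. Conditions (ii) and (iii) require that the maps $\pi\mapsto\Pi^\alpha_\downarrow(\pi)$ and $\pi\mapsto\Pi^\alpha_\uparrow(\pi)$ be order preserving, which is exactly Proposition~\ref{prop:projection_order_preserving}. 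Lemma~\ref{lem:congruences_combin} then gives that $\Theta$ is a lattice congruence of $\Weak(\Hyper_\alpha)$.

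To finish, I would identify the quotient. The classes of $\Theta$ are represented by their minima, which are precisely the elements of $\Hyper_\alpha(231)=\Pi^\alpha_\downarrow(\Hyper_\alpha)$, and the quotient order compares these representatives in the weak order; this is by definition $\Weak\bigl(\Hyper_\alpha(231)\bigr)=\Tamari_B(\alpha)$ (Definition~\ref{def:parabolic_Tamari}), consistent with the lattice already obtained in Proposition~\ref{prop:parabolic_tamari_lattice}. I expect the only delicate point to be the maximality claim for $\Pi^\alpha_\uparrow$: one must make the duality through $\iota$ precise, checking that $\iota$ maps $(\alpha,312)$-fibers to $\Theta$-classes and reverses order within them so that the minimum chosen by $\Pi^\alpha_\dagger$ turns into the maximum of the $\Theta$-class. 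Everything else is routine bookkeeping over the previously established propositions.
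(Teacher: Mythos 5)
Your proposal is correct and follows essentially the same route as the paper: both verify the three conditions of Lemma~\ref{lem:congruences_combin} using Lemma~\ref{lem:unique_maximal} for the class minima, Proposition~\ref{prop:two-ends-meet} together with Lemma~\ref{lem:unique_maximal_alt} and the order-reversing map $\iota$ for the class maxima, Proposition~\ref{prop:theta_order_convex} for order-convexity, and Proposition~\ref{prop:projection_order_preserving} for conditions (ii) and (iii), before identifying the quotient with $\Weak\bigl(\Hyper_\alpha(231)\bigr)=\Tamari_B(\alpha)$. You merely spell out in more detail what the paper leaves terse (the extremality of $\Pi^\alpha_\downarrow(\pi)$ and $\Pi^\alpha_\uparrow(\pi)$ in each class, and the fact that $\iota$ carries $\Theta$-classes onto $\Pi^\alpha_\dagger$-fibers while reversing order), aside from one slip of wording where you attribute that bijection to $\Pi^\alpha_\uparrow$ rather than to $\iota$ itself.
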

\begin{proof}
    To show that $\Theta$ is a congruence on $\Weak(\Hyper_\alpha)$, we check that the conditions in Lemma~\ref{lem:congruences_combin} are satisfied. For each equivalence class of $\Theta$, Lemma~\ref{lem:unique_maximal} implies the existence of a minimal element, which is in $\Hyper_\alpha(231)$. Then Proposition~\ref{prop:two-ends-meet} along with Lemma~\ref{lem:unique_maximal_alt} shows the existence of a maximal element in each equivalence class. Along with Proposition~\ref{prop:theta_order_convex}, this concludes that equivalence classes of $\Theta$ are intervals, which is precisely the condition (i) in Lemma~\ref{lem:congruences_combin}. The conditions (ii) and (iii) of Lemma~\ref{lem:congruences_combin} are ensured by Proposition~\ref{prop:projection_order_preserving}. Taking the minimal element in each equivalence class, the quotient lattice is $\Weak(\Hyper_\alpha(231)) = \Tamari_B(\alpha)$.
\end{proof}

\begin{figure}
    \centering
	\begin{tikzpicture}
		\def\x{1.25};
		\def\y{1};
		\def\s{.6};
		\def\t{1};	
		\coordinate(n1) at (3*\x,1*\y);
		\coordinate(n2) at (2*\x,2*\y);
		\coordinate(n3) at (3*\x,3*\y);
		\coordinate(n4) at (1*\x,3*\y);
		\coordinate(n5) at (2*\x,4*\y);
		\coordinate(n6) at (-1*\x,5*\y);
		\coordinate(n7) at (1*\x,5*\y);
		\coordinate(n9) at (2*\x,6*\y);
		\coordinate(n10) at (0*\x,6*\y);
		\coordinate(n11) at (1*\x,7*\y);
		\coordinate(n12) at (2*\x,8*\y);
		\draw[thick](n1) -- (n2);
		\draw[thick](n2) -- (n3);
		\draw[thick](n2) -- (n4);
		\draw[thick](n3) -- (n5);
		\draw[thick](n4) -- (n5);
		\draw[thick](n4) -- (n6);
		\draw[thick](n5) -- (n7);
		\draw[thick](n6) -- (n10);
		\draw[thick](n7) -- (n9);
		\draw[thick](n7) -- (n10);
		\draw[thick](n9) -- (n11);
		\draw[thick](n10) -- (n11);
		\draw[thick](n11) -- (n12);
		\draw(n1) node[scale=\s]{\AlphaPerm{2,2,2}{c2,c1,c2}{\overline{3},\overline{2},\overline{1},1,2,3}{.5}{.25}{white}{\t}};
		\draw(n2) node[scale=\s]{\AlphaPerm{2,2,2}{c2,c1,c2}{\overline{3},\overline{1},\overline{2},2,1,3}{.5}{.25}{white}{\t}};
		\draw(n3) node[scale=\s]{\AlphaPerm{2,2,2}{c2,c1,c2}{\overline{2},\overline{1},\overline{3},3,1,2}{.5}{.25}{white}{\t}};
		\draw(n4) node[scale=\s]{\AlphaPerm{2,2,2}{c2,c1,c2}{\overline{3},1,\overline{2},2,\overline{1},3}{.5}{.25}{white}{\t}};
		\draw(n5) node[scale=\s]{\AlphaPerm{2,2,2}{c2,c1,c2}{\overline{2},1,\overline{3},3,\overline{1},2}{.5}{.25}{white}{\t}};
		\draw(n6) node[scale=\s]{\AlphaPerm{2,2,2}{c2,c1,c2}{\overline{3},2,\overline{1},1,\overline{2},3}{.5}{.25}{white}{\t}};
		\draw(n7) node[scale=\s]{\AlphaPerm{2,2,2}{c2,c1,c2}{\overline{1},2,\overline{3},3,\overline{2},1}{.5}{.25}{white}{\t}};
		\draw(n9) node[scale=\s]{\AlphaPerm{2,2,2}{c2,c1,c2}{1,2,\overline{3},3,\overline{2},\overline{1}}{.5}{.25}{white}{\t}};
		\draw(n10) node[scale=\s]{\AlphaPerm{2,2,2}{c2,c1,c2}{1,3,\overline{2},2,\overline{3},\overline{1}}{.5}{.25}{white}{\t}};
		\draw(n11) node[scale=\s]{\AlphaPerm{2,2,2}{c2,c1,c2}{1,3,\overline{2},2,\overline{3},\overline{1}}{.5}{.25}{white}{\t}};
		\draw(n12) node[scale=\s]{\AlphaPerm{2,2,2}{c2,c1,c2}{2,3,\overline{1},1,\overline{3},\overline{2}}{.5}{.25}{white}{\t}};
	\end{tikzpicture}
    \caption{The lattice $\Tamari_{B}\bigl((1,2)\bigr)$.}
    \label{fig:tamari_12}
\end{figure}

\begin{figure}
    \centering
	\begin{tikzpicture}
		\def\x{1.25};
		\def\y{1};
		\def\s{.6};
		\def\t{1};	
		\draw(4*\x,1*\y) coordinate(n1);
		\draw(3*\x,2*\y) coordinate(n2);
		\draw(7*\x,4*\y) coordinate(n3);
		\draw(2*\x,3*\y) coordinate(n4);
		\draw(8*\x,5*\y) coordinate(n6);
		\draw(1*\x,4*\y) coordinate(n7);
		\draw(3*\x,4*\y) coordinate(n8);
		\draw(2*\x,5*\y) coordinate(n11);
		\draw(5*\x,6*\y) coordinate(n12);
		\draw(3*\x,6*\y) coordinate(n15);
		\draw(2*\x,7*\y) coordinate(n19);
		\draw(4*\x,7*\y) coordinate(n20);
		\draw(3*\x,8*\y) coordinate(n22);
		\draw(4*\x,9*\y) coordinate(n24);
		\draw[edge](n1) -- (n2);
		\draw[edge](n1) -- (n3);
		\draw[edge](n2) -- (n4);
		\draw[edge](n3) -- (n12);
		\draw[edge](n3) -- (n6);
		\draw[edge](n4) -- (n7);
		\draw[edge](n4) -- (n8);
		\draw[edge](n6) -- (n24);
		\draw[edge](n7) -- (n11);
		\draw[edge](n8) -- (n11);
		\draw[edge](n8) -- (n12);
		\draw[edge](n11) -- (n15);
		\draw[edge](n12) -- (n20);
		\draw[edge](n15) -- (n19);
		\draw[edge](n15) -- (n20);
		\draw[edge](n19) -- (n22);
		\draw[edge](n20) -- (n22);
		\draw[edge](n22) -- (n24);
		\draw(n1) node[scale=\s]{\AlphaPerm{2,1,1,2}{c2,c1,c1,c2}{\overline{3},\overline{2},\overline{1},1,2,3}{.5}{.25}{white}{\t}};
		\draw(n2) node[scale=\s]{\AlphaPerm{2,1,1,2}{c2,c1,c1,c2}{\overline{3},\overline{2},1,\overline{1},2,3}{.5}{.25}{white}{\t}};
		\draw(n3) node[scale=\s]{\AlphaPerm{2,1,1,2}{c2,c1,c1,c2}{\overline{3},\overline{1},\overline{2},2,1,3}{.5}{.25}{white}{\t}};
		\draw(n4) node[scale=\s]{\AlphaPerm{2,1,1,2}{c2,c1,c1,c2}{\overline{3},\overline{1},2,\overline{2},1,3}{.5}{.25}{white}{\t}};
		\draw(n6) node[scale=\s]{\AlphaPerm{2,1,1,2}{c2,c1,c1,c2}{\overline{2},\overline{1},\overline{3},3,1,2}{.5}{.25}{white}{\t}};
		\draw(n7) node[scale=\s]{\AlphaPerm{2,1,1,2}{c2,c1,c1,c2}{\overline{2},\overline{1},3,\overline{3},1,2}{.5}{.25}{white}{\t}};
		\draw(n8) node[scale=\s]{\AlphaPerm{2,1,1,2}{c2,c1,c1,c2}{\overline{3},1,2,\overline{2},\overline{1},3}{.5}{.25}{white}{\t}};
		\draw(n11) node[scale=\s]{\AlphaPerm{2,1,1,2}{c2,c1,c1,c2}{\overline{2},1,3,\overline{3},\overline{1},2}{.5}{.25}{white}{\t}};
		\draw(n12) node[scale=\s]{\AlphaPerm{2,1,1,2}{c2,c1,c1,c2}{\overline{3},2,1,\overline{1},\overline{2},3}{.5}{.25}{white}{\t}};
		\draw(n15) node[scale=\s]{\AlphaPerm{2,1,1,2}{c2,c1,c1,c2}{\overline{1},2,3,\overline{3},\overline{2},1}{.5}{.25}{white}{\t}};
		\draw(n19) node[scale=\s]{\AlphaPerm{2,1,1,2}{c2,c1,c1,c2}{1,2,3,\overline{3},\overline{2},\overline{1}}{.5}{.25}{white}{\t}};
		\draw(n20) node[scale=\s]{\AlphaPerm{2,1,1,2}{c2,c1,c1,c2}{\overline{1},3,2,\overline{2},\overline{3},1}{.5}{.25}{white}{\t}};
		\draw(n22) node[scale=\s]{\AlphaPerm{2,1,1,2}{c2,c1,c1,c2}{1,3,2,\overline{2},\overline{3},\overline{1}}{.5}{.25}{white}{\t}};
		\draw(n24) node[scale=\s]{\AlphaPerm{2,1,1,2}{c2,c1,c1,c2}{2,3,1,\overline{1},\overline{3},\overline{2}}{.5}{.25}{white}{\t}};
	\end{tikzpicture}
    \caption{The lattice $\Tamari_{B}\bigl((0,1,2)\bigr)$.}
    \label{fig:tamari_012}
\end{figure}

Figure~\ref{fig:tamari_12} shows $\Tamari_{B}\bigl((1,2)\bigr)$ and Figure~\ref{fig:tamari_012} shows $\Tamari_{B}\bigl((0,1,2)\bigr)$.  We have therefore just proved Theorem~\ref{thm:parabolic_tamari_lattice}.

\begin{proof}[Proof of Theorem~\ref{thm:parabolic_tamari_lattice}]
    Proposition~\ref{prop:parabolic_tamari_lattice} states that $\Tamari_{B}(\alpha)$ is a lattice and Proposition~\ref{prop:congruence} proves that it is a quotient lattice of the weak order on $\Hyper_{\alpha}$.
\end{proof}

\begin{remark}
	In contrast to Theorem~\ref{thm:tamari_type_b}, the lattice $\Tamari_{B}(\alpha)$ is in general \textit{not} a sublattice of $\Weak(\Hyper_{\alpha})$.  This can be witnessed for the composition $\alpha=(0,2,1)$.  We have drawn the weak order on $\Hyper_{(0,2,1)}$ in Figure~\ref{fig:weak_order_021}, where we have highlighted the congruence classes with respect to $\Pi^\alpha_\downarrow$, and we have drawn $\Tamari_{B}\bigl((0,2,1)\bigr)$ in Figure~\ref{fig:tamari_021}. 
	
	If we consider $\pi_{1}=\hspace*{-.3cm}\raisebox{\rsp}{\AlphaPerm{1,2,2,1}{c2,c1,c1,c2}{3,\overline{1},2,\overline{2},1,\overline{3}}{.5}{.25}{white}{1}}\hspace*{-.1cm}$ and $\pi_{2}=\hspace*{-.3cm}\raisebox{\rsp}{\AlphaPerm{1,2,2,1}{c2,c1,c1,c2}{2,1,3,\overline{3},\overline{1},\overline{2}}{.5}{.25}{white}{1}}\hspace*{-.1cm}$, then their meet in $\Weak\bigl(\Hyper\bigl((0,2,1)\bigr)\bigr)$ is $\sigma_{1}=\hspace*{-.3cm}\raisebox{\rsp}{\AlphaPerm{1,2,2,1}{c2,c1,c1,c2}{2,\overline{1},3,\overline{3},1,\overline{2}}{.5}{.25}{white}{1}}\hspace*{-.1cm}$, while their meet in $\Tamari_{B}\bigl((0,2,1)\bigr)$ is $\sigma_{2}=\hspace*{-.3cm}\raisebox{\rsp}{\AlphaPerm{1,2,2,1}{c2,c1,c1,c2}{\overline{1},\overline{2},3,\overline{3},2,1}{.5}{.25}{white}{1}}\hspace*{-.1cm}$.  It clearly holds that $\sigma_{1}\neq\sigma_{2}$ but we have $\Pi^{(0,2,1)}_\downarrow(\sigma_{1})=\sigma_{2}$.
\end{remark}

\begin{figure}
	\centering
	\begin{tikzpicture}
		\def\x{1.25};
		\def\y{1};
		\def\s{.6};
		\def\t{1};	
		\coordinate(n1) at (6*\x,1*\y);
		\coordinate(n2) at (5*\x,2*\y);
		\coordinate(n3) at (7*\x,2*\y);
		\coordinate(n4) at (4*\x,3*\y);
		\coordinate(n5) at (6*\x,3*\y);
		\coordinate(n6) at (8*\x,3*\y);
		\coordinate(n7) at (3*\x,4*\y);
		\coordinate(n8) at (5*\x,4*\y);
		\coordinate(n9) at (7*\x,4*\y);
		\coordinate(n10) at (9*\x,4*\y);
		\coordinate(n11) at (2*\x,5*\y);
		\coordinate(n12) at (4*\x,5*\y);
		\coordinate(n13) at (6*\x,5*\y);
		\coordinate(n14) at (8*\x,5*\y);
		\coordinate(n15) at (1*\x,6*\y);
		\coordinate(n16) at (3*\x,6*\y);
		\coordinate(n17) at (5*\x,6*\y);
		\coordinate(n18) at (7*\x,6*\y);
		\coordinate(n19) at (2*\x,7*\y);
		\coordinate(n20) at (4*\x,7*\y);
		\coordinate(n21) at (6*\x,7*\y);
		\coordinate(n22) at (3*\x,8*\y);
		\coordinate(n23) at (5*\x,8*\y);
		\coordinate(n24) at (4*\x,9*\y);
		\draw[thick](n1) -- (n2);
		\draw[thick](n1) -- (n3);
		\draw[thick](n2) -- (n4);
		\draw[thick](n2) -- (n5);
		\draw[thick](n3) -- (n5);
		\draw[thick](n3) -- (n6);
		\draw[thick](n4) -- (n7);
		\draw[thick](n5) -- (n8);
		\draw[thick](n6) -- (n9);
		\draw[thick](n6) -- (n10);
		\draw[thick](n7) -- (n11);
		\draw[thick](n8) -- (n12);
		\draw[thick](n8) -- (n13);
		\draw[thick](n9) -- (n13);
		\draw[thick](n9) -- (n14);
		\draw[thick](n10) -- (n14);
		\draw[thick](n11) -- (n16);
		\draw[thick](n11) -- (n15);
		\draw[thick](n12) -- (n16);
		\draw[thick](n12) -- (n17);
		\draw[thick](n13) -- (n17);
		\draw[thick](n14) -- (n18);
		\draw[thick](n15) -- (n19);
		\draw[thick](n16) -- (n19);
		\draw[thick](n17) -- (n20);
		\draw[thick](n18) -- (n21);
		\draw[thick](n19) -- (n22);
		\draw[thick](n20) -- (n22);
		\draw[thick](n20) -- (n23);
		\draw[thick](n21) -- (n23);
		\draw[thick](n22) -- (n24);
		\draw[thick](n23) -- (n24);
		\draw(n1) node[scale=\s]{\AlphaPerm{1,2,2,1}{c2,c1,c1,c2}{\overline{3},\overline{2},\overline{1},1,2,3}{.5}{.25}{white!50!gray}{\t}};
		\draw(n2) node[scale=\s]{\AlphaPerm{1,2,2,1}{c2,c1,c1,c2}{\overline{2},\overline{3},\overline{1},1,3,2}{.5}{.25}{white!50!gray}{\t}};
		\draw(n3) node[scale=\s]{\AlphaPerm{1,2,2,1}{c2,c1,c1,c2}{\overline{3},\overline{2},1,\overline{1},2,3}{.5}{.25}{white!50!gray}{\t}};
		\draw(n4) node[scale=\s]{\AlphaPerm{1,2,2,1}{c2,c1,c1,c2}{\overline{1},\overline{3},\overline{2},2,3,1}{.5}{.25}{white!50!gray}{\t}};
		\draw(n5) node[scale=\s]{\AlphaPerm{1,2,2,1}{c2,c1,c1,c2}{\overline{2},\overline{3},1,\overline{1},3,2}{.5}{.25}{white!50!gray}{\t}};
		\draw(n6) node[scale=\s]{\AlphaPerm{1,2,2,1}{c2,c1,c1,c2}{\overline{3},\overline{1},2,\overline{2},1,3}{.5}{.25}{white!50!gray}{\t}};
		\draw(n7) node[scale=\s]{\AlphaPerm{1,2,2,1}{c2,c1,c1,c2}{1,\overline{3},\overline{2},2,3,\overline{1}}{.5}{.25}{white!50!gray}{\t}};
		\draw(n8) node[scale=\s]{\AlphaPerm{1,2,2,1}{c2,c1,c1,c2}{\overline{1},\overline{3},2,\overline{2},3,1}{.5}{.25}{white!50!gray}{\t}};
		\draw(n9) node[scale=\s]{\AlphaPerm{1,2,2,1}{c2,c1,c1,c2}{\overline{2},\overline{1},3,\overline{3},1,2}{.5}{.25}{white!50!gray}{\t}};
		\draw(n10) node[scale=\s]{\AlphaPerm{1,2,2,1}{c2,c1,c1,c2}{\overline{3},1,2,\overline{2},\overline{1},3}{.5}{.25}{white!50!gray}{\t}};
		\draw(n11) node[scale=\s]{\AlphaPerm{1,2,2,1}{c2,c1,c1,c2}{2,\overline{3},\overline{1},1,3,\overline{2}}{.5}{.25}{white!50!gray}{\t}};
		\draw(n12) node[scale=\s]{\AlphaPerm{1,2,2,1}{c2,c1,c1,c2}{1,\overline{3},2,\overline{2},3,\overline{1}}{.5}{.25}{white!50!gray}{\t}};
		\draw(n13) node[scale=\s]{\AlphaPerm{1,2,2,1}{c2,c1,c1,c2}{\overline{1},\overline{2},3,\overline{3},2,1}{.5}{.25}{white!50!gray}{\t}};
		\draw(n14) node[scale=\s]{\AlphaPerm{1,2,2,1}{c2,c1,c1,c2}{\overline{2},1,3,\overline{3},\overline{1},2}{.5}{.25}{white!50!gray}{\t}};
		\draw(n15) node[scale=\s]{\AlphaPerm{1,2,2,1}{c2,c1,c1,c2}{3,\overline{2},\overline{1},1,2,\overline{3}}{.5}{.25}{white!50!gray}{\t}};
		\draw(n16) node[scale=\s]{\AlphaPerm{1,2,2,1}{c2,c1,c1,c2}{2,\overline{3},1,\overline{1},3,\overline{2}}{.5}{.25}{white!50!gray}{\t}};
		\draw(n17) node[scale=\s]{\AlphaPerm{1,2,2,1}{c2,c1,c1,c2}{1,\overline{2},3,\overline{3},2,\overline{1}}{.5}{.25}{white!50!gray}{\t}};
		\draw(n18) node[scale=\s]{\AlphaPerm{1,2,2,1}{c2,c1,c1,c2}{\overline{1},2,3,\overline{3},\overline{2},1}{.5}{.25}{white!50!gray}{\t}};
		\draw(n19) node[scale=\s]{\AlphaPerm{1,2,2,1}{c2,c1,c1,c2}{3,\overline{2},1,\overline{1},2,\overline{3}}{.5}{.25}{white!50!gray}{\t}};
		\draw(n20) node[scale=\s]{\AlphaPerm{1,2,2,1}{c2,c1,c1,c2}{2,\overline{1},3,\overline{3},1,\overline{2}}{.5}{.25}{white!50!gray}{\t}};
		\draw(n21) node[scale=\s]{\AlphaPerm{1,2,2,1}{c2,c1,c1,c2}{1,2,3,\overline{3},\overline{2},\overline{1}}{.5}{.25}{white!50!gray}{\t}};
		\draw(n22) node[scale=\s]{\AlphaPerm{1,2,2,1}{c2,c1,c1,c2}{3,\overline{1},2,\overline{2},1,\overline{3}}{.5}{.25}{white!50!gray}{\t}};
		\draw(n23) node[scale=\s]{\AlphaPerm{1,2,2,1}{c2,c1,c1,c2}{2,1,3,\overline{3},\overline{1},\overline{2}}{.5}{.25}{white!50!gray}{\t}};
		\draw(n24) node[scale=\s]{\AlphaPerm{1,2,2,1}{c2,c1,c1,c2}{3,1,2,\overline{2},\overline{1},\overline{3}}{.5}{.25}{white!50!gray}{\t}};
		\begin{pgfonlayer}{background}
			\filldraw[draw=gray,fill=white!50!gray,rounded corners](5.25*\x,.85*\y) -- (6.8*\x,.85*\y) -- (6.8*\x,1.25*\y) -- (5.25*\x,1.25*\y) -- cycle;
			\filldraw[draw=gray,fill=white!50!gray,rounded corners](4.25*\x,1.85*\y) -- (5.8*\x,1.85*\y) -- (5.8*\x,2.25*\y) -- (4.25*\x,2.25*\y) -- cycle;
			\filldraw[draw=gray,fill=white!50!gray,rounded corners](6.25*\x,1.85*\y) -- (7.8*\x,1.85*\y) -- (7.8*\x,2.25*\y) -- (6.25*\x,2.25*\y) -- cycle;
			\filldraw[draw=gray,fill=white!50!gray,rounded corners](3.25*\x,2.85*\y) -- (4.8*\x,2.85*\y) -- (4.8*\x,3.25*\y) -- (3.8*\x,3.85*\y) -- (3.8*\x,4.25*\y) -- (2.8*\x,4.85*\y) -- (2.8*\x,5.25*\y) -- (1.8*\x,5.85*\y) -- (1.8*\x,6.25*\y) -- (.25*\x,6.25*\y) -- (.25*\x,5.85*\y) -- (1.25*\x,5.25*\y) -- (1.25*\x,4.85*\y) -- (2.25*\x,4.25*\y) -- (2.25*\x,3.85*\y) -- (3.25*\x,3.25*\y) -- cycle;
			\filldraw[draw=gray,fill=white!50!gray,rounded corners](5.25*\x,2.85*\y) -- (6.8*\x,2.85*\y) -- (6.8*\x,3.25*\y) -- (5.8*\x,3.85*\y) -- (5.8*\x,4.25*\y) -- (4.8*\x,4.85*\y) -- (4.8*\x,5.25*\y) -- (3.25*\x,5.25*\y) -- (3.25*\x,4.85*\y) -- (4.25*\x,4.25*\y) -- (4.25*\x,3.85*\y) -- (5.25*\x,3.25*\y) -- cycle;
			\filldraw[draw=gray,fill=white!50!gray,rounded corners](7.25*\x,2.85*\y) -- (8.8*\x,2.85*\y) -- (8.8*\x,3.25*\y) -- (7.25*\x,3.25*\y) -- cycle;
			\filldraw[draw=gray,fill=white!50!gray,rounded corners](6.25*\x,3.85*\y) -- (7.8*\x,3.85*\y) -- (7.8*\x,4.25*\y) -- (6.25*\x,4.25*\y) -- cycle;
			\filldraw[draw=gray,fill=white!50!gray,rounded corners](8.25*\x,3.85*\y) -- (9.8*\x,3.85*\y) -- (9.8*\x,4.25*\y) -- (8.25*\x,4.25*\y) -- cycle;
			\filldraw[draw=gray,fill=white!50!gray,rounded corners](5.25*\x,4.85*\y) -- (6.8*\x,4.85*\y) -- (6.8*\x,5.25*\y) -- (5.8*\x,5.85*\y) -- (5.8*\x,6.25*\y) -- (4.8*\x,6.85*\y) -- (4.8*\x,7.25*\y) -- (3.25*\x,7.25*\y) -- (3.25*\x,6.85*\y) -- (4.25*\x,6.25*\y) -- (4.25*\x,5.85*\y) -- (5.25*\x,5.25*\y) -- cycle;
			\filldraw[draw=gray,fill=white!50!gray,rounded corners](7.25*\x,4.85*\y) -- (8.8*\x,4.85*\y) -- (8.8*\x,5.25*\y) -- (7.25*\x,5.25*\y) -- cycle;
			\filldraw[draw=gray,fill=white!50!gray,rounded corners](2.25*\x,5.85*\y) -- (3.8*\x,5.85*\y) -- (3.8*\x,6.25*\y) -- (2.8*\x,6.85*\y) -- (2.8*\x,7.25*\y) -- (1.25*\x,7.25*\y) -- (1.25*\x,6.8*\y) -- (2.25*\x,6.25*\y) -- cycle;
			\filldraw[draw=gray,fill=white!50!gray,rounded corners](6.25*\x,5.85*\y) -- (7.8*\x,5.85*\y) -- (7.8*\x,6.25*\y) -- (6.25*\x,6.25*\y) -- cycle;
			\filldraw[draw=gray,fill=white!50!gray,rounded corners](5.25*\x,6.85*\y) -- (6.8*\x,6.85*\y) -- (6.8*\x,7.25*\y) -- (5.25*\x,7.25*\y) -- cycle;
			\filldraw[draw=gray,fill=white!50!gray,rounded corners](2.25*\x,7.85*\y) -- (3.8*\x,7.85*\y) -- (3.8*\x,8.25*\y) -- (2.25*\x,8.25*\y) -- cycle;
			\filldraw[draw=gray,fill=white!50!gray,rounded corners](4.25*\x,7.85*\y) -- (5.8*\x,7.85*\y) -- (5.8*\x,8.25*\y) -- (4.25*\x,8.25*\y) -- cycle;
			\filldraw[draw=gray,fill=white!50!gray,rounded corners](3.25*\x,8.85*\y) -- (4.8*\x,8.85*\y) -- (4.8*\x,9.25*\y) -- (3.25*\x,9.25*\y) -- cycle;
		\end{pgfonlayer}
	\end{tikzpicture}
	\caption{The weak order on $\Hyper_{(0,2,1)}$.}
	\label{fig:weak_order_021}
\end{figure}

\begin{figure}
	\centering
	\begin{tikzpicture}
		\def\x{1.25};
		\def\y{1};
		\def\s{.6};
		\def\t{1};	
		\coordinate(n1) at (6*\x,1*\y);
		\coordinate(n2) at (2*\x,5*\y);
		\coordinate(n3) at (7*\x,2*\y);
		\coordinate(n4) at (1*\x,6*\y);
		\coordinate(n5) at (3*\x,6*\y);
		\coordinate(n6) at (8*\x,3*\y);
		\coordinate(n9) at (7*\x,4*\y);
		\coordinate(n10) at (9*\x,4*\y);
		\coordinate(n13) at (4*\x,7*\y);
		\coordinate(n14) at (8*\x,5*\y);
		\coordinate(n18) at (7*\x,6*\y);
		\coordinate(n21) at (6*\x,7*\y);
		\coordinate(n22) at (3*\x,8*\y);
		\coordinate(n23) at (5*\x,8*\y);
		\coordinate(n24) at (4*\x,9*\y);
		\draw[thick](n1) -- (n2);
		\draw[thick](n1) -- (n3);
		\draw[thick](n2) -- (n4);
		\draw[thick](n2) -- (n5);
		\draw[thick](n3) -- (n5);
		\draw[thick](n3) -- (n6);
		\draw[thick](n4) -- (n22);
		\draw[thick](n5) -- (n13);
		\draw[thick](n6) -- (n9);
		\draw[thick](n6) -- (n10);
		\draw[thick](n9) -- (n13);
		\draw[thick](n9) -- (n14);
		\draw[thick](n10) -- (n14);
		\draw[thick](n13) -- (n22);
		\draw[thick](n13) -- (n23);
		\draw[thick](n14) -- (n18);
		\draw[thick](n18) -- (n21);
		\draw[thick](n21) -- (n23);
		\draw[thick](n22) -- (n24);
		\draw[thick](n23) -- (n24);
		\draw(n1) node[scale=\s]{\AlphaPerm{1,2,2,1}{c2,c1,c1,c2}{\overline{3},\overline{2},\overline{1},1,2,3}{.5}{.25}{white}{\t}};
		\draw(n2) node[scale=\s]{\AlphaPerm{1,2,2,1}{c2,c1,c1,c2}{\overline{2},\overline{3},\overline{1},1,3,2}{.5}{.25}{white}{\t}};
		\draw(n3) node[scale=\s]{\AlphaPerm{1,2,2,1}{c2,c1,c1,c2}{\overline{3},\overline{2},1,\overline{1},2,3}{.5}{.25}{white}{\t}};
		\draw(n4) node[scale=\s]{\AlphaPerm{1,2,2,1}{c2,c1,c1,c2}{\overline{1},\overline{3},\overline{2},2,3,1}{.5}{.25}{white}{\t}};
		\draw(n5) node[scale=\s]{\AlphaPerm{1,2,2,1}{c2,c1,c1,c2}{\overline{2},\overline{3},1,\overline{1},3,2}{.5}{.25}{white}{\t}};
		\draw(n6) node[scale=\s]{\AlphaPerm{1,2,2,1}{c2,c1,c1,c2}{\overline{3},\overline{1},2,\overline{2},1,3}{.5}{.25}{white}{\t}};
		\draw(n9) node[scale=\s]{\AlphaPerm{1,2,2,1}{c2,c1,c1,c2}{\overline{2},\overline{1},3,\overline{3},1,2}{.5}{.25}{white}{\t}};
		\draw(n10) node[scale=\s]{\AlphaPerm{1,2,2,1}{c2,c1,c1,c2}{\overline{3},1,2,\overline{2},\overline{1},3}{.5}{.25}{white}{\t}};
		\draw(n13) node[scale=\s]{\AlphaPerm{1,2,2,1}{c2,c1,c1,c2}{\overline{1},\overline{2},3,\overline{3},2,1}{.5}{.25}{white}{\t}};
		\draw(n14) node[scale=\s]{\AlphaPerm{1,2,2,1}{c2,c1,c1,c2}{\overline{2},1,3,\overline{3},\overline{1},2}{.5}{.25}{white}{\t}};
		\draw(n18) node[scale=\s]{\AlphaPerm{1,2,2,1}{c2,c1,c1,c2}{\overline{1},2,3,\overline{3},\overline{2},1}{.5}{.25}{white}{\t}};
		\draw(n21) node[scale=\s]{\AlphaPerm{1,2,2,1}{c2,c1,c1,c2}{1,2,3,\overline{3},\overline{2},\overline{1}}{.5}{.25}{white}{\t}};
		\draw(n22) node[scale=\s]{\AlphaPerm{1,2,2,1}{c2,c1,c1,c2}{3,\overline{1},2,\overline{2},1,\overline{3}}{.5}{.25}{white}{\t}};
		\draw(n23) node[scale=\s]{\AlphaPerm{1,2,2,1}{c2,c1,c1,c2}{2,1,3,\overline{3},\overline{1},\overline{2}}{.5}{.25}{white}{\t}};
		\draw(n24) node[scale=\s]{\AlphaPerm{1,2,2,1}{c2,c1,c1,c2}{3,1,2,\overline{2},\overline{1},\overline{3}}{.5}{.25}{white}{\t}};
	\end{tikzpicture}
	\caption{The lattice $\Tamari_{B}\bigl((0,2,1)\bigr)$.}
	\label{fig:tamari_021}
\end{figure}

An immediate consequence is the following property of $\Tamari_{B}(\alpha)$.

\begin{proposition}\label{prop:alpha_tamari_congruence_uniform}
    For every type-$B$ composition $\alpha$ of $n$, the lattice $\Tamari_{B}(\alpha)$ is congruence uniform.
\end{proposition}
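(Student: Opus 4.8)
The plan is to deduce congruence uniformity of $\Tamari_{B}(\alpha)$ from that of the full weak order by chaining together the three preservation clauses of Proposition~\ref{prop:congruence_uniform_preserving}. No new combinatorics is needed; everything reduces to transport along structural maps already established.

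First I would record that $\Weak(\Hyper_{n})$ is congruence uniform. Since the hyperoctahedral group $\Hyper_{n}$ is a finite Coxeter group, this is exactly the second assertion of Theorem~\ref{thm:weak_order_congruence_uniform}, so no argument is required here.

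Next I would pass to the parabolic quotient. By Theorem~\ref{thm:weak_order_quotients}, recorded again in Proposition~\ref{prop:congruence}, the weak order $\Weak(\Hyper_{\alpha})$ is isomorphic to the interval $[\id,\woa]$ of $\Weak(\Hyper_{n})$. The one point not stated verbatim in the preliminaries, and the only place that calls for a small verification, is that an interval of a finite lattice is automatically a sublattice: for $x,y\in[\id,\woa]$ the meet and join taken in $\Weak(\Hyper_{n})$ still lie between $\id$ and $\woa$, hence agree with the meet and join computed inside the interval. Thus $[\id,\woa]$ is a sublattice of $\Weak(\Hyper_{n})$, and applying the sublattice clause of Proposition~\ref{prop:congruence_uniform_preserving} shows that $\Weak(\Hyper_{\alpha})$ is congruence uniform.

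Finally I would exploit the quotient structure. By Proposition~\ref{prop:congruence}, $\Tamari_{B}(\alpha)$ is the quotient lattice of $\Weak(\Hyper_{\alpha})$ by the congruence $\Theta$; equivalently, the downward projection $\Pi^{\alpha}_{\downarrow}$ sending each element to the minimal element of its class is a surjective lattice homomorphism from $\Weak(\Hyper_{\alpha})$ onto $\Tamari_{B}(\alpha)$. Invoking the lattice-homomorphism clause of Proposition~\ref{prop:congruence_uniform_preserving} for this surjection then yields that $\Tamari_{B}(\alpha)$ is congruence uniform. I do not expect any genuinely hard step; the main thing to be careful about is that it is the sublattice clause (and the fact that intervals are sublattices), rather than a nonexistent ``interval'' clause, that licenses the middle step.
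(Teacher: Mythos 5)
Your proposal is correct and follows essentially the same route as the paper's own proof: congruence uniformity of $\Weak(\Hyper_{n})$ via Theorem~\ref{thm:weak_order_congruence_uniform}, passage to $\Weak(\Hyper_{\alpha})$ as the interval $[\id,\woa]$ (with the same remark that intervals are sublattices) using Proposition~\ref{prop:congruence_uniform_preserving}, and then the quotient step via Proposition~\ref{prop:congruence} and a second application of Proposition~\ref{prop:congruence_uniform_preserving}. Your explicit verification that intervals are sublattices is a sensible spelling-out of a point the paper states in one clause.
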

\begin{proof}
    By Theorem~\ref{thm:weak_order_congruence_uniform}, $\Weak(\Hyper_{n})$ is congruence uniform, and by Theorem~\ref{thm:weak_order_quotients}, $\Weak(\Hyper_{\alpha})$ is an interval of $\Weak(\Hyper_{n})$.  Since intervals are sublattices, Proposition~\ref{prop:congruence_uniform_preserving} implies that $\Weak(\Hyper_{\alpha})$ is congruence uniform, too. Proposition~\ref{prop:congruence} now states that $\Tamari_{B}(\alpha)$ is a quotient lattice of $\Weak(\Hyper_{\alpha})$, so that the claim follows by applying Proposition~\ref{prop:congruence_uniform_preserving} once more.
\end{proof}

\subsection{Extremality of $\Tamari_{B}(\alpha)$}

In this section, we prove that $\Tamari_{B}(\alpha)$ is an extremal
lattice, \ie it has the same number of join- and meet-irreducible elements
and this number agrees with its length.

\begin{proposition}\label{prop:alpha_tamari_length}
    For every type-$B$ composition $\alpha$ of $n$, the length of the lattice $\Tamari_{B}(\alpha)$ is
    \begin{displaymath}
        n^2 - \sum_{i=1}^r \binom{\alpha_i}{2} -
        \begin{cases}
            \binom{\alpha_{1}+1}{2}, & \text{if}\;\alpha\;\text{is join},\\
            0, & \text{if}\;\alpha\;\text{is split}.
        \end{cases}
    \end{displaymath}
More precisely, the sequence $C$ of suffixes of the $\linc$-sorting word of $\woa$ forms a maximal chain of $\Tamari_B(\alpha)$.
\end{proposition}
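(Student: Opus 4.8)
The plan is to prove that the length of $\Tamari_{B}(\alpha)$ equals $\ell_{S}(\woa)$, since by Corollary~\ref{cor:parabolic_longest_length} the latter is exactly the claimed formula. The upper bound is immediate: $\Hyper_{\alpha}(231)$ is a subset of $\Hyper_{\alpha}$ and $\Tamari_{B}(\alpha)=\Weak\bigl(\Hyper_{\alpha}(231)\bigr)$ is the induced subposet, so every chain of $\Tamari_{B}(\alpha)$ is also a chain of $\Weak(\Hyper_{\alpha})$; as the weak order is graded by Coxeter length with maximal element $\woa$ (Theorem~\ref{thm:weak_order_quotients}), no such chain can have length exceeding $\ell_{S}(\woa)$. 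It therefore remains to exhibit a maximal chain of $\Tamari_{B}(\alpha)$ of that length, and I claim that $C$ is such a chain.

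Write $\wboa(\linc)=a_{1}a_{2}\cdots a_{m}$ with $m=\ell_{S}(\woa)$, and for $1\leq j\leq m+1$ let $w_{j}\defs a_{j}a_{j+1}\cdots a_{m}$ be the $j$-th suffix, so that $w_{m+1}=\id$ and $w_{1}=\woa$. Each $w_{j}$ is a reduced word, and the factorization $w_{j}=a_{j}\cdot w_{j+1}$ exhibits $w_{j+1}\weakcover w_{j}$ in the left weak order; hence the $w_{j}$ form a maximal chain from $\id$ to $\woa$ in $\Weak(\Hyper_{\alpha})$. By the definition of the inversion order, the inversion set $\invset(w_{j})$ consists precisely of the first $m-j+1$ reflections of $\invorder\bigl(\wboa(\linc)\bigr)$, that is, of an initial segment of this linear order.

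The heart of the argument is to show that each $w_{j}$ lies in $\Hyper_{\alpha}(231)$. By Proposition~\ref{prop:alpha_inversion_order}, the order $\invorder\bigl(\wboa(\linc)\bigr)$ is obtained by reading the filling of $\Skew(\alpha)$ from top to bottom and from right to left. Consequently the initial segment $\invset(w_{j})$ occupies a region of $\Skew(\alpha)$ that is closed under passing to a cell to the right within the same row and to a cell above within the same column, because any such cell is read strictly earlier. On the other hand, Note~\ref{note:split_forcing} (when $\alpha$ is split) and Note~\ref{note:join_forcing} (when $\alpha$ is join) reformulate the alignment conditions of Lemma~\ref{lem:split_forcing} and Lemma~\ref{lem:join_forcing} geometrically: every inversion forced by a cover inversion $t$ lies either in the same row as $t$ and to its right, or in the same column and above it. Since any cover inversion of $w_{j}$ is a cell of this region, all inversions it forces lie in the region as well, hence in $\invset(w_{j})$. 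Therefore $w_{j}$ satisfies the forcing conditions and belongs to $\Hyper_{\alpha}(231)$.

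It follows that $C=\{w_{1},w_{2},\ldots,w_{m+1}\}$ is contained in $\Hyper_{\alpha}(231)$. Each relation $w_{j+1}\weakcover w_{j}$ holds in $\Weak(\Hyper_{\alpha})$, and as $\Tamari_{B}(\alpha)$ is the induced subposet these remain cover relations there; the chain runs from the bottom element $\id$ to the top element $\woa$, so it is a maximal chain of length $m$. Combined with the upper bound, this yields $\ell\bigl(\Tamari_{B}(\alpha)\bigr)=m=\ell_{S}(\woa)$, which is the asserted formula by Corollary~\ref{cor:parabolic_longest_length}. The main obstacle is the geometric verification in the third paragraph, namely that the initial segments of the inversion order are exactly the cell regions closed under moving up within a column and to the right within a row, to which the forcing reformulation of Notes~\ref{note:split_forcing} and \ref{note:join_forcing} directly applies.
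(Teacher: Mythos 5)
Your proof is correct and takes essentially the same route as the paper's: an upper bound from the grading of $\Weak(\Hyper_{\alpha})$ by Coxeter length, the chain of suffixes of $\wboa(\linc)$, and the key observation---via Proposition~\ref{prop:alpha_inversion_order} together with Notes~\ref{note:split_forcing} and~\ref{note:join_forcing}---that the inversion sets of the suffixes are initial segments of $\invorder\bigl(\wboa(\linc)\bigr)$, hence up-and-right closed regions of $\Skew(\alpha)$, so every forcing relation is satisfied and each suffix is aligned. Your write-up simply makes explicit some points the paper treats tersely, such as the direction of the reading order on $\Skew(\alpha)$ and the fact that cover relations in $\Weak(\Hyper_{\alpha})$ between elements of $\Hyper_{\alpha}(231)$ remain covers in the induced subposet.
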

\begin{proof}
    We shall show the more precise statement since it implies the first one. First, since $\Tamari_B(\alpha)$ is a subposet of all type B permutations, its length cannot exceed the number of inversions of its maximal element $\woa$. So the considered chain $C$ is of maximal available length and it is a chain since the whole $\linc$-sorting word is a reduced decomposition so that each new letter exactly adds one inversion.

    Now, all the suffixes of $C$ are aligned. Indeed, let $\pi$ be such an element. Its inversions form a suffix of the list of inversions of $\woa$, or, in terms of the skew partition containing the inversions, they satisfy that if a cell $c$ is a inversion of $\pi$, then any cell above of $c$ or in its row to its right is also an inversion of $\pi$. But, thanks to Lemmas~\ref{lem:split_forcing} and~\ref{lem:join_forcing}, a permutation is aligned if and only if for any $t\in\covset(\pi)$, it implies other inversions in $\pi$ and thanks to Notes~\ref{note:split_forcing} and~\ref{note:join_forcing}, $\pi$ satisfies all forcing relations.
\end{proof}

\begin{proposition}\label{prop:alpha_tamari_irreducibles_count}
    For every type-$B$ composition $\alpha$ of $n$, the number of join-irreducible elements of $\Tamari_{B}(\alpha)$ is
    \begin{displaymath}
        n^2 - \sum_{i=1}^r \binom{\alpha_i}{2} -
        \begin{cases}
            \binom{\alpha_{1}+1}{2}, & \text{if}\;\alpha\;\text{is join},\\
            0, & \text{if}\;\alpha\;\text{is split}.
        \end{cases}
    \end{displaymath}
\end{proposition}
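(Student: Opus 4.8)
The plan is to count the join-irreducible elements of $\Tamari_{B}(\alpha)$ directly and to observe that their number matches the length computed in Proposition~\ref{prop:alpha_tamari_length}. Write $L\defs\Weak(\Hyper_{\alpha})$, which is congruence uniform by the proof of Proposition~\ref{prop:alpha_tamari_congruence_uniform}, and recall that $\Tamari_{B}(\alpha)=L/\Theta$. The first step is to reduce the count to a set of permutations. Since the congruence lattice $\Con(L)$ is distributive and $\Con(L/\Theta)$ is the interval $[\Theta,\nabla]$ in $\Con(L)$, the join-irreducibles of $\Con(L/\Theta)$ are precisely the join-irreducibles $p$ of $\Con(L)$ with $p\not\le\Theta$. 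As $L$ is congruence uniform, $\cg\colon\JI(L)\to\JI(\Con(L))$, $j\mapsto\cg(j_{*},j)$, is a bijection, and as $\Tamari_{B}(\alpha)$ is congruence uniform we obtain
\[
\bigl\lvert\JI(\Tamari_{B}(\alpha))\bigr\rvert
 =\bigl\lvert\{\,j\in\JI(L)\colon \cg(j_{*},j)\not\le\Theta\,\}\bigr\rvert .
\]

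Next I would identify this set combinatorially. An element $j\in\Hyper_{\alpha}$ is join-irreducible in $L$ exactly when it covers a single element, that is, when $\lvert\covset(j)\rvert=1$; denote its unique cover inversion by $t_{j}$, so that $j_{*}=t_{j}\cdot j$. Applying Proposition~\ref{prop:cover_equivalence} to $j_{*}\weakcover j$, we have $\cg(j_{*},j)\le\Theta$ (equivalently $(j_{*},j)\in\Theta$) if and only if $t_{j}$ violates the forcing conditions of Lemma~\ref{lem:split_forcing} or Lemma~\ref{lem:join_forcing}. Since $j$ has the single cover inversion $t_{j}$, it is $\wboa(\linc)$-aligned precisely when $t_{j}$ does \emph{not} violate these conditions. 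Hence the set above is exactly the set of aligned permutations in $\Hyper_{\alpha}$ having a unique cover inversion, and it remains to count those.

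The main step is a bijection between these aligned join-irreducibles and the reflections in $\invset(\woa)$, equivalently the cells of $\Skew(\alpha)$. I would send such a $j$ to its cover inversion $t_{j}$; since $j\weakorder\woa$, we have $t_{j}\in\covset(j)\subseteq\invset(j)\subseteq\invset(\woa)$, so the image is a cell of $\Skew(\alpha)$. For the inverse, to a reflection $t\in\invset(\woa)$ I would associate the element $j_{t}$ whose inversion set is the smallest biconvex set containing $t$ and closed under the forcing relations of Lemmas~\ref{lem:split_forcing}~and~\ref{lem:join_forcing}. By Notes~\ref{note:split_forcing} and~\ref{note:join_forcing} the forced inversions all lie weakly north and east of $t$ inside $\Skew(\alpha)$, so this closure is a well-defined region of cells; because forcing is defined through the root decompositions of Lemma~\ref{lem:type_b_decompositions}, this region is the inversion set of a genuine element $j_{t}\in\Hyper_{\alpha}$. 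One then checks that $t$ is the unique cover inversion of $j_{t}$ and that $j_{t}$ is aligned, and that the two assignments are mutually inverse: for an aligned $j$ with unique cover inversion $t_{j}$, alignment forces its inversion set to contain this closure while join-irreducibility forbids any further cover inversion, forcing the equality $\invset(j)=\invset(j_{t_{j}})$.

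Combining the three steps yields $\lvert\JI(\Tamari_{B}(\alpha))\rvert=\lvert\invset(\woa)\rvert=\ell_{S}(\woa)$, which is the claimed formula by Corollary~\ref{cor:parabolic_longest_length}. The main obstacle is the bijection of the previous paragraph: both showing that the forcing-closure of a single reflection is realizable as the inversion set of an element of $\Hyper_{\alpha}$ having that reflection as its \emph{only} cover inversion (surjectivity), and showing that an aligned element with a unique cover inversion has no inversions beyond this closure (injectivity). Both points hinge on the interplay between the biconvexity of inversion sets and the north-east description of forcing recorded in Notes~\ref{note:split_forcing} and~\ref{note:join_forcing}.
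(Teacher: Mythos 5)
Your reduction in the first two steps is sound, and in fact more carefully justified than the paper's own framing: the paper simply takes for granted that the join-irreducibles of $\Tamari_{B}(\alpha)$ are the aligned elements with exactly one cover inversion, whereas you derive this from congruence uniformity, the correspondence $\Con(L/\Theta)\cong[\Theta,\nabla]$, and Proposition~\ref{prop:cover_equivalence}. The final step (count $=\ell_{S}(\woa)$, then Corollary~\ref{cor:parabolic_longest_length}) also matches the paper. But the heart of the proposition is your third step, and there the proposal has a genuine gap that you yourself concede as ``the main obstacle'': you never prove that for each $t\in\invset(\woa)$ there exists an aligned element having $t$ as its \emph{unique} cover inversion, nor that it is the only such element. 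This is exactly where the paper invests all of its effort: it proves a lemma giving, for every inversion $\invd{i}{j}$ of $\woa$, an \emph{explicit} one-line description of the join-irreducible $\pi(i,j)$, split into cases ($i,j>0$; $i=-j$; mixed signs, with split/join and $\lvert i\rvert\lessgtr\alpha_{1}$ subcases), and its uniqueness argument is a genuine combinatorial analysis showing that the single cover inversion forces every entry of the permutation.

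Moreover, your proposed inverse map is not well-defined as stated. ``The smallest biconvex set containing $t$'' need not exist: inversion sets are not closed under intersection (the weak-order meet corresponds to the largest inversion set \emph{inside} the intersection, not the intersection itself), and a given reflection typically lies in the inversion sets of several incomparable minimal elements --- in the type-$A$ shadow, both $231$ and $312$ have $\invd{1}{3}$ as their unique cover inversion, so the assignment $j\mapsto t_{j}$ is injective only \emph{after} restricting to aligned elements, which is a theorem, not an observation. Closing under forcing does not repair this: the implications in Lemmas~\ref{lem:split_forcing} and~\ref{lem:join_forcing} are conditioned on a reflection being a \emph{cover} inversion, a predicate that is not monotone in the inversion set, so the family of forcing-closed biconvex sets is not intersection-closed; relatedly, the remark following the proof of Theorem~\ref{thm:parabolic_tamari_lattice} (the example $\alpha=(0,2,1)$) shows that aligned elements are not closed under meets, which undercuts any definition of $j_{t}$ by taking a smallest closed set. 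Both surjectivity and injectivity of your correspondence therefore remain unproven, and supplying them amounts to reconstructing the paper's explicit construction and uniqueness lemma.
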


The proof of this proposition directly comes from the next lemma. Indeed, recall that join-irreducible elements have by definition exactly one cover inversion. Since a cover inversion is in particular an inversion, the set of possible cover inversions of the join-irreducibles of $\Tamari_{B}(\alpha)$ is the set of inversions of $\woa$. The next lemma shows that, given any such inversion $\invd{i}{j}$, there is exactly one join-irreducible element of $\Tamari_{B}(\alpha)$ such that $\invd{i}{j}$ is its cover inversion, hence showing that the number of join-irreducible is equal to the length of $\woa$.

\begin{lemma}
    Let $\alpha$ be a type-$B$ composition and let $\invd{i}{j}$ be an inversion of $\woa$. Then there is exactly one join-irreducible element $\pi(i,j)$ of $\Tamari_{B}(\alpha)$ such that $\invd{i}{j}$ is its unique cover inversion.

    Moreover, it can be computed directly as follows.
    \begin{itemize}
        \item If $i$ and $j$ are both positive, fill from left to right the right part of a sign-symmetric permutation with letters starting from $1$ in increasing order, forgetting about position $i$ and the right of its block and ending at position $j$. Then fill the remaining positions from left to right with the remaining positive values in increasing order.
        \item If $i=-j$, fill $\pi$ with the values from $1$ to $n$ in that order the following positions: all the left part of $\pi$ starting at $i$ (or, if $\alpha$ is join, all the left part except the central block and then the right half of the central block instead) and then the right part of $\pi$ starting at position $j+1$.
        \item If either $i$ or $j$ is negative and $|i|<|j|$, there are three different cases.
        \begin{itemize}
            \item If $\alpha$ is split, $i$ is the negative one. Put $k=i+j$. Then its right part is given by the sequence
            \begin{equation}
                \overline j\dots\overline{k\!+\!1}\ \ 1\dots k\ \ j\!+\!1\dots n.
            \end{equation}
            \item If $\alpha$ is join and $|i|>\alpha_1$, then $i$ is again the negative one. Put $k=i+j$ and $\ell=j-\alpha_1$. Then its right part is given by the sequence
            \begin{equation}
                \ell\!+\!1\dots\ell\!+\!\alpha_1\ \ \overline\ell\dots\overline{k\!+\!1}\ \ 1\dots k\ \ \ell\!+\!\alpha_1\!+\!1\dots n.
            \end{equation}
            \item If $\alpha$ is join and $|i|\leq \alpha_1$, then $i$ is the positive one. Put $k=i$ and $\ell=i-j-\alpha_1$. Then its right part is given by the sequence
            \begin{equation}
                1\dots k\ \ \ell\!+\!1\dots \ell\!+\!\alpha_1\!-\!i\ \ \overline\ell\dots\overline{k\!+\!1}\ \ \ell\!+\!\alpha_1\!-\!i\!+\!1\dots n.
            \end{equation}
        \end{itemize}
    \end{itemize}
\end{lemma}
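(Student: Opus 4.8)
The plan is to prove the two assertions in turn: that the explicitly displayed permutation $\pi(i,j)$ is a join-irreducible of $\Tamari_{B}(\alpha)$ whose unique cover inversion is $\invd{i}{j}$, and that it is the only such element. Throughout I would use the identification, recalled in the discussion preceding the lemma, of the join-irreducibles of $\Tamari_{B}(\alpha)$ with the aligned elements having exactly one cover inversion. Since every cover inversion of $\pi\in\Hyper_\alpha$ lies in $\covset(\pi)\subseteq\invset(\pi)\subseteq\invset(\woa)$, and the inversions of $\woa$ are exactly the cells of $\Skew(\alpha)$ (Proposition~\ref{prop:alpha_inversion_order}), the admissible cover inversions range precisely over the inversions $\invd{i}{j}$ of $\woa$. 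Thus the lemma amounts to exhibiting, for each cell of $\Skew(\alpha)$, a unique minimal aligned element carrying that cell as its sole cover inversion, and identifying it with $\pi(i,j)$.

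For existence I would run through the cases of the construction. First, using Lemma~\ref{lem:increasing_blocks}, I would check that in each case the prescribed right part is increasing within every $\alpha$-block, so that $\pi(i,j)\in\Hyper_\alpha$. Next, applying the cover-inversion criterion of Lemma~\ref{lem:type_b_inversions}, I would verify that $\pi(i,j)$ has exactly one cover inversion and that it equals $\invd{i}{j}$; this is transparent because each prescribed right part consists of increasing runs interrupted by a single controlled jump, so only one pair of consecutive values appears out of order. Finally I would confirm alignment: by Propositions~\ref{prop:split_pattern} and~\ref{prop:join_pattern} it suffices to rule out a type-$B$ $(\alpha,231)$-pattern, and the run structure leaves no room for one. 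Equivalently, one checks the forcing conditions of Lemma~\ref{lem:split_forcing} (resp. Lemma~\ref{lem:join_forcing}) directly: the single cover inversion $\invd{i}{j}$ forces exactly the inversions lying in the same row to its right or in the same column above it (Notes~\ref{note:split_forcing} and~\ref{note:join_forcing}), and these are precisely the inversions realized by the explicit right part.

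For uniqueness, suppose $\sigma\in\Hyper_\alpha(231)$ is aligned with $\covset(\sigma)=\{\invd{i}{j}\}$. Because $\invd{i}{j}$ is a cover inversion of the aligned element $\sigma$, the forcing conditions place into $\invset(\sigma)$ every inversion forced by $\invd{i}{j}$, which by Notes~\ref{note:split_forcing} and~\ref{note:join_forcing} are exactly the cells of $\Skew(\alpha)$ weakly to the right of $\invd{i}{j}$ in its row or above it in its column, closed under iteration; call this set $F$. Conversely $\invset(\sigma)$ cannot properly contain $F$: arguing as in the construction of $\Pi^\alpha_\downarrow$ in Lemma~\ref{lem:unique_maximal}, a minimal aligned element carrying $\invd{i}{j}$ as a cover inversion has inversion set exactly $F$, since any further inversion would either create a second cover inversion or be removable while preserving alignment. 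Hence $\invset(\sigma)=F=\invset(\pi(i,j))$, and so $\sigma=\pi(i,j)$.

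The main obstacle is the bookkeeping imposed by the several cases, namely split versus join, the signs of $i$ and $j$, and, in the join case, whether $\lvert i\rvert\leq\alpha_1$, each of which alters both the shape of the forced region $F$ in $\Skew(\alpha)$ and the precise right part. Matching the closure $F$ read off from the skew shape against the values written in the explicit formula, uniformly across these cases and while simultaneously ruling out any second cover inversion, is where the real work lies; the lattice-theoretic reduction to a minimal aligned element with a prescribed cover inversion is comparatively routine once Lemma~\ref{lem:unique_maximal} is available.
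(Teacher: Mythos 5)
Your existence verification is fine in outline, but your uniqueness step contains a genuine gap: it is not true that an aligned element whose unique cover inversion is $t$ has inversion set equal to the alignment-forcing closure $F$ of $t$. Inversion sets of group elements are additionally closed under positive linear combinations of roots, and this closure is not captured by iterating the forcing of Lemmas~\ref{lem:split_forcing} and~\ref{lem:join_forcing}. Concretely, take $n=2$, $\alpha=(0,1,1)$ and $t=\sleft 2\sright$. The iterated forcing closure is $F=\{\sleft 1\sright,\sleft 2\sright\}$, but $F$ is not the inversion set of any element of $\Hyper_{2}$: by Lemma~\ref{lem:type_b_inversions}, $\sleft 1\sright,\sleft 2\sright\in\invset(\pi)$ means $\pi(1)<0$ and $\pi(2)<0$, which forces $\pi(-2)=-\pi(2)>\pi(1)$, \ie $\lleft {-}2\;1\rright\in\invset(\pi)$ (on the root side, $\varepsilon_{1}+\varepsilon_{2}$ is a positive combination of $\varepsilon_{1}$ and $\varepsilon_{2}$). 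The actual join-irreducible with unique cover inversion $\sleft 2\sright$ is the element with right part $\overline{2}\;\overline{1}$, whose inversion set $\{\sleft 1\sright,\sleft 2\sright,\lleft {-}2\;1\rright\}$ properly contains $F$ --- exactly as the lemma's second bullet predicts. So your claimed identity $\invset(\sigma)=F$ fails, and the sentence ``any further inversion would either create a second cover inversion or be removable while preserving alignment'' is not an argument: deleting a single inversion from $\invset(\sigma)$ does not in general yield an element of the group, and Lemma~\ref{lem:unique_maximal} (which removes \emph{cover} inversions violating the forcing conditions) gives no handle on surplus non-cover inversions.

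The paper's uniqueness proof avoids this entirely by working directly with the long one-line notation: if $\invd{i}{j}$ is the unique cover inversion, with values $k$ and $k+1$ at positions $i$ and $j$, then the values $\overline{1},\ldots,k$ occur in increasing order of position, as do $k+1,\ldots,n$, and the relevant type-$B$ $(\alpha,231)$-pattern avoidance (split or join, with the $312$-flavoured condition in the first join block) decides, case by case, which positions carry values at most $k$ and which carry values greater than $k$; this pins down the whole filling \emph{and} the value $k$ itself, which your sketch never determines. Repairing your route would require identifying the minimal biclosed (convexity-closed) aligned inversion set containing the forced cells, proving it is realizable by a group element, and excluding a second cover inversion --- precisely the work the paper's direct case analysis performs and your proposal defers to ``bookkeeping.''
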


\begin{example}\label{ex:irreducibles}
    Here are a few examples showing in all cases above how a join-irreducible is associated with a type-$B$ composition and an inversion. We shall only represent the right part of the permutation.
    \begin{displaymath}\begin{aligned}
        & \alpha=(0,3,1,2,1), && \invd{i}{j}=(2,6): && \pi  = \hspace*{-.25cm}\raisebox{\rsp}{\AlphaPerm{3,1,2,1}{c1,c2,c3,c4}{1,\hgl{5},6,2,3,\hgl{4},7}{.5}{.25}{white}{1}}\hspace*{-.1cm}\\
        & \alpha=(4,2,2), && \invd{i}{j}=(2,6): && \pi = \hspace*{-.25cm}\raisebox{\rsp}{\AlphaPerm{4,2,2}{c1,c2,c3}{1,\hgl{4},5,6,2,\hgl{3},7,8}{.5}{.25}{white}{1}}\hspace*{-.1cm}\\
        & \alpha=(0,3,1,2,1), && \invd{i}{j}=(\overline5,5): && \pi = \hspace*{-.25cm}\raisebox{\rsp}{\AlphaPerm{3,1,2,1}{c1,c2,c3,c4}{\overline5,\overline4,\overline3,\overline2,\hgl{\overline1},6,7}{.5}{.25}{white}{1}} \hspace*{-.1cm}\\
        & \alpha=(4,2,2), && \invd{i}{j}=(\overline6,6): && \pi  = \hspace*{-.25cm}\raisebox{\rsp}{\AlphaPerm{4,2,2}{c1,c2,c3}{3,4,5,6,\overline2,\hgl{\overline1},7,8}{.5}{.25}{white}{1}} \hspace*{-.1cm}\\
        & \alpha=(0,3,1,2,1), && \invd{i}{j}=(\overline2,6): && \pi  = \hspace*{-.25cm}\raisebox{\rsp}{\AlphaPerm{3,1,2,1}{c1,c2,c3,c4}{\overline6,\overline5,1,2,3,\hgl{4},7}{.5}{.25}{white}{1}} \hspace*{-.1cm}\\
        & \alpha=(4,2,2), && \invd{i}{j}=(\overline5,7): 
        && \pi  = \hspace*{-.25cm}\raisebox{\rsp}{\AlphaPerm{4,2,2}{c1,c2,c3}{4,5,6,7,\overline{3},1,\hgl{2},8}{.5}{.25}{white}{1}} \hspace*{-.1cm}\\
        & \alpha=(4,2,2), && \invd{i}{j}=(2,\overline5): 
        && \pi  = \hspace*{-.25cm}\raisebox{\rsp}{\AlphaPerm{4,2,2}{c1,c2,c3}{1,2,4,5,\hgl{\overline3},6,7,8}{.5}{.25}{white}{1}} \hspace*{-.1cm}\\
    \end{aligned}\end{displaymath}
\end{example}

\begin{proof}
    Let $\invd{i}{j}$ be an inversion of $\woa$ and let us consider $\pi$ a join-irreducible element with $\invd{i}{j}$ as its unique cover inversion. Let $k$ and $k+1$ be the values of this cover inversion, that is, the values in positions $i$ and $j$. Since there is only one cover inversion, that means that the values $\overline1$ up to $k$ are in increasing order from left to right in $\pi$ and so are the values $k+1$ up to $n$.

    We shall distinguish cases depending on the values $i$ and $j$, and show that $\pi$ is unique.

    First, consider the case $i>0$ and $j>0$. Then the positions of $\overline1$ and $1$ do not form a cover inversion, so $1$ is in the right part of $\pi$. Thus, the other positive values must also be in the right part of $\pi$: $1$ up to $k$ have to be on the right part, and since $k+1$ is also on the right, all the other values are too. So any type-$B$ $231$ pattern can only occur with three values on the right half. We are thus in the type-$A$ case and $\pi$ is unique (see~\cite{muehle19tamari}).

    Let us nonetheless prove by ourselves that $\pi$ is unique, since we shall prove uniqueness by building it explicitly using ideas that will be used in other cases of this proof. Since there is only one cover inversion between values $k$ and $k+1$, all values to the left of $k+1$ must be at most $k-1$ and all values to the right of $k$ must be at least $k+2$. Moreover, all positions between the positions of $k+1$ and $k$ are filled with elements that are either smaller or greater than $k$, but not both: the values to the right of $k+1$ in its block must be filled with greater elements whereas all others must be filled with smaller elements. In other words, given $\alpha$ and $\invd{i}{j}$, all positions are filled either with values greater or smaller than $k$, hence showing that the filling is unique and in particular that $k$ is fixed. One easily checks that the element described in the statement is the correct one.

    \medskip
    
    Now, consider the case $i=-j$. Since it is a cover inversion, it has to be that $\pi(j)=\overline1$ and $\pi(i)=1$. Since there are no other cover inversions, all positive values are in increasing order from left to right.

    \begin{itemize}
        \item If $\alpha$ is split, the values to the left of $\overline1$ in the right part of $\pi$ must be negative. Otherwise, we would have a type-$B$ $(\alpha,231)$-pattern. So there are exactly $n-1$ available positions to the right of $1$ for the $n-1$ positive values from $2$ to $n$. So $\pi$ is unique and it has no type-$B$ $(\alpha,231)$-pattern since the right part of $\pi$ is increasing. Moreover, it corresponds to the one in the statement.
        \item If $\alpha$ is join, the same holds except for the central one; but in that case, only the right half of this part can be filled with positive values, again implying that we have to put $n$ values in $n$ given spots, hence only one solution. Again, it avoids any type-$B$ $(\alpha,231)$-pattern (which is a $312$ if the '$1$' is in the first block right half).
    \end{itemize}

    \medskip

    Let us finally consider the case where $i$ and $j$ have different sign and $|i|<|j|$. Let us first solve the case $\alpha$ split. In that case, $i$ is the negative one; otherwise, we would have a type-$B$ $(\alpha,231)$-pattern $\overline k\dots k\dots \overline{k{+}1}$.
    As before, since $\invd{i}{j}$ is the only cover inversion, $1$ has to be in the right part of $\pi$, and so do all positive values up to $k$. Let now $\ell$ be the maximal value of the left part of $\pi$. Then the right part consists of three possibly intertwined sequences: $\overline\ell\dots\overline{k{+}1}$, $1\dots k$, and $\ell{+}1\dots n$. But $\ell+1$ must be to the right of $k$, or we would have a type-$B$ $(\alpha,231)$-pattern $k{+}1\dots\ell{+}1\dots k$; and $1$ must be to the right of $\overline{k+1}$, otherwise we would have a type-$B$ $(\alpha,231)$-pattern as $\overline k\dots1\dots\overline{k{+}1}$. So the three sequences are not intertwined at all: they have to be concatenated. But since the positions of $k$ and $k+1$ are given, one easily gets that $\ell=j$ and $k=\ell+i$, so that $\pi$ is unique and is the one described in the statement.

    \medskip
    
    The join case when $|i|>\alpha_1$ is essentially the same as in the split case up to one small difference: the first $\alpha_1$ values of the first half have to be positive and greater than $\ell$ in order to avoid the type-$B$ $(\alpha,231)$-pattern (which is actually 312). The other positions are filled as in the split case and one finds the sequence of the statement.

    \medskip
    
    Finally, the join case when $|i|<\alpha_1$ is a little different. First, in that case $i$ is the positive one. The values $1$ up to $k$ must still be in the right part and in that order, so that they form a prefix of the right part (so that $k=i$). Now, there still remains to intertwine the sequences $\overline\ell\dots\overline{k{+}1}$ and $\ell{+}1\dots n$. But again, there is no choice: the block containing $\alpha_1$ must be filled after $k$ by values greater than it, and the remaining part of the permutation must be increasing in order to avoid type-$B$ $(\alpha,231)$-patterns, hence the solution of the statement.
\end{proof}

\begin{proposition}\label{prop:alpha_tamari_trim}
    For every type-$B$ composition $\alpha$ of $n$, the lattice $\Tamari_{B}(\alpha)$ is trim.
\end{proposition}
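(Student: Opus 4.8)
The plan is to derive trimness from Theorem~\ref{thm:semidistributive_extremal_is_trim}, which reduces the task to checking that $\Tamari_{B}(\alpha)$ is both semidistributive and extremal. Semidistributivity is immediate: Proposition~\ref{prop:alpha_tamari_congruence_uniform} already shows that $\Tamari_{B}(\alpha)$ is congruence uniform, and every congruence-uniform lattice is semidistributive by Lemma~\ref{lem:congruence_uniform_semidistributive}. Thus the only thing left to establish is extremality, that is, the equality $\bigl\lvert\JI\bigl(\Tamari_{B}(\alpha)\bigr)\bigr\rvert = \ell\bigl(\Tamari_{B}(\alpha)\bigr) = \bigl\lvert\MI\bigl(\Tamari_{B}(\alpha)\bigr)\bigr\rvert$.

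First I would compare the two quantities that have already been computed. Proposition~\ref{prop:alpha_tamari_length} gives the length $\ell\bigl(\Tamari_{B}(\alpha)\bigr)$, and Proposition~\ref{prop:alpha_tamari_irreducibles_count} gives the number of join-irreducible elements $\bigl\lvert\JI\bigl(\Tamari_{B}(\alpha)\bigr)\bigr\rvert$. Crucially, both are given by the identical formula $n^{2} - \sum_{i=1}^{r}\binom{\alpha_{i}}{2}$, corrected by $\binom{\alpha_{1}+1}{2}$ when $\alpha$ is join and by $0$ when $\alpha$ is split. Hence $\bigl\lvert\JI\bigl(\Tamari_{B}(\alpha)\bigr)\bigr\rvert = \ell\bigl(\Tamari_{B}(\alpha)\bigr)$ holds by inspection, with no additional computation required.

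To obtain the remaining equality I would invoke semidistributivity a second time. Since $\Tamari_{B}(\alpha)$ is semidistributive, Lemma~\ref{lem:semidistributive_irreducibles} yields $\bigl\lvert\JI\bigl(\Tamari_{B}(\alpha)\bigr)\bigr\rvert = \bigl\lvert\MI\bigl(\Tamari_{B}(\alpha)\bigr)\bigr\rvert$. Combining this with the previous paragraph, all three numbers coincide, so $\Tamari_{B}(\alpha)$ is extremal. Being both semidistributive and extremal, it is trim by Theorem~\ref{thm:semidistributive_extremal_is_trim}, as desired.

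The proof itself is therefore pure assembly; the genuine difficulty has already been absorbed into the preceding results, namely the identification of a maximal chain of the stated length via the suffixes of the $\linc$-sorting word of $\woa$ (Proposition~\ref{prop:alpha_tamari_length}) and the explicit bijection sending each inversion of $\woa$ to a unique join-irreducible having that inversion as its only cover inversion (the lemma preceding this proposition). The one point I must not overlook is that these two counts are literally the same expression, including the split/join correction term $\binom{\alpha_{1}+1}{2}$, so that extremality drops out without re-deriving either quantity.
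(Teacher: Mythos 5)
Your proposal is correct and follows the paper's own proof essentially verbatim: both derive semidistributivity from congruence uniformity (Proposition~\ref{prop:alpha_tamari_congruence_uniform} with Lemma~\ref{lem:congruence_uniform_semidistributive}), match the length and join-irreducible counts from Propositions~\ref{prop:alpha_tamari_length} and \ref{prop:alpha_tamari_irreducibles_count}, invoke Lemma~\ref{lem:semidistributive_irreducibles} to equate join- and meet-irreducibles, and conclude via Theorem~\ref{thm:semidistributive_extremal_is_trim}. Nothing is missing.
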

\begin{proof}
    By Proposition~\ref{prop:alpha_tamari_congruence_uniform}, $\Tamari_{B}(\alpha)$ is congruence uniform, and thus by Lemma~\ref{lem:congruence_uniform_semidistributive} it is semidistributive. Now, Lemma~\ref{lem:semidistributive_irreducibles} states that in $\Tamari_{B}(\alpha)$ the number of join-irreducible elements agrees with the number of meet-irreducible elements.  Propositions~\ref{prop:alpha_tamari_length} and \ref{prop:alpha_tamari_irreducibles_count} thus imply that $\Tamari_{B}(\alpha)$ is extremal.  Using Theorem~\ref{thm:semidistributive_extremal_is_trim}, we conclude that it is in fact trim.
\end{proof}

We are now ready to conclude our second main result.

\begin{proof}[Proof of Theorem~\ref{thm:parabolic_tamari_properties}]
    Proposition~\ref{prop:alpha_tamari_congruence_uniform} states that $\Tamari_{B}(\alpha)$ is congruence uniform, which implies together with Lemma~\ref{lem:congruence_uniform_semidistributive} that it is semidistributive.  Proposition~\ref{prop:alpha_tamari_trim} states that $\Tamari_{B}(\alpha)$ is trim.
\end{proof}

\section{Conclusion and outlook}
    \label{sec:outlook}
The constructions and results presented in this article are a natural follow-up to \cite{muehle19tamari}, where combinatorial realizations for parabolic aligned elements in \text{linear type $A$} were considered.  Drawing inspiration from other results concerning the combinatorics of parabolic Catalan objects in linear type $A$ there are some natural research problems that may be studied subsequently, see~\cite{ceballos20the,fang21consecutive,muehle21noncrossing}.  
    
The first natural question to investigate is the exhibition of other combinatorial families associated with parabolic quotients of $\Hyper_{n}$.  In \cite{ceballos20the}, tree and lattice path models for elements of parabolic quotients of the symmetric group were introduced, and these constructions can be extended to produce the corresponding models in linear type $B$.  This is work in progress and will be addressed in a future paper.  

Following the approach from \cite{muehle21noncrossing} in linear type $A$, it will certainly be fruitful to study the secondary structures associated with the type-$B$ parabolic Tamari lattices, such as their Galois graph, canonical join complex or core label order.  This will also help in the research for a noncrossing partition model for parabolic quotients in linear type $B$.

One of the main results of \cite{ceballos20the} relates the parabolic Tamari lattices in type $A$ to the $\nu$-Tamari lattices of Pr{\'e}ville-Ratelle and Viennot~\cite{preville17extension}, see also \cite{fang21consecutive}.  The underlying geometric structure was generalized to type $B$ in \cite{ceballos19geometry}, and it is a natural question to study if there is a similar connection with our type-$B$ parabolic Tamari lattice.  More precisely, we may study the following research question.

\begin{question}
	Let $\alpha$ be a type-$B$ composition of an integer $n$.  Is there some integer $N$ such that the associated type-$B$ parabolic Tamari lattice $\Tamari_{B}(\alpha)$ is isomorphic to an interval in the type-$B$ ordinary Tamari lattice $\Tamari_{B}(N)$?  Is it even possible that $\Tamari_{B}(N)$ contains all $\Tamari_{B}(\alpha')$ as disjoint intervals, where $\alpha'$ ranges over all type-$B$ compositions of $n$?
\end{question}

Besides the structural considerations, it may also be fruitful to consider various enumerative aspects of type-$B$ parabolic Tamari lattices.  For instance, the total sum of the cardinalities of parabolic aligned elements with respect to all parabolic quotients of a fixed symmetric group yields the sequence \cite[A151498]{oeis} and enumerates the dimensions of a certain family of Hopf algebras, see \cite{bergeron22hopf,ceballos20the}.  The counterpart in type $B$ is the sequence $t_{1},t_{2},\ldots$, where
\begin{displaymath}
	t_{n} \defs \sum_{\text{$\alpha$ is a type-$B$ composition of $n$}}\bigl\lvert\Hyper_{\alpha}(231)\bigr\rvert.
\end{displaymath}
The first six terms of this sequence are $3,15,91,598,4109,29071$, which currently is not listed in the OEIS. Our work in progress on other combinatorial models for type $B$ may give a possible explanation of these numbers.

A second intriguing enumerative problem is the determination of the \textit{cover enumerator} of the type-$B$ parabolic Tamari lattices.  More precisely, if $\alpha$ is a type-$B$ composition of $n$, then we define
\begin{displaymath}
	c_{\alpha}(x) \defs \sum_{\pi\in\Hyper_{\alpha}(231)}x^{\lvert\covset(\pi)\rvert}.
\end{displaymath}
When $\alpha=(0,1,1,\ldots,1)$ is a type-$B$ composition of $n$, then we have
\begin{displaymath}
	c_{\alpha}(x) = \sum_{k=0}^{n}\binom{n}{k}^{2}x^{k},
\end{displaymath}
which follows for instance from \cite[Proposition~6]{reiner97non} and \cite[Theorem~6.1]{reading07clusters}.  The coefficients of this sequence may be considered as type-$B$ (parabolic) \textit{Narayana numbers}.  Computer experiments suggest the following.

\begin{conjecture}
	Let $t>0$ and let $\alpha=(t,1,1,\ldots,1)$ be a type-$B$ composition of $n$.  Then,
	\begin{displaymath}
		c_{\alpha}(x)  = \sum_{k=0}^{n-t}\binom{n-t}{k}\binom{n+t}{k}x^{k}.
	\end{displaymath}
	Consequently, $\bigl\lvert\Hyper_{\alpha}(231)\bigr\rvert=\binom{2n}{n-t}$.
\end{conjecture}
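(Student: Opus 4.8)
The plan is to give an explicit combinatorial model for the set $\Hyper_{\alpha}(231)$ when $\alpha=(t,1,1,\ldots,1)$, to read off the cover enumerator in that model, and finally to recognize the stated polynomial and apply the Vandermonde convolution for the consequence. Throughout, $\alpha$ is a \emph{join} composition with $\alpha_{1}=t$ and $r=n-t+1$ parts, so the only part of size larger than one is the first. First I would record the shape of a generic element of $\Hyper_{\alpha}$: by Lemma~\ref{lem:increasing_blocks} together with Lemma~\ref{lem:join_first_block_positive}, an element $\pi$ lies in $\Hyper_{\alpha}$ if and only if its right part satisfies $0<\pi(1)<\cdots<\pi(t)$, while the entries $\pi(t+1),\ldots,\pi(n)$ are otherwise unconstrained (each later region being a singleton). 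The cover inversions are then governed by Lemma~\ref{lem:type_b_inversions}: reading the values $\overline{1},1,2,\ldots,n$ in the long one-line notation, each value $v$ whose successor $v^{+}$ sits to its left contributes one cover inversion, so $\lvert\covset(\pi)\rvert$ is exactly the number of such value-descents.

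Next I would impose $(\alpha,231)$-avoidance via Proposition~\ref{prop:join_pattern} and encode the surviving permutations as lattice paths. Concretely, I expect $\Hyper_{\alpha}(231)$ to be in statistic-preserving bijection with pairs $(A,B)$, where $A$ is a $k$-subset of an $(n-t)$-element set and $B$ is a $k$-subset of an $(n+t)$-element set, the common cardinality $k$ recording $\lvert\covset(\pi)\rvert$; summing over $k$ then yields $c_{\alpha}(x)=\sum_{k}\binom{n-t}{k}\binom{n+t}{k}x^{k}$. The factor $\binom{n-t}{k}$ should arise from the behaviour of the $n-t$ singleton regions, and $\binom{n+t}{k}$ from the coupling of these regions with the size-$t$ join block. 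An alternative, and perhaps more robust, route is an induction on $n$ with $t$ fixed: peeling off the last singleton region relates $c_{(t,1^{n-t})}(x)$ to $c_{(t,1^{n-1-t})}(x)$, and one checks that the family $\sum_{k}\binom{n-t}{k}\binom{n+t}{k}x^{k}$ obeys the same recursion and the initial condition $c_{(t)}(x)=1$ (for $n=t$ the quotient consists of the identity alone).

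The hard part will be this counting step, and specifically the interplay between the first region and the singleton regions under $(\alpha,231)$-avoidance. The subtlety flagged after Definition~\ref{def:join_pattern} --- that a middle index inside the join region forbids a $312$ rather than a $231$ --- means the avoidance condition is genuinely two-sided, so one must simultaneously track the increasing constraint inside the block of size $t$ and the forbidden configurations it creates with the free positions. Verifying that the chosen encoding turns $\lvert\covset(\pi)\rvert$ into the shared index $k$ of the two binomial factors, rather than into some twisted statistic, is where the real work lies; the description of $\covset$ above and Note~\ref{note:join_forcing} should make this tractable but delicate.

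Finally, once the refined identity $c_{\alpha}(x)=\sum_{k=0}^{n-t}\binom{n-t}{k}\binom{n+t}{k}x^{k}$ is established, the enumeration of $\Hyper_{\alpha}(231)$ follows by setting $x=1$: using $\binom{n+t}{k}=\binom{n+t}{n+t-k}$ and the Vandermonde convolution $\sum_{k}\binom{n-t}{k}\binom{n+t}{n+t-k}=\binom{2n}{n+t}=\binom{2n}{n-t}$, we obtain $\lvert\Hyper_{\alpha}(231)\rvert=\binom{2n}{n-t}$. This also recovers the known full-group enumerator $\binom{n}{k}^{2}$ on formally setting $t=0$, which is a useful consistency check on the model.
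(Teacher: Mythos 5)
This statement is not proved in the paper at all: it is stated as a \emph{conjecture}, introduced by the sentence ``Computer experiments suggest the following,'' and the authors offer no argument beyond numerical evidence (together with the remark that the full-group case $\alpha=(0,1,\ldots,1)$ follows from results of Reiner and Reading). So there is no proof of the paper's to compare yours against, and your proposal, as written, does not close the gap either. Its peripheral components are sound: the description of $\Hyper_{\alpha}$ for $\alpha=(t,1,\ldots,1)$ (right part increasing and positive on the first $t$ positions, the remaining singleton regions unconstrained) follows correctly from Lemma~\ref{lem:increasing_blocks} and Lemma~\ref{lem:join_first_block_positive}; the identification of $\lvert\covset(\pi)\rvert$ with the number of value-descents $v$ whose successor $v^{+}$ lies to the left matches the paper's reading convention after Corollary~\ref{cor-si-less-invs}; and the final deduction $\sum_{k}\binom{n-t}{k}\binom{n+t}{k}=\binom{2n}{n+t}=\binom{2n}{n-t}$ via Vandermonde is a correct, purely formal reduction of the cardinality claim to the refined identity.

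The genuine gap is precisely where you flag it, and flagging it does not fill it: the ``statistic-preserving bijection with pairs $(A,B)$'' is only asserted (``I expect \dots''), with no construction, no candidate map, and no argument that the two binomial factors share the index $k=\lvert\covset(\pi)\rvert$ rather than two different statistics whose joint distribution merely has the right total. That bijection \emph{is} the conjecture, restated in bijective language, so the proposal is circular at its core. The alternative inductive route has the same defect: you never derive the recursion obtained by ``peeling off the last singleton region,'' and it is not clear one exists in usable form --- removing the last region changes which triples $(i,j,k)$ can form $(\alpha,231)$-join patterns in a way that does not obviously respect the cover statistic, and the two-sided nature of the avoidance condition inside the join block (the $312$-type constraint noted after Definition~\ref{def:join_pattern}) couples the first region to \emph{all} later ones, not just the last. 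The base case $c_{(t)}(x)=1$ and the formal $t=0$ consistency check are fine but carry no inductive weight. In short: correct reductions at the boundary, but the central counting step --- the only mathematically substantive content of the statement --- remains exactly as open as it is in the paper.
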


In type $A$, the cover enumerator has been determined for certain parabolic quotients of the symmetric group in \cite[Theorem~3.1]{krattenthaler22the} using a slightly different perspective.

We wish to end by observing that computational evidence leads us to suggest that the cardinality of $\Hyper_{\alpha}(231)$, where $\alpha=(0,1,1,\ldots,1,2)$ is a type-$B$ composition of $n$ is given by the type-$D$ Catalan number $\frac{3n-2}{n}\binom{2n-2}{n-1}$.

\subsection*{Acknowledgements}

HM has received funding from the European Research Council (Grant Agreement no. 681988, CSP-Infinity). 
JCN was partially supported by the CARPLO project of the Agence Nationale de la recherche (ANR-20-CE40-0007).


\end{document}